\documentclass[11pt]{amsart}
\usepackage[letterpaper,margin=1in]{geometry} 
\usepackage[colorlinks,
             linkcolor=blue,
             citecolor=black!75!red,
             pdfproducer={pdfLaTeX},
             pdfpagemode=None,
             bookmarksopen=true
             bookmarksnumbered=true]{hyperref}
\usepackage[matrix,arrow,ps,color,line,curve,frame,all]{xy}

\usepackage{tikz}
\usetikzlibrary{arrows}

%%% Title

\title{Exotic Elliptic Algebras of dimension 4}

\author[Alex Chirvasitu]{Alex Chirvasitu}
\author[S. Paul Smith]{S. Paul Smith
\\
\mbox{ } 
\\
with an appendix by Derek Tomlin}

\address{Department of Mathematics, Box 354350, University of  Washington, Seattle, WA 98195,USA.}

\email{chirva@math.washington.edu, smith@math.washington.edu}

\keywords{Sklyanin algebras, elliptic algebras, line modules}

\subjclass[2010]{16E65, 16S38, 16T05, 16W50}

%%% Theorem environments

\usepackage{amsmath,amsthm,stmaryrd}
\usepackage{cleveref}
%\swapnumbers

\newtheorem{lemma}{Lemma}[section]
\newtheorem{theorem}[lemma]{Theorem}
\newtheorem{proposition}[lemma]{Proposition}
\newtheorem{corollary}[lemma]{Corollary}

\theoremstyle{definition} 

\newtheorem{definitionnodiamond}[lemma]{Definition}
\newtheorem{examplenodiamond}[lemma]{Example}
\newtheorem{remarknodiamond}[lemma]{Remark}

\newenvironment{definition}{\begin{definitionnodiamond}}{\hfill\ensuremath\blacklozenge\end{definitionnodiamond}}

\numberwithin{equation}{section}

\setcounter{tocdepth}{1}

\newcounter{stepofproof}

%%% Ref styles

\crefname{section}{Section}{Sections}
\crefformat{section}{#2Section~#1#3} 
\Crefformat{section}{#2Section~#1#3} 

\crefname{subsection}{}{Subsections}
\crefformat{subsection}{\S#2#1#3} 
\Crefformat{subsection}{\S#2#1#3}

\crefname{definition}{Definition}{Definitions}
\crefformat{definition}{#2Definition~#1#3} 
\Crefformat{definition}{#2Definition~#1#3} 

\crefname{example}{Example}{Examples}
\crefformat{example}{#2Example~#1#3} 
\Crefformat{example}{#2Example~#1#3} 

\crefname{examplenodiamond}{Example}{Examples}
\crefformat{examplenodiamond}{#2Example~#1#3} 
\Crefformat{examplenodiamond}{#2Example~#1#3} 

\crefname{remark}{Remark}{Remarks}
\crefformat{remark}{#2Remark~#1#3} 
\Crefformat{remark}{#2Remark~#1#3} 

\crefname{remarknodiamond}{Remark}{Remarks}
\crefformat{remarknodiamond}{#2Remark~#1#3} 
\Crefformat{remarknodiamond}{#2Remark~#1#3} 

\crefname{convention}{Convention}{Conventions}
\crefformat{convention}{#2Convention~#1#3} 
\Crefformat{convention}{#2Convention~#1#3} 

\crefname{lemma}{Lemma}{Lemmas}
\crefformat{lemma}{#2Lemma~#1#3} 
\Crefformat{lemma}{#2Lemma~#1#3} 

\crefname{proposition}{Proposition}{Propositions}
\crefformat{proposition}{#2Proposition~#1#3} 
\Crefformat{proposition}{#2Proposition~#1#3} 

\crefname{corollary}{Corollary}{Corollaries}
\crefformat{corollary}{#2Corollary~#1#3} 
\Crefformat{corollary}{#2Corollary~#1#3} 

\crefname{theorem}{Theorem}{Theorems}
\crefformat{theorem}{#2Theorem~#1#3} 
\Crefformat{theorem}{#2Theorem~#1#3} 

\crefname{assumption}{Assumption}{Assumptions}
\crefformat{assumption}{#2Assumption~#1#3} 
\Crefformat{assumption}{#2Assumption~#1#3} 

\crefname{equation}{}{}
\crefformat{equation}{(#2#1#3)} 
\Crefformat{equation}{(#2#1#3)}

\crefname{align}{}{}
\crefformat{align}{(#2#1#3)} 
\Crefformat{align}{(#2#1#3)}

\crefname{proofstep}{Step}{Steps}
\crefformat{proofstep}{#2Step~#1#3} 
\Crefformat{proofstep}{#2Step~#1#3}

%%% Short macros

\newcommand\cat[1]{\textsc{#1}}
\newcommand\define[1]{\emph{#1}}

%%% Links and bibliography

\newcommand\arXiv[1]{\href{http://arxiv.org/abs/#1}{\nolinkurl{arXiv:#1}}}
\newcommand\MRnumber[1]{\href{http://www.ams.org/mathscinet-getitem?mr=#1}{\nolinkurl{MR#1}}}
\newcommand\DOI[1]{\href{http://dx.doi.org/#1}{\nolinkurl{DOI:#1}}}
\newcommand\MAILTO[1]{\href{mailto:#1}{\nolinkurl{#1}}}

%%% Letters

\usepackage{amsfonts,amssymb}

\newcommand\bP{\mathbb P}

\newcommand\bZ{\mathbb Z}

\newcommand\cA{\mathcal A}

\newcommand\cC{\mathcal C}

\newcommand\cL{\mathcal L}
\newcommand\cM{\mathcal M}

\newcommand\cO{\mathcal O}

\newcommand\cV{\mathcal V}

%%% Words

%\DeclareMathOperator\Hom{\cat{Hom}}
\DeclareMathOperator\End{\cat{End}}
\newcommand\Vect{\cat{Vect}}

\DeclareMathOperator\QCoh{\cat{Qcoh}}

\newcommand\id{\mathrm{id}}

\renewcommand\lim{\varprojlim}

%%%%%%%%%%%%%%%%% Paul's definitions %%%%%%%%%%%%%%%%%

%
%  Blackboard Bold
%

\def\CC{{\mathbb C}}

\def\GG{{\mathbb G}}

\def\LL{{\mathbb L}}

\def\PP{{\mathbb P}}

\def\RR{{\mathbb R}}

\def\ZZ{{\mathbb Z}}
 
\def\fol{{\bar f}}

\def\im{\operatorname {im}}

    % for writing n^{th}

\def\Ann{\operatorname{Ann}}
\def\Aut{\operatorname{Aut}}

\def\Fdim{{\sf Fdim}}

\newcommand{\GKdim}{\mathrm{GKdim}}

\def\gr{{\sf gr}}

\def\Gr{{\sf Gr}}

\def\Hom{\operatorname{Hom}}

\def\Mod{{\sf Mod}}

\def\Projnc{\operatorname{Proj}_{nc}}

\def\QGr{\operatorname{\sf QGr}}

\def\a{\alpha}
\def\b{\beta}
\def\c{\gamma}
\def\d{\delta}

\def\g{\gamma}
\def\l{\lambda}

\def\s{\sigma}

\def\ve{\varepsilon}

\def\D{\Delta}
\def\G{\Gamma}
\def\L{\Lambda}

\def\fP{{\mathfrak P}}

\def\fsl{{\mathfrak s}{\mathfrak l}}

\def\wtB{{\widetilde{ B}}}

\def\wtQ{{A}}  
\def\wtV{{\widetilde{ V}}}

\def\QCoh{{\sf Qcoh}}
\def\qcoh{{\sf Qcoh}}

% % % % % % % % % % % % % % % % % % % % % % % % % % % % % % % % % % % % % % % % 
% % % % % % % % % %   B E G I N   D O C U M E N T   % % % % % % % % % % % % % %
% % % % % % % % % % % % % % % % % % % % % % % % % % % % % % % % % % % % % % % %

\begin{document}

\maketitle

\begin{abstract} 
Let $E$ be an elliptic curve defined over an algebraically closed field $k$ whose characteristic is not 2 or 3. 
Let $\tau$ be a translation automorphism of $E$ that is not of order 2.
 In a previous paper we studied an algebra $\wtQ=\wtQ(E,\tau)$ that depends on this data:
 $\wtQ(E,\tau)=\big(S(E,\tau) \otimes M_2(k)\big)^\G$ where $S(E,\tau)$ is the 4-dimensional Sklyanin algebra associated to $(E,\tau)$, $M_2(k)$ is the ring of $2 \times 2$ matrices over $k$,
 and $\G$ is $(\ZZ/2)\times (\ZZ/2)$ acting in a particular way as automorphisms of $S$ and $M_2(k)$. The action of $\G$ on $S$ is compatible with the translation action of the 2-torsion subgroup $E[2]$ on $E$. 
 Following the ideas and results in papers of Artin-Tate-Van den Bergh, Smith-Stafford, and Levasseur-Smith,  this paper examines the line modules,  point modules, and fat point modules, over $\wtQ$, and their incidence relations. The right context for the results is non-commutative 
 algebraic geometry: we view $\wtQ$ as a homogeneous coordinate ring of a non-commutative analogue of $\PP^3$ that we denote by 
 $\Projnc(\wtQ)$. Point modules and fat point modules determine ``points'' in  $\Projnc(\wtQ)$. Line modules determine ``lines'' in  $\Projnc(\wtQ)$. 
 Line modules for $A$ are in bijection with certain lines in $\PP(\wtQ_1^*) \cong \PP^3$ and 
therefore correspond to the closed points of a certain subscheme $\LL$ of the Grassmannian $\GG(1,3)$. 
 Shelton-Vancliff call $\LL$ the line scheme for $\wtQ$. We show that $\LL$ is  
 the union of 7 reduced and irreducible components, 3 quartic elliptic space curves and 4 plane conics in the ambient Pl\"ucker $\PP^5$, and that $\deg(\LL)=20$. The union of the lines corresponding to the points on each elliptic curve
is an elliptic scroll in $\PP(\wtQ_1^*)$. Thus, the lines on that elliptic scroll are in natural bijection with a corresponding family of line modules for $\wtQ$.
\end{abstract}

%%%%%%%%%%%%%%%%%%%%%%%%%%%%%%%%%%%%%%%%%%%%%%%%%%%%%%%%%%%%%%%%%%%%%%%%%%%%%%%%%%%%%%%%%%%%%%%%%%%%%%%%%%%%%%%%%%%%%%%%%%%%%%%%%%%%%%%%%%%%%%%%%%%%%%%%%%%%%%%%%%%%%%%%%%%%%%%%%%%%%%%%%%%%%%%%%%%%%%%%%%%%%%%%%%%%%%%%%%%%%%%%%%%%%%

\tableofcontents

\section{Introduction}

This paper is a continuation of  \cite{CS15}.

\subsection{}
\label{sect.1.1}
The 3- and 4-dimensional Sklyanin algebras are certain non-commutative graded algebras $S=\CC+S_1+S_2+\cdots$ that have many of the same ring-theoretic and homological properties as the polynomial rings in 3 and 4 variables.  They are among the most interesting algebras that have 
appeared in non-commutative algebraic geometry; their study has stimulated and justified many of the developments in that subject. 
Such a Sklyanin algebra determines, and is determined by, an elliptic curve, $E$,
a translation automorphism, $\tau:E \to E$, and an invertible $\cO_E$-module $\cL$ of degree 3 or 4, respectively. We will denote the algebra by 
$S(E,\tau,\cL)$.\footnote{The isomorphism class of $S(E,\tau,\cL)$ depends only on the degree of $\cL$. Since the case $\deg(\cL)=3$ will only 
appear in \S\ref{sect.1.1}, and only the case $\deg(\cL)=4$ appears from \S\ref{sect.1.3} onward we will simply write $S(E,\tau)$ from \S\ref{sect.1.3} onward.}

The 3-dimensional Sklyanin algebras are, arguably, the most interesting 3-dimensional  Artin-Schelter regular algebras \cite{AS87}. 
In  \cite{ATV1} and \cite{ATV2},  Artin-Tate-Van den Bergh completed the classification of the 3-dimensional  Artin-Schelter algebras having Hilbert series 
$(1-t)^{-3}$. Such algebras are now viewed as homogeneous coordinate rings of non-commutative deformations of the projective plane $\PP^2$.
If $S$ is such an algebra we write $\Projnc(S)$ for the corresponding non-commutative deformation of $\PP^2$.
Van den Bergh \cite{VdB11-NCQ} showed that as $S$ ranges over all 3-dimensional  Artin-Schelter algebras, 
the spaces  $\Projnc(S)$ give {\it all} non-commutative deformations of $\PP^2$.
The precise statement of Van den Bergh's Theorem involves the deformation theory of the category of $\qcoh(\PP^2)$ of quasi-coherent sheaves on $\PP^2$;
the deformation theory of abelian categories was formulated and developed by Lowen and Van den Bergh in  \cite{LVdB05} and \cite{LVdB06}.
In  \cite{VdB11-NCQ}, Van den Bergh also classifies the non-commutative deformations of $\PP^1 \times \PP^1$.

Moving beyond surfaces, it is natural to try to classify 4-dimensional  Artin-Schelter regular algebras and non-commutative deformations of $\PP^3$.
A bewildering variety of such algebras is known. A classification remains a long way off. The 4-dimensional Sklyanin algebras are, again, the most interesting
such algebras (so far) and they have a very rich structure. The representation theory of $S(E,\tau,\cL)$ and, what is almost the same thing, the geometric structure of  $\Projnc\big(S(E,\tau,\cL)\big)$, is governed by the geometry of 
$E$ and $\tau$ when $E$ is embedded as a quartic curve in $\PP\big(H^0(E,\cL)^*\big)$.

We refer the reader to \cite{A90} and  \cite{S94} for overviews of the 3- and 4-dimensional Sklyanin algebras and the spaces $\Projnc(S(E,\tau,\cL))$. 

\subsection{}
\label{sect.1.2}

The significance of the 4-dimensional Sklyanin algebras is amplified by the fact that they arise naturally  in mathematical physics:
Sklyanin discovered them in the context of quantum statistical mechanics and Baxter's elliptic solutions to the Yang-Baxter equation (\cite{Skl82}, \cite{Skl83}) . 
He also observed that ``degenerate'' versions of the 4-dimensional Sklyanin algebras lead to the quantized enveloping algebras $U_q(\fsl_2)$. 
A detailed examination of this degeneration procedure is carried out in \cite{CSW16}. 

Real forms of 4-dimensional Sklyanin algebras  arise from a natural problem  in non-commutative differential geometry. In \cite{CDV02}, \cite{CDV03},
and \cite{CDV08}, Connes and Dubois-Violette define a non-commutative 3-sphere in terms of the cyclic-homology Chern character 
${\rm ch}_*:K_*(\cA) \to {\rm HC}(\cA)$. They show that  non-commutative 3-spheres are ``unit spheres'' in an ambient non-commutative 
analogue of $\RR^4$ that is defined in terms of a C$^*$-algebra whose defining relations are  necessarily 
those of a 4-dimensional Sklyanin algebra.
The finer structure of non-commutative 3-spheres is very rich and involves, as might be expected, elliptic curves, theta functions, and other algebro-geometric data.
The results of Connes and Dubois-Violette in these three papers provide one of the 
few interactions between non-commutative algebraic geometry and non-commutative differential 
geometry. We expect further exploration of this interaction will prove fruitful. 

 \subsection{Notation and conventions}
    \label{sect.1.3}
In \S\S\ref{sect.1.1} and \ref{sect.1.2}, all algebraic objects were assumed to be vector spaces over the field of complex numbers.
From now on we will work over  an algebraically closed field $k$ of characteristic $\ne 2,3$. Always, $E$ will denote an elliptic curve defined over
 $k$, $\tau:E \to E$ will denote a translation automorphism,  $S(E,\tau)$ will denote the 4-dimensional Sklyanin algebra associated to $(E,\tau)$,
 $\G$ will denote the group $(\ZZ/2)\times (\ZZ/2)$, and $M_2(k)$ will denote the ring of  $2 \times 2$ matrices over $k$. The group 
 $\G$  acts in a particular way as automorphisms of $S(E,\tau)$ and $M_2(k)$ (see  \S\S\ref{sect.Gamma.action} and \ref{ssect.quat.basis}).

As explained in \cite{SS92}, there is a natural copy of $E$ embedded as a quartic curve in $\PP(S_1^*) \cong \PP^3$. 
We fix an identity $o$ and a group law on $E$ such that four points on $E$ are coplanar if and only if their sum is $o$. 
We will identify $\tau$ with $\tau(o)$. We write $E[2]$ and $E[4]$ for the 2-torsion and 4-torsion subgroups of $E$.
The action of $\G$ as automorphisms of $S$ induces an action of $\G$ on $\PP(S_1^*)$; that action sends $E$ to itself, 
and its action on $E$ is the same as the translation action of $E[2]$. We often identify $\G$ and $E[2]$.

 \subsection{}
  The algebras in the title of this paper are the invariant subalgebras 
  $$
  A(E,\tau):=\big(S(E,\tau) \otimes M_2(k)\big)^\G.
  $$
A similar operation can be applied to $A(E,\tau)$. By  \cite[Prop. 6.2]{CS15},  $\big(A(E,\tau) \otimes M_2(k)\big)^\G \cong S(E,\tau)$. 
%In \cite{CS15} and in this paper we exploit our better understanding of $S(E,\tau)$ to study $A(E,\tau)$. 
 
These algebras were studied in some detail in the authors' previous paper \cite{CS15}, where they were denoted $\widetilde{Q}(E,\tau)$, and in the Ph.D. thesis and subsequent paper of Andrew Davies, \cite{Davies16}  and  \cite{Davies-arXiv}. Those papers show that $A(E,\tau)$ has the same excellent homological properties that $S(E,\tau)$ has. For example, $A(E,\tau)$ and $S(E,\tau)$ are, like the polynomial ring in 4 variables, noetherian domains with Hilbert series $(1-t)^{-4}$, Koszul algebras of global homological dimension 4, and so on.

\subsection{}
In this paper we view $A(E,\tau)$ as a homogeneous coordinate ring of a non-commutative analogue of $\PP^3$ that we denote by  $\Projnc(A(E,\tau))$, and   
examine the geometric features of $\Projnc(A(E,\tau))$, particularly the ways in which it is and is not like $\Projnc(S(E,\tau))$. 
 %The action of $\G$ on $S(E,\tau)$ is compatible with the translation action of the 2-torsion subgroup $E[2]$ on $E$. 
Following the ideas and results in papers of Artin-Tate-Van den Bergh, Smith-Stafford, and Levasseur-Smith, 
we examine the line modules,  point modules, and fat point modules, over $A$, and their incidence relations.  
Point modules and fat point modules determine ``points'' in  $\Projnc(A)$. Line modules determine ``lines'' in  $\Projnc(A)$. 

For a moment, let $A$ denote a 3-dimensional quadratic Artin-Schelter regular algebra such that $\Projnc(A) \not\cong \PP^2$, i.e., $\Projnc(A)$ is a genuine non-commutative deformation of $\PP^2$. The ``points'' in $\Projnc(A)$ form a closed subspace of $\Projnc(A)$ that is a genuine commutative curve of arithmetic genus 
one. That curve, which is called the {\it point scheme} of $A$ or $\Projnc(A)$, is a fine moduli space for a moduli problem concerning the parametrization of ``points'' 
in $\Projnc(A)$. There is a similar moduli problem for ``lines'' in $\Projnc(A)$ and the {\it line scheme}, the solution to the moduli problem, is isomorphic to $\PP^2$.
There is also a theorem saying that every ``line'' in $\Projnc(A)$ meets the point scheme with multiplicity three. All these results, and the attendant definitions, 
appear in \cite{ATV2}.  The point scheme for a 3-dimensional Sklyanin algebra  $A(E,\tau)$  is the image of a closed immersion $E \to \Projnc(A)$. 

Non-commutative analogues of $\PP^3$ exhibit a much greater variety of behaviors and we still don't understand the typical, or 
generic, behaviour of points, lines, and their incidence relations, in that setting. Various papers of Michaela Vancliff and her co-authors illustrate some of this variety: \cite{V95}, \cite{VVR97},  \cite{SV99}, \cite{VVW98},   \cite{ShV99}, \cite{ShV02}, \cite{ShV02_bis}, \cite{SV06}, \cite{SV07}, \cite{ChV}.
More examples can be found in \cite{JTS94} and \cite{LBSvdB}.

\subsection{}
It was shown in \cite{SS92} that the point scheme for $\Projnc(S(E,\tau))$ consists of a copy of $E$ and 4 additional points. The point scheme is, in a natural way, a closed subscheme of $\PP(S_1^*) \cong \PP^3$. That closed subscheme is the natural copy of $E$ (embedded as a quartic curve) in $\PP(S_1^*) \cong 
\PP^3$ and the 4 additional points are the vertices of the 4 singular quadrics that contain $E$. In \cite{LS93}, it was shown that the ``lines'' in $\Projnc(S(E,\tau))$
are in natural bijection with the lines in $\PP(S_1^*)$  that are secant to $E$, and these are the same lines as those that lie on the pencil of quadrics in
$\PP(S_1^*)$ that contain $E$. The incidence relations between the lines and points in $\Projnc(S(E,\tau)$ are exactly the same as the incidence relations 
between the secant lines and the points of $E \sqcup\{\text{4 vertices}\}$. 

In addition to the points in $\Projnc(S(E,\tau))$, there are {\it fat points} (see \S\ref{ssect.fat.pts}).  
When $\tau$ has infinite order, there are 4 fat points of multiplicity $n$ for each integer $n\ge 2$ (the 4 ``isolated'' point modules should be considered  fat points of multiplicity 1)  \cite{SSJ93}. When $S(E,\tau)$ degenerates to a homogenization of the quantized enveloping algebra $U_q(\fsl_2)$ the fat points 
degenerate to the finite dimensional simple $U_q(\fsl_2)$-modules \cite[\S\S5,6]{CSW16}.

\subsection{}
In contrast to what happens for $\Projnc(S(E,\tau))$, Van den Bergh has shown there are non-commutative analogues of $\PP^3$ that have only 20 points \cite{VdB88}. Although he showed that such examples exist, and that this behavior is in some sense typical, he did not produce explicit examples. Since then,
Vancliff and her collaborators have produced a wealth of such examples (loc. cit.) including one for which the point scheme is a single point of multiplicity 20. 
Remarkably, there are examples with exactly $n<\infty $ points if and only if $n \in \{ 1,\ldots,20\}-\{2,15,17,19\}$ \cite{SV07}. 

By \cite[Thm. 9.3]{CS15},  $\Projnc(A(E,\tau))$ has exactly 20 points.
 
\subsection{}
 Lines in $\Projnc(A(E,\tau))$ and, what are essentially the same things, line modules for $A(E,\tau)$ are in bijection with certain lines in 
 $\PP(A_1^*) \cong \PP^3$ and  therefore correspond to the closed points of a certain subscheme $\LL$ of the Grassmannian $\GG(1,3)$. 
 Shelton-Vancliff call $\LL$ the {\it line scheme} for $A$. 
 
 In this paper we complete the classification of lines in $\Projnc(A(E,\tau))$.  We show that $\LL$ is  
 the union of 7 reduced and irreducible components, 3 quartic elliptic space curves and 4 plane conics in the ambient Pl\"ucker $\PP^5$, and that $\deg(\LL)=20$. The union of the lines corresponding to the points on each elliptic curve
is an elliptic scroll in $\PP(\wtQ_1^*)$. Thus, the lines on that elliptic scroll are in natural bijection with a corresponding family of line modules for $\wtQ$.

The fact that the degree of $\LL$ is 20 is a special case of a general phenomenon: if $A=TV/(R)$ is an algebra generated by a 4-dimensional vector space $V$
subject to a 6-dimensional subspace of relations $R \subseteq V^{\otimes 2}$, then every irreducible component of the line scheme $\LL$ has dimension $\ge 1$ and 
if $\dim(\LL)=1$, then $\deg(\LL)=20$ \cite[Thm. 1.1]{CSV15}.
 
We also determine the incidence relations between lines, points, and fat points of multiplicity 2,   in $\Projnc(A(E,\tau))$.
  
\subsection{}
It is pointed out in \cite[Remark 3.2]{ShV02} that if a point in a non-commutative analogue of $\PP^3$ lies on only finitely many lines, then it lies on exactly 6 
lines counted with multiplicity.  It is also remarked there that there is a paucity of such examples. We show that every point in $\Projnc(A(E,\tau))$  lies on 
exactly 6 lines (\Cref{main.thm.pts.lines} below).

\subsection{}
The remainder of this introduction will make some of what we have just said more precise.

  \subsection{The line scheme $\LL$ for $\wtQ$}
 If $A$ is any connected graded $k$-algebra generated by $A_1$ we call a module $L \in \Gr(A)$ a {\sf line module} if its Hilbert series is $(1-t)^{-2}$ and $L=A L_0$. 
  
In \cite{ShV02}, Shelton and Vancliff formulate a moduli problem, ``classification of line modules'',
for certain graded algebras $A$ and show there is a 
fine moduli space for it, the closed points of which are in natural bijection with the isomorphism classes 
of line modules for $A$. This moduli space is called the {\sf line scheme} for $A$. We denote it by $\LL$. 

For $\wtQ=\wtQ(E,\tau)$, $\LL$ is a closed subscheme of the Grassmannian $\GG(1,3)$ that parametrizes the lines in $\PP(\wtQ_1^*) \cong \PP^3$.  Often, we identify $A_1$ with $S_1$ and $\PP(\wtQ_1^*)$ with $\PP(S_1^*)$.

For each 2-torsion point $\xi$ there is a natural bijection between the secant lines $\overline{p,p+\xi} \subseteq 
\PP(S_1^*)$, $p \in E$, and isomorphism classes of certain line modules for $A$ \cite[\S10]{CS15}.
 
For each 2-torsion point $\xi$ the union of the secant lines $\overline{p,p+\xi}$ is an elliptic scroll in $\PP(S_1^*)$
and there is a corresponding closed immersion $E/\langle \xi\rangle \to \GG(1,3)$. We frequently identify  $E/\langle \xi\rangle$ with its image in $\GG(1,3)$. 

The degree of a closed subscheme of $\GG(1,3)$ is the degree of its image under the Pl\"ucker embedding $\GG(1,3) \to \PP^5$.

\begin{theorem}
[\Cref{thm.main1}]
\label{main.thm1}
Let  $\xi_1,\xi_2,\xi_3$ be the 2-torsion points of $E$.
The line scheme for $\wtQ$ is a reduced irreducible curve of degree 20. It is  the union of 3 disjoint quartic 
elliptic curves, $E_j \cong E/\langle \xi_j\rangle$, and 4 disjoint plane conics, 
$$
\LL \; = \;  \big(E_1 \; \sqcup \; E_2 \; \sqcup \;E_3 \big)  \; \bigcup \; 
\big(C_0  \; \sqcup \; C_1 \; \sqcup \; C_2  \; \sqcup \;   C_3\big),
$$
having the property that $\big\vert C_i \cap E_j \big\vert =2$ for all $(i,j) \in \{0,1,2,3\} \times \{1,2,3\}$.
\end{theorem}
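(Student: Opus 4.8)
The plan is to realize $\LL$ as an explicit determinantal subscheme of $\GG(1,3)\subseteq\PP^5$, decompose it, and match the pieces to the seven claimed components. A left line module is $\wtQ/\wtQ W$ for a $2$-dimensional subspace $W\subseteq\wtQ_1$, and for algebras of this type the theory of $\cite{ShV02}$ guarantees that the Hilbert-series condition is equivalent to the single requirement $\dim_k\big(R\cap(\wtQ_1\otimes W)\big)\ge 1$, where $R\subseteq\wtQ_1\otimes\wtQ_1$ is the $6$-dimensional space of quadratic relations. Equivalently, the restriction $\phi_W\colon R\to\wtQ_1\otimes(\wtQ_1/W)$ of the canonical projection has nonzero kernel, i.e. the $8\times 6$ matrix of $\phi_W$ has rank $\le 5$. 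Expressing $W$ in Pl\"ucker coordinates makes the entries of this matrix linear in the $p_{ij}$, so $\LL$ is cut out inside $\GG(1,3)$ by the $\binom{8}{6}=28$ maximal ($6\times 6$) minors, each of degree $6$; the expected codimension of a rank $\le 5$ locus of maximal minors of an $8\times 6$ matrix is $(8-6+1)=3$, matching $\dim\LL=1$. First I would import the explicit quadratic relations of $\wtQ=(S(E,\tau)\otimes M_2(k))^\G$ from $\cite{CS15}$ and write down $\phi_W$ and its minors in closed form.

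Next I would locate the three elliptic components, which are essentially already in hand. By $\cite[\S10]{CS15}$, for each $2$-torsion point $\xi_j$ the secant lines $\overline{p,p+\xi_j}$, $p\in E$, carry line modules and sweep out an elliptic scroll whose associated curve in $\GG(1,3)$ is a closed copy of $E/\langle\xi_j\rangle$, a quartic elliptic curve in the Pl\"ucker $\PP^5$. Hence $E_1\sqcup E_2\sqcup E_3\subseteq\LL$, contributing degree $3\cdot 4=12$. Pairwise disjointness of the three $E_j$ I would check directly on $E$: a line that is simultaneously a $\xi_i$- and a $\xi_j$-secant with $i\ne j$ forces a coincidence among the four points $p,\,p+\xi_i,\,p+\xi_j,\,p+\xi_i+\xi_j$ that is ruled out because $\tau$ is not of order $2$.

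The residual locus then has degree $8$, and I would show that it consists of four plane conics, using the symmetry built into the construction. Writing $\wtQ_1=(S_1\otimes M_2(k))^\G=\bigoplus_{\chi\in\widehat\G}(S_1)_\chi\otimes(M_2)_\chi$ exhibits the quaternionic grading of $\S\ref{ssect.quat.basis}$: a distinguished basis of $\wtQ_1$ indexed by $\widehat\G\cong(\ZZ/2)^2$, together with a sign action of $\G$ on $\PP(\wtQ_1^*)$ and on $\GG(1,3)$ that preserves $\LL$. This symmetry indexes the four conics $C_0,C_1,C_2,C_3$ and reduces their analysis to a small number of cases. Concretely, after removing $E_1,E_2,E_3$ from $\LL$ by saturation, I expect the residual ideal to split into four pieces, each defining a curve that spans a plane $\Pi_i\subseteq\PP^5$, so that $C_i=\GG(1,3)\cap\Pi_i$ is a reduced irreducible plane conic. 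Together with the pairwise disjointness of the $C_i$, the tally $4\cdot 2=8$ then completes the degree count to $3\cdot 4+4\cdot 2=20$, in agreement with $\cite[\mathrm{Thm.}~1.1]{CSV15}$.

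Two tasks remain, the second being the crux. For the incidence numbers $\big\vert C_i\cap E_j\big\vert=2$ it suffices, by the symmetry, to compute $C_i\cap E_j$ for one representative $i$; this reduces to intersecting a conic with the Pl\"ucker-image of an elliptic curve, a finite scheme whose length I would read off from the defining equations and expect to equal $2$ in each case. The genuinely hard part is to prove that $\LL$ has no further components and is everywhere reduced. Here I would combine a degree constraint with Cohen--Macaulayness. Since $\cite[\mathrm{Thm.}~1.1]{CSV15}$ forces every component of $\LL$ to have dimension $\ge 1$, once I establish $\dim\LL=1$ the scheme is pure of dimension $1$ with total degree $20$; as the seven reduced irreducible curves already exhibited are distinct components whose degrees sum to exactly $20$, each occurs with multiplicity $1$ and no other component can exist. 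Moreover $\LL$ is defined by the maximal minors of $\phi_W$ and has the expected codimension $3$, so its ideal is Cohen--Macaulay (resolved by the Eagon--Northcott complex) and $\LL$ has no embedded points; being generically reduced, it is therefore reduced. Establishing $\dim\LL=1$---ruling out any higher-dimensional component of the minor ideal---is where I expect the main obstacle to lie, and I would settle it together with the explicit decomposition by a primary decomposition carried out over the function field of the parameters $(E,\tau)$, exploiting the $\G$-symmetry to reduce to a few orbit representatives and the Jacobian criterion to confirm reducedness at the generic point of each component.
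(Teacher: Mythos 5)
Your proposal has the same skeleton as the paper's proof: exhibit the seven curves inside $\LL$, invoke the facts that every component of $\LL$ has dimension at least one \cite[Cor.~2.6]{ShV02} and that $\dim(\LL)=1$ forces $\deg(\LL)=20$ \cite{CSV15}, and finish with the count $3\cdot 4+4\cdot 2=20$. In particular, deferring the crux $\dim(\LL)\le 1$ to a heavy elimination/primary-decomposition computation is exactly what the paper does as well -- that bound is \Cref{thm.D}, Tomlin's computer calculation in the Appendix. One of your ingredients is a genuine supplement: the Eagon--Northcott/Cohen--Macaulay observation (a degeneracy locus of a map of vector bundles of ranks $6$ and $8$ over the smooth $\GG(1,3)$, of expected codimension $3$, is Cohen--Macaulay, hence has no embedded points and is reduced once it is generically reduced), which the paper leaves implicit in the machinery of \cite{CSV15}.

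The genuine gap is your symmetry claim, and it infects two steps. The only symmetry you actually exhibit -- the sign action of $\G\cong(\ZZ/2)^2$ coming from the quaternionic basis -- does preserve $\LL$, since it is induced by algebra automorphisms of $\wtQ$, but it fixes each of $C_0,C_1,C_2,C_3$ (and each $E_j$) individually: it multiplies each Pl\"ucker coordinate $z_{ij}$ by $\pm 1$, and each triple of linear forms in \Cref{pr.theyre_conics} is stable under these sign changes; this is precisely \Cref{pr.E4_act_lines}(2). So this symmetry can neither ``index the four conics'' nor reduce the incidence computation $|C_i\cap E_j|=2$ to a single representative $i$. Linear maps of $\PP(\wtQ_1^*)$ that do permute the four conics exist (the $H_4$-action of \Cref{prop.aut.S}), but they are not algebra automorphisms of $\wtQ$, so there is no a priori reason they preserve $\LL$; showing that they nonetheless act on $\Gr(\wtQ)$ by ``quantum symmetry'' autoequivalences, and hence on the line scheme, is the content of \Cref{sect.quantum.symm}, \Cref{cor.autoequivs2} and \Cref{pr.theyre_conics} -- a substantial part of the paper for which your plan has no substitute, short of doing all the conic and incidence computations case by case. (The paper instead obtains $|C_i\cap E_j|=2$ conceptually: every line of $C_i$ passes through the fat point $\tau'+\ve_i+E[2]$, and exactly two lines of each elliptic family do; see \Cref{th.fat_conic} and \Cref{cor.ell_line=conic_line}.) A second, smaller caveat: a primary decomposition carried out over the function field of the parameters certifies the statement only for generic $(E,\tau)$; that generic statement was already in Davies's thesis \cite{Davies16}, and the point of \Cref{thm.D} is that the bound holds for all admissible parameter values, so you would still need to control specialization.
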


We describe the line modules parametrized by 
the conics $C_j$ in \Cref{se.commuting_conic,se.other_conics}. There we show that $\LL$ contains the components in \Cref{main.thm1}. To show their union is $\LL$ we use a result by Derek Tomlin showing that $\dim(\LL) \le 1$. 
We are grateful to  Derek Tomlin for writing an appendix to this paper in which he provides a 
computer calculation of the reduced scheme $\LL_{\rm red}$ that we use  to show that 
$\LL$ is contained in the union of the components in \Cref{main.thm1}.
His work will appear, with more detail, in his Ph.D. thesis \cite{DT}. The key point, for us, is that Tomlin shows that 
$\dim(\LL) \le 1$. This inequality, when ${\rm char}(k)=0$ and $\tau$ is ``generic'', appeared earlier in the thesis of
Andrew Davies  \cite[Thm. 6.1.27]{Davies16}.

\subsection{}
Points on $C_0$ correspond to lines $y=y'=0$ in $\PP(\wtQ_1^*)$ 
where $y$ and $y'$ are linearly independent elements in $\wtQ_1$ such that
$[y,y']=0$. The conics $C_1,C_2,C_3$, are obtained from $C_0$ by using auto-equivalences of the category of 
graded left $\wtQ$-modules, $\Gr(\wtQ)$, that are induced by a coaction $\wtQ \to \wtQ \otimes H$ of a finite dimensional Hopf algebra
$H$. We think of $H$ as the coordinate ring of a finite quantum group acting on $\wtQ$. As a coalgebra, $H=k(H_4)$, the ring of 
$k$-valued functions on the Heisenberg group $H_4$ of order $4^3$. As an algebra, though, $H$ is not commutative.

The  isomorphism classes of the  line modules parametrized by $C_j$ are in natural bijection with 
the lines in a ruling on a smooth quadric in $\PP(A_1^*)$, the equations of which appear in \Cref{table.conics.quadrics}.

\subsection{}
In \cite{CS15}, we found various $\wtQ$-modules that become irreducible in the quotient category
$$
\QGr(\wtQ) \; := \; \frac{\Gr(\wtQ)}{\Fdim(\wtQ)}
$$
where $\Fdim(\wtQ)$ is the full subcategory of $\Gr(\wtQ)$
consisting of those graded modules that are the sum of their finite dimensional submodules. 
We write $\pi^*$ for the quotient functor $\Gr(\wtQ) \to \QGr(\wtQ)$ and $\pi_*$ for its right adjoint.

The irreducible objects in $\QGr(\wtQ)$ that we found in \cite{CS15} were of the form $\pi^*N$ where $N$
was either a point module for $\wtQ$ or a fat point module for $\wtQ$ of multiplicity two. 

We think of $\QGr(A)$ as if it is the category of quasi-coherent sheaves on the implicitly defined non-commutative variety $\Projnc(A)$. We think of $\Projnc(A)$ as a non-commutative analogue of $\PP^3$. 
The geometric properties of $\Projnc(S)$ and $\Projnc(A)$ are non-commutative analogues of the very 
beautiful geometric properties of quartic elliptic curves in $\PP^3$ and our main results are best understood in that context.

\subsection{}
\label{ssect.base.locus}
For both $S$ and $A$ there is a non-commutative analogue of the fact that $E$ is the base locus of a pencil of quadrics in 
$\PP(S_1^*)$. 

By \cite{SS92}, $S$ has a central subalgebra $k[\Omega,\Omega']$ that is a polynomial ring in two variables 
$\Omega,\Omega' \in S_2$ and ``the zero locus of $(\Omega,\Omega')$  in $\Projnc(S)$ is isomorphic to 
$E$'' in the sense that  $S/(\Omega,\Omega')$ is isomorphic to a twisted homogeneous coordinate ring 
$B(E,\tau,\cL)$ and $\QGr\!\big(B(E,\tau,\cL)\big)$ is equivalent to $\qcoh(E)$ by \cite{AV90}. 
We say that $\Projnc(S)$ contains a copy of $E$ because the homomorphism $S \to B(E,\tau,\cL)$ 
induces a fully faithful functor $i_*:\qcoh(E) \to \QGr(S)$ that has a left and a right adjoint and the essential image of $i_*$ is closed under subquotients.
In other words, $i_*$ behaves like a direct image functor for a closed immersion $E \to \Projnc(S)$. 

By \cite{CS15}, $A$ also has a central subalgebra $k[\Theta,\Theta']$ that is a polynomial ring in two variables 
$\Theta,\Theta' \in A_2$ and ``the zero locus of $(\Theta,\Theta')$  in $\Projnc(A)$ is isomorphic to 
$E/E[2]$'' in the sense that $\QGr\!\big(A/(\Theta,\Theta')\big)$ is equivalent to $\qcoh(E/E[2])$.
The ring $A/(\Theta,\Theta')$  is not a twisted homogeneous coordinate ring in the sense of \cite{AV90}. 
The natural functor $\Gr\big(A/(\Theta,\Theta') \big) \to \Gr(A)$ induces a functor $\qcoh(E/E[2]) \to \QGr(A)$
that behaves like the direct image functor for a ``closed immersion''  $E/E[2] \to \Projnc(A)$. 
Although $E/E[2]$ is isomorphic to $E$ it is better to think of it as $E/E[2]$ in this context.

\subsection{Points, lines, fat points, and quadrics}
\label{ssect.fat.pts}

A module $N \in \Gr(\wtQ)$ is a {\sf point module} if its Hilbert series is $(1-t)^{-1}$ and $N=\wtQ N_0$. By \cite[Thm. 9.3]{CS15}, the set $\fP$ of 
isomorphism classes of point modules has cardinality 20, and is a disjoint union of five 4-element subsets that we labelled $\fP_\infty$, $\fP_0, \fP_1,\fP_2,\fP_3$. We call point modules in $\fP_\infty$ {\sf special} and those in the other $\fP_j$ {\sf ordinary}.

A module $F \in \Gr(\wtQ)$ is a {\sf  fat point module of multiplicity two} if its Hilbert series is either
$2(1-t)^{-1}$ or $(1+t)(1-t)^{-1}$, $F=\wtQ F_0$, and $\pi^*F$ is a simple object in $\QGr(A)$ (the last condition is equivalent to the condition that  every proper quotient of $F$ has finite dimension). 
The image in $\QGr(A)$ under the ``direct image functor'' of 
the skyscraper sheaf $\cO_{p+E[2]}$ at a point $p+E[2] \in E/E[2]$ is a simple object in $\QGr(A)$ and 
the truncation $\big(\pi_*\cO_{p+E[2]}\big)_{\ge 0}$ is a fat point of multiplicity two with Hilbert series $2(1-t)^{-2}$.
If $v$ is any non-zero element in $\big(\pi_*\cO_{p+E[2]}\big)_{0}$, then $Av$ is a fat point of multiplicity two with 
Hilbert series $(1+t)(1-t)^{-2}$. See \cite[Thm. 8.1 and Cor. 8.7]{CS15}. 

If $N \in \Gr(\wtQ)$ is a point module (resp., a fat point module) we will often refer to  the isomorphism class of $\pi^*N$ in $\QGr(\wtQ)$ as a {\it point} (resp., a {\it fat point of multiplicity two}) because it is the analogue of the structure sheaf of a point (resp., a point with multiplicity two).  

If $L$ is a line module for $\wtQ$ we call  the isomorphism class of $\pi^*L$ a {\it line} because it is the analogue of the structure sheaf of a line. 

Since isomorphism classes of line modules and point modules for $A$, or $S$, are in bijection with certain lines and 
points in $\PP(A_1^*)$, or $\PP(S_1^*)$, there is some potential for confusion when one speaks of a line or point. We will
be careful to avoid ambiguity.

\subsection{Incidence relations}
After we classify the line modules for $\wtQ$, we determine which points and fat points of multiplicity two lie on which lines.  
We say that a point $\pi^*N$, or  a fat point $\pi^*F$,   {\sf lies} on the line $\pi^*L$ if there is an epimorphism 
$\pi^*L \to \pi^*N$ or $\pi^*L \to \pi^*F$. We also express this by saying that the point $\pi^*N$, or the fat point $\pi^*F$,
{\sf belongs to} the line $\pi^*L$. We note that $\pi^*N$, or $\pi^*F$, lies on $\pi^*L$ if and only if there is a surjective
homomorphism $L_{\ge n} \to N_{\ge n}$, or  $L_{\ge n} \to F_{\ge n}$ for $n \gg 0$. 

\begin{theorem}
\label{main.thm.pts.lines}
$\phantom{x}$
\begin{enumerate}
  \item 
 Every point in $\fP_\infty$ lies on exactly two lines in each $E/\langle \xi \rangle$ and lies on no lines in $C_0 \cup \ldots \cup C_3$.
  \item 
Every point in $\fP_j$, $j \ne \infty$, lies on exactly one line in each $E/\langle \xi \rangle$, 
on exactly one line in $C_i$ if $i \ne j$, and on no lines in $C_j$.
  \item 
Every fat point of multiplicity two corresponding to a point in $E/E[2]$  lies on exactly two lines in each family 
$E/\langle \xi \rangle$.
\item
For $i=0,1,2,3$,  there is a unique fat point of multiplicity two in $E/E[2]$ through which every line in $C_i$  passes, and ... see
below.
\end{enumerate}
\end{theorem}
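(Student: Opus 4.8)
The plan is to treat all four parts uniformly as a computation of graded epimorphisms $\pi^*L\twoheadrightarrow\pi^*N$ (respectively $\pi^*L\twoheadrightarrow\pi^*F$), feeding on three inputs: the explicit list of the $20$ point modules from \cite[Thm. 9.3]{CS15}, the explicit line families of \Cref{main.thm1} (the elliptic rulings being the secants $\overline{p,p+\xi}$, the conics $C_i$ being the rulings of the smooth quadrics recorded in \Cref{table.conics.quadrics}), and the symmetry afforded by the $\G$-action together with the $H$-coaction. Since the auto-equivalences of $\Gr(\wtQ)$ induced by $H$ permute $C_0,\dots,C_3$, while the translation action of $\G=E[2]$ permutes the families $E/\langle\xi\rangle$ and the point-sets $\fP_0,\dots,\fP_3$, it suffices to verify each incidence for one representative in each orbit and then transport it. The two regimes are genuinely different in flavour: incidences with honest point modules (parts (1),(2)) are read off in $\PP(\wtQ_1^*)$, whereas incidences with fat points (parts (3),(4)) must be read inside $\QGr(\wtQ)$ through the copy of $E/E[2]$ from \S\ref{ssect.base.locus}.

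For the point-module incidences I would first reduce the existence of $\pi^*L\twoheadrightarrow\pi^*N$ to a condition in degree one. A line module $L$ is cyclic, $L=\wtQ/\wtQ W$ for a two-dimensional $W\subseteq\wtQ_1$ cutting out the line $\ell=\{W=0\}$, and a point module $N$ has one-dimensional degree-one part whose annihilator is a three-dimensional $U\subseteq\wtQ_1$ cutting out the point $e=\{U=0\}$; the containment $W\subseteq U$, i.e. $e\in\ell$, is forced, and whether it lifts to a surjection is governed by the compatibility of $e\in\ell$ with the twist encoded by the relations of $\wtQ$ — the non-commutative refinement that makes the count finite. Carrying this out for a representative vertex in $\fP_\infty$ and a representative point in $\fP_0$, tested against the secant families and against the quadrics of \Cref{table.conics.quadrics}, and then spreading the answer by symmetry, should yield parts (1) and (2). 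The global bookkeeping check is that every point then lies on exactly six lines, split as $2+2+2$ (elliptic) $+\,0$ (conic) for $\fP_\infty$ and as $1+1+1$ (elliptic) $+\,1+1+1$ (conic, one for each $C_i$ with $i\ne j$) for $\fP_j$; matching this against the ``six lines per point'' count from the introduction is both a sanity check and, read backwards, a way to pin down any residually ambiguous incidence.

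The fat-point incidences I would handle entirely inside $\QGr(\wtQ)$, through the equivalence $\QGr\!\big(\wtQ/(\Theta,\Theta')\big)\simeq\qcoh(E/E[2])$ of \S\ref{ssect.base.locus} and its attendant direct-image functor. For part (3), the fat point $F_p=\big(\pi_*\cO_{p+E[2]}\big)_{\ge 0}$ attached to $p\in E/E[2]$ should be a quotient of the secant line module for $\overline{q,q+\xi}$ precisely when the two intersection points $q,q+\xi$ reduce to $p$ in $E/E[2]$; since $q$ and $q+\xi$ automatically lie in a single $E[2]$-coset, this says the coset of $q$ equals $p$, and pairing the four points of that coset by translation by $\xi$ produces exactly the two secants of type $\xi$ through $F_p$. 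For part (4) I would, by the $H$-symmetry, reduce to $C_0$, whose line modules—described in \Cref{se.commuting_conic}—are the $\wtQ/(\wtQ y+\wtQ y')$ with $[y,y']=0$, and then push each such module into $\qcoh(E/E[2])$: the commuting relation ties $y,y'$ to the central pair $\Theta,\Theta'$, so the central support of every $C_0$-line module should collapse to a single point of $E/E[2]$, namely the advertised common fat point $F_0$.

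The hard part will be part (4): that the \emph{entire} ruling $C_i$ passes through one fat point is invisible in $\PP(\wtQ_1^*)$, since the lines of a ruling on a smooth quadric are mutually skew, so the phenomenon is purely categorical. The technical crux is to control the reduction of \emph{all} $C_i$-line modules modulo $(\Theta,\Theta')$ simultaneously and to identify each image with the skyscraper $\cO_{F_i}$; once $F_i$ is located this becomes a uniform computation, but producing a single surjection $\pi^*L\twoheadrightarrow\pi^*F_i$ valid along the whole conic, rather than line by line, is exactly where the argument needs the coaction structure and the precise form of the quadrics in \Cref{table.conics.quadrics}.
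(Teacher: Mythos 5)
Your overall skeleton — verify each incidence on one representative and transport by the $\G$- and quantum symmetries — is indeed the paper's strategy, and your treatment of the conic/point incidences matches \Cref{cor.conics} and \Cref{cor.S123}: the points of $\fP-(\fP_\infty\cup\fP_j)$ are checked to lie on the quadric $Q_j$, which is the disjoint union of the lines of the ruling $C_j$, and the autoequivalences $a_\ve$ transport everything from $C_0$. But two of your load-bearing steps are off. First, there is no ``non-commutative refinement'' of the containment condition: since $L=\wtQ/\wtQ W$ and $M_p=\wtQ/\wtQ p^\perp$ are cyclic, a non-zero map $L\to M_p$ exists if and only if $W\subseteq p^\perp$, i.e.\ if and only if $p\in\ell$; the finiteness of the count comes from the line scheme being a curve, not from any twist compatibility. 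Second, and more seriously, your plan gives no way to execute the elliptic half of (1)--(2): deciding which secants $\overline{q,q+\xi}$ pass through the sixteen ordinary points would require explicit knowledge of how $\fP_j$ sits relative to the secant scrolls, which neither you nor the paper computes. The paper instead argues equivariantly: $P\in\fP_j$ is realized as $(\wtV(\tau+\xi_j)\otimes k^2)^\G$ (\Cref{cor.equi_fat}); \Cref{prop.line.fat} gives, for each $\xi$, a unique $x=q+E[2]$ (with $2q=\tau+\xi_j+\xi$) such that $\Hom_{\Gr(S')}(M_{x,\xi},F)\ne 0$; and \Cref{pr.few_lines} shows that only one of the two equivariant structures on $M_{x,\xi}$ makes that map equivariant — this is exactly where ``one line per elliptic family'' comes from, while the regular-representation eigenspace matching yields ``exactly two'' for special points (\Cref{th.pts_on_elliptic_lines}). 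Your criterion in (3) is the correct statement, but the actual content is again descent: the unique equivariant structure on $N=\bigoplus_{\omega}M_{p+\omega}\otimes k^2$ (\Cref{le.equiv_fat}, via cohomology vanishing) and the fact that \emph{both} equivariant structures on $M_{x,\xi}$ admit equivariant maps to $N$ (\Cref{pr.fat_pts_on_elliptic_lines}).

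The genuine gap is (4), as you suspected, and your proposed mechanism cannot close it. For \emph{any} line module $L$, every map in $\QGr(\wtQ)$ to a fat point factors through $L/(\Omega_0,\Omega_1)L$, so ``the central support collapses to a single point of $E/E[2]$'' is automatic line by line and distinguishes nothing; the content of (4) is that this point is the \emph{same} for all lines of $C_0$, and that it is the specific point $\tau'+E[2]$ with $2\tau'=-\tau$ (\Cref{prop.abc.a.b.c}) — nothing in your outline would discover $\tau'$, and your phrase ``once $F_i$ is located'' concedes exactly the step that constitutes the proof. The paper locates it by linear algebra at the fat point rather than by pushing line modules down to $\wtB$: for $p\notin E[4]$ the action of $\wtQ_1$ on $(M_p\otimes k^2)_0$ is a linear isomorphism $f_p:\wtQ_1\to M_2(k)$ (\Cref{lem.commuting.subspaces}), and for $p=\tau'$ one computes $\det f_{\tau'}\big(\textstyle\sum\l_jy_j\big)=\a\b\c\l_0^2+\a\l_1^2+\b\l_2^2+\c\l_3^2$, which is precisely the quadric whose ruling consists of the commuting subspaces (\Cref{prop.comm.subspace.ruling}); hence $f_{\tau'}$ matches commuting subspaces with simple left ideals of $M_2(k)$, so every commuting line passes through $\tau'+E[2]$, only commuting lines pass through it, and no other fat point lies on a commuting line (\Cref{prop.commuting.subspaces}). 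The transport to $C_1,C_2,C_3$ then works as you anticipated, using that the $E[4]$-autoequivalences preserve $\Gr(\wtB)$ and act on $E/E[2]$ by translation (\Cref{pr.E4_acts_B}, \Cref{pr.E4_act_fat}, \Cref{th.fat_conic}).
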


To state (4) more precisely we must introduce the  point
$$
\tau':=(abc,a,b,c) \in E
$$
which satisfies $2\tau'=-\tau$. In the previous sentence, $a$, $b$, $c$, are related to the 
structure constants for $S$ and $A$ (see \S\ref{sect.defn.S}). If $k=\CC$, then $a$, $b$, $c$, are the values at $\tau$ of certain meromorphic functions on $E$ and, as a consequence of Jacobi's theta function identity $\theta_{00}(\tau)^4+\theta_{11}(\tau)^4=\theta_{01}(\tau)^4+\theta_{10}(\tau)^4$, they satisfy the identity $a^2b^2c^2+a^2+b^2+c^2=0$. Now we can state (4) more precisely:
\begin{enumerate}
  \item[(4)] 
{\it 
there are 4-torsion points $o=\ve_0,\ve_1,\ve_2,\ve_3$, such that every line  in $C_i$ passes through the 
fat point $\tau'+\ve_i$ of multiplicity two  and through no other fat point of multiplicity two in $E/E[2]$. 
In coordinates, $\tau'+\ve_1=(a,-ia,i,1)$, $\tau'+\ve_2=(b,1,-ib,i)$, $\tau'+\ve_3=(c,i,1,-ic)$. 
}
\end{enumerate}

Our understanding of \Cref{main.thm.pts.lines}(4) is informed by the fact that there are exactly four singular quadrics in 
$\PP(S_1^*)$ that contain $E$ and the vertex of each singular quadric lies on every line on that quadric.
One can embed $E/E[2]$ as a quartic curve in $\PP^3$  in such a way that 4 points of $E/E[2]$ are coplanar if and only if their sum is $o+E[2]$. There is a pencil of quadrics that contain $E/E[2]$. The surfaces in this pencil may be 
labeled $Q_z$, $z \in E/E[2]$, in such a way that $Q_z=Q_{-z}$ and the secant line $\overline{pq}$ through $p,q \in 
E/E[2]$ lies on $Q_z$ if and only if $p+q=\pm z$. 
There are exactly 4 singular quadrics containing $E/E[2]$, namely $\{Q_{\ve+E[2]} \; | \; \ve \in E[4]/E[2]\}$. 
Thus, if $\ve \in E[4]/E[2]$, then all lines in the pencil of secant lines $\{ \overline{pq} \; | \; p+q=\ve+E[2]\}$ pass through the vertex of $Q_{\ve}$.
We think of \Cref{main.thm.pts.lines}(4) as saying that $\tau'+\ve_i+E[2]$ behaves like the vertex of one 
of these singular quadrics in so far as all the lines (in $\Projnc(\wtQ)$) parametrized by $C_i$ pass through $\tau'+\ve_i+E[2]$.

We use the adjectives {\it elliptic} and {\it conic} for lines and/or line modules parametrized by the $E_j$'s and the $C_i$'s,
respectively.

\subsection{New results on the Sklyanin algebra}
Although $A$ is our primary interest in this paper we need some new results on the 4-dimensional Sklyanin algebras.
These results are in \S\ref{sect.4dim.Skl.alg}.

We determine four explicit two-dimensional irreducible representations $V(\tau+\xi)$, $\xi \in E[2]$,
of $S$ whose ``homogenizations'', $\wtV(\tau+\xi)$, are fat points of multiplicity 2; i.e., $\dim_k\big(\wtV(\tau+\xi)_n\big)=2$
when $n \ge 0$ and is zero otherwise, and every proper quotient of $\wtV(\tau+\xi)$ has finite dimension. Previously,
graded modules with these properties were constructed in \cite{SSJ93} only when $k=\CC$ and then in terms of 
theta functions.

We use the modules  $\wtV(\tau+\xi)$ to provide a better, more abstract, description of the point modules for $A$.
As a module over $S \otimes M_2(k)$, each $\wtV(\tau+\xi) \otimes k^2$ has four different $\G$-equivariant structures
and the $\G$-invariant subspaces give four different point modules for $A$. 
This equivariant construction of the point modules for $A$ enables us to determine which elliptic line 
modules map onto them. The elliptic line modules for $A$ are obtained in \cite[\S11]{CS15}  by a similar equivariant 
construction.

In \S\ref{sect.4dim.Skl.alg}, and in later sections, we exploit the action of the Heisenberg group of order $4^3$ as automorphisms of $S$.

\subsection{Non-commutative quadrics in $\Projnc(\wtQ)$}

If $z$ is a non-zero degree-two homogeneous central element in $\wtQ$ it is reasonable to view $\QGr(\wtQ/(z))$ as the category of ``quasi-coherent sheaves'' on a ``non-commutative quadric''  $\Projnc(\wtQ/(z)) \subseteq \Projnc(A)$. 
A justification for this view can be found  in sections 10 and 11 of Stafford and Van den Bergh's survey \cite{StVdB01}, and 
in \cite{SVdB-NCQ}. 

The best understood non-commutative surfaces
are those that are analogous to $\PP^2$ and $\PP^1 \times \PP^1$.  Non-commutative quadrics, meaning non-commutative analogues of $\PP^1 \times \PP^1$,
are also treated in some detail in \cite{SVdB-NCQ} and \cite{VdB11-NCQ}.   
We don't understand how $\Projnc(\wtQ/(z))$ fits into the taxonomy of non-commutative surfaces. 
For example, we do not know if  $\Projnc(\wtQ/(z))$ is birationally isomorphic to any previously known non-commutative 
surfaces.

 \subsection*{Acknowledgement}
We are grateful to Michaela Vancliff for several useful conversations, for pointing out some errors in earlier
drafts, and for sharing some calculations with us. We are also indebted to Derek Tomlin for sharing some calculations
with us and for writing an appendix to this paper which contains the result of those calculations. His computer calculations confirm some of our pencil and paper calculations and some abstract results such as \Cref{cor.ell_line=conic_line}. We are particularly grateful to the referee for reading the paper so carefully and
making specific corrections and suggestions for improvement.
 
 \section{The 4-dimensional Sklyanin algebras $S(E,\tau)$}
 \label{sect.4dim.Skl.alg}

\subsection{Definition of the Sklyanin algebra}
\label{sect.defn.S}
Always, $\a_1,\a_2,\a_3$ are fixed elements in $k- \{0,\pm 1\}$ such that $\a_1+\a_2+\a_3+\a_1\a_2\a_3=0$. 
We often write $\a=\a_1$, $\b=\a_2$, and $\c = \a_3$. 
As in \cite[\S6]{CS15}, we fix $a,b,c, i \in k$ such that $a^2=\a$, $b^2=\b$, $c^2=\c$, and $i^2=-1$.

The 4-dimensional Sklyanin algebra, $S=S(\a_1,\a_2,\a_3)$, is the quotient of the free algebra
$k \langle x_0,x_1,x_2,x_3\rangle$ by the six relations
\begin{equation}
\label{S-relns}
x_0x_i-x_ix_0 = \a_i(x_jx_k+x_kx_j), \qquad \quad x_0x_i+x_ix_0 =  x_jx_k-x_kx_j, 
\end{equation}
where $(i,j,k)$ runs over the cyclic permutations of $(1,2,3)$. 

\subsubsection{Notation}
If $r$ and $s$ are elements in a ring $R$ we write $[r,s]$ for their commutator and $\{r,s\}$ for their anti-commutator. With this notation, the relations for $S$ are
$$
[x_0,x_i]-\a_i\{x_j,x_k\} \; = \; \{x_0,x_i\}-[x_j,x_k]  \;=\; 0.
$$

\subsection{The curve $E \subseteq \PP^3=\PP(S_1^*)$}
\label{ssect.E.eqns}
Sklyanin originally defined $S$ over $\CC$ and did so by defining 
$\a_1,\a_2,\a_3$ to be values of certain meromorphic functions on an elliptic curve, $E$, evaluated at a particular point $\tau \in E$. Later, in \cite[\S2.4]{SS92}, it was shown that there is a ``natural'' copy of 
$E$ in $\PP(S_1^*)$ embedded as the quartic curve that is 
the intersection of any two of the quadrics
 \begin{equation}
 \label{eqns.for.E}
 \begin{cases}
  x_0^2+x_1^2+x_2^2+x_3^2 \; = \; 0,     
  \\
  x_0^2 - \b\c x_1^2 - \c x_2^2+\b x_3^2  \; = \; 0,
\\
x_0^2+\c x_1^2-\a\c x_2^2-\a x_3^2  \; = \; 0,
\\
x_0^2-\b x_1^2+\a x_2^2-\a\b x_3^2  \; = \; 0.
  \end{cases}
  \end{equation}
These formulas are illuminated by the calculation in  \Cref{quadrics.Q.tau} and by \Cref{prop.Q.tau}.
  
Thus, $E$ is the base locus of a pencil of quadrics. Exactly four of these quadrics are singular. 
We label the vertices of the singular quadrics $e_0,\ldots,e_3$ in such a way that $e_j$ is the point where $\{x_0,\ldots,x_3\}-\{x_j\}$ vanish. For example, $e_1=(0,1,0,0)$.

Let $p,q \in E$. We write $\overline{pq}$ for the line in $\PP(S_1^*)$ whose scheme-theoretic intersection with 
$E$ is the divisor $(p)+(q)$.

%By \cite[\S6]{LS93} and \cite[\S7.2]{SVdB13}, 
\begin{lemma}
The quadrics containing $E$ may be labelled $Q(z)$, $z \in E$, in such a way that 
\begin{enumerate}
  \item 
  $Q(z)=Q(-z)$;
  \item 
  $Q(z)$ is singular if and only if $z \in E[2]$;
  \item 
  the lines $\overline{pq}$, $p+q=z$, provide a ruling on $Q(z)$. 
\end{enumerate} 
\end{lemma}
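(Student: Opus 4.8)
The statement concerns the pencil of quadrics through the quartic elliptic curve $E \subseteq \PP^3 = \PP(S_1^*)$, asserting that its members can be indexed by points $z \in E$ so that $Q(z) = Q(-z)$, the singular members correspond exactly to $z \in E[2]$, and the secant lines $\overline{pq}$ with $p+q = z$ rule $Q(z)$. The natural approach is to use the classical geometry of a quartic elliptic curve in $\PP^3$, which is the base locus of a pencil of quadrics, together with the group law on $E$ normalized so that four points are coplanar iff their sum is $o$. The key structural fact I would invoke is that a smooth quadric $Q$ in $\PP^3$ is isomorphic to $\PP^1 \times \PP^1$, carrying two rulings, and that $E \subseteq Q$ is a curve of bidegree $(2,2)$, so each ruling cuts $E$ in a degree-2 divisor, i.e., a secant line of the quadric meets $E$ in two points.

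First I would fix the pencil, spanned by two of the explicit quadrics in \Cref{eqns.for.E}, and parametrize its members. The crucial step is the indexing: I would define a map from the pencil $\cong \PP^1$ to $E/\{\pm 1\}$ by sending a quadric $Q$ to the divisor class $p+q$ of a secant line $\overline{pq}$ lying on $Q$, and show this is well-defined (independent of the chosen ruling line) and bijective. Concretely, for a smooth $Q$ in the pencil, one ruling consists of secant lines $\overline{pq}$ of $E$; as $\overline{pq}$ varies over that ruling, the sum $p+q \in E$ is constant because the two points of $\overline{pq} \cap E$ move in a linear system (a fiber of one projection $Q \cong \PP^1\times\PP^1 \to \PP^1$), and linearly equivalent degree-2 divisors on $E$ have the same sum under the group law. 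Calling that common value $z$ (defined up to sign by the choice of ruling, since the other ruling gives $-z$), I set $Q = Q(z)$; item (1), $Q(z) = Q(-z)$, then records exactly that the two rulings give opposite sums. This simultaneously establishes (3).

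For item (2), I would use the standard fact that the singular members of a pencil of quadrics in $\PP^3$ are cones (rank $3$), of which there are exactly four (the roots of the degree-4 discriminant of the pencil), matching the four vertices $e_0,\dots,e_3$ already identified. On a quadric cone the two rulings coincide into a single family of lines through the vertex, which forces $z = -z$, i.e., $z \in E[2]$; conversely the four 2-torsion points $z \in E[2]$ are the ramification of the 2:1 map $E \to E/\{\pm1\}$, and there are exactly four of them, so they match the four singular quadrics bijectively. The precise matching of $z \in E[2]$ with the vertices $e_j$ should follow by evaluating the explicit quadrics of \Cref{eqns.for.E} at the points $e_j = (0,\ldots,1,\ldots,0)$ and checking which pencil member is singular there.

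The main obstacle I expect is making the indexing map $z \mapsto Q(z)$ genuinely well-defined and bijective rather than merely set-theoretically plausible: one must verify that the assignment of $p+q$ to a secant line is constant along a ruling (a statement about linear equivalence of divisors cut by a ruling, hence equal sums in the group law) and that distinct values of $z \in E/\{\pm1\}$ yield distinct quadrics. I would handle the first point via the theory of linear systems on $E$ and the second by a degree/dimension count: the pencil is a $\PP^1$, $E/\{\pm1\} \cong \PP^1$, and the correspondence is an algebraic map of degree one between them, hence an isomorphism. The explicit equations in \Cref{eqns.for.E} provide a concrete cross-check at the four singular members, which pins down the labeling and confirms (2).
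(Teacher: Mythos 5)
Your proposal is correct, but it reaches the lemma by a genuinely different route than the paper. The paper's proof is purely synthetic: it defines $Q(z)$ outright as the union of all secant lines $\overline{pq}$ with $p+q=z$, and then everything follows from a single tool, the normalization of the group law (four points of $E$ are coplanar if and only if they sum to $o$), applied to pairs of intersecting lines. Constancy of $p+q$ along a ruling comes from cutting two lines of one ruling against a fixed transversal line of the other ruling; the opposite ruling gets sum $-z$ by the same coplanarity argument; and in the singular case the plane meeting a cone in a doubled line gives $2(p+q)=o$ at once, so no discriminant count is needed. You instead obtain constancy along a ruling from Abel's theorem (the fibers of $Q\cong\PP^1\times\PP^1\to\PP^1$ cut linearly equivalent degree-$2$ divisors on $E$, which therefore have equal sums), and you obtain part (2) from the classical fact that the pencil has exactly four singular members, all rank-$3$ cones, matched bijectively with the four points of $E[2]$. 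Both routes work; yours is the standard linear-systems picture and generalizes well, while the paper's is self-contained given the coplanarity normalization. Two steps that you assert rather than prove should be made explicit. First, that the second ruling gives $-z$ does \emph{not} follow from linear equivalence alone; the quick proof is exactly the paper's argument --- any line of one ruling meets any line of the other, so the four points they cut out on $E$ are coplanar and sum to $o$ --- or, equivalently, the fact that the two rulings add up to the hyperplane class on $Q$. Second, your degree-one claim for the map from the pencil to $E/\{\pm 1\}$ needs the observation that the label determines the quadric: for instance, by your item (3) the quadric labelled $z$ is the union of the secants with sum $z$, or one can note that two distinct members of the pencil meet only along the base locus $E$ and so cannot share a one-parameter family of secant lines. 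With those two points supplied, your counting argument for (2) --- four cones, four distinct $2$-torsion labels --- closes correctly.
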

\begin{proof}
Given $z \in E$, define $Q(z)$ to be the union of the secant lines $\overline{p,z-p}$ as $p$ ranges over $E$. 

As remarked in \S\ref{sect.1.3}, the group law on $E$ is chosen so it has the following property: 
if $p,q,r,s \in E$, then $p+q+r+s=o$ if and only if there is a plane $H \subseteq \PP^3$
such that the scheme-theoretic intersection $H \cap E$ is the divisor $(p)+(q)+(r)+(s)$. 

Let $Q$ be a smooth quadric containing $E$. Let $\ell$, $\ell'$, and $\ell''$ be
lines on $Q$ such that $\ell'$ and $\ell''$ belong to the same ruling and $\ell$ belongs to the other ruling. Let $p,p',p'',q,q',q'' \in E$ be such that
the scheme-theoretic intersections are
$\ell \cap E=(p)+(q)$, $\ell' \cap E=(p')+(q')$, and $\ell'' \cap E=(p'')+(q'')$. Since $\ell \cap \ell' \ne \varnothing$, there is a plane $H$ containing $\ell \cup \ell'$;
since $H \cap E=(p)+(q)+(p')+(q')$, $p+q+p'+q'=o$. Similarly, $p+q+p''+q''=o$. Thus, if $z=p+q$, then $Q=Q(z)=Q(-z)$. Since $Q$ has two different rulings on it, $z \ne -z$, i.e., $z \notin E[2]$. 

Now let $Q$ be a singular quadric containing $E$. Let $\ell=\overline{pq}$ and $\ell'=\overline{p'q'}$ be different lines on $Q$. Since  $\ell$ and $\ell'$ meet at 
the vertex of $Q$,  $\ell$ and $\ell'$ lie on a common plane that meets $E$ at $(p)+(q)+(p')+(q')$. Hence $p+q+p'+q'=o$.  Since $Q$ is singular, there is a plane 
that meets $Q$ in $\ell$ with multiplicity two. Hence  $p+q+p+q=o$. Thus, if $z=p+q$, then $z \in E[2]$ and $p'+q'=z=-z$. Thus, $Q=Q(z)=Q(-z)$. 

The result follows.
\end{proof}

We will label the elements in $E[2]=\{o=\xi_0,\xi_1,\xi_2,\xi_3\}$ are labelled so that $e_j$ is the vertex of the quadric $Q(\xi_j)$. 

In \Cref{prop.Q.tau}, we will show that the four quadrics cut out by the equations in (\ref{eqns.for.E}) are $Q(\tau+\xi)$, 
$\xi \in E[2]$.

 \subsection{Central elements in $S$}
\label{sect.ann.pt.mods}
There is a non-commutative analogue of the fact that the image of $E$ in $\PP(S_1^*)$ is the 
intersection of a pencil of quadrics: there is a closed immersion 
$E \to \Projnc(S)$ such that the image of $E$ is the ``intersection of 
a pencil of non-commutative quadrics''.  (See \S\ref{ssect.base.locus}.)

\begin{proposition}
\label{prop.center.S}
{\rm (cf.,  \cite[Cor. 3.9]{SS92}, \cite[p.39]{LS93})}
The center of $S$ contains $\Omega= -x_0^2 +  x_1^2 + x_2^2 + x_3^2$ and  the elements 
\begin{equation}
\label{central.elts.S}
\begin{cases}
\Omega_0  \; := \;  (1+\c)x_1^2\;\,\;+\;(1+\a\c)x_2^2\,\;+\; (1-\a)x_3^2,  
\\
\Omega_1  \; := \; (1+\b\c)x_0^2\;-\;(\c+\b\c)x_2^2 \;+\;   (\b-\b\c)x_3^2,
\\
\Omega_2  \; := \; (1+\a\c)x_0^2\;+\; (\c-\a\c)x_1^2 \;-\; (\a+\a\c)x_3^2,
\\
\Omega_3  \; := \; (1+\a\b)x_0^2\;-\;(\b+\a\b)x_1^2\;+\;(\a-\a\b)x_2^2.
\end{cases}
\end{equation} 
These elements satisfy the relations  $2\a\b\c \Omega_0+\a \Omega_1 + \b\Omega_2+\c\Omega_3=0$ and
$$
\a_j(1+\a_i)(1+\a_j)\Omega \; = \; (1+\a_i\a_j)\Omega_i-(1+\a_j)\Omega_k
$$
if $(i,j,k)$ is a cyclic permutation of $(1,2,3)$. 
\end{proposition}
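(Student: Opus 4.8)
The plan is to treat centrality and the linear relations separately. Since $S$ is generated by $S_1$, an element $z\in S_2$ is central precisely when $[z,x_m]=0$ for every generator $x_m$. Each of $\Omega,\Omega_0,\Omega_1,\Omega_2,\Omega_3$ is a $k$-linear combination of the squares $x_0^2,\dots,x_3^2$, so the whole centrality question comes down to the commutators $[x_l^2,x_m]$. Here the convenient identity is $[x_l^2,x_m]=\{x_l,[x_l,x_m]\}$, and the individual commutators are immediate from \eqref{S-relns}: for a cyclic permutation $(i,j,k)$ of $(1,2,3)$ one reads off $[x_0,x_i]=\a_i\{x_j,x_k\}$ and $[x_i,x_j]=\{x_0,x_k\}$. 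Substituting these turns each $[z,x_m]$ into a sum of nested anticommutators in $S_3$, and I would verify it vanishes by rewriting every degree-three monomial in a fixed normal form via \eqref{S-relns} and checking that the coefficients cancel.

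For the linear relations the point is that $x_0^2,x_1^2,x_2^2,x_3^2$ are linearly independent in $S_2$ — the Hilbert series gives $\dim_k S_2=10$, and these squares belong to a PBW-type monomial basis — so each asserted identity among $\Omega$ and the $\Omega_i$ is equivalent to the vanishing of the four scalar coefficients of the squares. These are polynomial identities in $\a_1,\a_2,\a_3$ to be checked using only the relation $\a_1+\a_2+\a_3+\a_1\a_2\a_3=0$; I would substitute \eqref{central.elts.S} and collect terms. Checking that the written relations have rank three shows moreover that the five elements span a two-dimensional subspace of $S_2$. This yields a useful shortcut for the centrality step: it suffices to prove directly that two linearly independent members are central — for instance $\Omega$ and $\Omega_0$, which are independent because only $\Omega$ carries an $x_0^2$ term — and centrality of the remaining three then follows by linearity from the relations.

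I would therefore carry out only the two explicit commutator computations, the short one for $\Omega$ and the representative longer one for $\Omega_0$. The defining relations also carry a cyclic symmetry permuting the indices $1,2,3$ together with the structure constants, which permutes $\Omega_1,\Omega_2,\Omega_3$ and further cuts down the distinct cases. The main obstacle I anticipate is purely organizational: in the computation for $\Omega_0$, moving a generator through a square produces several degree-three monomials that must each be reduced to normal form before the cancellation becomes visible, so careful bookkeeping is essential. This is the same type of calculation performed in \cite{SS92} and \cite{LS93}, against which the outcome can be cross-checked.
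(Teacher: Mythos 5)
Your approach is genuinely different from the paper's, and most of its ingredients are sound. The paper performs no commutator computations at all: it quotes \cite[Cor.~3.9]{SS92} for the centrality of $\Omega$ and of $\Omega':=x_1^2+\big(\tfrac{1+\a}{1-\b}\big)x_2^2+\big(\tfrac{1-\a}{1+\c}\big)x_3^2$, observes $(1+\c)\Omega'=\Omega_0$, and then exhibits each $\Omega_j$ as a linear combination of $\Omega$ and $\Omega'$. You replace the citation by two direct commutator verifications — the identity $[x_l^2,x_m]=\{x_l,[x_l,x_m]\}$, your reading of the relations, and the linear independence of $x_0^2,\dots,x_3^2$ in $S_2$ are all correct — and then, like the paper, propagate centrality by linear algebra. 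That buys self-containedness at the cost of one long normal-form computation.

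The gap is in the propagation step, and your own verification method would expose it. The first displayed relation is false as printed: $\Omega_0$ contains no $x_0^2$ term, so the coefficient of $x_0^2$ in $2\a\b\c\Omega_0+\a\Omega_1+\b\Omega_2+\c\Omega_3$ is
$$
\a(1+\b\c)+\b(1+\a\c)+\c(1+\a\b)\;=\;\a+\b+\c+3\a\b\c\;=\;2\a\b\c\;\neq\;0,
$$
using $\a+\b+\c=-\a\b\c$. The relation that actually holds is $2\a\b\c\,\Omega+\a\Omega_1+\b\Omega_2+\c\Omega_3=0$, with $\Omega$ in place of $\Omega_0$. This hurts your argument much more than it hurts the statement: after the correction, \emph{none} of the four displayed relations involves $\Omega_0$, so your seed pair $\{\Omega,\Omega_0\}$ is disconnected from $\Omega_1,\Omega_2,\Omega_3$, and the rank argument you describe cannot transfer centrality to them. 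The rank-three claim you planned to check holds only for the misprinted system; for the true system the relations live entirely inside $\mathrm{span}\{\Omega,\Omega_1,\Omega_2,\Omega_3\}$.

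The repair is small but must be made explicitly. Seed instead with $\{\Omega,\Omega_1\}$: rearranging the (correct) relations for $(i,j,k)=(1,2,3)$ and $(2,3,1)$ gives
$$
(1+\b)\Omega_3=(1+\a\b)\Omega_1-\b(1+\a)(1+\b)\Omega,
\qquad
(1+\b\c)\Omega_2=(1+\c)\Omega_1+\c(1+\b)(1+\c)\Omega,
$$
and $1+\b\neq0$, $1+\b\c\neq0$ (if $\b\c=-1$ the constraint forces $\b+\c=0$, hence $\b^2=1$, which is excluded), so $\Omega_2,\Omega_3$ are central once $\Omega$ and $\Omega_1$ are. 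Then $\Omega_0$ still needs its own link: verify, by the same coefficient matching, the additional identity $(1+\b\c)\Omega_0=(1+\c)(1+\b\c)\Omega+(1+\c)\Omega_1$, which is the paper's $(1+\c)\Omega'=\Omega_0$ in disguise. (Equivalently, keep your pair $\{\Omega,\Omega_0\}$, but then this extra identity is what lets you reach $\Omega_1$, after which the displayed relations give $\Omega_2$ and $\Omega_3$.) Without some such supplementary relation, the proposition's displayed relations alone do not support your reduction of the centrality of all five elements to that of two of them.
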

 \begin{proof}
By  \cite[Cor. 3.9]{SS92}, $\Omega$ and 
$\Omega':=x_1^2+\big(\frac{1+\a}{1-\b}\big)x_2^2+\big(\frac{1-\a}{1+\c}\big)x_3^2$
belong to the center of $S$.
Error-prone computations show that the elements in (\ref{central.elts.S}) are linear combinations of $\Omega$ and 
$\Omega'$ so belong to the center of $S$. The first step in the proof is to observe that $(1+\c)\Omega' = \Omega_0$
because $(1+\a)(1+\c)=(1-\b)(1+\a\c)$. 
\end{proof}

\subsection{Line modules for $S$ and their annihilators}
Let $p,q \in E$. We write $(\overline{pq})^\perp$ for the 2-dimensional subspace of $S_1$ that vanishes on 
$\overline{pq}$.

Let $S^2E:=E \times E/\!\sim$ where $\sim$ is the equivalence relation $(p,q) \sim (q,p)$. 
The closed points in $S^2E$ are in bijection with the effective divisors of degree 2:  $(p)+(q)$ is the image  in $S^2E$ of $(p,q)$.

\begin{theorem}
\cite[Thm. 4.5]{LS93}
The function $S^2E \to \Gr(S)$ that sends $(p)+(q)$ to $S/S(\overline{pq})^\perp$ 
 is a bijection from $S^2E$ to the  set of isomorphism classes of line modules for $S$.
\end{theorem}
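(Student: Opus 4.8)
The plan is to reduce the classification to a question about $2$-dimensional subspaces of $S_1$ and then recognize the relevant subspaces geometrically as the secant lines of $E$. First I would record that any line module $L$ is cyclic, generated in degree $0$, and linear (its defining relations lie in degree $1$), because $S$ is a Koszul Artin--Schelter regular algebra of global dimension $4$; hence $L\cong S/SW$ with $W:=\{w\in S_1:wL_0=0\}$, and since $\dim_k L_1=2$ we have $\dim_k W=2$. The assignment $L\mapsto \ell:=Z(W)\subseteq\PP(S_1^*)$, the line cut out by the two linear forms in $W$ (so $W=\ell^\perp$ and $\ell=\PP(W^\perp)$), is then injective on isomorphism classes, and the theorem becomes the statement that $S/SW$ has Hilbert series $(1-t)^{-2}$ precisely when $\ell$ is a secant line $\overline{pq}$, in which case $W=(\overline{pq})^\perp$.

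For the sufficiency direction --- the crux --- I would exploit the central pencil. Given a divisor $(p)+(q)$, the secant line $\ell=\overline{pq}$ lies on the quadric $Q(p+q)$ by the Lemma above, and the defining quadratic form of $Q(p+q)$ is a member $\Omega_{p+q}\in k\Omega+k\Omega'$ of the central pencil of \Cref{prop.center.S}. The key computational lemma I would establish is the containment $\Omega_{p+q}\in S_1 W=(SW)_2$; geometrically this says that the equation of the quadric through $\ell$ lies in the ideal generated by $W$, and one checks it lifts from the commutative symbol calculation to $S$ because $\Omega_{p+q}$ is central. Granting this, $\Omega_{p+q}$ annihilates $L:=S/SW$, so $L$ is a module over the ``non-commutative quadric'' $S/(\Omega_{p+q})$.

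Next I would compute the Hilbert series by passing to the twisted homogeneous coordinate ring. Choosing $\Omega''$ so that $\{\Omega_{p+q},\Omega''\}$ is a basis of the pencil (hence $(\Omega_{p+q},\Omega'')=(\Omega,\Omega')$ as ideals), I would show that $\Omega''$ acts as a non-zero-divisor on $L$ and that $L/\Omega''L\cong B/B\overline{W}$, where $B=S/(\Omega,\Omega')\cong B(E,\tau,\cL)$ and $\overline{W}$ is the image of $W$. Since $\QGr(B)\simeq\qcoh(E)$ and the two sections in $\overline{W}$ cut out exactly the degree-$2$ divisor $(p)+(q)$ on $E$, the module $B/B\overline{W}$ has Hilbert series $(1+t)/(1-t)$. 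Killing the degree-$2$ regular element $\Omega''$ multiplies $H_L(t)$ by $(1-t^2)$, so
\[
H_L(t)\,(1-t^2)\;=\;\frac{1+t}{1-t},\qquad\text{whence}\qquad H_L(t)\;=\;\frac{1+t}{(1-t)(1-t^2)}\;=\;(1-t)^{-2},
\]
confirming that $S/S(\overline{pq})^\perp$ is a line module.

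Finally I would address injectivity, surjectivity onto all line modules, and necessity. Injectivity is immediate once one notes that $\ell$, and hence the divisor $\ell\cap E=(p)+(q)$, is recovered from $L$ as $Z(\Ann(L_0)_1)=Z(W)$. For necessity I would argue that a line module has $\GKdim=2$, so it cannot be faithful over the $2$-dimensional central subalgebra $k[\Omega,\Omega']$; some member of the pencil annihilates it, forcing $\ell$ onto a quadric $Q(z)$, and the surjections $S/SW\twoheadrightarrow N_p$ onto point modules $N_p$ (which exist exactly when $[p]\in\ell\cap E$) together with the Hilbert-series constraint pin $\ell\cap E$ down to a degree-$2$ divisor, so $z=p+q$ and $\ell=\overline{pq}$. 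The hard part will be the Hilbert-series control in the sufficiency step: proving that $\Omega''$ is genuinely a non-zero-divisor on $L$ and that $L$ does not collapse in higher degrees --- equivalently, that the family of modules $S/S(\overline{pq})^\perp$ over $S^2E$ is flat with the expected linear resolution $0\to S(-2)\to S(-1)^2\to S\to L\to 0$. This is the genuinely homological input; everything else is linear algebra with the relations \cref{S-relns} and the geometry of the pencil of quadrics through $E$.
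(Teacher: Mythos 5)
The paper does not actually prove this statement: it is imported verbatim from \cite[Thm.~4.5]{LS93}, so your proposal has to be measured against the argument there (pieces of which the paper quotes, e.g.\ \Cref{prop.line.killers} and the factorization criterion \cite[Prop.~2.8]{LS93} used in \Cref{se.commuting_conic}). Your skeleton --- reduce to $L\cong S/SW$ with $\dim W=2$, recover $\ell$ from $L$, pass to $B=S/(\Omega,\Omega')\cong B(E,\tau,\cL)$ and count sections on $E$ --- is recognizably the right circle of ideas, and the $B$-side Hilbert series computation is fine. But your two crucial inputs fail. First, the ``key computational lemma'' rests on a false identification of the pencil of quadrics through $E$ with the central pencil. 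A commutative quadratic form is not an element of $S_2$, and the obvious monomial lift is not central: by \Cref{prop.center.S} the central pencil contains $-x_0^2+x_1^2+x_2^2+x_3^2$, whereas $Q(\tau)$ is cut out by $x_0^2+x_1^2+x_2^2+x_3^2$ (\Cref{prop.Q.tau}); likewise the central element $x_0^2+\beta\gamma x_1^2-\gamma x_2^2+\beta x_3^2$ of \Cref{prop.H4.action.on.Z} differs by a sign from the equation $x_0^2-\beta\gamma x_1^2-\gamma x_2^2+\beta x_3^2$ of $Q(\tau+\xi_1)$. The mismatch is not cosmetic: by \Cref{prop.line.killers}, $\Omega(z)$ annihilates $M_{p,q}$ iff $p+q\in\{z,-z-2\tau\}$, while the secant lines lying on $Q(z)$ are those with $p+q\in\{z,-z\}$; so $\Omega(z)=\Omega(-z-2\tau)$ but $Q(z)=Q(-z)$, the correspondence between the two pencils involves a translation by $\tau$, and a single $\Omega(z)$ kills line modules whose lines lie on two in general distinct quadrics, $Q(z)$ and $Q(z+2\tau)$. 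Hence there is no ``lift of the commutative symbol calculation'': determining which member of the central pencil lies in $S_1W$ is itself a substantive theorem (\S 6 of \cite{LS93}), and there it is \emph{deduced from} the classification of line modules, so using it as input makes your argument circular.

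Second, what you defer as ``the hard part'' is in fact the entire content of the sufficiency direction. Even if the first gap were repaired, your central-element-plus-$B$ argument only gives the coefficientwise upper bound $H_L(t)\le (1-t)^{-2}$, because $H_{L/\Omega''L}\ge (1-t^2)H_L(t)$ holds unconditionally; equality, i.e.\ that $\Omega''$ is a non-zero-divisor on $S/SW$, is exactly what must be proved, and at that stage you cannot invoke criticality or the resolution $0\to S(-2)\to S(-1)^2\to S\to L\to 0$, since those are consequences of $L$ being a line module. The reference closes this gap by a different mechanism that bypasses central elements entirely: the factorization criterion \cite[Prop.~2.8]{LS93} (an identity $st=uv$ with $W=kt+kv$ forces the linear resolution and hence the Hilbert series), with the elliptic geometry of $E$ entering through the construction of such factorizations for secant lines. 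Finally, your necessity argument opens with the principle that a GK-dimension $2$ module ``cannot be faithful over the $2$-dimensional central subalgebra $k[\Omega,\Omega']$''; this is not a valid principle ($k[\Omega,\Omega']$ is a faithful GK-dimension $2$ module over itself), so the existence of a member of the pencil annihilating an arbitrary line module --- again a result of \S 6 of \cite{LS93}, proved using criticality of line modules and the $B$-module theory --- needs a genuine argument rather than GK-dimension bookkeeping.
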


If $p,q \in E$, we define $M_{p,q}:= S/S(\overline{pq})^\perp$.

 \subsubsection{The central elements $\Omega(z)$}
Let $z \in E$. We will abuse notation and use the symbol $\Omega(z)$ to denote any non-zero element in $k\Omega_0+k\Omega_1$
 that annihilates all line modules $M_{p,q}$ for which $p+q=z$. 
By \cite{LS93}, $\Omega(z)=\Omega(-z-2\tau)$. Since $\Omega(z)$ is only defined up to a non-zero scalar multiple, the previous sentence has the same meaning as the phrase ``a particular line module is annihilated by $\Omega(z)$ if and only if it is annihilated by $\Omega(-z-2\tau)$''. 
%We will treat $\Omega(z)$ as if it is an element of $S$.
%This won't cause confusion because, for example, the statement ``$\Omega(z)$ annihilates
%an $S$-module $M$'' really means that  ``$k \Omega(z)$ annihilates  $M$''.

\begin{proposition}
  \cite[Lem. 6.2, Cor. 6.6, Prop. 6.4, Prop. 6.8]{LS93}
\label{prop.line.killers}
Let $p,q \in E$. 
\begin{enumerate}
\item{}
The line module $M_{p,q}$ is annihilated by  $\Omega(z)$
if and only if $p+q \, \in \, \{z,-z-2\tau\}$.  
\item{}
The line $\overline{pq}$ lies on one of the singular quadrics that contains $E$
if and only if  $p+q \in E[2]$. 
\end{enumerate}
\end{proposition}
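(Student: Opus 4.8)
The plan is to handle the two parts separately: part~(2) is essentially immediate from the preceding lemma, while part~(1) reduces to a degree-$2$ statement about an ``annihilator map'' $E\to\PP(Z)$, where $Z=k\Omega_0+k\Omega_1$.

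\textbf{Part (2).} A secant line $\overline{pq}$ lies on the pencil quadric $Q(p+q)$ by part~(3) of the preceding lemma, and on no other member of the pencil: a line lying on two distinct quadrics of the pencil would lie on every quadric of the pencil, hence on their common base locus $E$, which is impossible since $E$ is an irreducible quartic curve containing no lines. The singular members of the pencil are exactly the $Q(\xi)$ with $\xi\in E[2]$, again by the lemma. Hence $\overline{pq}$ lies on a singular quadric containing $E$ if and only if $Q(p+q)$ is singular, i.e. if and only if $p+q\in E[2]$.

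\textbf{Part (1), setup.} By \Cref{prop.center.S}, $Z=k\Omega_0+k\Omega_1=k\Omega+k\Omega'$ is the full space of central elements of $S_2$. Since $M_{p,q}=S/S(\overline{pq})^\perp$ is cyclic and $\omega\in Z$ is central, $\omega$ annihilates $M_{p,q}$ if and only if $\omega\cdot\overline 1=0$, i.e. if and only if $\omega$ lies in the degree-$2$ component $S_1(\overline{pq})^\perp$ of the defining left ideal, which has codimension $\dim_k(M_{p,q})_2=3$ in $S_2$. Thus the central annihilators of $M_{p,q}$ form $Z\cap S_1(\overline{pq})^\perp$, and this is at most one-dimensional: were it all of $Z$, then $(\Omega_0,\Omega_1)=(\Omega,\Omega')$ would annihilate $M_{p,q}$, so $M_{p,q}$ would be a $S/(\Omega,\Omega')\cong B(E,\tau,\cL)$-module and hence correspond to a coherent sheaf on $E$; but then $\dim_k(M_{p,q})_n$ would grow like $4rn$ for some positive integer $r$ (the rank), contradicting $\dim_k(M_{p,q})_n=n+1$. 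Together with the existence built into the definition of $\Omega(z)$, this shows $\Omega(z)$ is well defined up to a scalar. The ``if'' direction of~(1) is then immediate: $\Omega(z)$ kills $M_{p,q}$ when $p+q=z$ by definition, and when $p+q=-z-2\tau$ because $\Omega(z)=\Omega(-z-2\tau)$.

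\textbf{Part (1), the core and the main obstacle.} The previous paragraph makes $w\mapsto[\text{annihilator of }M_{p,q}]$ $(p+q=w)$ a well-defined morphism $g\colon E\to\PP(Z)\cong\PP^1$, $w\mapsto[\Omega(w)]$, factoring through the involution $\iota(w):=-w-2\tau$ via the relation $\Omega(z)=\Omega(-z-2\tau)$. The ``only if'' direction asserts precisely that the fibres of $g$ are the $\iota$-orbits $\{w,-w-2\tau\}$, i.e. that the induced map $\overline g\colon E/\langle\iota\rangle\to\PP(Z)$ is injective; since $\iota$ is an involution with the four fixed points $\{w:2w=-2\tau\}$, the quotient $E/\langle\iota\rangle$ is a $\PP^1$, and the assertion is equivalent to $\deg g=2$. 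Applying Riemann--Hurwitz to $g=\overline g\circ\pi_\iota$, where $\pi_\iota\colon E\to E/\langle\iota\rangle$ is the quotient map, already gives $\deg g\ge 2$ from its four ramification points, so everything comes down to the upper bound $\deg g\le 2$. This is the crux, and I do not see how to get it formally; it requires computing $\Omega(w)$ explicitly. The route I would take is to pass to symbols: abelianising $S$ sends $Z$ isomorphically onto the pencil of quadrics through $E$ (the symbols of $\Omega_0,\Omega_1$ are linearly independent quadrics vanishing on $E$), so $\deg g$ equals the degree of $w\mapsto[\text{symbol of }\Omega(w)]$. Writing $\Omega(w)=\lambda(w)\Omega_0+\mu(w)\Omega_1$ in terms of the structure constants $a,b,c$ (over $\CC$ via Jacobi's theta identity) should exhibit this symbol as $Q(w+c)$ with $2c=2\tau$, hence a degree-$2$ family. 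This $\tau$-shift is the genuinely non-commutative input: the annihilating central element $\Omega(w)$ has symbol $Q(w+\tau)$ rather than the quadric $Q(w)$ actually containing $\overline{pq}$, and pinning it down is exactly where the translation automorphism $\tau$ enters. Granting $\deg g=2$, the ``only if'' follows: if $\Omega(z)$ kills $M_{p,q}$ then $g(p+q)=[\Omega(z)]=g(z)$, so injectivity of $\overline g$ forces $p+q\in\{z,\iota z\}=\{z,-z-2\tau\}$.
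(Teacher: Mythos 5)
Your part (2) is fine: it follows, exactly as you say, from the lemma in \S\ref{ssect.E.eqns} labelling the pencil as $Q(z)$, together with the observation that a line lying on two distinct members of a pencil lies on its base locus $E$, which contains no lines. Your setup for part (1) is also correct as far as it goes: the argument that $Z\cap S_1(\overline{pq})^\perp$ is at most one-dimensional (because otherwise $M_{p,q}$ would be a module over $S/(\Omega,\Omega')\cong B(E,\tau,\cL)$, whose $\GKdim$-$2$ modules have Hilbert functions growing like $4rn$, incompatible with $n+1$) is sound. But the proof is not complete, and you say so yourself: the whole content of the ``only if'' direction has been repackaged as the bound $\deg g\le 2$ (equivalently, injectivity of $\bar g$ on $E/\langle\iota\rangle$), and that is exactly the step you defer. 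There are also two smaller unestablished points upstream of it: that $w\mapsto[\Omega(w)]$ is a morphism of varieties rather than merely a map of sets, and that it is non-constant --- without non-constancy Riemann--Hurwitz gives nothing, and in fact the lower bound $\deg g\ge 2$ is not what you need anyway; what you need is non-constancy together with $\deg g\le 2$, which forces $\deg\bar g=1$ and hence injectivity.

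For comparison: the paper does not prove this proposition at all --- it is imported verbatim from Levasseur--Smith \cite{LS93} (Lem.~6.2, Prop.~6.4, Cor.~6.6, Prop.~6.8), and the definition of $\Omega(z)$ together with the relation $\Omega(z)=\Omega(-z-2\tau)$, which your ``if'' direction relies on, is part of the same imported package. The proof in \cite{LS93} is precisely the explicit computation you gesture at: one writes down, for each line module $M_{p,q}$, the specific linear combination of $\Omega$ and $\Omega'$ that annihilates it, with coefficients varying quadratically in the coordinates of $p+q$, and the $2\tau$-shift (your ``genuinely non-commutative input'') falls out of that calculation. So your abstract frame --- one-dimensional central annihilators, the map $g$, the involution $\iota$ --- is a legitimate and arguably cleaner organization of the problem, but as submitted it has a genuine gap at the crux, and closing it requires the very computation that the cited reference performs.
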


\subsection{Point modules for $S$}
If $p \in \PP(S_1^*)$, we write $p^\perp$ for the subspace of $S_1$ that vanishes at $p$.

\begin{theorem}
 \cite{SS92}
If $e_0,\ldots, e_3$ are the vertices of the singular quadrics that contain $E$, then  
$$
\big\{  \hbox{isomorphism classes of point modules for $S$}\big\}  \; = \;  \big\{S/Sp^\perp \; | \;   p \in E\sqcup\{e_0,e_1,e_2,e_3\}\big\}.
$$
\end{theorem}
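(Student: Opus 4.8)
The plan is to follow the Artin--Tate--Van den Bergh method of \emph{multilinearizing} the defining relations, since $S$ is quadratic. Recall that a point module $M=\bigoplus_{n\ge 0}M_n$ with $\dim_k M_n=1$ is determined, after choosing a nonzero vector in each $M_n$, by the sequence of points $p_n\in\PP(S_1^*)$ recording the one-dimensional image of the multiplication $S_1\otimes M_n\to M_{n+1}$; the relations force each consecutive pair $(p_n,p_{n+1})$ to lie on the incidence variety $\Gamma\subseteq\PP(S_1^*)\times\PP(S_1^*)$ cut out by the six relations \eqref{S-relns} read as bilinear forms, and conversely any such sequence reconstructs a point module. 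Thus, if one shows that $\Gamma$ is the graph of an automorphism $\sigma$ of its image $\Gamma_1\subseteq\PP(S_1^*)$ under the first projection, then $p\mapsto S/Sp^\perp$ is a bijection from $\Gamma_1$ onto the isomorphism classes of point modules: the full sequence $p,\sigma(p),\sigma^2(p),\dots$ is uniquely determined by $p$ and the resulting cyclic module is exactly $S/Sp^\perp$. The theorem then amounts to the identity $\Gamma_1=E\sqcup\{e_0,e_1,e_2,e_3\}$.

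Next I would compute $\Gamma$ explicitly. Writing $(u_0,\dots,u_3)$ and $(v_0,\dots,v_3)$ for coordinates on the two factors, each relation in \eqref{S-relns} multilinearizes to a bilinear form; for $(i,j,k)$ a cyclic permutation of $(1,2,3)$ these are
\begin{align*}
u_0v_i-u_iv_0-\a_i(u_jv_k+u_kv_j)&=0,\\
u_0v_i+u_iv_0-(u_jv_k-u_kv_j)&=0.
\end{align*}
Fixing $u$, these six equations are linear in $v$, with a $6\times 4$ coefficient matrix $M(u)$; a nonzero $v$ exists precisely when $\rank M(u)\le 3$, so $\Gamma_1$ is the locus where all maximal minors of $M(u)$ vanish. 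I would reorganize the six equations (by taking sums and differences of the two relations sharing a fixed index $i$) into a more symmetric system and extract the scheme-theoretic vanishing locus of the relevant determinant.

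Then I would identify this locus. For a generic point the computation should reproduce the pencil of quadrics \eqref{eqns.for.E}, so that the curve component of $\Gamma_1$ is exactly $E$, and on $E$ the second projection $\sigma$ is a translation automorphism (this is where the datum $\tau$ enters); here I can invoke the already-established description of $E$ as the base locus of that pencil. The four remaining, isolated solutions are found by inspection: one checks directly from the displayed bilinear equations that $(u,v)=(e_j,e_j)$ is a solution and that it is the unique $v$ over $u=e_j$, so each vertex $e_j$ of a singular quadric is a fixed point of $\sigma$ lying off $E$ (for instance $e_1=(0,1,0,0)$ fails the first equation of \eqref{eqns.for.E}). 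This yields $\Gamma_1=E\sqcup\{e_0,\dots,e_3\}$ together with the four ``isolated'' point modules $S/Se_j^\perp$.

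The main obstacle is twofold. First, the minor/determinant computation is exactly the ``error-prone'' calculation underlying \cite{SS92}, and one must carry it out carefully enough to see both that $E$ appears with the correct reduced quartic structure and that no spurious components survive. Second, and more essential, one must verify the \emph{graph-of-an-automorphism} property: that both projections $\Gamma\to\Gamma_1$ are isomorphisms of schemes. This is what guarantees that each $p\in\Gamma_1$ extends to a \emph{unique} infinite admissible sequence, and hence that $S/Sp^\perp$ genuinely has Hilbert series $(1-t)^{-1}$ rather than truncating or branching; it is the step where the regularity and good homological behaviour of $S$ are really used, and it is what makes the clean bijection in the statement hold.
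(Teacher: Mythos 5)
This theorem is stated in the paper only as a citation to \cite{SS92}, and your plan is essentially the proof given there: multilinearize the six quadratic relations to get the incidence scheme $\Gamma\subseteq\PP(S_1^*)\times\PP(S_1^*)$, identify the rank-$\le 3$ locus of the resulting $6\times 4$ matrix as $E\sqcup\{e_0,\ldots,e_3\}$, and verify that $\Gamma$ is (scheme-theoretically) the graph of an automorphism — translation by $\tau$ on $E$, the identity on the four vertices — so that $p\mapsto S/Sp^\perp$ bijects this locus with the point modules. Your spot-checks (each $(e_j,e_j)$ solves the bilinear system, is the unique solution over $e_j$, and $e_j\notin E$) are correct, and the two obstacles you flag are exactly the computations carried out in \cite{SS92}.
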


If $p \in E\sqcup\{e_0,e_1,e_2,e_3\}$, we define $M_p:=S/Sp^\perp$.

 \begin{theorem}
  \cite[Thms. 5.5, 5.7]{LS93}
  Let $p,q \in E$. 
  \begin{enumerate}
  \item 
  There  is an exact sequence $0 \to M_{p+\tau,q-\tau}(-1) \to M_{p,q} \to M_{p} \to 0$. 
  \item 
If $p+q=\xi_j$, then there is an exact sequence $0 \to M_{p-\tau,q-\tau}(-1) \to M_{p,q} \to M_{e_j} \to 0$. 
\item{}
$M_{e_j}$ is annihilated by the central element $\Omega_j=\Omega(\xi_j)$ in (\ref{central.elts.S}).
\end{enumerate}
\end{theorem}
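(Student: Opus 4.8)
The plan is to treat the three parts separately, using for (1) and (2) the same architecture: a surjection of the line module onto a point module whose kernel turns out to be a shifted line module. The surjections come from the fact that the target point lies on $\overline{pq}$. In (1), $p\in\overline{pq}$ gives $(\overline{pq})^\perp\subseteq p^\perp$ and hence a surjection $M_{p,q}=S/S(\overline{pq})^\perp\to S/Sp^\perp=M_p$ that is an isomorphism in degree $0$. In (2), the hypothesis $p+q=\xi_j\in E[2]$ places $\overline{pq}$ on the singular quadric $Q(\xi_j)$ by \Cref{prop.line.killers}(2); as every line on a quadric cone passes through its vertex and $e_j$ is by definition the vertex of $Q(\xi_j)$, we get $e_j\in\overline{pq}$, so $(\overline{pq})^\perp\subseteq e_j^\perp$ and a surjection $M_{p,q}\to M_{e_j}$. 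In both cases the Hilbert series computation $(1-t)^{-2}-(1-t)^{-1}=t(1-t)^{-2}$ identifies the kernel $K$ as having the Hilbert series of a line module shifted by one.

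I would then show $K$ is a shifted line module. The cleanest route is homological: lifting the surjection to a map of the minimal (Koszul) resolutions of $M_{p,q}$ and of the point module, and passing to the mapping cone, exhibits $K$ as generated in degree $1$ with a line-module resolution shifted by one. Equivalently, $K$ is generated in degree $1$ and $K(1)$ has Hilbert series $(1-t)^{-2}$, so $K(1)$ is a line module and hence $K(1)\cong M_{r,s}$ for a unique divisor $(r)+(s)$ by the classification of line modules. Its sum is already constrained: $K(1)\subseteq M_{p,q}(1)$ inherits annihilation by $\Omega(p+q)$, so \Cref{prop.line.killers}(1) forces $r+s\in\{p+q,\,-(p+q)-2\tau\}$. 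The asserted kernels realize these two possibilities: $(p+\tau)+(q-\tau)=p+q$ in (1), and $(p-\tau)+(q-\tau)=\xi_j-2\tau=-(p+q)-2\tau$ in (2), the latter two values being distinct since $\tau$ is not $2$-torsion.

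The main obstacle is the remaining step, determining $r$ and $s$ individually, and it is precisely what produces the asymmetric shifts ($p\mapsto p+\tau$ in (1) but $p\mapsto p-\tau$ in (2)). I would compute $K_1$ explicitly (in (1) it is the one-dimensional space of linear forms on $\overline{pq}$ vanishing at $p$) and then analyse the multiplication map $S_1\otimes K_1\to K_2\subseteq (M_{p,q})_2$, whose kernel is the relation space $(\overline{rs})^\perp$. Identifying it with $(\overline{(p+\tau)(q-\tau)})^\perp$, resp.\ $(\overline{(p-\tau)(q-\tau)})^\perp$, uses the defining relations \Cref{S-relns} together with the point-module truncation shift $(M_r)_{\ge 1}(1)\cong M_{r\pm\tau}$ and the geometry of $E\subseteq\PP^3$; this is where $\tau$ genuinely enters and where the sign of the shift is dictated by whether the quotient point lies on $E$ (case (1)) or is a cone vertex off $E$ (case (2)).

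Finally, part (3) is a direct computation. By \Cref{central.elts.S} each $\Omega_j$ is a linear combination of the squares $x_i^2$, $i\ne j$, i.e.\ of the squares of the three forms spanning $e_j^\perp$; hence $\Omega_j\in S_1e_j^\perp\subseteq Se_j^\perp$, so $\Omega_j\cdot 1=0$ in $M_{e_j}=S/Se_j^\perp$, and centrality of $\Omega_j$ then gives $\Omega_j M_{e_j}=0$. To see $\Omega_j$ is a scalar multiple of $\Omega(\xi_j)$, note that both lie in the two-dimensional central space $k\Omega_0+k\Omega_1$ (the relations of \Cref{prop.center.S}), that $\Omega(\xi_j)$ annihilates $M_{e_j}$ because the surjection of (2) gives $\Ann(M_{p,q})\subseteq\Ann(M_{e_j})$, and that $\Omega$ itself does not annihilate $M_{e_j}$ (its degree-$2$ image is nonzero); thus the elements of $k\Omega_0+k\Omega_1$ killing $M_{e_j}$ form a one-dimensional space containing both $\Omega_j$ and $\Omega(\xi_j)$, which are therefore proportional.
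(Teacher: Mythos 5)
First, a point of reference: the paper does not prove this theorem at all --- it is quoted from \cite{LS93} (Theorems 5.5 and 5.7) --- so your proposal has to be judged as a re-proof of the Levasseur--Smith result rather than against any internal argument. Much of your outline is sound. The two surjections exist for exactly the reasons you give ($p\in\overline{pq}$ in (1); in (2), $\overline{pq}$ lies on the singular quadric $Q(\xi_j)$ by \Cref{prop.line.killers}(2), hence passes through its vertex $e_j$). Your homological step does show the kernel $K$ is cyclic in degree $1$: the long exact sequence for $\mathrm{Tor}^S(k,-)$ exhibits $K\otimes_S k$ as a quotient of $\mathrm{Tor}_1^S(k,M_{\mathrm{pt}})$, which is concentrated in degree $1$ because the defining ideal of a point module is generated in degree $1$; so $K(1)$ is a line module, equals some $M_{r,s}$ by the classification, and the central element $\Omega(p+q)$ forces $r+s\in\{p+q,\,-(p+q)-2\tau\}$. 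Part (3) is essentially correct as well, except that your parenthetical claim that $\Omega$ does not annihilate $M_{e_j}$ (``its degree-$2$ image is nonzero'') is itself unproved; the clean fix is to observe that if every element of $k\Omega_0+k\Omega_1$ killed $M_{e_j}$, then $M_{e_j}$ would be a point module over $B=S/(\Omega_0,\Omega_1)$ and hence, by \cite{AV90}, would correspond to a point of $E$, contradicting $e_j\notin E$.

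The genuine gap is the step you explicitly defer in your third paragraph: determining $r$ and $s$ individually. The constraint on the sum $r+s$ leaves a one-parameter family of candidate secant lines (any $r\in E$ with $s=(p+q)-r$), so it cannot distinguish $M_{p+\tau,q-\tau}$ from the other members of that family, and nothing in your sketch does. That identification --- including why the shift is $(p+\tau,q-\tau)$ in (1) but $(p-\tau,q-\tau)$ in (2) --- is the entire content of the theorem; it is where $\tau$ enters, as you say, but saying so does not supply the argument. Carrying it out requires computing the annihilator in $S_1$ of the degree-one generator $\bar u$ of $K$ (with $u\in p^\perp\setminus(\overline{pq})^\perp$), and the only known way to do this is to use the geometric model of multiplication in $S$: modulo the two-dimensional space of central quadrics, $S_2$ is identified with sections of $\cL\otimes\cL^\tau$ on $E$ (equivalently, one works in the twisted homogeneous coordinate ring $B$ of \cite{SS92}, \cite{AV90}), and the condition $x\bar u=0$ becomes a vanishing condition on $E$ involving translation by $\tau$. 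This is precisely what occupies the proofs of Theorems 5.5 and 5.7 in \cite{LS93}. As written, your proposal establishes only that the kernel is $M_{r,s}(-1)$ for some pair with the stated sum, which is strictly weaker than the theorem.
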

 
We call the point modules corresponding to the $e_i$'s {\sf special}.

One reason point modules  are important is that they are irreducible/simple as objects in the quotient category $\QGr(S)$. Not all simple objects in $\QGr(S)$ arise from point modules (see \S\ref{ssect.2-fat-pts}).

 \subsection{The action of $\G$ and $E[2]$ as automorphisms of $S$}
 \label{sect.Gamma.action}
In \cite[\S6]{CS15}, we defined an action of the group $\G=\ZZ_2 \times \ZZ_2=\{1=\c_0,\c_1,\c_2,\c_3\}$ as 
$k$-algebra automorphisms of $S$.
Table \ref{Gamma.action} describes the action: the entry in row $\c_i$ and column $x_j$ is $\c_i(x_j)$.
\begin{table}[htp]
\begin{center}
\begin{tabular}{|c|c|c|c|c|c|c|c|}
\hline
 & $x_0$ &  $x_1$ &  $x_2$ &  $x_3$
\\
\hline
$\c_1 $  & $x_0$ &  $x_1$ &  $-x_2$ &  $-x_3$
\\
\hline
$\c_2 $  & $x_0$ &  $-x_1$ &  $x_2$ &  $-x_3$
\\
\hline
$\c_3$  & $x_0$ &  $-x_1$ &  $-x_2$ &  $x_3$
\\
\hline
\end{tabular}
\end{center}
\vskip .12in
\caption{The action of $\G$ and $E[2]$ as automorphisms of $S$.}
 \label{Gamma.action}
\end{table}
\newline 
The induced action of $\G$ on $\PP(S_1^*)$ restricts to an action of $\G$ as automorphisms of 
$E$.  The results in \cite[\S7]{CS15} showed that if $p \in E \subseteq \PP(S_1^*)$, then $\c_j(p)=p+\xi_j$.
Because $\c_j(p)=p+\xi_j$ we often blur the distinction between $\G$ and $E[2]$
although we usually use multiplicative notation for $\G$ and additive notation for $E[2]$.

\subsubsection{}
Let $A$ be a $\ZZ$-graded $k$-algebra.
Let $\Aut_{\gr}(A)$ denote the group of graded $k$-algebra automorphisms of $A$. If $\l \in k^\times$, let 
$\phi_\l$ be the automorphism of $A$ that is multiplication by $\l^n$ on $A_n$. Then  $\l \mapsto \phi_\l$
is a homomorphism $k^\times \to \Aut_{\gr}(A)$. 
We will often identify $\l \in k^\times$ with $\phi_\l$  and, if  $\psi \in \Aut(A)$, we will write  $\psi^m=\l$ if $\psi^m=\phi_\l$
and $\l\psi$  for $\phi_\l \psi$. 

\subsubsection{}
If $\psi$ is a $k$-algebra automorphism of a $k$-algebra $R$ and $M$ is a left $R$-module we write 
$\psi^*M$ for the left $R$-module that is $M$ as a vector space with the new action given by $x \cdot m:=\psi(x)m$.

 \subsection{The action of the Heisenberg group $H_4$ as automorphisms of $S$}
By  \cite{FO98} and \cite[pp. 64-65]{SSJ93}, for example, the Heisenberg group of order $4^3$ acts as graded 
$k$-algebra automorphisms of $S$ when $k=\CC$. The next result shows this holds without restriction on $k$.

\begin{proposition} 
\label{prop.aut.S}
 If $(i,j,k)$ is a cyclic permutation of $(1,2,3)$, then 
there is a $k$-algebra automorphism $\phi_i$ of $S$  such that 
$\phi_i(x_0)=a_ja_kx_i$, $\phi_i(x_i)=-ix_0$, $\phi_i(x_j)=-ia_jx_k$, and $\phi_i(x_k)=-a_kx_j$,
where $(a_1,a_2,a_3)=(a,b,c)$. 
Thus, $\phi_i(x_j)$ is the entry in row $\phi_i$ and column $x_j$ in \Cref{autom}.
\begin{table}[htp]
\begin{center}
\begin{tabular}{|c|c|c|c|c|c|c|c|}
\hline
 & $x_0$ &  $x_1$ &  $x_2$ &  $x_3$  
\\
\hline
$\phi_1 $  & $bc x_1$ &   $-i x_0$ &  $-ib x_3$ &  $-c x_2$ $\phantom{\Big)}$
\\
\hline
$\phi_2$  & $ac x_2$ &   $-a  x_3$ &  $-ix_0$ &  $ -ic x_1$ $\phantom{\Big)}$
\\
\hline
$\phi_3$  & $ab x_3$ &   $-ia x_2$ &  $-bx_1$ &  $ -i  x_0$ $\phantom{\Big)}$
\\
\hline
\end{tabular}
\end{center} 
\caption{Automorphisms of $S$.}
\label{autom}
\end{table}

\noindent
Fix $\nu_1,\nu_2,\nu_3 \in k^\times$ such that $a\nu_1^2=b\nu_2^2=c\nu_3^2=-iabc$. Let
$\ve_1= \nu_1^{-1}\phi_1$, $\ve_2= \nu_2^{-1}\phi_2$,  $\ve_3 = \nu_3^{-1}\phi_3$, 
and $\d= i$. The subgroup $\langle \ve_1,\ve_2,\ve_3,\d\rangle \subseteq \Aut(S)$
is isomorphic to the Heisenberg group of order $4^3$, 
$$
H_4 \; : = \; \langle \ve_1,\ve_2,\d \; | \; \ve_1^4=\ve_2^4=\d^4=1, \;  \d\ve_1=\ve_1\d, \; \ve_2\d=\d\ve_2, \; \varepsilon_1\varepsilon_2=\delta \varepsilon_2\varepsilon_1\rangle.
$$
Furthermore, $\ve_1^2=\c_1$, $\ve_2^2=\c_2$, and $\ve_3^2=\c_3$
where $\c_1,\c_2,\c_3$ are the automorphisms of $S$ in \Cref{Gamma.action}. 
In particular, we can identify $\G$ with the subgroup $\langle \ve_1^2, \ve_2^2\rangle \subseteq H_4 \subseteq \Aut(S)$.
\end{proposition}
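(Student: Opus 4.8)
The plan is to treat $S$ as $TV/(R)$ with $V=S_1$ the four-dimensional space of generators and $R\subseteq V^{\otimes 2}$ the six-dimensional span of the relations in \cref{S-relns}, and to verify everything by direct computation with the explicit coefficients in \Cref{autom}. First I would check that each $\phi_i$ is a graded automorphism. Each $\phi_i$ is visibly an invertible linear map on $V$ (its matrix is a scaled signed permutation), so it extends to an automorphism of $TV$, and it descends to $S$ exactly when $\phi_i^{\otimes 2}(R)=R$; by invertibility this reduces to $\phi_i^{\otimes 2}(R)\subseteq R$. For $\phi_1$ I would apply $\phi_1^{\otimes 2}$ to each of the six generators of $R$ and check that the image is again a relation up to a scalar: one finds that $\phi_1$ scales the pair of relations indexed by $(1,2,3)$ and interchanges, up to scalar, the commutator and anticommutator relations within each of the pairs indexed by $(2,3,1)$ and $(3,1,2)$. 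Notably this uses only the explicit coefficients, not the constraint $\a_1+\a_2+\a_3+\a_1\a_2\a_3=0$. The cases $\phi_2,\phi_3$ then follow by the same computation, or formally from the symmetry of the presentation under the simultaneous cyclic relabelling $x_1\mapsto x_2\mapsto x_3\mapsto x_1$, $(a,b,c)\mapsto(b,c,a)$, $\phi_1\mapsto\phi_2\mapsto\phi_3$, which carries the relation set to itself.

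Next I would compute squares and commutators on $V$. A short calculation gives $\phi_i^2=-\mathrm{i}\,a_ja_k\,\c_i$ on $S_1$, where $\mathrm{i}$ is the fixed element with $\mathrm{i}^2=-1$ and $\c_i$ is the automorphism in \Cref{Gamma.action}; since $\nu_i$ is chosen so that $\nu_i^2=-\mathrm{i}\,a_ja_k$ (this is the content of $a_i\nu_i^2=-\mathrm{i}abc$), we obtain $\ve_i^2=\nu_i^{-2}\phi_i^2=\c_i$ and hence $\ve_i^4=\c_i^2=1$. For the commutation relation I would compute $\phi_1\phi_2$ and $\phi_2\phi_1$ on the four generators and verify $\phi_1\phi_2=\mathrm{i}\,\phi_2\phi_1=\d\,\phi_2\phi_1$, where $\d=\phi_{\mathrm{i}}$ acts as multiplication by $\mathrm{i}$ on $S_1$. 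Since the $\nu_i$ are scalars they cancel in this identity, so $\ve_1\ve_2=\d\,\ve_2\ve_1$. Finally $\d$ is a scalar automorphism, hence central in $\Aut_{\gr}(S)$ and of order $4$, which supplies the remaining relations $\d\ve_1=\ve_1\d$, $\ve_2\d=\d\ve_2$, $\d^4=1$.

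Together these relations show that the assignment $\ve_1,\ve_2,\d$ defines a surjective homomorphism from the abstract $H_4$ onto $G:=\langle\ve_1,\ve_2,\d\rangle$. To upgrade this to an isomorphism I would use that $|H_4|=64$, every element being $\ve_1^a\ve_2^b\d^c$ with $a,b,c\in\ZZ/4$, together with a faithfulness argument: restricting to $V$ gives $G\to\GL(S_1)$, and I would separate the $64$ monomial operators $\ve_1^a\ve_2^b\d^c|_V$ by reading off first the underlying permutation (which records $a,b\bmod 2$), then the diagonal signs coming from $\c_1=\ve_1^2$ and $\c_2=\ve_2^2$, and finally the overall scalar $\d^c$. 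This forces the kernel to be trivial, so $G\cong H_4$. It then remains to see that $\ve_3\in G$, for which I would express $\phi_3$ as a scalar multiple of a word in $\phi_1,\phi_2$ and $\c_3=\ve_3^2$; concretely one finds $\phi_1\phi_2=-\mathrm{i}c\,\c_3\phi_3$, which after dividing by the $\nu_i$ should recover $\ve_3$ from $\ve_1\ve_2$ up to a scalar, giving $\langle\ve_1,\ve_2,\ve_3,\d\rangle=G$.

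I expect this last step to be the main obstacle. The relation-preservation check and the computations of $\phi_i^2$ and of the commutator are essentially bookkeeping, but pinning the group down \emph{exactly} as $H_4$ requires controlling all the scalar normalizations simultaneously: one must verify that the scalar relating $\ve_1\ve_2$ to $\ve_3$ is a power of $\d=\mathrm{i}$, so that $\ve_3$ genuinely lies in $G$ and no central scalar of order larger than $4$ is created. This is precisely where the normalization $a_i\nu_i^2=-\mathrm{i}abc$ must be used carefully, and where I would recheck every coefficient against \Cref{autom} before claiming the isomorphism.
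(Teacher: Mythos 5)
Your first two steps are correct and are essentially the paper's own proof: the paper verifies that the $\phi_i$ extend to automorphisms via Sklyanin's scalar criterion (your direct check that $\phi_i^{\otimes 2}(R)=R$ is equivalent, and your observation that only $\a_i=a_i^2$ is needed, not the cubic constraint on the $\a_i$, is accurate), then computes $\phi_i^2=-i\,a_ja_k\,\c_i$ and $\phi_1\phi_2=i\,\phi_2\phi_1$, exactly as you do, and then ``leaves the rest of the proof to the reader.'' Your counting/faithfulness argument does establish $\langle\ve_1,\ve_2,\d\rangle\cong H_4$, and your formula $\phi_1\phi_2=-ic\,\c_3\phi_3$ is also correct.

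However, the step you defer -- checking that the scalar relating $\ve_1\ve_2$ to $\ve_3$ is a power of $\d$ -- is a genuine gap, and when you carry it out it \emph{fails}. From $\phi_1\phi_2=-ic\,\c_3\phi_3$ and $\c_3\phi_3=\nu_3\ve_3^{3}$ you get $\ve_1\ve_2=\lambda\,\ve_3^{-1}$, i.e.\ $\ve_1\ve_2\ve_3=\lambda$, where $\lambda=-ic\,\nu_3\nu_1^{-1}\nu_2^{-1}$. With the prescribed normalization $\nu_1^2=-ibc$, $\nu_2^2=-iac$, $\nu_3^2=-iab$,
\begin{equation*}
\lambda^{2}\;=\;\frac{(-ic)^{2}\,\nu_3^{2}}{\nu_1^{2}\nu_2^{2}}\;=\;\frac{(-c^{2})(-iab)}{(-ibc)(-iac)}\;=\;\frac{iabc^{2}}{-abc^{2}}\;=\;-i,
\end{equation*}
so $\lambda^4=-1$: the scalar $\lambda$ is a primitive $8$th root of unity, not a power of $\d=i$. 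Consequently $\ve_3\notin\langle\ve_1,\ve_2,\d\rangle$, and $\langle\ve_1,\ve_2,\ve_3,\d\rangle$ contains a central scalar of order $8$, hence has order $128$, not $4^3$. No choice of the square roots $\nu_i$ can repair this, because the obstruction is already group-theoretic: the relations $\ve_1\ve_2=\d\ve_2\ve_1$, $\ve_1^2=\c_1$, $\ve_2^2=\c_2$ force
\begin{equation*}
(\ve_2^{-1}\ve_1^{-1})^{2}\;=\;\d\,\ve_2^{-2}\ve_1^{-2}\;=\;\d\,\c_2\c_1\;=\;\d\,\c_3,
\end{equation*}
so any central scalar $\mu$ with $(\mu\,\ve_2^{-1}\ve_1^{-1})^{2}=\c_3$ must satisfy $\mu^{2}=\d^{-1}=-i$, which has no solution in $\langle\d\rangle$.

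So your plan cannot be completed as written: what is actually provable (and what your argument, minus the last step, does prove) is that $\langle\ve_1,\ve_2,\d\rangle\cong H_4$, together with $\ve_3^2=\c_3$ and the commutation relations of \Cref{ssect.phi_i.phi_j}; the element $\ve_3$ agrees with $\ve_2^{-1}\ve_1^{-1}$ only up to the order-$8$ scalar $\lambda$. Equivalently, the assertion involving all of $\ve_1,\ve_2,\ve_3$ holds in $\GL(S_1)$ modulo scalars, i.e.\ for the induced action on $\PP(S_1^*)$ -- which is all that the rest of the paper uses (e.g.\ \Cref{prop.transl.by.E4}) -- but not literally in $\Aut(S)$ as stated. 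You should either restrict the claim to $\langle\ve_1,\ve_2,\d\rangle$ or pass to the projectivized action before asserting the isomorphism with $H_4$.
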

\begin{proof}
Let   $(i,j,k)$ be a cyclic permutation of $(1,2,3)$, let $\l_0,\l_i,\l_j,\l_k \in k^\times$,
and let $\phi:S_1 \to S_1$ be the linear map acting on $x_0,x_i,x_j,x_k$ as
\begin{table}[htp]
\begin{center}
\begin{tabular}{|c|c|c|c|c|c|c|c|}
\hline
 & $x_0$ &  $x_i$ &  $x_j$ &  $x_k$
\\
\hline
$\phi$  & $\l_0x_i$ &   $\l_ix_0$ &  $\l_jx_k$ &  $\l_kx_j$ 
\\
\hline
\end{tabular}
\end{center} 
\end{table}

\noindent
Following \cite[Prop. 4]{Skl83}, it is easy to see that $\phi$ extends to an automorphism  of $S$ if and only if  
$$
\frac{\l_0\l_i}{\l_j\l_k} \; = \; -1, \qquad 
\frac{\l_0\l_j}{\l_k\l_i} \; = \; -\a_j, \quad \hbox{and} \quad
\frac{\l_0\l_k}{\l_i\l_j} \; = \;  \a_k.
$$
The maps $\phi_1$, $\phi_2$, and $\phi_3$, satisfy these conditions so extend to automorphisms of $S$.  

Simple calculations show that  $\phi_1^2=-ibc\c_1$,  $\phi_2^2=-iac\c_2$,  and $\phi_3^2=-iab\c_3$, where $\c_1$, $\c_2$, and $\c_3$, are the automorphisms in \Cref{Gamma.action}. (For the sake of symmetry, let  $\ve_3=\nu_3^{-1}\phi_3$.) It follows that $\ve_1^2=\c_1$, $\ve_2^2=\c_2$, and 
$\ve_3^2=\c_3$.  Hence $\ve_1^4=\ve_2^4=\ve_3^4=1$. 
It is easy to check that  $\phi_1\phi_2=i\phi_2\phi_1$  which implies that $\ve_1\ve_2=i\ve_2\ve_1=\d\ve_2\ve_1$.
 We leave the rest of the proof to the reader. 
\end{proof}

\subsubsection{Remark}
\label{ssect.phi_i.phi_j}
Later we will make use of the fact that $\ve_2\ve_3=\d\ve_3\ve_2$ and $\ve_3\ve_1=\d\ve_1\ve_3$. These equalities follow
from the fact that  $\phi_2\phi_3=\d\phi_3\phi_2$ and $\phi_3\phi_1=\d\phi_1\phi_3$.

\begin{proposition}  
\label{prop.H4.action.on.Z}
$H_4$ acts on  the central element $\Omega=-x_0^2+x_1^2+x_2^2+x_3^2$ as follows:
\begin{equation}
\label{central.elts.S2}
\begin{cases}
-\phi_1(\Omega)  \; = \;  \Omega_1+\b\c \Omega  \;  = \; x_0^2+\b\c x_1^2-\c x_2^2+\b x_3^2,
 \\
-\phi_2(\Omega)  \; = \; \Omega_2+\a\c \Omega \;  = \;  x_0^2+\c x_1^2+\a\c x_2^2- \a x_3^2,
\\
-\phi_3(\Omega)  \; = \; \Omega_3+\a\b \Omega \;  = \;  x_0^2-\b x_1^2+\a x_2^2+\a\b x_3^2.
\end{cases}
\end{equation} 
\end{proposition}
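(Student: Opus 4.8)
The statement is a set of three explicit identities in the degree-two part $S_2$, and the natural plan is to verify each of them by a direct computation, with the following remark serving as motivation for their shape. Each $\phi_i$ is a graded $k$-algebra automorphism of $S$ by \Cref{prop.aut.S}, so it preserves the center of $S$; hence $\phi_i(\Omega)$ is again a central element of degree two. By \Cref{prop.center.S} the elements $\Omega$ and $\Omega'$ are central of degree two and the $\Omega_j$ are linear combinations of them, so---granting that $\Omega,\Omega'$ span the whole degree-two part of the center, as is the case for the Sklyanin algebras considered here---one knows a priori that $\phi_i(\Omega)=\lambda\Omega+\mu\Omega_i$ for some scalars $\lambda,\mu\in k$. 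The substance of the proposition is to pin down these scalars, and the cleanest way to do that is to evaluate both sides.

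First I would compute the left-hand sides. Since $\phi_i$ is an algebra homomorphism, $\phi_i(\Omega)=-\phi_i(x_0)^2+\phi_i(x_1)^2+\phi_i(x_2)^2+\phi_i(x_3)^2$, so I would read the images $\phi_i(x_\ell)$ off \Cref{autom}, square them, and simplify using $a^2=\alpha$, $b^2=\beta$, $c^2=\gamma$ and $i^2=-1$. The only thing requiring care is the sign bookkeeping: the entries $-i$, $-ia$, $-ib$, $-ic$ appearing in \Cref{autom} are exactly the ones whose squares pick up the factor $i^2=-1$ that flips a sign. For $\phi_1$, for instance, one finds $\phi_1(x_0)^2=\beta\gamma x_1^2$, $\phi_1(x_1)^2=-x_0^2$, $\phi_1(x_2)^2=-\beta x_3^2$ and $\phi_1(x_3)^2=\gamma x_2^2$, whence $-\phi_1(\Omega)=x_0^2+\beta\gamma x_1^2-\gamma x_2^2+\beta x_3^2$, the rightmost expression in the first line of \Cref{central.elts.S2}. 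The computations for $\phi_2$ and $\phi_3$ are structurally identical and yield the rightmost expressions in the remaining two lines.

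It then remains to establish the middle equalities, namely that each of these quadratic forms equals $\Omega_i$ plus the indicated multiple of $\Omega$. For this I would substitute the definitions of $\Omega_1,\Omega_2,\Omega_3$ from \Cref{central.elts.S} and compare the four monomial coefficients; e.g.\ in $\Omega_1+\beta\gamma\Omega$ the coefficient of $x_0^2$ is $(1+\beta\gamma)-\beta\gamma=1$, that of $x_1^2$ is $\beta\gamma$, that of $x_2^2$ is $-(\gamma+\beta\gamma)+\beta\gamma=-\gamma$, and that of $x_3^2$ is $(\beta-\beta\gamma)+\beta\gamma=\beta$, which agrees with the form computed above, and the cases $i=2,3$ are handled identically. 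I do not expect any real obstacle here: the whole proof is a bounded symbolic calculation, and the only genuine risk is an arithmetic slip in the signs or the factors of $i$. A convenient internal check is that each side is manifestly central---$-\phi_i(\Omega)$ as the image of the central element $\Omega$ under the automorphism $\phi_i$, and $\Omega_i+\mu\Omega$ as a combination of the central elements $\Omega_i$ and $\Omega$---so the asserted identities are equalities within the two-dimensional space $k\Omega+k\Omega'$, which makes a sign error easy to detect.
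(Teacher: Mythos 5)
Your proof is correct: the squares of the entries of \Cref{autom} give exactly the right-hand expressions in \Cref{central.elts.S2}, and the coefficient comparison against the definitions of $\Omega_1,\Omega_2,\Omega_3$ in \Cref{prop.center.S} establishes the middle equalities (I checked all three cases and your signs and factors of $i$ are right). The paper states this proposition without any proof, treating it as exactly the routine symbolic verification you describe, so your argument is the intended one; your preliminary remark that $\phi_i(\Omega)$ must a priori lie in $k\Omega+k\Omega'$ is a nice sanity check but, as you note, is not needed once the direct computation is done.
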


 \subsubsection{}
 The elements of $S$ in  (\ref{central.elts.S2}) look much like the polynomials in (\ref{eqns.for.E}) that cut out $E$.
 
 \subsubsection{}
\Cref{{prop.aut.S}} gives a representation of $H_4$ on $S_1$. We let $H_4$ act on $S_1^*$ via the 
contragredient representation.

The center of $H_4$ is $\langle \d\rangle$ and  $H_4/\langle \d\rangle \cong (\bZ/4)^2$.  
The induced action of $H_4$ as automorphisms of $\PP(S_1^*)$ factors through $H_4/\langle \d\rangle$ 
thereby giving an action of $ (\bZ/4)^2$ as automorphisms of $\PP(S_1^*)$.

\begin{proposition}
\label{prop.transl.by.E4} 
The action of $H_4$ on $\PP(S_1^*)$ restricts to an action on $E$ as translation 
by  $E[4]$. Furthermore, if we identify $\ve_j \in H_4$ with the 4-torsion point $\ve_j(o)$, then $2\ve_j=\xi_j$.
\end{proposition}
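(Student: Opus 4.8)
The plan is to determine the automorphism of $E$ induced by each generator $\ve_j$ by playing off two facts against each other: that $\ve_j$ preserves $E\subseteq\PP(S_1^*)$, and that $\ve_j^2=\c_j$ already acts on $E$ as the nontrivial translation $p\mapsto p+\xi_j$ recorded in \S\ref{sect.Gamma.action}. To see that $\ve_j$ preserves $E$, note that $\ve_j$ is a graded algebra automorphism of $S$, so it preserves both the grading and the center, and hence maps the two-dimensional degree-two central subspace $k\Omega+k\Omega'$ to itself; \Cref{prop.H4.action.on.Z} makes this explicit. Since $E$ is the base locus of the pencil of quadrics cut out by $k\Omega+k\Omega'$ (see \S\ref{ssect.E.eqns}), the projective transformation induced by $\ve_j$ on $\PP(S_1^*)$ carries this pencil to itself and therefore fixes its base locus $E$ as a set. (One could instead observe that $E$ is the one-dimensional component of the point scheme, which every algebra automorphism must respect.) Thus each $\ve_j$ restricts to an automorphism $\sigma_j$ of $E$ as an abstract curve.

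Next I would write $\sigma_j$ in its canonical form $\sigma_j(p)=\epsilon_j(p)+t_j$, where $t_j:=\sigma_j(o)=\ve_j(o)$ and $\epsilon_j\in\Aut(E,o)$ fixes the origin, and then square it: $\sigma_j^2(p)=\epsilon_j^2(p)+\epsilon_j(t_j)+t_j$. Comparing with the identity $\sigma_j^2=\c_j=(p\mapsto p+\xi_j)$ supplied by \Cref{prop.aut.S} and \S\ref{sect.Gamma.action}, the origin-fixing parts must agree, forcing $\epsilon_j^2=\id$; since $\mathrm{char}(k)\ne 2,3$, the only elements of $\Aut(E,o)$ of order dividing two are $\pm 1$, so $\epsilon_j\in\{\pm 1\}$. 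The main obstacle is excluding $\epsilon_j=-1$. But if $\epsilon_j=-1$ then $\epsilon_j(t_j)+t_j=o$, so $\sigma_j$ would be an involution, contradicting $\sigma_j^2=(p\mapsto p+\xi_j)\ne\id$ (here $\xi_j\ne o$ because $j\in\{1,2,3\}$). Hence $\epsilon_j=\id$ and $\sigma_j$ is translation by $t_j$, with $2t_j=\xi_j$; this is exactly the asserted relation $2\ve_j=\xi_j$, and it shows $t_j\in E[4]\setminus E[2]$.

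Finally I would check that the three translations fill out all of $E[4]$. Because $2t_1=\xi_1$ and $2t_2=\xi_2$ are independent generators of $E[2]$, the subgroup $\langle t_1,t_2\rangle$ contains $E[2]$ and surjects onto $E[4]/E[2]\cong E[2]$, hence equals $E[4]$. The induced homomorphism $H_4/\langle\d\rangle\cong(\bZ/4)^2\to E[4]\cong(\bZ/4)^2$, $\ve_j\mapsto t_j$, is therefore a surjection between groups of order $16$ and so an isomorphism; consequently the $H_4$-action on $E$ is precisely translation by $E[4]$. Apart from the involution step isolated above, the remainder is routine group-theoretic bookkeeping, so I expect no further difficulty.
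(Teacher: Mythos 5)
Your overall strategy — first show each $\ve_j$ preserves $E$, then use $\ve_j^2=\c_j$ together with the translation-plus-automorphism decomposition to force $\ve_j$ to be a translation — is exactly the paper's, and the second half of your argument is correct. However, your \emph{primary} argument for stability of $E$ has a genuine gap. The central elements $\Omega,\Omega'$ live in the noncommutative space $S_2$, not in $\operatorname{Sym}^2(S_1)$, and the assertion that ``the zero locus of $(\Omega,\Omega')$ in $\Projnc(S)$ is $E$'' (\S\ref{ssect.base.locus}) is a statement about the quotient ring $S/(\Omega,\Omega')\cong B(E,\tau,\cL)$, not about a commutative vanishing locus in $\PP(S_1^*)$. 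If you pass to commutative quadrics in the only natural way (reading off the coefficients of the $x_i^2$, which is well defined because the relations of $S$ map onto the span of the off-diagonal monomials in $\operatorname{Sym}^2 S_1$), the central pencil does \emph{not} contain $E$: one has $\Omega=-x_0^2+x_1^2+x_2^2+x_3^2$, whereas the member of (\ref{eqns.for.E}) is $x_0^2+x_1^2+x_2^2+x_3^2$, and $-\phi_1(\Omega)=x_0^2+\beta\gamma x_1^2-\gamma x_2^2+\beta x_3^2$ from (\ref{central.elts.S2}), whereas $Q(\tau+\xi_1)$ in \Cref{prop.Q.tau} is $x_0^2-\beta\gamma x_1^2-\gamma x_2^2+\beta x_3^2$. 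A short check (using $\beta+\gamma\ne 0$, which follows from $\a_i\notin\{0,\pm1\}$ and the constraint $\a_1+\a_2+\a_3+\a_1\a_2\a_3=0$) shows these two pencils of diagonal quadrics are genuinely distinct, so stability of the central pencil under $\phi_j$ gives no information about $E$. This is precisely why the paper says only that the central elements ``look much like'' the polynomials cutting out $E$, and why its proof instead verifies directly, via the computation (\ref{quadrics.Q.tau}), that $\phi_j$ stabilizes the linear span of the four polynomials in (\ref{eqns.for.E}).

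Your parenthetical fallback does repair this: $E$ is the one-dimensional component of the point scheme of $S$, a graded algebra automorphism permutes point modules ($\ve_j^*(S/Sp^\perp)\cong S/S\ve_j^{-1}(p^\perp)$), and the induced automorphism of the point scheme is the restriction of the projective-linear action of $\ve_j$ on $\PP(S_1^*)$, which must therefore carry $E$ to $E$. That is a valid, more abstract alternative to the paper's explicit computation, and it should be promoted from a parenthesis to the actual argument. From that point on your proof is sound and essentially matches the paper: the decomposition $\sigma_j(p)=\epsilon_j(p)+t_j$, the forced $\epsilon_j^2=\id$, the uniqueness of the order-two automorphism $p\mapsto -p$ when $\operatorname{char}(k)\ne 2,3$, and your exclusion of $\epsilon_j=-1$ (if $\epsilon_j=-1$ then $\sigma_j^2=\id$, contradicting $\sigma_j^2=$ translation by $\xi_j\ne o$) is in fact a tidier version of the paper's contradiction. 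Your closing verification that $\langle t_1,t_2\rangle=E[4]$, so that the $H_4$-action on $E$ is translation by \emph{all} of $E[4]$, is a worthwhile piece of bookkeeping that the paper leaves implicit.
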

\begin{proof}
Each $\ve_j:S_1 \to S_1$ is a scalar multiple of the linear automorphism $\phi_j$ in \Cref{prop.aut.S}.
Thus,  to show  that $E$ is stable under the action of $\ve_j$ it suffices 
to treat $\phi_j$ as an automorphism of the commutative polynomial ring $k[x_0,x_1,x_2,x_3]$
and show that the linear span of the polynomials in  (\ref{eqns.for.E}) is stable under the action of $\phi_j$.
Straightforward calculations show that
\begin{equation}
\label{quadrics.Q.tau}
\phi_j(x_0^2+x_1^2+x_2^2+x_3^2) \; = \; 
\begin{cases}
-x_0^2 +\b\c x_1^2 + \c x_2^2-\b x_3^2 & \text{if $j=1$,} 
\\
-x_0^2-\c x_1^2+\a\c x_2^2+\a x_3^2 & \text{if $j=2$,} 
\\
-x_0^2+\b x_1^2-\a x_2^2+\a\b x_3^2 & \text{if $j=3$.}
\end{cases}  
\end{equation}
These are scalar multiples of the other polynomials in  (\ref{eqns.for.E}). Similarly,
$$
\phi_j(x_0^2-\b\c x_1^2-\c x_2^2+\b x_3^2) \; = \; 
\begin{cases}
\b\c(x_0^2+x_1^2+x_2^2+x_3^2) & \text{if $j=1$,} 
\\
\c(x_0^2-\b x_1^2+\a x_2^2-\a\b x_3^2)  & \text{if $j=2$,} 
\\
-\b(x_0^2+\c x_1^2-\a\c x_2^2-\a x_3^2 )  & \text{if $j=3$.}
\end{cases}  
$$
These are also scalar multiples of the polynomials in  (\ref{eqns.for.E}). Hence $E$ is stable under the actions of 
$\ve_1$, $\ve_2$, and $\ve_3$.

Since every automorphism of $E$ as an algebraic variety is the composition of a translation and an automorphism of the group $(E,+,o)$, there is a point $q \in E$ and an automorphism $\phi$ of the group $(E,+)$ such that 
$\ve_1(p)=\phi(p)+q$ for all  $p \in E$. By 
\Cref{prop.aut.S}, $\ve_1^2=\c_1$. By \cite[\S7]{CS15}, $\c_1(p)=p+\xi_1$. Thus,
$p+\xi_1=\phi^2(p)+\phi(q)+q$ for all $p \in E$. It follows that $p=\phi^2(p)$ and $\xi_1=\phi(q)+q$. 

Because the characteristic of $k$ is not 2 or 3 the group of automorphisms of the group $(E,+)$ is cyclic of 
order 6 if $j(E)=0$,
cyclic of order 4 if $j(E)=12^3$, and cyclic of order 2 in all other cases. Thus, in all cases there is a unique automorphism of 
$(E,+)$ having order 2, namely $p \mapsto -p$. See \cite[Chap. 3]{Huse04}, for example.

If $\phi$ is not the identity map, then $\phi(p)=-p$ for all $p \in E$ 
which implies that $p+\xi_1=-p$ for all $p$. That is absurd, so we conclude that $\phi$ is the identity morphism and $p+\xi_1=p+2q$. Thus, $\ve_1$ is translation by $q$ which is a point of order 4. Similar arguments apply to $\ve_2$ and $\ve_3$. 

Since $\ve_j^2=\c_j$, and $\c_j$ acts on $E$ as translation by $\xi_j$, in $E$ we have $\ve_j+\ve_j=\xi_j$. 
\end{proof}

As in the previous proof we will often identify the image of 
$H_4$ in $\Aut(E)$ with the subgroup $E[4]$ acting by translation and when we do that
we will identify the automorphism $\ve_j$ with the point $\ve_j(o)$ which we will also label $\ve_j$. 
For the sake of symmetry we set $\ve_0=o$.

\begin{corollary}
\label{cor_H4_action_on_center}
The action of $\ve_j$ on $S$ sends $\Omega(z)$ to $\Omega(z+\xi_j)$. In particular, up to scalar multiples,
$\Omega(o)=\Omega_0$, and $\ve_j(\Omega_0)=\Omega_j=\Omega(\xi_j)$. 
\end{corollary}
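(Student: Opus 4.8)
The plan is to prove that $\ve_j(\Omega(z))$ and $\Omega(z+\xi_j)$ annihilate exactly the same line modules and both lie in $k\Omega_0+k\Omega_1$; since such an element is determined up to a nonzero scalar, this forces $\ve_j(\Omega(z))=\Omega(z+\xi_j)$ up to a scalar. Two elementary facts will drive the argument. First, for any graded $k$-algebra automorphism $\psi$ of $S$ and any left ideal $I$ one has $\psi^*(S/I)\cong S/\psi^{-1}(I)$, via $s+\psi^{-1}(I)\mapsto \psi(s)+I$. Second, an element $c\in S$ annihilates $\psi^*M$ if and only if $\psi(c)$ annihilates $M$; this is immediate from the definition $x\cdot m=\psi(x)m$ of the action on $\psi^*M$.

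First I would translate the geometry of $\ve_j$ into the ideals defining line modules. By \Cref{prop.transl.by.E4}, $\ve_j$ acts on $E\subseteq \PP(S_1^*)$ as translation by the $4$-torsion point $\ve_j$, with $2\ve_j=\xi_j$. Working with the contragredient action on $S_1^*$, a linear form $\ell\in S_1$ vanishing at $p,q\in E$ is carried by $\ve_j$ to a form vanishing at $p+\ve_j,q+\ve_j$; since $\ve_j$ is a linear isomorphism of $S_1$ it maps the two-dimensional space $(\overline{pq})^\perp$ onto $(\overline{p+\ve_j,\,q+\ve_j})^\perp$. Applying this with $\ve_j^{-1}$ gives $\ve_j^{-1}\big((\overline{pq})^\perp\big)=(\overline{p-\ve_j,\,q-\ve_j})^\perp$, whence
$$
\ve_j^*M_{p,q}\;\cong\;S/S\,\ve_j^{-1}\big((\overline{pq})^\perp\big)\;=\;M_{p-\ve_j,\,q-\ve_j}.
$$

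Combining the two facts, I would argue that $\ve_j(\Omega(z))$ annihilates $M_{p,q}$ if and only if $\Omega(z)$ annihilates $\ve_j^*M_{p,q}\cong M_{p-\ve_j,\,q-\ve_j}$, which by \Cref{prop.line.killers} happens exactly when $(p-\ve_j)+(q-\ve_j)\in\{z,\,-z-2\tau\}$, i.e.\ when $p+q\in\{z+\xi_j,\,-z-2\tau+\xi_j\}$. Since $\xi_j$ is $2$-torsion, $-z-2\tau+\xi_j=-(z+\xi_j)-2\tau$, so this is precisely the set $\{z+\xi_j,\,-(z+\xi_j)-2\tau\}$ singled out by \Cref{prop.line.killers} for $\Omega(z+\xi_j)$. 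Finally, because $\ve_j$ is a graded automorphism it preserves the center and hence the two-dimensional space $k\Omega_0+k\Omega_1$ of degree-two central elements (\Cref{prop.center.S} and \cite{SS92}), so $\ve_j(\Omega(z))\in k\Omega_0+k\Omega_1$; uniqueness up to scalar then yields $\ve_j(\Omega(z))=\Omega(z+\xi_j)$. The last assertion follows on specializing: $\Omega(o)=\Omega(\xi_0)=\Omega_0$ by \cite{LS93}, and therefore $\ve_j(\Omega_0)=\ve_j(\Omega(o))=\Omega(o+\xi_j)=\Omega(\xi_j)=\Omega_j$.

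I expect the only delicate point to be the bookkeeping of the direction of the twist — whether $\ve_j$ or $\ve_j^{-1}$ appears and whether the points shift by $+\ve_j$ or $-\ve_j$ — but this ultimately washes out because $2\ve_j=\xi_j$ and $\xi_j$ is $2$-torsion, so the two candidate divisor classes produced by the computation coincide with those characterizing $\Omega(z+\xi_j)$.
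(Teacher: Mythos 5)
Your argument follows essentially the same route as the paper's own proof: both rest on \Cref{prop.line.killers} together with the fact that the auto-equivalence $\ve_j^*$ carries $M_{p,q}$ to a line module translated by a $4$-torsion point, and then pin down $\ve_j(\Omega(z))$ by the up-to-scalar uniqueness of the element of the pencil $k\Omega_0+k\Omega_1$ annihilating a given family of line modules. (Your sign, $\ve_j^*M_{p,q}\cong M_{p-\ve_j,\,q-\ve_j}$, is opposite to the paper's $M_{p+\ve_j,\,q+\ve_j}$; with the paper's convention $x\cdot m=\psi(x)m$ your computation is the right one, and, as you say, the discrepancy is immaterial because $2(\pm\ve_j)=\xi_j$.) Your derivation of the ``in particular'' clause from the general statement, using $\Omega_j=\Omega(\xi_j)$ from \cite{LS93}, is also fine; the paper goes in the opposite direction, proving $\ve_j(\Omega_0)\propto\Omega_j$ first by direct computation.

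The one step that needs more care is your justification that $\ve_j(\Omega(z))$ lies in the pencil at all. You assert that $k\Omega_0+k\Omega_1$ is the \emph{entire} space of degree-two central elements of $S$, so that any graded automorphism preserves it. That equality is established in \cite{SS92} only when $\tau$ has infinite order (there the whole center is $k[\Omega,\Omega']$); the present paper allows $\tau$ of finite order, and then the equality is true but requires an argument you do not give --- for instance: a degree-two central element outside $k\Omega_0+k\Omega_1$ would map to a nonzero degree-two central element of $B(E,\tau,\cL)=S/(\Omega_0,\Omega_1)$, which is impossible when $\tau^2\neq\id$. The paper instead anchors pencil-stability computationally: it checks directly that $\ve_j(\Omega_0)$ is a scalar multiple of $\Omega_j$, and \Cref{prop.H4.action.on.Z} together with the linear relations in \Cref{prop.center.S} shows $\ve_j(\Omega)$ also lies in the pencil; since $\Omega$ and $\Omega_0$ span it, $\ve_j$ preserves it. Supplying either the $B(E,\tau,\cL)$ argument or this computation closes the gap, after which your proof is complete.
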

\begin{proof}
A straightforward calculation using the definitions of $\ve_j$ and $\Omega_0$ shows that $\ve_j(\Omega_0)$ is a scalar
multiple of $\Omega_j$ for $j=1,2,3$. 

If $p,q \in E$, then the line module $M_{p,q}$ is annihilated by $\Omega(z)$ if and only if $p+q+\tau=\pm(z+\tau)$.
The same idea as in the proof of \cite[Prop. 7.7]{CS15}, shows that the auto-equivalence $\ve_j^*$ of $\Gr(S)$ induced by
the automorphism $\ve_j$ has the property that $\ve_j^*(M_{p,q}) \cong M_{p+\ve_j,q+\ve_j}$ where $p+\ve_j$
denotes translation by the 4-torsion point $\ve_j$. 
Since $\Omega(z+2\ve_j)$ annihilates $M_{p+\ve_j,q+\ve_j}$, we conclude that the automorphism
$\ve_j$ sends $\Omega(z)$ to $\Omega(z+\xi_j)$.
\end{proof}

\begin{proposition}
With respect to the coordinate functions $(x_0,x_1,x_2,x_3)$ on $S_1^*$ and $\PP(S_1^*)$, the action of $\ve_j \in H_4$ on a point $(\l_0,\l_1,\l_2,\l_3)\in \PP(S_1^*)$ is
\begin{equation}
\label{E4.action.formulas}
\ve_j(\l_0,\l_1,\l_2,\l_3)  \; =\; 
\begin{cases}
(bc \l_1,-i \l_0, ib \l_3,c\l_2) \phantom{\big)}& \text{if $j=1$,}  
\\
(ac \l_2, a \l_3, -i \l_0,ic\l_1) \phantom{\big)}& \text{if $j=2$,}  
\\
(ab \l_3,i a \l_2, b \l_1,-i\l_0) \phantom{\big)}& \text{if $j=3$.}  
\end{cases}
\end{equation}
\end{proposition}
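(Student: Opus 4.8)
The plan is to read off the formula directly from the representation of $H_4$ on $S_1$ given in \Cref{prop.aut.S}, passing to the contragredient and then projectivizing. Recall that $H_4$ acts on $S_1^*$ via the contragredient of the representation on $S_1$, so for $g\in H_4$ a point $\l\in S_1^*\setminus 0$ is sent to $\l\circ g^{-1}$, where $g^{-1}$ denotes the inverse of the linear automorphism $g|_{S_1}$. For $g=\ve_j$ we have $\ve_j=\nu_j^{-1}\phi_j$ by \Cref{prop.aut.S}, and the scalar $\nu_j^{-1}$ scales $\phi_j|_{S_1}$ by a nonzero constant, hence induces the identity on $\PP(S_1^*)$. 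Thus it suffices to compute the map on $\PP(S_1^*)$ induced by $\phi_j$, i.e.\ the class of $\l\mapsto \l\circ\phi_j^{-1}$.

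First I would invert each $\phi_j$. Since the maps in \Cref{autom} are monomial (each $\phi_j$ permutes the coordinate lines $kx_0,\dots,kx_3$ up to a scalar), the inverses are immediate: for instance, from $\phi_1(x_1)=-ix_0$ one reads $\phi_1^{-1}(x_0)=ix_1$, from $\phi_1(x_0)=bc\,x_1$ one reads $\phi_1^{-1}(x_1)=\tfrac{1}{bc}x_0$, and similarly $\phi_1^{-1}(x_2)=-\tfrac1c x_3$ and $\phi_1^{-1}(x_3)=\tfrac{i}{b}x_2$. Writing $\l=(\l_0,\l_1,\l_2,\l_3)$ with $\l_i=\l(x_i)$, the $i$-th coordinate of $\ve_j\cdot\l$ is $\l\bigl(\phi_j^{-1}(x_i)\bigr)$; for $j=1$ this gives $\bigl(i\l_1,\tfrac{1}{bc}\l_0,-\tfrac1c\l_3,\tfrac{i}{b}\l_2\bigr)$. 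Finally I would clear the common scalar: multiplying through by $bc/i=-ibc$ and using $i^2=-1$ turns this into $(bc\l_1,-i\l_0,ib\l_3,c\l_2)$, which is exactly the stated formula for $j=1$. The cases $j=2,3$ are handled identically, inverting $\phi_2,\phi_3$ from \Cref{autom} and rescaling by $ac/i$ and $ab/i$ respectively.

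The computation is entirely routine once the conventions are pinned down, so the only genuine obstacle is bookkeeping: one must use the \emph{inverse} $\phi_j^{-1}$ (not $\phi_j$ itself), since the naive pushforward would produce the wrong signs in the last two coordinates, and one must track the overall projective rescaling so that the normalization agrees with the stated form. As a built-in consistency check I would verify that squaring the resulting maps reproduces the coordinate sign changes of $\c_j$ in \Cref{Gamma.action} (using $\ve_j^2=\c_j$), and that the induced action on $E$ is translation by the $4$-torsion point $\ve_j$, in accordance with \Cref{prop.transl.by.E4}; both follow directly from the displayed formulas and confirm that the contragredient convention has been applied correctly.
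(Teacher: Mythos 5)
Your proposal is correct and is exactly the argument the paper intends: the proposition is stated without proof precisely because it is this routine contragredient computation from the table in \Cref{prop.aut.S}, and your inverses $\phi_j^{-1}$, the irrelevance of the scalars $\nu_j^{-1}$ on projective space, and the final rescalings by $-ia_ja_k$ all check out in each of the three cases. Your bookkeeping remark is also right --- using $\l\circ\phi_j$ instead of $\l\circ\phi_j^{-1}$ would indeed flip the signs of the last two coordinates --- and the consistency checks against $\ve_j^2=\c_j$ and \Cref{prop.transl.by.E4} confirm the convention is applied correctly.
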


By \Cref{prop.transl.by.E4}, the formulas in (\ref{E4.action.formulas}) have the following interpretation.

\begin{corollary}
\label{cor.transl.by.E4}
Let $\ve_j \in E[4]$, $j=1,2,3$, be the points $\ve_j(o)$. If $p=(\l_0,\l_1,\l_2,\l_3)  \in E$, then
\begin{align*}
p+\ve_1 & \; = \; (bc \l_1,-i \l_0, ib \l_3,c\l_2),
\\
p+\ve_2 & \; = \; (ac \l_2, a \l_3, -i \l_0,ic\l_1), \phantom{\big)} 
\\
p+\ve_3 & \; = \; (ab \l_3,i a \l_2, b \l_1,-i\l_0).
\end{align*} 
\end{corollary}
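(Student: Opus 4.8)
The plan is to compute directly with the contragredient representation, reducing everything to inverting the monomial maps recorded in \Cref{prop.aut.S}. First I would observe that the assertion concerns only the induced transformation of $\PP(S_1^*)$, which is insensitive to rescaling the underlying linear map by a nonzero constant. Since $\ve_j=\nu_j^{-1}\phi_j$ differs from $\phi_j$ only by the scalar $\nu_j^{-1}\in k^\times$, the two maps induce the same automorphism of $\PP(S_1^*)$; this is the same principle under which the action factors through $H_4/\langle\delta\rangle$ (as $\delta=i$ is scalar). Hence I may work throughout with the simpler linear maps $\phi_j$ from \Cref{prop.aut.S} in place of the $\ve_j$.

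Next I would unwind the coordinates. A point $(\l_0,\l_1,\l_2,\l_3)\in\PP(S_1^*)$ is the functional $f\in S_1^*$ with $f(x_m)=\l_m$, the $x_m$ being read as the coordinate functions $x_m(f)=f(x_m)$. Because $H_4$ acts on $S_1^*$ via the contragredient representation, $\phi_j$ sends $f$ to $f\circ\phi_j^{-1}$, so the $m$-th coordinate of the image point is
\[
(f\circ\phi_j^{-1})(x_m)\;=\;f\big(\phi_j^{-1}(x_m)\big).
\]
Thus the whole computation reduces to writing down $\phi_j^{-1}$ on $x_0,\ldots,x_3$. As each $\phi_j$ is a monomial map, inverting it is immediate from \Cref{prop.aut.S}: for example $\phi_1(x_0)=bc\,x_1$ gives $\phi_1^{-1}(x_1)=(bc)^{-1}x_0$, while $\phi_1(x_1)=-ix_0$ gives $\phi_1^{-1}(x_0)=ix_1$, and so on. Feeding these into the displayed formula produces the coordinate vector of $\phi_1\cdot f$, and the cases $j=2,3$ are identical in form.

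The only real point to watch is the final normalization. Inverting $\phi_j$ introduces denominators and powers of $i$ (via $i^{-1}=-i$), so the coordinate vector I obtain is not literally the displayed one but a fixed $k^\times$-multiple of it: for $j=1,2,3$ the common factors work out to $i/(bc)$, $i/(ac)$, $i/(ab)$ respectively. The substance of the verification is precisely the check that all four entries of the computed vector carry this one common scalar, so that rescaling (by $-ibc$, $-iac$, $-iab$ respectively) lands exactly on the stated formula; this is what legitimizes passing to the displayed projective representatives. I expect this bookkeeping of scalars, together with the observation that it is the contragredient (inverse–transpose), rather than the naive $f\mapsto f\circ\phi_j$, that reproduces the table up to a single overall constant, to be the main and essentially the only subtlety.
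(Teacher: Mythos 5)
Your coordinate computations are correct as far as they go: replacing $\ve_j$ by $\phi_j$ is harmless for the induced map on $\PP(S_1^*)$, the contragredient convention $f\mapsto f\circ\phi_j^{-1}$ is the right one, and the common scalars $i/(bc)$, $i/(ac)$, $i/(ab)$ do check out (e.g.\ for $j=1$ the contragredient image of $(\l_0,\l_1,\l_2,\l_3)$ is $(i\l_1,(bc)^{-1}\l_0,-c^{-1}\l_3,ib^{-1}\l_2)$, which is $i/(bc)$ times the displayed vector). But what this establishes is the proposition immediately preceding the corollary — the formulas (\ref{E4.action.formulas}) for the projective-linear action of $\ve_j$ on $\PP(S_1^*)$ — not the corollary itself.

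The genuine gap is that the corollary's left-hand sides $p+\ve_j$ are sums in the group law of $E$ fixed in \S\ref{sect.1.3} (four points of $E$ are coplanar iff they sum to $o$), with $\ve_j$ standing for the point $\ve_j(o)$. Your proposal silently reads $p+\ve_j$ as notation for $\ve_j(p)$, and that identification is precisely the point at issue: one must know that the automorphism of $E$ obtained by restricting the linear map $\ve_j$ is a \emph{translation}; only then is it automatic that it is translation by $\ve_j(o)$. This is the content of \Cref{prop.transl.by.E4}, which the paper invokes in exactly this role, and it is not formal bookkeeping: its proof first checks that $E$ is stable under $\ve_j$ (the span of the quadrics (\ref{eqns.for.E}) is preserved), and then rules out the alternative that $\ve_j$ acts as a translation composed with the inversion $p\mapsto -p$, using $\ve_j^2=\c_j$, the fact that $\c_j$ acts on $E$ as translation by $\xi_j$ (\cite[\S7]{CS15}), and the structure of $\Aut(E,+,o)$ in characteristic $\neq 2,3$. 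Without citing or reproving that step, your computation says nothing about addition on $E$, so the stated corollary is not established. The repair is short — open by invoking \Cref{prop.transl.by.E4} to convert ``apply $\ve_j$'' into ``add $\ve_j(o)$'', then run your calculation — but as written the proposal proves a different (weaker) statement and begs the question on the part that carries the real content.
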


One can use the formulas in \Cref{cor.transl.by.E4} to check that  $p+2\ve_j=\c_j(p)$ where $\c_j$ is the automorphism of $\PP(S_1^*)$ induced by the linear automorphism $\c_j$ of $S_1$ in \Cref{Gamma.action}. 

\begin{corollary}
\label{cor.E4}
The 4-torsion subgroup $E[4]$ is the intersection of $E$ with the four ``coordinate'' planes 
$x_0x_1x_2x_3x_4=0$. More precisely,
if $j\in \{0,1,2,3\}$, then
$$
\ve_j + E[2] \; = \; E \cap \{x_j=0\}.
$$
\end{corollary}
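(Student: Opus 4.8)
The plan is to identify each $E\cap\{x_j=0\}$ as a divisor on $E$, use its invariance under the $E[2]$-action to recognize it as a coset of $E[2]$ inside $E[4]$, and then deploy the translation formulas of \Cref{cor.transl.by.E4} to reduce the labelled statement to the single case $j=0$.

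First I would note that $E$ is a nondegenerate quartic, so it lies in no hyperplane and $E\cap\{x_j=0\}$ is a degree-$4$ effective divisor on $E$. Each automorphism $\c_i$ of Table~\ref{Gamma.action} scales the coordinates by $\pm1$, hence fixes every hyperplane $\{x_j=0\}$ setwise; since $\c_i(E)=E$, the divisor $E\cap\{x_j=0\}$ is stable under all of $\G=E[2]$. As $E[2]$ acts by fixed-point-free translations and $|E[2]|=4=\deg\big(E\cap\{x_j=0\}\big)$, a translation-invariant effective divisor of degree $4$ can only be a single free orbit with multiplicity one; thus $E\cap\{x_j=0\}$ is a reduced coset of $E[2]$. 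The four points are coplanar, so their sum is $o$; since the sum of a coset $c+E[2]$ equals $4c$, we get $4c=o$ and the coset lies in $E[4]$.

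Next I would check that the four cosets (for $j=0,1,2,3$) are distinct: an equality $E\cap\{x_m=0\}=E\cap\{x_n=0\}$ with $m\ne n$ would force these four points onto the line $\{x_m=x_n=0\}$, but a line meeting the quartic $E$ (a complete intersection of two of the quadrics in \eqref{eqns.for.E}) in three or more points would lie on both quadrics and hence inside $E$, which is impossible. Therefore the four cosets are exactly the four cosets of $E[2]$ in $E[4]$, and their union is $E[4]$; this already proves the first assertion $E\cap\{x_0x_1x_2x_3=0\}=E[4]$. For the labelling I would use \eqref{E4.action.formulas}: the $x_j$-coordinate of $\ve_j(\l_0,\l_1,\l_2,\l_3)$ is a nonzero multiple of $\l_0$, so $\ve_j\big(\{x_0=0\}\big)=\{x_j=0\}$, and since $\ve_j$ acts on $E$ as translation by the $4$-torsion point $\ve_j$ (\Cref{prop.transl.by.E4}),
\[
E\cap\{x_j=0\} \;=\; \ve_j\big(E\cap\{x_0=0\}\big) \;=\; \big(E\cap\{x_0=0\}\big)+\ve_j.
\]
Everything thus reduces to the base case $E\cap\{x_0=0\}=E[2]$, equivalently $o\in\{x_0=0\}$. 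For this I would substitute $x_0=0$ into \eqref{eqns.for.E}, obtaining four linear conditions on $(x_1^2,x_2^2,x_3^2)$ whose consistency is forced by the identity $\a+\b+\c+\a\b\c=0$, giving a one-dimensional solution and so exhibiting $E\cap\{x_0=0\}$ as the nonempty $E[2]$-coset $\{(0:\pm x_1:\pm x_2:\pm x_3)\}$.

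The genuinely delicate point, and the step I expect to be the main obstacle, is showing that this coset is $E[2]$ itself rather than one of the other three cosets in $E[4]$; equivalently, that the coordinate vanishing at the origin is $x_0$ and not some $x_m$. The equivariance relations above are symmetric in the four hyperplanes and cannot by themselves single out $j=0$: indeed the negation automorphism of $E$ commutes with the $\c_i$ (hence is diagonal) and anticommutes with $H_4$, which only pins it down to a single sign change $x_m\mapsto-x_m$ whose fixed locus on $E$ is the coset $E\cap\{x_m=0\}$, without determining $m$. I would settle this by invoking the explicit normalization of the embedding $E\hookrightarrow\PP(S_1^*)$ from \cite{SS92}, in which the origin is the point with vanishing $x_0$-coordinate (so $m=0$). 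Granting $o\in\{x_0=0\}$, the coset through $o$ is $o+E[2]=E[2]$, and the displayed translation formula then gives $E\cap\{x_j=0\}=\ve_j+E[2]$ for every $j$, as claimed.
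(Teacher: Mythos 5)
Your proof is correct, and it is more self-contained than the paper's, which disposes of the corollary in two sentences: it cites \cite[\S7]{CS15} for the single fact that $E[2]=E\cap\{x_0=0\}$ and then applies the translation formulas of \Cref{cor.transl.by.E4}, exactly as you do in your labelling step. What you do differently is to prove, rather than cite, most of the content of that base fact: your divisor-theoretic argument (a hyperplane section of the nondegenerate quartic $E$ has degree $4$; it is invariant under the fixed-point-free $E[2]$-translations, hence is a reduced coset; coplanarity forces the coset into $E[4]$; the secant-line argument gives distinctness) shows intrinsically that the four coordinate sections are precisely the four cosets of $E[2]$ in $E[4]$, so the only external input you need is the normalization $o\in\{x_0=0\}$. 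You are also right that this last point cannot be extracted from the symmetry alone, and your appeal to the \cite{SS92} normalization is legitimate — it is the same convention the paper inherits through \cite{CS15}. One small improvement: instead of citing \cite{SS92} you could close this gap using a fact the paper already quotes, namely the negation formula $-(\l_0,\l_1,\l_2,\l_3)=(-\l_0,\l_1,\l_2,\l_3)$ from \cite[Eq. (7-2) and Prop. 7.4(3)]{CS15} (see the proof of \Cref{prop.abc.a.b.c}); since $o$ is fixed by negation and $(1,0,0,0)\notin E$, this forces $o\in\{x_0=0\}$, making your argument self-contained modulo facts already stated in this paper. What your longer route buys is independence from the bookkeeping in \cite[\S7]{CS15} and, as a by-product, a direct proof of the first assertion $E[4]=E\cap\{x_0x_1x_2x_3=0\}$; what the paper's route buys is brevity.
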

\begin{proof}
By \cite[\S7]{CS15}, $E[2]$ is the intersection of $E$ with the plane $x_0=0$. It follows from the formulas in  \Cref{cor.transl.by.E4} that  $\ve_j + E[2] = E \cap \{x_j=0\}$.
\end{proof}

\subsection{The points in $-\frac{1}{2}\tau+E[2]$ and the quadrics $Q(\tau+\xi)$, $\xi \in E[2]$}

The point $$\tau':=(abc,a,b,c) \; \in \; E $$ plays an important role in the rest of \S\ref{sect.4dim.Skl.alg} 
and in \S\ref{subse.fat_conic}.

\begin{proposition}
\label{prop.abc.a.b.c}
Let $\tau'=(abc,a,b,c)$. Then $2\tau'=-\tau$, $-\tau'=(-abc,a,b,c)$, and 
\begin{align*}
\tau'+\ve_1 & \; = \; (a,-i a, i,1),  \phantom{xxxx}  -\tau'+\ve_1  \; = \; (a,i a, i,1)
\\
\tau'+\ve_2 & \; = \; (b, 1, -i b,i),  \phantom{xxxx}  -\tau'+\ve_2  \; = \; (b, 1, i b,i),  
\\
\tau'+\ve_3 & \; = \; (c,i , 1,-ic),  \phantom{xxxx}  -\tau'+\ve_3  \; = \; (c,i , 1,ic).
\end{align*} 
\end{proposition}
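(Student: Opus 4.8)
The plan is to establish, in order, (i) that $\tau'=(abc,a,b,c)$ lies on $E$, (ii) that $-\tau'=(-abc,a,b,c)$, (iii) that $2\tau'=-\tau$, and then (iv) to read off the six remaining formulas by feeding $\tau'$ and $-\tau'$ into \Cref{cor.transl.by.E4}. Step (i) is immediate: substituting $(x_0,x_1,x_2,x_3)=(abc,a,b,c)$ into the four forms of \eqref{eqns.for.E} and using $\a=a^2$, $\b=b^2$, $\c=c^2$ turns each equation into a multiple of the defining relation $\a+\b+\c+\a\b\c=0$; for instance the first becomes $\a\b\c+\a+\b+\c=0$. Granting (ii) and (iii), step (iv) is a one-line substitution: $\tau'+\ve_1=(bc\cdot a,-i\cdot abc,ib\cdot c,c\cdot b)=(abc,-iabc,ibc,bc)$, which is $(a,-ia,i,1)$ after dividing the projective coordinates by $bc$, and the remaining five formulas follow the same way from the explicit expressions for $p+\ve_1,p+\ve_2,p+\ve_3$ in \Cref{cor.transl.by.E4} with $p=\pm\tau'$.

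For (ii) I would exploit the involution $\sigma\colon(x_0,x_1,x_2,x_3)\mapsto(-x_0,x_1,x_2,x_3)$ of $\PP(S_1^*)$. Since every equation in \eqref{eqns.for.E} involves $x_0$ only through $x_0^2$, $\sigma$ preserves $E$, and its fixed locus on $E$ is $E\cap\{x_0=0\}$, which equals $E[2]$ by \Cref{cor.E4} in the case $j=0$. An involutive automorphism of $E$ fixing all four $2$-torsion points must be negation: writing it as $p\mapsto\phi(p)+t$ with $\phi\in\Aut(E,+)$, the classification of $\Aut(E,+)$ recalled in the proof of \Cref{prop.transl.by.E4} (valid as $\mathrm{char}(k)\ne2,3$) forces $\phi=-\id$, and then the fixed-point count forces $t=o$. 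Hence $\sigma=-\id$ on $E$, so $-\tau'=\sigma(\tau')=(-abc,a,b,c)$.

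Step (iii) is the crux, and the geometric idea is to locate the member of the pencil on which the tangent line to $E$ at $\tau'$ lies. First I compute that tangent line as $W=\ker\big(\nabla f_0(\tau')\big)\cap\ker\big(\nabla f_1(\tau')\big)$, where $f_0,f_1$ are the first two forms of \eqref{eqns.for.E}; since $\nabla f_0(\tau')$ is proportional to $\tau'$ itself, solving a single linear equation produces a tangent direction $v$ (one may take $v_1=0$). A short computation, again invoking $\a+\b+\c+\a\b\c=0$, then shows $f_0(v)=0$, so $f_0$ vanishes on all of $W$ and the tangent line lies on $Q_1=\{f_0=0\}$. By the lemma of \Cref{ssect.E.eqns} applied with $p=q=\tau'$, that tangent line is a ruling line of $Q(2\tau')$, and because a tangent line of $E$ lies on a unique member of the pencil, I conclude $Q_1=Q(2\tau')$. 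As $Q(z)=Q(-z)$, this gives $2\tau'\in\{\tau,-\tau\}$ once $Q_1$ is identified with $Q(\tau)$.

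The main obstacle is precisely pinning the sign. The tangent-line argument only sees $Q(2\tau')=Q(-2\tau')$, so even granting $Q_1=Q(\tau)$ from the normalization of $\tau$ in \cite{SS92} it leaves the ambiguity $2\tau'\in\{\tau,-\tau\}$, and distinguishing these amounts to deciding which of $\pm(abc,a,b,c)$ doubles to $-\tau$. Resolving this requires the finer normalization of $\tau$ encoded in the relation $\Omega(z)=\Omega(-z-2\tau)$ of \cite{LS93}: whereas the quadric pencil $Q(z)$ is symmetric under $z\mapsto-z$, the central pencil $\Omega(z)$ is symmetric only under the shifted involution $z\mapsto-z-2\tau$, and it is this asymmetry that breaks the $\pm$ tie. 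Concretely I would compute, from the coordinates of $\tau'$, the explicit central element annihilating $M_{\tau',\tau'}$, namely $\Omega(2\tau')$, and match it against the parametrization $z\mapsto\Omega(z)$ — known on $E[2]$ via the anchors $\Omega(o)=\Omega_0$ and $\Omega(\xi_j)=\Omega_j$ of \Cref{cor_H4_action_on_center} and governed by $\Omega(z)=\Omega(-z-2\tau)$ through \Cref{prop.line.killers} — to extract $2\tau'=-\tau$. I expect this sign bookkeeping, rather than any individual computation, to be the genuinely delicate point.
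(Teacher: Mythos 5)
Your steps (i), (ii) and (iv) are sound: (i) is the easy verification that $\tau'\in E$, your involution argument in (ii) is a legitimate re-derivation of the negation formula that the paper simply cites from \cite{CS15}, and (iv) is exactly how the paper obtains the six displayed formulas, namely by substituting $\pm\tau'$ into \Cref{cor.transl.by.E4}. The gap is in step (iii), which you yourself flag as the crux. First, the identification $Q(\tau)=\{x_0^2+x_1^2+x_2^2+x_3^2=0\}$ is not available to you: in this paper that statement is \Cref{prop.Q.tau}, whose proof uses the coordinates of $-\tau'\pm\ve_1$, i.e.\ it is deduced \emph{from} the proposition you are proving, so invoking it is circular. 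Nor is it ``the normalization of $\tau$ in \cite{SS92}'': what \cite{SS92} supplies is an explicit coordinate formula for the translation $q\mapsto q+\tau$ (Cor.~2.8), not an identification of which smooth quadric of the pencil is $Q(\tau)$.

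Second, even granting $2\tau'\in\{\tau,-\tau\}$, your plan for breaking the tie does not close. The data you propose to use --- the anchors $\Omega(\xi_j)=\Omega_j$ from \Cref{cor_H4_action_on_center} together with the relation $\Omega(z)=\Omega(-z-2\tau)$ from \Cref{prop.line.killers} --- pins down $\Omega$ explicitly only at the eight points of $E[2]\cup(E[2]-2\tau)$; it gives no way to evaluate $\Omega(\tau)$ or $\Omega(-\tau)$ as explicit elements of the pencil $k\Omega_0+k\Omega_1$, which is what ``matching'' the computed annihilator of $M_{\tau',\tau'}$ would require. Both candidates annihilate one ruling each of the \emph{same} quadric $Q(\tau)$, so no property of the tangent line alone distinguishes them; to separate them you would need explicit secant lines whose point-sums lie in a known fiber of $\Omega$ (for instance sum $=-2\tau+\xi$), and such points cannot be produced using only negation and the $E[4]$-translations of \Cref{cor.transl.by.E4} --- it requires an explicit translation-by-$\tau$ formula, which is precisely the tool you are trying to avoid. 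The repair is the paper's own two-line argument: apply \cite[Cor.~2.8]{SS92} to $\tau'$ to get $\tau'+\tau=(-abc,a,b,c)$, which by your step (ii) equals $-\tau'$; hence $2\tau'=-\tau$ with the sign determined outright, and the rest of your proposal then goes through unchanged.
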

\begin{proof}
An explicit formula for the translation automorphism $q \mapsto q+\tau$ is given in \cite[Cor. 2.8]{SS92}.  
Applying it to $\tau'$ gives $\tau'+\tau=(-abc,a,b,c)$. However, by \cite[Eq. (7-2) and Prop. 7.4(3)]{CS15}, if 
$(\l_0,\l_1,\l_2,\l_3) \in E$, then $-(\l_0,\l_1,\l_2,\l_3)=(-\l_0,\l_1,\l_2,\l_3)$. Hence $(-abc,a,b,c)=-\tau'$ and $2\tau'=-\tau$. 
The formulas for $\pm \tau'+\ve_j$ come from \Cref{cor.transl.by.E4}.
\end{proof}

The quadrics defined by  the equations in (\ref{eqns.for.E}) can now be labelled according to the system described in \S\ref{ssect.E.eqns}. These equations are similar to the central elements of $S$ in \Cref{prop.H4.action.on.Z}.

\begin{proposition}
\label{prop.Q.tau}
The quadrics $Q(\tau+\xi)$, $\xi \in E[2]$, are 
\begin{align*}
Q(\tau) & \; = \; \{x_0^2+x_1^2+x_2^2+x_3^2=0\},
\\
Q(\tau+\xi_1) & \; = \; \{   x_0^2 - \b\c x_1^2 - \c x_2^2+\b x_3^2 =0  \},
\\
Q(\tau+\xi_2) & \; = \; \{ x_0^2+\c x_1^2-\a\c x_2^2-\a x_3^2=0  \},
\\
Q(\tau+\xi_3) & \; = \; \{ x_0^2-\b x_1^2+\a x_2^2-\a\b x_3^2  =0  \}.
\end{align*}
\end{proposition}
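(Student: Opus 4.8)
The plan is to pin down the first of the four quadrics in \Cref{eqns.for.E}, namely the smooth quadric $\{x_0^2+x_1^2+x_2^2+x_3^2=0\}$, as $Q(\tau)$, and then to read off the remaining three labellings from the Heisenberg automorphisms $\phi_1,\phi_2,\phi_3$, whose effect on this quadratic form was already recorded in \Cref{quadrics.Q.tau}. Everything hinges on the single identification $\{x_0^2+x_1^2+x_2^2+x_3^2=0\}=Q(\tau)$; once that is in hand, the symmetry does the rest.

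To make this identification I would exhibit an explicit secant line of $E$ lying on the first quadric whose two points sum to $-\tau$. By \Cref{prop.abc.a.b.c}, together with the negation rule $-(\l_0,\l_1,\l_2,\l_3)=(-\l_0,\l_1,\l_2,\l_3)$ used in its proof, the points $\tau'+\ve_1=(a,-ia,i,1)$ and $\tau'-\ve_1=(-a,ia,i,1)$ lie on $E$ and satisfy $(\tau'+\ve_1)+(\tau'-\ve_1)=2\tau'=-\tau$. A one-line substitution, using $i^2=-1$, shows that every point $s(a,-ia,i,1)+t(-a,ia,i,1)$ of the line through them satisfies $x_0^2+x_1^2+x_2^2+x_3^2=0$, so this secant line lies on the first quadric. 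Now a secant line of $E$ lies on exactly one member of the pencil of quadrics through $E$: two distinct members meet precisely along the base locus $E$, and the irreducible quartic $E$ contains no line (\Cref{ssect.E.eqns}). Since our secant line has $p+q=-\tau$, it lies on $Q(-\tau)=Q(\tau)$ by the construction of the $Q(z)$ in \Cref{ssect.E.eqns}; as the first quadric is the member of the pencil also containing this line, the two must coincide, giving $\{x_0^2+x_1^2+x_2^2+x_3^2=0\}=Q(\tau)$. The smoothness of the first quadric, visible from its identity Gram matrix, is then a consistency check against $\tau\notin E[2]$.

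Finally I would propagate the labelling along $\phi_1,\phi_2,\phi_3$. By \Cref{prop.transl.by.E4} each $\phi_j$ acts on $E$ as translation by the $4$-torsion point $\ve_j$ with $2\ve_j=\xi_j$, so it carries a secant line $\overline{pq}$ to $\overline{(p+\ve_j)(q+\ve_j)}$ and hence maps $Q(z)$ onto $Q(z+2\ve_j)=Q(z+\xi_j)$; the sign of the translation is immaterial since $\xi_j=-\xi_j$. On the other hand, \Cref{quadrics.Q.tau} exhibits $\phi_j(x_0^2+x_1^2+x_2^2+x_3^2)$ as a nonzero scalar multiple of the defining form of the $(j+1)$-st quadric in \Cref{eqns.for.E}. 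Comparing the two descriptions of $\phi_j(Q(\tau))$ identifies the $(j+1)$-st quadric with $Q(\tau+\xi_j)$, which is exactly the asserted labelling.

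The one genuinely delicate step is the uniqueness argument forcing the first quadric to equal $Q(\tau)$ rather than some other smooth member of the pencil; everything after it is the mechanical action of the Heisenberg symmetry, whose only potential ambiguity---the direction of the translation by $\ve_j$---is harmless because each $\xi_j$ is $2$-torsion. I would therefore concentrate care on verifying that the exhibited line genuinely sums to $-\tau$ and lies on the first quadric, as that single line pins down the entire labelling.
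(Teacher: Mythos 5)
Your proof is correct, and it overlaps with the paper's argument only in its anchor step. The paper likewise pins down $Q(\tau)$ by exhibiting an explicit secant line on $\{x_0^2+x_1^2+x_2^2+x_3^2=0\}$ — it uses the line $x_0+ix_1=x_2-ix_3=0$ through $-\tau'\pm\ve_1$, whose endpoints sum to $\tau$, whereas yours passes through $\tau'\pm\ve_1$ and sums to $-\tau$; the two are equivalent since $Q(z)=Q(-z)$. After that the routes genuinely diverge: the paper handles the remaining quadrics by the same direct method, exhibiting an explicit line on $Q(\tau+\xi_2)$ through $(-\tau'+\ve_2)\pm\ve_1$ and leaving the cases $\xi_1,\xi_3$ to the reader, while you transport the single identification by symmetry, combining \Cref{prop.transl.by.E4} ($\phi_j$ acts on $E$ as translation by $\ve_j$ with $2\ve_j=\xi_j$, hence carries $Q(\tau)$ to $Q(\tau+\xi_j)$) with \Cref{quadrics.Q.tau} ($\phi_j$ carries the form defining $Q(\tau)$ to a scalar multiple of the $(j{+}1)$-st form in \Cref{eqns.for.E}). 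Your route buys uniformity — no cases are left to the reader — and it makes explicit the uniqueness fact both arguments rest on, namely that a line lies on at most one member of the pencil because two distinct members meet exactly along $E$, which contains no line; the paper uses this silently. It also substantiates the paper's own remark that the equations \Cref{eqns.for.E} are ``illuminated'' by \Cref{quadrics.Q.tau}. What the paper's case-by-case computation buys is explicit secant lines with named points on each quadric, data it reuses later (the line it finds on $Q(\tau)$ reappears in the proof of \Cref{prop.Mpq.onto.V}). The one ambiguity in your symmetry step — whether the induced action on points translates by $+\ve_j$ or $-\ve_j$ — is indeed harmless since $\xi_j$ is $2$-torsion, exactly as you note.
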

\begin{proof}
The line $x_0+ix_1=x_2-ix_3=0$ lies on the quadric $x_0^2+x_1^2+x_2^2+x_3^2=0$ and passes through
the points $-\tau'+\ve_1=(a,ia,i,1)$ and $-\tau'-\ve_1=(-\tau'+\ve_1)-\xi_1=(a,ia,-i,-1)$. Thus, the secant line through $-\tau'+\ve_1$ and $-\tau'-\ve_1$ lies on the quadric $x_0^2+x_1^2+x_2^2+x_3^2=0$. Hence that quadric is $Q(-\tau'+\ve_1-\tau'-\ve_1)=Q(\tau)$. 

We will do one more case and leave the other two cases to the reader. 

The line $x_0-icx_1=icx_2+x_3=0$ lies on the quadric $x_0^2+\c x_1^2-\a\c x_2^2-\a x_3^2=0$
and passes through the points $(ic,1,-i,-c)$ and $(ic,1,i,c)$. It follows from the various formulas above that
$(ic,1,-i,-c)=(-\tau'+\ve_2)+\ve_1$ and $(ic,1,i,c)=(-\tau'+\ve_2)-\ve_1$. 
Since $\tau+\xi_2$ is the sum of  $(-\tau'+\ve_2)+\ve_1$ and $(-\tau'+\ve_2)-\ve_1$ the line  
through the latter two points lies on $Q(\tau+\xi_2)$.  Therefore $Q(\tau+\xi_2)$ is 
 the quadric $x_0^2+\c x_1^2-\a\c x_2^2-\a x_3^2=0$.  
\end{proof}

\subsection{Some 2-dimensional simple $S$-modules $V(\tau+\xi)$, $\xi \in E[2]$}
\label{ssect.2-dim-irreps}
This section concerns un-graded $S$-modules.

The finite dimensional simple $S$-modules were classified in \cite{Skl83} and \cite{SSJ93} when $k=\CC$ and $\tau$ has infinite order: for each integer $n\ge 0$, there are four 1-parameter families of simple 
$S$-modules of dimension $n+1$ which were labelled $V(n\tau+\xi)^\l$, $\xi \in E[2]$, $\l \in k^\times$, in \cite{SSJ93}.
In  \cite{Skl83}, Sklyanin defined them by having the $x_j$'s
act as difference operators on certain spaces of theta functions. 

\Cref{prop.2-dim-irreps} describes four 2-dimensional simple modules that exist for all $k$ and all 
$\tau$ whose order is not 2 or 4. The four 1-parameter families of 2-dimensional simple $S$-modules can be obtained from these four modules in the following way: if $V$ is a left $S$-module and $\l \in k^\times$, let $V^\l$ be the vector space $V$ with a new action of $S$ in which each $x \in S_1$ acts on  $V^\l$ as $\l x$ acts on $V$.  

\subsubsection{Quaternions}
Define
\begin{equation}
\label{defn.a_i}
q_0=\begin{pmatrix} 1 & 0 \\ 0 & 1 \end{pmatrix}, \qquad  q_1=\begin{pmatrix} i & 0 \\ 0 & -i \end{pmatrix}, \qquad q_2= \begin{pmatrix} 0 & i \\ i & 0 \end{pmatrix}, \qquad q_3=  \begin{pmatrix} 0 & -1 \\ 1 & 0 \end{pmatrix}. 
\end{equation}
Then $q_1^2=q_2^2=q_3^2 = -1$ and if $(i,j,k)$ is a cyclic permutation of $(1,2,3)$, then $q_iq_j=q_k$ and $q_iq_j+q_jq_i=0$.

\begin{proposition} 
\label{prop.2-dim-irreps}
For $j=0,1,2,3$, let $\rho_j:S \to M_2(k)$ be the homomorphism with $\rho_j(x_i)$ equal to the entry in row 
$\rho_j$ and column $x_i$ of \Cref{2-dim-irreps}. The $\rho_j$'s are structure maps for four pairwise non-isomorphic 
2-dimensional simple $S$-modules, $V(\tau+\xi_j)$.  
\begin{table}[htp]
\begin{center}
\begin{tabular}{|c|c|c|c|c|c|c|c|c|}
\hline
 & &$x_0$ &  $x_1$ &  $x_2$ &  $x_3$  
 \\
\hline
$V(\tau)$ & $\rho_0$   & $1$ &   $q_1$ &  $q_2$ &  $q_3$ $\phantom{\Big)}$
\\
\hline
$V(\tau+\xi_1)$   & $\rho_1$ & $bc q_1$ &   $-i $ &  $-ib q_3$ &  $-c q_2$ $\phantom{\Big)}$
\\
\hline
$V(\tau+\xi_2)$  & $\rho_2$  & $ac q_2$ &   $-a  q_3$ &  $-i$ &  $ -ic q_1$ $\phantom{\Big)}$
\\
\hline
$V(\tau+\xi_3)$   & $\rho_3$ & $ab q_3$ &   $-ia q_2$ &  $-bq_1$ &  $ -i  $ $\phantom{\Big)}$
\\
\hline
\end{tabular}
\end{center}
%\vskip .12in
\caption{2-dimensional simple $S$-modules.}
\label{2-dim-irreps}
\end{table}

\noindent
If $1 \le j \le 3$, then $V(\tau+\xi_j) \cong \phi_j^* V(\tau)$ where $\phi_j$ is the automorphism in 
\Cref{prop.aut.S}. 
\end{proposition}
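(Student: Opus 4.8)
The plan is to treat $\rho_0$ as the fundamental case and derive the other three maps from it by precomposition with the automorphisms $\phi_j$. First I would verify directly that $\rho_0$ respects the defining relations \eqref{S-relns}. Since $\rho_0(x_0)=q_0$ is the identity matrix, the commutator $[\rho_0(x_0),\rho_0(x_i)]$ vanishes, while $\{\rho_0(x_j),\rho_0(x_k)\}=q_jq_k+q_kq_j=0$ because the quaternions anticommute; this checks the first relation. For the second, $\{\rho_0(x_0),\rho_0(x_i)\}=2q_i$ and $[\rho_0(x_j),\rho_0(x_k)]=q_jq_k-q_kq_j=q_i-(-q_i)=2q_i$, using $q_jq_k=q_i$ for cyclic $(i,j,k)$. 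Thus $\rho_0$ is a well-defined algebra homomorphism $S\to M_2(k)$. The four matrices $q_0,q_1,q_2,q_3$ are linearly independent, so $\rho_0$ is surjective, whence $k^2$, being the unique simple $M_2(k)$-module, is simple as an $S$-module; that is, $V(\tau)$ is a $2$-dimensional simple module.

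Next I would observe, by comparing \Cref{2-dim-irreps} entry-by-entry with \Cref{autom}, that $\rho_j=\rho_0\circ\phi_j$ for $j=1,2,3$; for example $\rho_0(\phi_1(x_0))=\rho_0(bc\,x_1)=bc\,q_1=\rho_1(x_0)$, and likewise for the remaining generators. Because $\phi_j$ is an algebra automorphism of $S$ (\Cref{prop.aut.S}) and $\rho_0$ is a homomorphism, each $\rho_j$ is automatically a homomorphism, so all four maps are genuine structure maps of $2$-dimensional $S$-modules. The identity $\rho_j=\rho_0\circ\phi_j$ is exactly the assertion $V(\tau+\xi_j)\cong\phi_j^*V(\tau)$, which at once proves the final sentence of the proposition and shows each $V(\tau+\xi_j)$ is simple, since pullback along an automorphism preserves simplicity.

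It remains to establish that the four modules are pairwise non-isomorphic, and this is the only step that is not a mechanical verification. For each $j$ the scalars $\operatorname{tr}\rho_j(x_m)^2$, $m=0,1,2,3$, are isomorphism invariants of $V(\tau+\xi_j)$, since isomorphic modules make each $x_m^2$ act by conjugate, hence equal-trace, matrices. A short computation with the matrices in \Cref{2-dim-irreps} gives the four tuples $(2,-2,-2,-2)$, $(-2\beta\gamma,-2,2\beta,-2\gamma)$, $(-2\alpha\gamma,-2\alpha,-2,2\gamma)$ and $(-2\alpha\beta,2\alpha,-2\beta,-2)$ for $j=0,1,2,3$ respectively. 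Comparing any two of these tuples, they differ in at least one entry precisely because $\alpha,\beta,\gamma\in k-\{0,\pm1\}$: e.g. $V(\tau)$ and $V(\tau+\xi_1)$ are separated by the fourth entry ($-2\ne -2\gamma$ as $\gamma\ne1$), $V(\tau+\xi_1)$ and $V(\tau+\xi_2)$ by the second ($-2\ne -2\alpha$ as $\alpha\ne1$), and $V(\tau+\xi_2)$ and $V(\tau+\xi_3)$ by the third ($-2\ne-2\beta$ as $\beta\ne1$). The remaining pairs are handled identically, so the four modules are pairwise non-isomorphic.

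The main obstacle I anticipate is purely bookkeeping: correctly matching the columns of \Cref{2-dim-irreps} against $\rho_0\circ\phi_j$ and computing the trace table without sign errors. Conceptually there is no deeper difficulty, since the hypothesis $\alpha_i\in k-\{0,\pm1\}$ already forces every relevant pairwise coincidence of traces to fail; in particular one need not pass to the full central character (which would record the supporting points $\tau+\xi_j$ via the $\Omega(z)$ of \Cref{prop.H4.action.on.Z}), as the squared-generator traces alone separate all four simple modules.
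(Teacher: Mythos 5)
Your proposal is correct, and its skeleton coincides with the paper's proof: verify that $\rho_0$ respects the relations \Cref{S-relns} via the quaternion identities, deduce simplicity of $V(\tau)$ from surjectivity of $\rho_0$ onto $M_2(k)$, and obtain the other three modules as $\phi_j^*V(\tau)$ by observing $\rho_j=\rho_0\circ\phi_j$. The only place you diverge is the pairwise non-isomorphism step. The paper argues via annihilators of degree-one elements: $x_j+i$ annihilates $V(\tau+\xi_j)$ (since $\rho_j(x_j)=-i$ is scalar) but annihilates none of the other three modules (where $x_j$ acts by a non-scalar multiple of some $q_m$), and $x_0-1$ annihilates $V(\tau)$ but not the others. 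You instead use the traces of the actions of $x_m^2$, which are conjugation invariants; your four tuples are computed correctly, and the separations you cite do follow from $\alpha_i\in k-\{0,\pm1\}$ together with $\mathrm{char}\,k\neq 2$ (needed, e.g., to distinguish $2\beta$ from $-2\beta$). Both arguments are equally elementary; the paper's is slightly quicker since it requires no computation beyond reading off which generators act as scalars, whereas yours has the mild advantage of invoking only the standard trace invariant rather than inspecting annihilators, at the cost of a small bookkeeping table.
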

\begin{proof}
First, $V(\tau)$ is an $S$-module because 
the elements in the row labelled $V(\tau)$ satisfy the relations
 \begin{align*}
[x_0,x_1] & \; = \; 0 \; = \; \{x_2,x_3\} \qquad \qquad  \{x_0,x_1\} \; = \; 2x_1 \; = \; [x_2,x_3] 
\\
[x_0,x_2] & \; = \; 0 \; = \; \{x_3,x_1\} \qquad \qquad \{x_0,x_2\} \; = \; 2x_2 \; = \; [x_3,x_1] 
\\
[x_0,x_3] & \; = \; 0 \; = \; \{x_1,x_2\} \qquad \qquad \{x_0,x_3\} \; = \; 2x_3 \; = \; [x_1,x_2]
\end{align*}
and therefore satisfy the six defining relations for $S$.  
Since $M_2(k)$ is generated as an algebra by $\{1,q_1,q_2,q_3\}$, $V(\tau)$ is  a simple $S$-module. 

The other three modules are obtained from $V(\tau)$ by applying the autoequivalences $\phi_j^*$ of $\Mod(S)$
induced by the $k$-algebra automorphisms $\phi_j$ in \Cref{prop.aut.S}. Explicitly, $\phi_j^*(V(\tau))$ is obtained by
having each $x_i$ act on $k^2$ by $\phi_j(x_i)$. Thus, applying $\phi_j$ to the elements in row $V(\tau)$ gives the
elements in the row $V(\tau+\xi_j)$. Of course, $V(\tau+\xi_j)$ is simple because $V(\tau)$ is. 

Let $j \in \{1,2,3\}$. The four modules are pairwise non-isomorphic because $x_j+i$ annihilates $V(\tau+\xi_j)$  but none of the other three modules, and $x_0-1$ annihilates $V(\tau)$ but not $V(\tau+\xi_j)$.
\end{proof}

It is easy to see that $V(\tau+\xi_j)$ is annihilated by the central element 
$$
\begin{cases}
\Omega+4 & \text{if $j=0$,}
\\
 x_0^2+\b\c x_1^2-\c x_2^2+\b x_3^2 +4\b\c & \text{if $j=1$},
 \\
  x_0^2+\c x_1^2+\a\c x_2^2- \a x_3^2 +4\a\c & \text{if $j=2$},
\\
x_0^2-\b x_1^2+\a x_2^2+\a\b x_3^2+4\a\b & \text{if $j=3$}.
\end{cases}
$$

The next proof uses the duality between lines in $\PP(S_1)$ and lines in $\PP(S_1^*)$ that is induced by the 
duality $X \longleftrightarrow X^\perp$ between 2-dimensional subspaces of $S_1$ and 
2-dimensional subspaces of $S_1^*$. 

\begin{proposition}
\label{prop.Mpq.onto.V}
Let $\xi \in E[2]$ and $p,q \in E$. If $p+q=\tau+\xi$, then
\begin{enumerate}
  \item 
there is a surjective homomorphism $M_{p,q} \to V(\tau+\xi)$ in $\Mod(S)$ and
  \item 
$\Hom_S(M_{p,q}, V(\tau+\xi)) \cong k$.
\end{enumerate} 
\end{proposition}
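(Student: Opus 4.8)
The plan is to reinterpret $\Hom_S(M_{p,q},V(\tau+\xi))$ as the space of common kernel vectors of a pencil of $2\times 2$ matrices, and to control that space using the determinant quadric on $M_2(k)$ together with the duality between lines in $\PP(S_1)$ and lines in $\PP(S_1^*)$. Since $M_{p,q}=S/S(\overline{pq})^\perp$ is generated by its degree-zero part, a map $M_{p,q}\to V(\tau+\xi)$ in $\Mod(S)$ is exactly a vector killed by the $2$-dimensional space $(\overline{pq})^\perp\subseteq S_1$, so
$$
\Hom_S(M_{p,q},V(\tau+\xi))\;\cong\;\{v\in V(\tau+\xi): (\overline{pq})^\perp\cdot v=0\}.
$$
First I would reduce to $\xi=o$. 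Because $V(\tau+\xi_j)\cong\phi_j^*V(\tau)$ (\Cref{prop.2-dim-irreps}), $\ve_j^*M_{p,q}\cong M_{p+\ve_j,q+\ve_j}$ (proof of \Cref{cor_H4_action_on_center}), and $\phi_j^*\cong\ve_j^*$ on graded modules ($\phi_j$ being a scalar multiple of $\ve_j$, and scalar twists acting trivially on graded modules), applying the auto-equivalence $\phi_j^*$ turns a surjection $M_{p-\ve_j,q-\ve_j}\to V(\tau)$ into one $M_{p,q}\to V(\tau+\xi_j)$ and matches the two Hom-spaces. As $(p-\ve_j)+(q-\ve_j)=(p+q)-2\ve_j=\tau$ by \Cref{prop.transl.by.E4}, it suffices to treat $V(\tau)$ under the hypothesis $p+q=\tau$.

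For $V(\tau)$ the structure map $\rho_0\colon S_1\to M_2(k)$, $x_i\mapsto q_i$, is a linear isomorphism, and a short computation with \eqref{defn.a_i} gives $\det\rho_0(\sum_i c_ix_i)=c_0^2+c_1^2+c_2^2+c_3^2$. Thus $\rho_0(f)$ is singular iff $f$ lies on the quadric $\hat Q:=\{\sum_ic_i^2=0\}\subseteq\PP(S_1)$, and a nonzero common kernel vector can exist only when every matrix in $\rho_0\big((\overline{pq})^\perp\big)$ is singular, i.e. only when the line $\PP\big((\overline{pq})^\perp\big)$ lies on $\hat Q$. In that case $\PP\big((\overline{pq})^\perp\big)$ is a line on the smooth determinant quadric in $\PP(M_2(k))$ and so belongs to one of its two rulings; the ruling of matrices with a fixed kernel yields a common kernel vector, unique up to scalar, while the ruling of matrices with fixed image yields none.

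It remains to decide, by duality, which lines $\overline{pq}$ fall in the kernel ruling. Since $\hat Q$ has identity Gram matrix, its dual quadric in $\PP(S_1^*)$ is $\{\sum_i x_i^2=0\}=Q(\tau)$ (\Cref{prop.Q.tau}), and the correspondence between lines on a smooth quadric surface and lines on its dual gives
$$
\PP\big((\overline{pq})^\perp\big)\subseteq\hat Q\iff\overline{pq}\subseteq Q(\tau)\iff p+q\in\{\tau,-\tau\},
$$
the last step because the two rulings of $Q(\tau)$ (smooth as $\tau\notin E[2]$) are $\{\overline{pq}:p+q=\tau\}$ and $\{\overline{pq}:p+q=-\tau\}$ (Lemma in \Cref{ssect.E.eqns}). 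Duality followed by $\rho_0$ sends rulings to rulings, so the whole family $p+q=\tau$ maps into a single ruling of the determinant quadric; I would identify that ruling as the kernel ruling using the one sample line $x_0+ix_1=x_2-ix_3=0$ from the proof of \Cref{prop.Q.tau} (which has $p+q=\tau$), since $\rho_0(x_0+ix_1)$ and $\rho_0(x_2-ix_3)$ share the kernel spanned by $(1,0)$. Hence for $p+q=\tau$ there is a common kernel vector, unique up to scalar: the resulting map $M_{p,q}\to V(\tau)$ is nonzero, hence onto as $V(\tau)$ is simple, giving $(1)$, and $\Hom_S(M_{p,q},V(\tau))\cong k$, giving $(2)$. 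I expect the main obstacle to be precisely this last point — ruling out that $p+q=-\tau$ is the kernel ruling — which forces one to verify carefully that projective duality and $\rho_0$ genuinely carry rulings to rulings, so that a single explicit line settles the dichotomy for the entire family.
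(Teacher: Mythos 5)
Your proposal is correct and takes essentially the same route as the paper's own proof: reduce to $\xi=o$ via the auto-equivalences $\phi_j^*$ (using $\phi_j^*M_{p,q}\cong\ve_j^*M_{p,q}$ and $2\ve_j=\xi_j$), identify $\Hom_S(M_{p,q},V(\tau))$ with common kernel vectors of $\rho_0\big((\overline{pq})^\perp\big)$, use the determinant quadric together with the duality $\ell\leftrightarrow\ell^\perp$ to show the duals of the kernel-ruling lines form a ruling of $Q(\tau)$, and pin down which ruling via the sample line $x_0+ix_1=x_2-ix_3=0$ annihilating $\binom{1}{0}$. The only (cosmetic) divergence is in part (2), where you read off one-dimensionality of the Hom space directly from the fact that a simple left ideal of $M_2(k)$ has exactly a one-dimensional common kernel, whereas the paper argues by contradiction using two disjoint secants in the same ruling of $Q(\tau)$; both rest on the same geometry of the determinant quadric.
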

\begin{proof}
(1)
Consider the case $\xi=o$. The element ${1 \choose 0} \in V(\tau)$ is annihilated by $x_0+ix_1$ and $x_2-ix_3$.
In the proof of \Cref{prop.Q.tau}, we observed that the line  $x_0+ix_1=x_2-ix_3=0$ is the secant line through the points
 $-\tau'+\ve_1$ and $-\tau'-\ve_1$. Hence, there is a surjective homomorphism
$$
M_{-\tau'+\ve_1,-\tau'-\ve_1} \; =  \; \frac{S}{S(x_0+ix_1)+S(x_2-ix_3)} \; \twoheadrightarrow \; V(\tau).
$$
 
The action of $S$ on $V(\tau)$ restricts to a linear isomorphism $\rho:S_1 \to \End_kV(\tau)$. We will also write $\rho$ 
for the induced isomorphism $\PP(S_1) \to \PP(\End_kV(\tau))$. 
 Let $Q \subseteq \PP(\End_kV(\tau))$ be the quadric where the determinant vanishes. Let $\bf{L}$ denote the set
of lines on $Q$ that are the images of the simple left ideals in $\End_kV(\tau)$. The lines in $\bf{L}$
provide a ruling on $Q$.
The preimage $\rho^{-1}(Q)$ of $Q$ in $\PP(S_1)$ is also a smooth quadric and 
$\rho^{-1}({\bf L})=\{\rho^{-1}(\ell)  \; | \;  \ell \in \bf{L}\}$ is a ruling on $\rho^{-1}(Q)$. 

If $x=\l_0 x_0+\l_1x_1+\l_2 x_2 +\l_3 x_3$, then $\det\big(\rho(x)\big) = \l_0^2+\l_1^2+\l_2^2+\l_3^2$
so $\rho^{-1}(Q)=\{\l_0 x_0+\l_1x_1+\l_2 x_2 +\l_3 x_3 \; | \; \l_0^2+\l_1^2+\l_2^2+\l_3^2=0\}$. 
It is easy to verify the following fact: if $\ell \in \rho^{-1}(\bf{L})$, then the line $\ell^\perp :=\{\hbox{the points where $\ell$ vanishes}\}  \subseteq \PP(S_1^*)$ lies on the quadric $x_0^2+x_1^2+x_2^2+x_3^2=0$, i.e., on $\Omega(\tau)$.
Therefore $\{\ell^\perp \; | \;  \ell\in \rho^{-1}(\bf{L})\}$ is a ruling on $\Omega(\tau)$. 
Since the secant line through $-\tau'+\ve_1$ and $-\tau'-\ve_1$ belongs to $\{\ell^\perp \; | \;  \ell\in \rho^{-1}(\bf{L})\}$
it follows that 
$$
\{\ell^\perp \; | \;  \rho( \ell)\in {\bf L}\}   \; = \; \{\overline{pq} \; | \; p,q \in E \, \hbox{ and } p+q=\tau\}.
$$
This equality says that if $p+q=\tau$, then $S(\overline{pq})^\perp$ annihilates a non-zero element in $V(\tau)$ so there
is a surjective map
$$
M_{p,q} \; = \; \frac{S}{S(\overline{pq})^\perp} \; \twoheadrightarrow \; V(\tau).
$$

This completes the proof of (1) when $\xi=o$. 

The other cases are obtained by applying the auto-equivalences $\phi_j^*$, $j=1,2,3$, to the case $\xi=o$. 
There is a surjective homomorphism $\phi_j^*M_{p,q} \to \phi_j^*V(\tau)=V(\tau+\xi_j)$.
However, because $\phi_j$ is a scalar multiple of $\ve_j$,  $\phi_j^*M_{p,q} \cong \ve_j^*M_{p,q} \cong
M_{p+\ve_j,q+\ve_j}$.\footnote{This last isomorphism is discussed in the proof of \Cref{cor_H4_action_on_center}.}
In conclusion, if $p,q \in E$ are such that $p+q=\tau+\xi_j$, then there is a surjective homomorphism
$M_{p,q} \to V(\tau+\xi_j)$. 

(2)
Arguing as in the previous paragraph, it suffices to prove (2) when $\xi=o$. Since $M_{p,q}$ is a cyclic module and 
$\dim_k(V(\tau))=2$, the dimension of $\Hom_S(M_{p,q}, V(\tau+\xi))$ is at most two. However, if it were two, then 
$(\overline{pq})^\perp$ would annihilate $V(\tau+\xi)$. If $(p')+(q')$ is another divisor on $E$ such that $p'+q'=\tau$,
then the lines $\overline{pq}$ and $\overline{p'q'}$ belong to the same ruling on the smooth quadric $Q(\tau)$ so 
are disjoint. Hence  $(\overline{pq})^\perp+(\overline{p'q'})^\perp=S_1$. By (1) applied to $p'$ and $q'$ in place of $p$ 
and $q$ there is a non-zero element in $V(\tau)$ that is annihilated by $(\overline{p'q'})^\perp$ and hence by $S_1$.
This can not happen because $V(\tau)$ is a simple module of dimension 2. We conclude that the dimension of $\Hom_S(M_{p,q}, V(\tau+\xi))$ is one.
\end{proof}

\begin{corollary}
\label{cor.ann.V}
If $\xi \in E[2]$, then $V(\tau+\xi)$ is annihilated by $\Omega(\tau+\xi)$.  
\end{corollary}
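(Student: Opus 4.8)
The plan is to realize $V(\tau+\xi)$ as a quotient of a line module that is already known to be killed by $\Omega(\tau+\xi)$, and then transport the annihilation statement along the surjection. This reduces the corollary to two results already in hand: the characterization in \Cref{prop.line.killers} of which line modules $\Omega(z)$ annihilates, and the surjection onto $V(\tau+\xi)$ produced in \Cref{prop.Mpq.onto.V}. There is no genuine computation to perform; the only content is the elementary fact that an annihilator of a module annihilates all of its quotients.

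First I would fix $\xi \in E[2]$, choose any point $p \in E$, and set $q := \tau+\xi-p$, so that $p+q = \tau+\xi$; such a decomposition exists because every element of $E$ is a sum of two points. Applying \Cref{prop.line.killers}(1) with $z = \tau+\xi$, the equality $p+q = \tau+\xi = z$ shows that the line module $M_{p,q}$ is annihilated by $\Omega(\tau+\xi)$. Next, since $p+q = \tau+\xi$, \Cref{prop.Mpq.onto.V}(1) supplies a surjective homomorphism $M_{p,q} \twoheadrightarrow V(\tau+\xi)$ in $\Mod(S)$. Finally, for any element $m \in S$ and any surjection $f\colon M \twoheadrightarrow N$ in $\Mod(S)$, the condition $mM = 0$ forces $mN = m f(M) = f(mM) = 0$; applying this with $m = \Omega(\tau+\xi)$, $M = M_{p,q}$, and $N = V(\tau+\xi)$ yields that $\Omega(\tau+\xi)$ annihilates $V(\tau+\xi)$.

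The hard part, such as it is, is simply to notice that no centrality of $\Omega(\tau+\xi)$ is needed for the last step: annihilation of a module by an element of $S$ is automatically inherited by every quotient. The one point worth stating explicitly is that $\Omega(\tau+\xi)$ is a well-defined nonzero element of $k\Omega_0+k\Omega_1$, which is part of the setup inherited from \cite{LS93}, and that it is only defined up to a nonzero scalar — harmless for an annihilation statement. It is also reassuring to cross-check the outcome against the explicit inhomogeneous annihilators listed after \Cref{prop.2-dim-irreps}: for instance $\rho_0(\Omega)=-4\cdot\mathrm{id}$ on $V(\tau)$, so $\Omega$ itself does not annihilate $V(\tau)$, whereas the distinct homogeneous element $\Omega(\tau)\in k\Omega_0+k\Omega_1$ does, consistent with the argument above.
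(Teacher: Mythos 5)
Your proof is correct and is exactly the argument the paper intends: \Cref{cor.ann.V} is stated without proof immediately after \Cref{prop.Mpq.onto.V} precisely because the surjection $M_{p,q}\twoheadrightarrow V(\tau+\xi)$ for $p+q=\tau+\xi$, combined with \Cref{prop.line.killers}(1), transports the annihilation by $\Omega(\tau+\xi)$ to the quotient. Your observations that centrality is not needed for this last step and that the scalar ambiguity in $\Omega(\tau+\xi)$ is harmless are both accurate.
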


\subsection{Multiplicity-two fat point modules in $\Gr(S)$}
\label{ssect.2-fat-pts}
Let $k[t]$ be the polynomial ring in one variable. As in \cite[Rmk. 2, p.79]{SSJ93}, for each $\xi \in E[2]$ we make 
$$
\wtV(\tau+\xi) \; := \; V(\tau+\xi) \otimes k[t]
$$
a graded left $S$-module by declaring that $\deg(v \otimes t^m)=m$ and having $a \in A_n$ act on $v\otimes t^m$
as $a(v \otimes t^m)=(av) \otimes t^{m+n}$. 

Clearly, for all $n \ge 0$, the graded $A$-modules $\wtV(\tau+\xi)(n)_{\ge 0}$ and  $\wtV(\tau+\xi)$ are isomorphic. 

When $k=\CC$ and $\tau$ has infinite order, the next result is a special case of results in \cite[\S4]{SSJ93}.

\begin{proposition}
\label{prop.L->F}
Let $\xi \in E[2]$ and let $p,q \in E$. If $p+q=\tau+\xi$, then 
\begin{enumerate}
  \item 
$\Hom_{\Gr(S)}\big(M_{p,q}, \wtV(\tau+\xi)\big) \cong k$;
  \item 
every non-zero $f \in \Hom_{\Gr(S)}\big(M_{p,q}, \wtV(\tau+\xi)\big) \cong k$ is surjective in degrees $\ge 1$;
\item
$\wtV(\tau+\xi)$ is 1-critical   of multiplicity 2;
  \item 
there is an exact sequence $0 \to M_{p-2\tau,q-2\tau}(-2) \to M_{p,q}  \stackrel{f}{\longrightarrow} \im(f) \to 0$.
\end{enumerate}
\end{proposition}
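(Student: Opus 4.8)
I would deduce (1) and (2) from the ungraded \Cref{prop.Mpq.onto.V}. A morphism $f\in\Hom_{\Gr(S)}\big(M_{p,q},\wtV(\tau+\xi)\big)$ is determined by $f(\bar 1)=v_0\otimes t^0$, since $M_{p,q}$ is generated in degree $0$; the defining relation $(\overline{pq})^\perp\bar 1=0$ forces $(\overline{pq})^\perp v_0=0$ in $V(\tau+\xi)$, and conversely any such $v_0$ extends to a morphism. Hence $\Hom_{\Gr(S)}\big(M_{p,q},\wtV(\tau+\xi)\big)\cong\Hom_S\big(M_{p,q},V(\tau+\xi)\big)\cong k$ by \Cref{prop.Mpq.onto.V}, proving (1). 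For (2), \Cref{prop.2-dim-irreps} shows that the action is a linear isomorphism $\rho\colon S_1\xrightarrow{\sim}\End_k V(\tau+\xi)$, so for every nonzero $v$ the annihilator $\{x\in S_1:xv=0\}$ is exactly $2$-dimensional and therefore $S_1v=V(\tau+\xi)$; an immediate induction gives $S_nv=V(\tau+\xi)$ for all $n\ge 1$. Applied to $v=v_0$ this yields $\im(f)_n=(S_nv_0)\otimes t^n=\wtV(\tau+\xi)_n$ for $n\ge 1$, while $\im(f)_0=k\,v_0\otimes t^0$ is proper, so a nonzero $f$ is surjective precisely in degrees $\ge 1$.

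For (3), let $0\ne N\subseteq\wtV(\tau+\xi)$ be a graded submodule and choose a nonzero homogeneous element $w\otimes t^m\in N_m$. The same computation gives $S_n(w\otimes t^m)=(S_nw)\otimes t^{m+n}=\wtV(\tau+\xi)_{m+n}$ for all $n\ge 1$, so $N$ agrees with $\wtV(\tau+\xi)$ in every degree $>m$ and $\wtV(\tau+\xi)/N$ is finite dimensional. Thus every proper graded quotient is finite dimensional, which together with the Hilbert series $2(1-t)^{-1}$ says exactly that $\wtV(\tau+\xi)$ has $\GKdim$ equal to $1$ and is $1$-critical of multiplicity $2$.

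For (4), parts (1)--(3) give a short exact sequence $0\to\ker f\to M_{p,q}\xrightarrow{f}\im f\to 0$ in which $\im f$ has Hilbert series $1+2t+2t^2+\cdots=(1+t)(1-t)^{-1}$, so $\ker f$ has Hilbert series $(1-t)^{-2}-(1+t)(1-t)^{-1}=t^2(1-t)^{-2}$; in particular $(\ker f)_0=(\ker f)_1=0$ and $\dim_k(\ker f)_2=1$. My plan is to let $u$ span $(\ker f)_2$ and to compute its left annihilator $W:=\{x\in S_1:xu=0\}\subseteq S_1$, which I expect to be exactly $2$-dimensional and equal to $\big(\overline{(p-2\tau)(q-2\tau)}\big)^\perp$. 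Granting this, $Su\subseteq\ker f$ is the image of a nonzero map $g\colon M_{p-2\tau,q-2\tau}(-2)\to M_{p,q}$ sending the degree-$0$ generator to $u$. Since line modules over $S$ are $2$-critical, the submodule $\ker f$ of $M_{p,q}$ is $2$-critical too, so the nonzero submodule $Su$ has $\GKdim$ equal to $2$; criticality of the source $M_{p-2\tau,q-2\tau}$ then forces $g$ to be injective, and comparing the Hilbert series $t^2(1-t)^{-2}$ of $\im g\cong M_{p-2\tau,q-2\tau}(-2)$ with that of $\ker f$ gives $\im g=\ker f$. This is the sequence in (4).

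The hard part is the computation of $W$, and in particular pinning down the unordered pair as $\{p-2\tau,q-2\tau\}$ rather than some other divisor of the same degree. The only soft constraint is that $\ker f$ is annihilated by $\Omega(\tau+\xi)$ (which kills both $M_{p,q}$ and $V(\tau+\xi)$ by \Cref{cor.ann.V}), and by \Cref{prop.line.killers} this merely yields $r+s\in\{\tau+\xi,-3\tau+\xi\}$; only the explicit computation confirms $r+s=-3\tau+\xi=(p-2\tau)+(q-2\tau)$ and then resolves the individual points. To carry it out I would reduce to $\xi=o$ via the auto-equivalences $\phi_j^*$ of \Cref{prop.2-dim-irreps}, under which the whole sequence is transported with all points translated by $\ve_j$; for $\xi=o$ and the explicit secant line $\overline{pq}=\{x_0+ix_1=x_2-ix_3=0\}$ of \Cref{prop.Q.tau} I would produce a lift $\tilde u\in S_2$ of $u$ killing $v_0=\binom{1}{0}$, compute the two linear forms spanning $W$, and identify the line they cut out with $\overline{(p-2\tau)(q-2\tau)}$ using the translation formula of \cite[Cor. 2.8]{SS92} (equivalently \Cref{cor.transl.by.E4}). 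It is worth emphasizing that this kernel is not the submodule $M_{p+2\tau,q-2\tau}(-2)$ obtained by iterating the point-module sequence $0\to M_{p+\tau,q-\tau}(-1)\to M_{p,q}\to M_p\to 0$ of \cite{LS93}: the corresponding quotient there is a non-split extension of two distinct point modules and hence fails to be $1$-critical, whereas $\im f$ is $1$-critical, so the two subobjects must differ—and it is precisely the computation that produces the shift by $-2\tau$ rather than $+2\tau$.
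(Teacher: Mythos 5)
Parts (1)--(3) of your argument are correct and are essentially the paper's own proof: the paper likewise obtains the graded morphism from the ungraded surjection of \Cref{prop.Mpq.onto.V}, and uses surjectivity of the structure map $S_1\to\End_k V(\tau+\xi)$ to get surjectivity in degrees $\ge 1$ and $1$-criticality. Your identification $\Hom_{\Gr(S)}\big(M_{p,q},\wtV(\tau+\xi)\big)\cong\Hom_S\big(M_{p,q},V(\tau+\xi)\big)$ via the degree-zero generator is a tidy packaging of the paper's ``adapt the argument of \Cref{prop.Mpq.onto.V}(2)''.

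Part (4) is where there is a genuine gap. The paper proves (4) by citing \cite[Prop.~4.4]{SSJ93}; you propose instead a self-contained computation, but the one step that carries the entire content of the statement is never performed: showing that the annihilator $W\subseteq S_1$ of a generator $u$ of $(\ker f)_2$ is exactly $\big(\overline{(p-2\tau)(q-2\tau)}\big)^\perp$. The surrounding bookkeeping (Hilbert series $t^2(1-t)^{-2}$ for $\ker f$, transfer of $2$-criticality to submodules, injectivity of $g$, and $\im g=\ker f$ by comparing Hilbert series) is all fine, but it is conditional on that identification, which --- as you yourself observe --- cannot be extracted from soft arguments, since \Cref{prop.line.killers} only constrains the relevant divisor to sum to $\tau+\xi$ or $-3\tau+\xi$. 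Your plan for it is stated in the future tense (``I would produce a lift \dots compute \dots identify''), so as written the proposal proves (1)--(3) and reduces (4) to an unexecuted calculation. There is also a quantifier problem with the plan itself: you propose to carry out the computation only for the single secant line $x_0+ix_1=x_2-ix_3=0$ with $v_0=\binom{1}{0}$. The auto-equivalences $\phi_j^*$ move you between the four rulings $Q(\tau+\xi)$, $\xi\in E[2]$, and the $\G$-action moves a line to its $E[2]$-translates within a ruling, but nothing in your setup acts transitively on the $\PP^1$ of lines in a fixed ruling; so even after that computation you would have (4) only for finitely many pairs $(p,q)$ per $\xi$, not for all $p,q$ with $p+q=\tau+\xi$. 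To finish along your lines you would need to run the computation with the ruling parametrized (say $v_0=(s:t)\in\PP(k^2)$, with $\overline{pq}$ the corresponding line of the ruling), or else simply invoke \cite[Prop.~4.4]{SSJ93} as the paper does.
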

\begin{proof}
(1)
Let $ \fol:M_{p,q} \to V(\tau+\xi)$ be a surjective $S$-module homomorphism as in \Cref{prop.Mpq.onto.V}.  
For $m \in (M_{p,q})_n$ define $f(m):=\fol(m) \otimes t^n$.   It is easy to see that $f$ is a morphism in $\Gr(S)$ 
(cf., \cite[Rmk. 2, p.79]{SSJ93}). The argument used to prove \Cref{prop.Mpq.onto.V}(2) can be adapted to show that 
the dimension of $\Hom_{\Gr(S)}\big(M_{p,q}, \wtV(\tau+\xi)\big)$ is one.

(2) 
Since the structure map $S_1 \to \End_kV(\tau+\xi)$ is surjective  (see \Cref{2-dim-irreps}), $S_1(v \otimes 1) = V(\tau+\xi) \otimes t$ for all non-zero $v$ in $V(\tau+\xi)$. Therefore $f((M_{p,q})_1)=\wtV(\tau+\xi)_1$. 
The result follows.

(3)
Certainly,  $\wtV(\tau+\xi)$ has multiplicity 2. If $v$ is a non-zero element in  $V(\tau+\xi)$, then $S_1v=V(\tau+\xi)$.
Hence $S_1(v \otimes t^n)= \wtV(\tau+\xi)_{n+1}$ for all $n \ge 0$.  It follows that every proper graded quotient module of
 $\wtV(\tau+\xi)$ has finite dimension. Hence  $\wtV(\tau+\xi)$ is 1-critical. 

(4)
This is a special case of \cite[Prop. 4.4]{SSJ93}.  
\end{proof}

\subsubsection{Some functors}
Let $R$ be a graded $k$-algebra. 

We will now define a functor $G:\Mod(R) \to \Gr(R)$. 
Let $k[t]$ be the polynomial ring in one variable. If $V$ is a 
left $R$-module we make $G(V):=V \otimes k[t]$ a graded $R$-module by declaring that $(GV)_n=V \otimes t^n$
and making $r \in R_d$ act on $v \otimes t^n$ by $r(v \otimes t^n)=(rv) \otimes t^{n+d}$. If $h:V \to V'$ is a 
homomorphism of left $R$-modules we define $G(h):G(V) \to G(V')$ by $G(h)(v \otimes t^n)=h(v) \otimes t^n$.

If $\l \in k^\times$, let $\psi_\l\in \Aut_{\rm gr}(R)$ be the graded $k$-algebra automorphism defined by 
$\psi_\l(r)=\l^nr$ for $r \in R_n$. The auto-equivalence $\psi_\l^*:\Gr(R) \to \Gr(R)$ is defined as follows: 
\begin{enumerate}
  \item 
 if $M \in \Gr(R)$ the underlying graded vector space for $\psi_\l^*M$ is $M$; 
  \item 
  if $m \in M$ we write $m_\l$ for $m$ viewed as an element in $\psi_\l^*M$; 
  \item 
  the action of $r \in R_d$ on $m_\l \in \psi_\l^* M$ is $rm_\l=\l^d (rm)_\l$;
  \item
 if $h:M \to N$ is a morphism in $\Gr(R)$, then $\psi_\l^*(h)=h$. 
\end{enumerate}

 \begin{lemma}
\label{old.lemma}
Let $R$ be a graded $k$-algebra, $\l \in k^\times$, $V \in \Mod(R)$, and $M \in \Gr(R)$.
\begin{enumerate}
  \item 
There is an isomorphism $f:M \to \psi_\l^*M$ in $\Gr(R)$ given by $f(m)=\l^n m$ for $m \in M_n$. 
  \item 
There are isomorphisms $  \psi_\l^*GV  \stackrel{f}{\longleftarrow} GV \stackrel{\theta}{\longrightarrow} G\psi_\l^* V$ in
 $\Gr(R)$ given by $f(v \otimes t^n)=\l^n(v \otimes t^n)_\l$ and $\theta(v \otimes t^n)=\l^n v_\l \otimes t^n$.
 \item
If $\phi \in \Aut_{\sf gr}(R)$, then $G(\phi^*V) \cong \phi^*(G(V))$ in $\Gr(R)$.
  \end{enumerate}
\end{lemma}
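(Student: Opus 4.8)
The plan is to treat all three parts as routine verifications that an explicitly prescribed, degree-preserving linear bijection intertwines the two relevant module structures; the only genuine content is confirming that the powers of $\lambda$ introduced by the degree shift built into $G$ and by the twist $\psi_\lambda^*$ combine correctly. I would prove part (1) first as the prototype and then reduce the other two to it or to a direct comparison of module structures.

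For part (1), $f$ multiplies $M_n$ by the nonzero scalar $\lambda^n$, so it is automatically a bijective, degree-preserving linear map; the only thing to check is $R$-linearity. For $r \in R_d$ and $m \in M_n$ I would compute $f(rm) = \lambda^{n+d}(rm)_\lambda$ on one side, and on the other $r f(m) = r\big(\lambda^n m_\lambda\big) = \lambda^n\big(\lambda^d (rm)_\lambda\big)$, using the defining action $r m_\lambda = \lambda^d (rm)_\lambda$ of $\psi_\lambda^*$. These agree because $\lambda^{n+d} = \lambda^n \lambda^d$, which settles (1).

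For part (2), the map $f$ is literally the instance of part (1) with $M = GV$, so it is an isomorphism with no further work. For $\theta$, observe that it scales the degree-$n$ piece $V \otimes t^n$ by $\lambda^n$ while replacing $v$ by its twisted avatar $v_\lambda$, hence is a graded linear bijection; $R$-linearity again reduces to scalar bookkeeping. Explicitly, for $r \in R_d$ one finds $\theta\big(r(v \otimes t^n)\big) = \lambda^{n+d}\,(rv)_\lambda \otimes t^{n+d}$, while $r\,\theta(v \otimes t^n) = \lambda^n\big(r\cdot(v_\lambda \otimes t^n)\big)$, and the action of $r$ on $G\psi_\lambda^* V$ first applies the twisted action $r \cdot v_\lambda = \lambda^d (rv)_\lambda$ and then shifts the $t$-degree, producing the same factor $\lambda^{n+d}$.

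For part (3), the crucial simplification is that $\phi$ is degree-preserving, so $\phi(r) \in R_d$ whenever $r \in R_d$. I would then simply observe that $G(\phi^*V)$ and $\phi^*(GV)$ share the underlying graded vector space $V \otimes k[t]$, with $n$-th graded piece $V \otimes t^n$, and that in both modules $r \in R_d$ acts on $v \otimes t^n$ by $(\phi(r)v) \otimes t^{n+d}$; hence the identity map is the required isomorphism. The main obstacle, such as it is, is purely notational: disentangling the coexisting $R$-actions — the one on $GV$, the graded twist $\psi_\lambda^*$, and the ungraded twists $\psi_\lambda^*$ and $\phi^*$ — and confirming that the $\lambda$-powers coming from the degree shift and from the twist match up. Once these conventions are unwound, each identity is immediate.
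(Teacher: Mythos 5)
Your proposal is correct and follows essentially the same route as the paper: parts (1) and (2) are the identical scalar-bookkeeping verifications, and your part (3) — identifying both modules with $V \otimes k[t]$ and noting that $r \in R_d$ acts in each by $v \otimes t^n \mapsto (\phi(r)v) \otimes t^{n+d}$ — is exactly the paper's map $g(v_\phi \otimes t^n) = (v \otimes t^n)_\phi$, which is the identity on the underlying space once the decorations are stripped away.
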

\begin{proof}
(1) 
Certainly, $f$ is an isomorphism of graded vector spaces. Let $r \in R_j$ and $m \in M_n$. Then $r(f(m)_\l)=r(\l^n m_\l)
= \l^j\l^n (rm)_\l=\l^{n+j}(rm)_\l=f(rm)$. Hence $f$ is an $R$-module homomorphism and therefore an isomorphism of graded $R$-modules.

(2) 
Certainly, $\theta$ is an isomorphism of 
graded vector spaces. If $r \in R_d$, then $\theta(r(v \otimes t^n)) = \theta((rv) \otimes t^{n+d}) = \l^{n+d}(rv)_\l \otimes t^{n+d} $ and  $r\theta(v \otimes t^n)=r(\l^n v_\l \otimes t^n)=\l^n r(v_\l \otimes t^n)=\l^n\l^d (rv)_\l \otimes t^{n+d}$.
Hence $\theta(r(v \otimes t^n)) =r\theta(v \otimes t^n)$; i.e., $\theta$ is an $R$-module homomorphism and hence an isomorphism in $\Gr(R)$. 
The function $f$ in part (2) is the isomorphism $f$ in part (1) applied to $M=GV$.

(3)
By definition, $\phi^*V$ is equal to $V$ as a vector space and we write $v_\phi$ when an element $v \in V$ is considered
as an element of $\phi^*V$. Thus, $rv_\phi=(\phi(r)v)_\phi$ for all $r \in R$ and $v\in V$. 

The function $g:G(\phi^*V) \to \phi^*(G(V))$ given by $g(v_\phi \otimes t^n)=(v \otimes t^n)_\phi$ is an 
isomorphism of graded vector spaces. It is also a homomorphism of $R$-modules because if $r \in R_d$, then
$$
g(r(v_\phi \otimes t^n)) =g(rv_\phi \otimes t^{n+d})) =g((\phi(r)v)_\phi \otimes t^{n+d})= (\phi(r)v \otimes t^{n+d})_\phi
$$ 
and 
$$
rg(v_\phi \otimes t^n) =r  (v \otimes t^n)_\phi = \big( \phi(r)(v \otimes t^n) \big)_\phi  =   (\phi(r)v \otimes t^{n+d})_\phi.
$$ 
Hence $g$ is an isomorphism in $\Gr(R)$.
\end{proof}

\begin{proposition}
\label{prop.V.tau.F.tau}
$\phantom{x}$
\begin{enumerate}
  \item 
  $\c^* V(\tau) \cong V(\tau)$ for all $\c \in \G=\langle\ve_1^2,\ve_2^2\rangle\subseteq H_4$.
  \item 
  $\c^*\wtV(\tau+\xi_j) \cong \wtV(\tau+\xi_j)$ for all  $\c \in \G \times Z=\langle\ve_1^2,\ve_2^2,\d\rangle 
  \subseteq  H_4$ and all $j\in \{0,1,2,3\}$. 
  \item
  $\phi_j^*\wtV(\tau) \cong \wtV(\tau+\xi_j)$ for $j\in \{0,1,2,3\}$.
\end{enumerate}
\end{proposition}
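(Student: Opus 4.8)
My plan is to prove the three parts in the order (1), (3), (2), since part (1) supplies the basic intertwiners and part (3) feeds directly into the argument for (2).

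For (1), I would realize the isomorphism $\c^*V(\tau)\cong V(\tau)$ explicitly by an intertwiner inside $\End_k V(\tau)=M_2(k)$, and it suffices to treat the generators $\c=\c_1,\c_2$ of $\G$. Write $\rho_0$ for the structure map of $V(\tau)$, so $\rho_0(x_0)=1$ and $\rho_0(x_m)=q_m$. Then $\c_l^*V(\tau)$ has structure map $x_m\mapsto\rho_0(\c_l(x_m))$, and I claim conjugation by the quaternion $q_l$ realizes it: since $q_l^{-1}=-q_l$ and the $q_m$ mutually anticommute and square to $-1$, conjugation by $q_l$ fixes $1$ and $q_l$ and negates the other two $q_m$'s. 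This matches exactly the sign pattern of $\c_l$ in Table~\ref{Gamma.action} (fix $x_0,x_l$, negate the other two $x_m$'s), so $q_l\,\rho_0(x_m)\,q_l^{-1}=\rho_0(\c_l(x_m))$ for all $m$; hence $q_l$ is an $S$-module isomorphism. The case $\c=\c_3=\c_1\c_2$ follows by composing, and the identity is trivial.

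For (3), recall from \Cref{prop.2-dim-irreps} that $V(\tau+\xi_j)\cong\phi_j^*V(\tau)$ for $1\le j\le 3$ (with $\phi_0=\id$, so $j=0$ is trivial). Since $\wtV(\tau+\xi)=G(V(\tau+\xi))$ for the functor $G\colon\Mod(S)\to\Gr(S)$ defined before \Cref{old.lemma}, I apply $G$ and invoke \Cref{old.lemma}(3), which gives $G(\phi_j^*V)\cong\phi_j^*G(V)$. Thus $\wtV(\tau+\xi_j)=G(V(\tau+\xi_j))\cong G(\phi_j^*V(\tau))\cong\phi_j^*G(V(\tau))=\phi_j^*\wtV(\tau)$.

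For (2), it is enough to produce the isomorphism for each generator of $\G\times Z=\langle\c_1,\c_2,\d\rangle$, because $\c^*M\cong M$ and $\c'^*M\cong M$ force $(\c\c')^*M=\c'^*\c^*M\cong M$. The central generator $\d$ is the scalar automorphism $\psi_i$, so $\d^*\wtV(\tau+\xi_j)\cong\wtV(\tau+\xi_j)$ is immediate from \Cref{old.lemma}(1). The main obstacle is $\c_l\in\G$: here $\c_l^*V(\tau+\xi_j)$ is in general only isomorphic to a \emph{nontrivial scalar twist} $(\psi_{-1})^*V(\tau+\xi_j)$ as an ungraded $S$-module, not to $V(\tau+\xi_j)$ itself. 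To see this, write $\rho_j=\rho_0\circ\phi_j$; since $\phi_j$ and $\ve_j$ differ by a central scalar automorphism (so they conjugate alike) and $\c_l=\ve_l^2$, the Heisenberg relations of \Cref{prop.aut.S} give $\phi_j\,\c_l\,\phi_j^{-1}=\ve_j\,\ve_l^2\,\ve_j^{-1}=(\ve_j\ve_l\ve_j^{-1})^2=\d^{2c}\,\c_l=(\psi_{-1})^{s}\,\c_l$ with $s\in\{0,1\}$ (using $\d^2=\psi_{-1}$; here $s=0$ when $j=l$, where conjugation by $q_l$ works directly, and $s=1$ when $j\ne l$, since then $\ve_j\ve_l=\d^{\pm1}\ve_l\ve_j$). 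Combined with part (1) in the form $\rho_0\circ\c_l=(\text{conjugation by }q_l)\circ\rho_0$, this yields $\c_l^*V(\tau+\xi_j)\cong(\psi_{-1}^{s})^*V(\tau+\xi_j)$.

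Passing to graded modules then removes the twist: using \Cref{old.lemma}(3) and finally \Cref{old.lemma}(1) to kill the scalar factor $\psi_{-1}^{s}$, I obtain
\[
\c_l^*\wtV(\tau+\xi_j)\;\cong\;G\big(\c_l^* V(\tau+\xi_j)\big)\;\cong\;G\big((\psi_{-1}^{s})^{*}V(\tau+\xi_j)\big)\;\cong\;(\psi_{-1}^{s})^{*}\wtV(\tau+\xi_j)\;\cong\;\wtV(\tau+\xi_j).
\]
This is precisely why the statement is phrased for the graded modules $\wtV$ and why the group is enlarged by the center $Z=\langle\d\rangle$: the scalar twists obstructing an ungraded isomorphism become trivial in $\Gr(S)$. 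Composing the generator-by-generator isomorphisms gives (2) for every element of $\G\times Z$. Alternatively, one can bypass the Heisenberg bookkeeping and verify $\c_l^*V(\tau+\xi_j)\cong(\psi_{\pm1})^*V(\tau+\xi_j)$ case-by-case directly from Table~\ref{2-dim-irreps} by exhibiting the appropriate quaternion intertwiner, which I expect to be the most transparent route to write out in full.
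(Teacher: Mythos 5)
Your proposal is correct and follows essentially the same route as the paper's proof: quaternion conjugation by $q_l$ as the intertwiner for (1), the functor $G$ together with \Cref{old.lemma}(3) for (3), and for (2) the observation that $\Gamma$ commutes with $\phi_j$ up to a central scalar automorphism $\psi_\lambda$, which is then killed in $\Gr(S)$ by \Cref{old.lemma}(1). The only cosmetic difference is that you argue generator-by-generator with an explicit intermediate ungraded isomorphism $\c_l^*V(\tau+\xi_j)\cong(\psi_{-1}^s)^*V(\tau+\xi_j)$, whereas the paper handles all $\c\in\G\times Z$ at once by transferring through $\wtV(\tau)$ via the relation $\phi_j\c=\psi_\lambda\c\phi_j$ — the underlying mechanism is identical.
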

\begin{proof}
(1)
Let $\rho:S_1 \to M_2(k)$ be the restriction to $S_1$ of the structure map for $V(\tau)$; i.e., $\rho(x_i)$ is the 
entry in row $V(\tau)$ and column $x_i$ of \Cref{2-dim-irreps}.   

We write $\c_0=1,\c_1,\c_2,\c_3$ for the elements of $\G$ as in \Cref{Gamma.action}.

We will show that the function $\theta:k^2 \to k^2$, $\theta(v) = q_jv$, is an isomorphism of left $S$-modules 
$V(\tau) \to \c_j^*V(\tau)$ by showing that $\theta\big(\rho(x)v\big) = \rho\c_j(x)\theta(v)$ for all 
$x \in S_1$ and $v \in k^2$.  
To do this it suffices to show that $q_j\rho(x_i) = \rho\c_j(x_i)q_j$ for $i=0,1,2,3$.  
This is true because
$$
\rho\c_j(x_i) q_j \; = \; 
\begin{cases} 
\rho(x_i) q_j \phantom{x} \,= \; \;q_i q_j  \, \,= q_jq_i = q_j \rho(x_i) & \text{if $j \in \{0,i\}$,}
\\
\rho(-x_i)q_j =-q_i q_j=q_jq_i = q_j \rho(x_i) & \text{if $j \notin \{0,i\}$.}
\end{cases}
$$ 
 
(2)
We first prove that $\c^*\wtV(\tau) \cong \wtV(\tau)$.
The elements in $Z=\langle \d \rangle \subseteq H_4$ act on $S_1$ as scalar multiplication. Thus, if $\c \in \G \times Z$,
then $\c^* = \c_j^*\psi_\l^*$ for some $\l \in \{ \pm i, \pm 1\}$ and some $j \in \{0,1,2,3\}$. 
By \Cref{old.lemma}, $\c_j^*\psi_\l^*\wtV(\tau)   \cong G\big(\c_j^*V(\tau)\big) \cong G\big(V(\tau)\big)=\wtV(\tau)$. 

(3) 
By  definition, $V(\tau+\xi_j) = \phi_j^*V(\tau)$ so, by \Cref{old.lemma}(3),  $\phi_j^*\wtV(\tau) = \phi_j^*G\big(V(\tau)\big)  \cong  G\phi_j^*V(\tau) =G \big(V(\tau+\xi_j)\big)  = \wtV(\tau+\xi_j)$.

Now we can finish the proof of (2). Since $\phi_j \c = \psi_\l\c\phi_j$ for some  $\l \in \{ \pm i, \pm 1\}$, 
$$
\c^*\wtV(\tau+\xi_j) \; = \; \c^*\phi_j^*\wtV(\tau) \; = \;  (\phi_j\c)^*\wtV(\tau) \; = \;   \phi_j^*\c^*\psi_\l^*\wtV(\tau)   \;  \cong \;
 \phi_j^*\c^*\wtV(\tau) \;  \cong \;
 \phi_j^*\wtV(\tau) \;  \cong \;
\wtV(\tau+\xi_j) 
$$
where, again, we used \Cref{old.lemma}(3).
\end{proof}

 \section{Preliminaries concerning $A=A(E,\tau)$}

 \subsection{The definition of $\wtQ$}
\label{ssect.quat.basis}
We make $\G$ act as $k$-algebra automorphisms of $M_2(k)$ by  $\c_j(a):=q_jaq_j^{-1}$ and  define 
$$
\wtQ:=(S \otimes M_2(k))^\G
$$
where $\G$ acts diagonally as graded $k$-algebra automorphisms of $S \otimes M_2(k)$. 
The elements  
$$
y_0:=x_0 \otimes 1, \quad y_1:=x_1\otimes q_1, \quad y_2:=x_2\otimes  q_2, \quad y_3:=x_3\otimes  q_3,
$$
are a basis for $\wtQ_1$ and  $A$ is isomorphic to the free algebra $k \langle y_0,y_1,y_2,y_3\rangle$ modulo the relations
\begin{equation}
\label{S-tilde-relns}
y_0y_i-y_iy_0 \;=\; \a_i(y_jy_k-y_ky_j) \qquad \hbox{and} \qquad y_0y_i+y_iy_0  \;=\;  y_jy_k+y_ky_j,
\end{equation}
See \cite[\S6]{CS15} for more information.

  \subsubsection{$\G$-equivariant $S'$-modules}
  We define $S':= S \otimes M_2(k)$ and write $\Gr(S')^\G$ for the category of $\G$-equivariant graded left $S'$-modules. 
By \cite[Prop. 3.9]{CS15}, there is an equivalence  of categories $\Gr(S')^\G \equiv \Gr(\wtQ)$ given by 
sending a $\G$-equivariant $S'$-module $M$ to $M^\G$, its $\G$-invariant subspace. 

We usually view a $\G$-equivariant $S'$-module  as a left $S'$-module $M$ endowed with a left action $\G \times M \to M$, 
$(\c,m) \mapsto m^\c$, such that $(xm)^\c=\c(x)m^\c$ for all $x \in S'$, $m \in M$, and $\c \in \G$. We often describe this by saying that the
actions of $\G$ and $S'$ on $M$ are {\sf compatible}.

  \subsubsection{Twisting $S$ by a 2-cocycle}
  \label{ssect.davies.cocycle}
Let $\widehat{\G}=\{1,\chi_1,\chi_2,\chi_3\}$ denote the character group of $\G$.  
Since the characteristic of $k$ does not divide $|\G|$, there is a $\widehat{\G}$-grading on $S$, 
$$
S \; = \; \bigoplus_{\chi \in \widehat{\G}} S_\chi   \; = \; \bigoplus_{\chi \in \widehat{\G}} \; \{a \in S \; | \; \c(a)=\chi(\c)a \; \hbox{ for all } \, \c \in \G\}.
$$ 

Let $\mu:\widehat{\G}  \times \widehat{\G} \to k^\times$ be the 2-cocycle  $\mu(\chi_1^p \chi_2^q,\chi_1^r\chi_2^s)  := (-1)^{ps}$. In his  Ph.D. thesis \cite[Prop.3.1.16]{Davies-arXiv}, Davies
showed that  $S$ with the new multiplication  $w*w' \; := \; \mu(\chi,\chi') ww'$
if $w \in S_{\chi}$ and $w'\in S_{\chi'}$ is isomorphic to $\wtQ$.

In \S\ref{ssect.construction.of.S-tilde}, we give a slightly different interpretation of this construction of $A$ by starting with 
the homomorphism $H_4 \to \Aut(S)$ and viewing $S$ as an algebra object in the category of  comodules over the  ring $k(H_4)$ of $k$-valued functions on $H_4$.

\subsection{Our convention for identifying $A_1$ and $S_1$}
\label{conv.identification}
 From the perspective in \S\ref{ssect.davies.cocycle}, 
 $S$ and $\wtQ$ have the same underlying $\ZZ$-graded vector space but different algebra structures. 
 
 In terms of the generators $x_i$ and $y_i$ we  adopt the convention that $S_1$ and $\wtQ_1$ are identified via
  \begin{equation*}
  x_0 \, =\, y_0, \qquad   x_1 \, =\, iy_1, \qquad  x_2 \, =\, iy_2, \qquad  x_3 \, =\, y_3. \qquad 
  \end{equation*}
We use this convention whenever we refer to $S$ and $\wtQ$ as being supported on the same graded vector space. 
Later on, we will use the fact that this is the identification used in \cite[Prop. 10.10]{CS15}.

\subsubsection{}
By \cite[Prop. 6.2]{CS15}, $\G$ also acts as automorphisms of $\wtQ$. In fact, \cite[Prop. 6.2]{CS15} shows that 
if one identifies $S_1$ and $\wtQ_1$ according to the above convention, then the action of 
$\G$ on $S_1=\wtQ_1$ extends to an action as $k$-algebra automorphisms of both $S$ and $\wtQ$.  
We will sometimes use this convention to obtain an action of $H_4$ on $S_1=\wtQ_1$;  this action of $H_4$
extends to an action of $H_4$ as $k$-algebra automorphisms of $S$ but not of $\wtQ$.

\subsection{The centers of $S$ and $A$}
\label{ssect.2centers}
Let $\Omega_j$, $j=0,1,2,3$,  be the central elements of $S$ defined in \Cref{prop.center.S}.
The only central elements in $S$ that play a role in this paper are those in the  subalgebra $k[\Omega_0,\Omega_1]=k[\Omega_0,\Omega_1,\Omega_2,\Omega_3]$.
Each $\Omega_j$ is fixed by $\G$ so
$k[\Omega_0,\Omega_1] \otimes 1 \subseteq (S \otimes M_2(k))^\G=A$. 
We will identify $k[\Omega_0,\Omega_1]$ with this subalgebra of $A$. Since $k[\Omega_0,\Omega_1] \otimes 1$ is in the center of $S \otimes M_2(k)$ this copy of 
$k[\Omega_0,\Omega_1]$ belongs to the center of $A$.  

We will take advantage of this fact by using the notation $\Omega(z)$, $z \in E$, for the 
central element $\Omega(z) \otimes 1$ in $A$. Each $\Omega(z)$ is a linear combination of $x_0^2$, $x_1^2$, $x_2^2$,
and $x_3^2$. As an element in $A$, $\Omega(z)$, which is really $\Omega(z) \otimes 1$, is a linear combination of 
$y_0^2$, $y_1^2$, $y_2^2$, and $y_3^2$. Since $y_j=x_j \otimes q_j$, 
$$
y_j^2=
\begin{cases} 
x_0^2 \otimes 1 & \text{if $j=0$,}
\\
-x_j^2 \otimes 1 & \text{if $j \ne 0$.}
\end{cases} 
$$
Thus, when considered as elements of $A$, the central elements in (\ref{central.elts.S2})   are
\begin{equation}
\label{central.A.elts}
\begin{cases}
\Omega \phantom{x} \,\; = \; -y_0^2- y_1^2- y_2^2-y_3^2,
\\
\Theta_1 \; := \; y_0^2-\b\c y_1^2+\c y_2^2-\b y_3^2,
\\
\Theta_2 \; := \; y_0^2-\c y_1^2-\a\c y_2^2+\a y_3^2,
\\
\Theta_3 \; := \;  y_0^2+\b y_1^2-\a y_2^2-\a\b y_3^2.
\end{cases}
\end{equation}

\subsection{The ring $\wtB$}
\label{ssect.B_tilde}
Let $\wtB$ be the quotient of $A$ by the ideal generated by all $\Omega(z)$. By \cite[\S8]{CS15}, there is an equivalence 
of categories $\QGr(\wtB) \equiv \qcoh(E/E[2])$. Under this equivalence the skyscraper sheaf $\cO_{p+E[2]}$ at 
the point $p+E[2]$ corresponds to the image of $M_p \otimes k^2$ where $M_p$ is the point module in $\Gr(S)$ corresponding to $p$ and $A$ acts on $M_p\otimes k^2$ by virtue of the fact that $S$ is a subalgebra of 
$S \otimes M_2(k)$. If $\omega \in E[2]$, then $M_p \otimes k^2$ and $M_{p+\omega} \otimes k^2$ are isomorphic in $\Gr(A)$.

We call the $A$-module $M_p \otimes k^2$ a fat point of multiplicity two; ``fat'' because its Hilbert series is $2(1-t)^{-1}$
and ``point'' because it is irreducible as an object in $\QGr(A)$.

\subsection{Elliptic line modules}
Let $p \in E$ and write $x=p+E[2] \in E/E[2]$. If $\xi$, $\xi'$, and $\xi''$ are the three 2-torsion points, we define the $S'$-module
\begin{equation}
\label{defn.M.x.xi}
M_{x,\xi} \; := \; (M_{p,p+\xi} \oplus M_{p+\xi',p+\xi'+\xi}) \otimes k^2.
\end{equation}

\begin{proposition}
\cite[\S10]{CS15}
If $\xi$ and $\omega$ are 2-torsion points and $x,y \in E/E[2]$, then  
 $M_{x,\xi} \cong M_{y,\omega}$ as $S'$-modules if and only if $(x,\xi)=(y,\omega)$. 
\end{proposition}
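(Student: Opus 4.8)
The plan is to reduce the assertion, by Morita theory and the classification of line modules for $S$, to an elementary statement about pairs of degree-two divisors on $E$. Since $M_2(k)$ is Morita trivial, the functor $N \mapsto N \otimes_k k^2$ (with $S$ acting on the first factor and $M_2(k)$ on $k^2$, the latter concentrated in degree $0$) is a graded Morita equivalence $\Gr(S) \xrightarrow{\sim} \Gr(S')$. This is exactly the construction of $M_{x,\xi}$: writing $E[2]=\{o,\xi,\xi',\xi''\}$ and using $\xi'+\xi=\xi''$, so that $p+\xi'+\xi = p+\xi''$, the module $M_{x,\xi}$ is the image under this equivalence of the graded $S$-module
\[
N_{x,\xi} \; := \; M_{p,p+\xi} \; \oplus \; M_{p+\xi',\,p+\xi''}.
\]
Hence $M_{x,\xi}\cong M_{y,\omega}$ as $S'$-modules if and only if $N_{x,\xi}\cong N_{y,\omega}$ in $\Gr(S)$, and it suffices to analyze the latter.

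Next I would record that each line module $M_{p,q}=S/S(\overline{pq})^\perp$ is indecomposable in $\Gr(S)$: it is cyclic, generated in degree $0$, and its Hilbert series $(1-t)^{-2}$ forces $(M_{p,q})_0=k$, so in any graded direct-sum decomposition the cyclic generator lies in one summand and the other must vanish. The same computation identifies the ring of degree-$0$ graded endomorphisms of $M_{p,q}$ with $k$, which is local. Thus $N_{x,\xi}$ is a direct sum of two indecomposables with local endomorphism rings, and the Krull--Schmidt/Azumaya theorem yields uniqueness of the decomposition up to isomorphism and permutation of summands. I expect the verification that Krull--Schmidt genuinely applies here—pinning down indecomposability and local endomorphism rings in the graded category—to be the main technical obstacle, though it is short once set up as above.

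Invoking the classification of line modules $S^2E \xrightarrow{\sim} \{\text{line modules}\}$ of \cite[Thm. 4.5]{LS93}, under which $M_{p,q}\cong M_{p',q'}$ exactly when $(p)+(q)=(p')+(q')$, the isomorphism $N_{x,\xi}\cong N_{y,\omega}$ becomes an equality of multisets of effective degree-two divisors
\[
\big\{\,(p)+(p+\xi),\;(p+\xi')+(p+\xi'')\,\big\} \; = \; \big\{\,(q)+(q+\omega),\;(q+\omega')+(q+\omega'')\,\big\}.
\]
Each divisor on the left is reduced (its two points differ by a nonzero $2$-torsion element) and the two divisors have disjoint supports, so the multiset genuinely consists of two distinct divisors.

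The final, geometric step is to recover the pair $(x,\xi)$ from the left-hand multiset. Because $\mathrm{char}(k)\ne 2$ the four points $p,p+\xi,p+\xi',p+\xi''$ are distinct and constitute precisely the coset $x=p+E[2]$, so the union of the supports of the two divisors recovers $x$. Moreover the two points lying in a common divisor differ by $\xi$: for the first divisor $(p+\xi)-p=\xi$, and for the second $(p+\xi'')-(p+\xi')=\xi''+\xi'=\xi$ since $\xi'+\xi''=\xi$ in the Klein four-group $E[2]$. The three partitions of the coset into two pairs correspond bijectively to the three nonzero elements of $E[2]$, so the partition induced by the two divisors determines $\xi$. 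Therefore the multiset determines $(x,\xi)$, giving $(x,\xi)=(y,\omega)$ and proving the ``only if'' direction. The ``if'' direction is immediate: choosing the same representative and the same $2$-torsion point gives literally the same module, and the same divisor computation shows $N_{x,\xi}$ (hence $M_{x,\xi}$) is independent of the choice of representative $p\in x$, so $M_{x,\xi}$ is a well-defined function of $(x,\xi)$.
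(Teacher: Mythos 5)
Your proof is correct. Note that this proposition is not proved in the present paper at all—it is quoted from \cite[\S10]{CS15}—so there is no in-text argument to compare against; your route (Morita reduction of $M_{x,\xi}\cong M_{y,\omega}$ to an isomorphism of the $S$-modules $M_{p,p+\xi}\oplus M_{p+\xi',p+\xi''}$, indecomposability plus $\End_{\Gr(S)}(M_{p,q})=k$ so that Krull--Schmidt/Azumaya gives uniqueness of the summands, the Levasseur--Smith bijection with degree-two divisors, and the elementary recovery of the coset $x$ from the union of supports and of $\xi$ from the induced pairing of the coset) is precisely the natural argument underlying the cited result, and every step, including the well-definedness of $M_{x,\xi}$ independent of the representative $p\in x$, checks out.
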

 
In \cite[\S10]{CS15}, we showed there are exactly two $\G$-equivariant structures on each $M_{x,\xi}$ and then, by taking the $\G$-invariant subspace of the equivariant structures on the $M_{x,\xi}$'s, we constructed  line modules for $\wtQ$ parametrized by
$(E/\langle \xi\rangle) \sqcup (E/\langle \xi'\rangle) \sqcup (E/\langle \xi''\rangle)$. 
We  call these  {\sf elliptic line modules}.

\subsection{Other remarks, conventions, and notation}

\subsubsection{Symmetries in $\wtQ$}
\label{ssect.symmetry}
Calculations in $\wtQ$ can often be carried out more efficiently by exploiting the symmetries involving the ordered triples 
$(\a_1,\a_2,\a_3)$ and $(y_1,y_2,y_3)$. To formalize this idea let $\a_1$, $\a_2$, and $\a_3$, denote central indeterminates and
define the algebra $\widehat{S}:=k[\a_1,\a_2,\a_3][y_0,y_1,y_2,y_3]$ with relations  (\ref{S-tilde-relns}). There is a $k$-algebra automorphism
$\phi:\widehat{S} \to \widehat{S}$ given by $\phi(y_0)=y_0$, and $\phi(\a_j)=\a_{j+1}$ and $\phi(y_j)=y_{j+1}$ for $j \in \{1,2,3\}=\ZZ/(3)$.
Most explicit calculations in $\wtQ$ can be interpreted as calculations in $\widehat{S}$ followed by 
the obvious specialization $\widehat{S} \to \wtQ$. 

For example, if one wants to show that the elements $\D_i:=y_0^2-\a_{j}\a_k y_i^2 +\a_k y_j^2 -\a_j y_k^2$, where $(i,j,k)$ is a cyclic permutation 
of $(1,2,3)$, are central in $\wtQ$ it suffices to show that the corresponding element $\widehat{\D}_1 \in \widehat{S}$ is central in 
$\widehat{S}$, then apply $\phi$ and $\phi^2$ to $\widehat{\D}_1$ to see that $\widehat{\D}_2$ and $\widehat{\D}_3$ are central in 
$\widehat{S}$, then specialize to $\wtQ$ to see that $\D_1$, $\D_2$, and $\D_3$, are central in $\wtQ$.

\subsubsection{Comodules}
If $H$ is a $k$-coalgebra we write $\cM^H$ for the category of right $H$-comodules. If $\dim_k(H)<\infty$, then 
$\cM^H$ is equivalent to the category $\Mod(H^*)$ of left $H^*$-modules.

\subsubsection{Invariant modules}
Let $\G$ be a group acting as automorphisms of a ring $R$. If $\c \in \G$ we denote by $\c^*$ the following 
automorphism of $\Mod(R)$:  if $f:M \to N$ is a morphism in $\Mod(R)$, 
$\c^*M$ is $M$ as an abelian group with a new action of $R$, $r*m=\c(r)m$, and $\c^*(f)=f$. If $\c^*M \cong M$ for all 
$\c \in \G$ we say that $M$ is {\sf invariant} under the action of $\G$. 

\subsubsection{Hilbert series, multiplicity, and criticality}
The  {\sf Hilbert series} of a finitely generated graded module $M$ over $S$, or $A$, or $A \otimes M_2(k)$,  is the
formal series $H(M;t)=\sum_{n \in \ZZ} \dim_k(M_n)t^n$. Because these rings have finite global dimension and Hilbert series
$(1-t)^{-4}$ for $S$ and $A$, and $4(1-t)^{-4}$ for $A \otimes M_2(k)$, $H(M;t)=f(t)(1-t)^{-d}$ for some integer $d$
and some $f(t) \in \ZZ[t^{\pm 1}]$ such that $f(1) \ne 0$. The integer $d$ is the Gelfand-Kirillov dimension, or {\sf GK-dimension} of $M$. We call $f(1)$ the {\sf multiplicity} of $M$. If $\GKdim(M)=d$ and $\GKdim(M/N)<d$ for all non-zero $N \subseteq M$ we say that $M$ is $d$-critical. If $M$ is 1-critical, then $M$ becomes a simple object in the quotient 
category $\QGr$.

%%%%%%%%%%%%%%%%%%%%%%%%%%%%%%%%%%%%%%%%%%%%%%%%%%%%%%%%%%

\section{Point modules for $\wtQ$}

The point modules for $\wtQ$ were classified in \cite[\S9]{CS15} by a rather unilluminating calculation.
In this section we show they can be obtained in a more systematic and meaningful way. 
Let $j\in \{0,1,2,3\}$ and write $F=\wtV(\tau+\xi_j)$. 
We will show there are four $\G$-equivariant structures on the $S'$-module $F \otimes k^2$,  
and that the four point modules $(F \otimes k^2)^\G$  for $A$ are the four point modules in $\fP_j$ below.

\subsection{Classification of point modules for $\wtQ$}
If $M$ is a point module for $\wtQ$, then $M \cong \wtQ/\wtQ p^\perp$ for a unique 
$p \in \PP(\wtQ_1^*)$. By \cite[\S9]{CS15}, $\wtQ$ has 20 point modules up to isomorphism and, with respect 
to the coordinate functions $(y_0,y_1,y_2,y_3)$ on $\PP(\wtQ_1^*)$, the corresponding points are those  in \Cref{table.20.pts}.

\begin{table}[htp]
\begin{center}
\begin{tabular}{|l||l|l|l|l|l|}
\hline
$\quad \fP_\infty$ & $\qquad \fP_0$ & $\qquad \fP_1$ & $\qquad \fP_2$ & $\qquad \fP_3$ & $\G$
\\
\hline
\hline
$(1,0,0,0)$ & $(1,1,1,1)$ & $(bc,-i,-ib,-c)$ & $(ac,-a,-i,-ic)$&   $(ab, -ia,-b,-i)$  & 
\\
\hline
$(0,1,0,0)$ &$(1,1,-1,-1)$   & $(bc,-i,ib,c)$  &  $(ac,-a,i,ic)$&  $(ab, -ia,b,i)$  & $\c_1$
\\
\hline
 $(0,0,1,0)$ &$(1,-1,1,-1)$   & $(bc,i,-ib,c)$  &  $(ac,a,-i,ic)$&  $(ab, ia,-b,i)$  & $\c_2$
  \\
\hline
 $(0,0,0,1)$ & $(1,-1,-1,1)$ &$(bc,i,ib,-c)$  & $(ac,a,i,-ic)$  & $(ab, ia,b,-i)$  & $\c_3$
  \\
\hline
\end{tabular}
\end{center}
\vskip .12in
\caption{The points in $\fP$.}
\label{table.20.pts}
\end{table}

\noindent
For example, if $p=(bc,-i,-ib,-c)$ the corresponding point module is
$$
M_p \; =\; \frac{A}{Ap^\perp} \; = \; \frac{A}{A(ix_0+bcx_1)+A(ix_0+cx_2)+A(x_0+bx_3)}\,.
$$ 

Each point in $\fP_\infty$ is fixed by $\G$. If $j \ne \infty$, then $\fP_j$ is a $\G$-orbit 
and the points in the column labelled $\fP_j$ are, in descending order, $p$, $\c_1(p)$, $\c_2(p)$, and $\c_3(p)$.

The involution $\theta:\fP \to \fP$ is defined by 
\begin{equation}
\label{eq.theta}
\theta(p):= 
\begin{cases} 
p & \text{if $p \in \fP_\infty \cup \fP_0$}
\\
\c_i(p) & \text{if $p \in \fP_i$, $i=1,2,3$.}
\end{cases}
\end{equation} 
If $M_{p}$ is the point module corresponding to $p \in \fP$, then $M_{p}(1)_{\ge 0} \cong M_{\theta(p)}$. 
In particular, if $p \in \fP_j$ and $j \ne \infty$, then  $M_{p}(1)_{\ge 0} \cong M_{\c_j(p)} \cong \c_j^*M_p$.

We call point modules in $\fP_\infty$ {\sf special} and those in the other $\fP_j$'s {\sf ordinary}.

\begin{proposition}
\label{prop.ann.pts}
Let $p \in \fP-\fP_\infty$. Then $M_p$ is annihilated by
$$
\begin{cases}
4\Theta_1+(1-\b)(1+\c)\Omega   &   \text{if $p \in \fP_0$},
\\
4\b\c\Omega +(1-\b)(1+\c)\Theta_1  & \text{if $p \in \fP_1$} ,
\\
4\a\c\Omega +(1-\c)(1+\a)\Theta_2   &   \text{if $p \in \fP_2$} ,
\\
4\a\b\Omega +(1-\a)(1+\b)\Theta_3   &   \text{if $p \in \fP_3$},
\end{cases}
$$
where $\Theta_1,\Theta_2,\Theta_3$ are the central elements in (\ref{central.A.elts}).  
\end{proposition}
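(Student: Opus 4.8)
The plan is to deduce the annihilation from the realization, carried out in this section, of the ordinary point modules as invariant submodules of the multiplicity-two fat points. Fix $j\in\{0,1,2,3\}$, put $F=\wtV(\tau+\xi_j)$, and recall that each $p\in\fP_j$ gives a point module $M_p\cong (F\otimes k^2)^\G$ for one of the four $\G$-equivariant structures on the $S'=S\otimes M_2(k)$-module $F\otimes k^2$. Because $A=(S')^\G$ preserves $\G$-invariants, $M_p$ is an $A$-submodule of $F\otimes k^2$; hence to annihilate $M_p$ it is enough to produce a degree-two central element of $A$ that annihilates the whole of $F\otimes k^2$.

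The element I would use is $\Omega(\tau+\xi_j)\otimes 1$. It lies in the copy of $k[\Omega_0,\Omega_1]$ sitting inside the center of $A$ (see \S\ref{ssect.2centers}), and by \Cref{cor.ann.V} the central element $\Omega(\tau+\xi_j)$ kills the simple module $V(\tau+\xi_j)$. Since $F=V(\tau+\xi_j)\otimes k[t]$ and a central degree-two element of $S$ acts on $F$ degreewise through the $S$-action on $V(\tau+\xi_j)$, it follows that $\Omega(\tau+\xi_j)$ annihilates $F$, hence $\Omega(\tau+\xi_j)\otimes 1$ annihilates $F\otimes k^2$ and therefore $M_p$. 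What is left is purely a matter of naming: one must recognize $\Omega(\tau+\xi_j)\otimes 1$ as the central element displayed in the statement.

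For that identification I would work inside the two-dimensional space of central quadrics $k\Omega_0+k\Omega_1$, which by \Cref{prop.center.S} and the conversion to $y$-coordinates in (\ref{central.A.elts}) equals $k\Omega+k\Theta_i$ for every $i\in\{1,2,3\}$. Writing $\Omega(\tau+\xi_j)=a\Omega+b\Theta_i$ (taking $i=1$ when $j=0$, and $i=j$ otherwise), the coefficients are fixed up to a common scalar by the single linear condition that $a\Omega+b\Theta_i$ annihilate $V(\tau+\xi_j)$, i.e. $a\,\rho_j(\Omega)+b\,\rho_j(\Theta_i)=0$, where $\rho_j$ is the structure map of \Cref{prop.2-dim-irreps}. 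The needed scalars are already recorded after \Cref{prop.2-dim-irreps}: for instance $\rho_0(\Omega)=-4$, while $V(\tau+\xi_1)$ is killed by $-\phi_1(\Omega)+4\b\c$, so $\rho_1(\Theta_1)=\rho_1(-\phi_1(\Omega))=-4\b\c$. Solving gives the stated combination in each case.

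The main obstacle is exactly this last bookkeeping: one has to move correctly between the $x$-coordinates on $S_1$ and the $y$-coordinates on $A_1$, so that $y_\ell^2=\pm\,x_\ell^2\otimes 1$, when evaluating $\rho_j$ on $\Omega$ and on $\Theta_i$, and then read off $(a,b)$ while tracking every sign. As an independent check that sidesteps the fat-point description, one can compute directly on $M_p=Ae_0$: writing a candidate central element as $\zeta=\sum_\ell c_\ell y_\ell^2$ and using $y_\ell e_0=p_\ell e_1$ together with $y_\ell e_1=\theta(p)_\ell e_2$ (with $\theta$ the involution of (\ref{eq.theta}) governing the degree shift), one finds that $\zeta e_0$ is a scalar multiple of $\big(\sum_\ell c_\ell\,p_\ell\,\theta(p)_\ell\big)e_2$; since $\zeta$ is central and $M_p$ is cyclic, $\zeta$ annihilates $M_p$ the moment this bracket vanishes, and its vanishing is a short calculation using $\a+\b+\c+\a\b\c=0$.
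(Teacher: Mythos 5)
Your main route is circular in the paper's logical order. The realization $M_p\cong(\wtV(\tau+\xi_j)\otimes k^2)^\G$ for $p\in\fP_j$ is \Cref{cor.equi_fat}, which comes \emph{after} \Cref{prop.ann.pts}, and the one step of its proof that your argument needs --- knowing that the four pairwise non-isomorphic invariant modules are the point modules of the column labelled $\fP_j$ in \Cref{table.20.pts}, rather than of some other ordinary family --- is precisely the statement that the point modules in $\fP_j$ are killed by $\Omega(\tau+\xi_j)$, i.e.\ the proposition you are proving (combined with the computation of which pencil element kills $V(\tau+\xi_j)$). What the equivariant machinery gives you for free is only that the four sets $\{(\wtV(\tau+\xi_j)\otimes k^2)^\G\}$, $j=0,1,2,3$, are the four ordinary families \emph{in some order}; matching that order against the explicit coordinates of \Cref{table.20.pts} requires a coordinate computation somewhere --- either on the point modules themselves (the paper's choice) or by writing out one equivariant structure explicitly and computing its invariants, which you do not do. So the first three paragraphs of your proposal assume exactly the part of the proposition that is at issue.

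Second, the coefficient identification you defer to ``bookkeeping'' does not come out as you claim. With the paper's conventions, $\Theta_1$ corresponds to $x_0^2+\b\c x_1^2-\c x_2^2+\b x_3^2$ and $\Omega=-x_0^2+x_1^2+x_2^2+x_3^2$, and on $V(\tau+\xi_1)$ these act as the scalars $-4\b\c$ and $(\b-1)(1+\c)$ respectively; solving $a\,\rho_1(\Omega)+b\,\rho_1(\Theta_1)=0$ gives $4\b\c\Omega-(1-\b)(1+\c)\Theta_1$, with a relative \emph{minus} sign, and similarly for $\fP_2,\fP_3$ (only the $\fP_0$ case matches as displayed). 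This discrepancy is in fact a sign slip in the paper itself --- its own proof of the $\fP_1$ case silently treats $\Omega$ as $y_0^2+\cdots+y_3^2$ rather than its negative, and since each $\Omega(z)$ is only ever used up to a scalar nothing downstream is affected --- but your assertion that ``solving gives the stated combination in each case'' shows the check was never carried out. Finally, your last paragraph is not a side check: the computation $\zeta e_0\propto\big(\sum_\ell c_\ell\,p_\ell\,\theta(p)_\ell\big)e_2$, followed by centrality plus cyclicity, is, almost verbatim, the paper's own proof, and it is the non-circular argument you should promote to the main proof. (One small correction there: the identity $\a+\b+\c+\a\b\c=0$ is not needed; the relevant brackets factor directly, e.g.\ $1-\b\c+\c-\b=(1-\b)(1+\c)$.)
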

\begin{proof}
Since the points in $\fP_j$ form a single $\G$-orbit and the elements $y_0^2,\ldots,y_3^2$ are fixed by $\G$, 
a linear combination of $y_0^2,\ldots,y_3^2$ annihilates $M_p$ for some $p \in \fP_j$ if and only if it annihilates $M_p$ for all $p \in \fP_j$. Furthermore, if that linear combination is in the center of $A$ it annihilates $M_p$ if and only if it annihilates 
$(M_p)_0$. 

We prove the result for $j\in \{0,1\}$ and leave the reader to check the other two cases.  

Let $M \in \Gr(A)$ be the point module associated to $p=(1,1,1,1) \in \fP_0$. Let $e_0$ be a basis for $M_0$. There is an element $e_1 \in M_1$ such that
$y_ie_0 = e_1$ for $i=0,1,2,3$. Since $\theta(1,1,1,1) =(1,1,1,1)$, there is an element 
$e_2 \in M_2$ such that $y_ie_1 = e_2$ for $i=0,1,2,3$. It follows that $y_i^2 e_0 = e_2$ for $i=0,1,2,3$.
Hence
$ (y_0^2 + y_1^2+ y_2^2 +  y_3^2)e_0 = 4 e_2$. On the other hand, $\Theta_1 e_0 = (y_0^2 -\b\c y_1^2+\c y_2^2 -\b  y_3^2)e_0 = (1-\b)(1+\c) e_2$. Therefore $e_0$ is annihilated by $4\Theta_1- (1-\b)(1+\c) (y_0^2 + y_1^2+ y_2^2 +  y_3^2)$. 

Let $M \in \Gr(A)$ be the point module associated to  $p=(bc,-i,-ib,-c) \in \fP_1$. Let $e_0$ be a basis for $M_0$. There is an element $e_1 \in M_1$ such that $y_0e_0 =bc e_1$, $y_1e_0=-ie_1$, $y_2e_0=-ibe_1$, and $y_3e_0=-ce_1$. 
Since $\theta(p)=\c_1(p)=(bc,-i,ib,c)$, there is an element 
$e_2 \in M_2$ such that $y_0e_1 = bce_2$,  $y_1e_1 =-ie_2$, $y_2e_1 = ibe_2$, and $y_3e_1 = ce_2$.
Therefore $\Theta_1e_0= (y_0^2 -\b\c y_1^2+\c y_2^2 -\b  y_3^2)e_0 = 4\b\c e_2$. On the other hand, $ (y_0^2 + y_1^2+ y_2^2 +  y_3^2)e_0 = (\c+1)(\b-1)e_2$. Therefore $e_0$ is annihilated by $4\b\c\Omega +(1-\b)(1+\c)\Theta_1$.
\end{proof}

 \subsection{The actions of $H_4$ on $A_1$ and $\fP$}
 We identify $\wtQ_1$ with $S_1$ according to the convention in \S\ref{conv.identification}.
  There are corresponding identifications of $\wtQ_1^*$ and $S_1^*$, and 
of  $\PP(\wtQ_1^*)$ and  $\PP(S_1^*)$. 
If $p=(\l_0,\l_1,\l_2,\l_3)$ is a point in $S_1^*$ written with respect to the coordinate functions $(x_0,x_1,x_2,x_3)$, then $x_j(p)=\l_j$. Therefore $y_j(p)=\l_j$ if $j\in \{0,3\}$ and $y_j(p)=-i\l_j$ if $j \in \{1,2\}$. Thus, 
as a point in $\wtQ_1^*$, $p$ has coordinates $(\l_0,-i\l_1,-i\l_2,\l_3)$ with respect to the coordinate functions
$(y_0,y_1,y_2,y_3)$.

These identifications lead to actions of $H_4$ on $\wtQ_1^*$ and $\PP(\wtQ_1^*)$. 
When expressed in terms of the coordinate functions $(y_0,y_1,y_2,y_3)$, the action of $H_4$ on $\wtQ_1^*$
is also given by the  formulas in (\ref{E4.action.formulas}). 

\begin{proposition}
\label{prop.H4.moves.points.for.A}
The action of $H_4$ on $\PP(\wtQ_1^*)$ is such that:
\begin{enumerate}
  \item 
$\fP_\infty$ is a single $H_4$-orbit;  
  \item 
  if $\{i,j,k\}=\{1,2,3\}$, then $\ve_i(\fP_0) =\fP_i$, $\ve_i(\fP_i) = \fP_0$,  $\ve_i(\fP_j) = \fP_k$,  and $\ve_i(\fP_k)=\fP_j$.
\end{enumerate}
  \end{proposition}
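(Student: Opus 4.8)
The plan is to read both parts off the explicit data already in place: the action of $H_4$ on $\PP(\wtQ_1^*)$ is given in the $(y_0,y_1,y_2,y_3)$-coordinates by the formulas in (\ref{E4.action.formulas}), and the twenty points are tabulated in \Cref{table.20.pts}, so everything reduces to applying the three generators $\ve_1,\ve_2,\ve_3$ to explicit coordinate vectors.

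For (1) I would apply (\ref{E4.action.formulas}) to the four coordinate points. Since each row of \Cref{autom} is a monomial (signed permutation) matrix, each $\ve_i$ carries a coordinate point to a scalar multiple of another coordinate point, so $\fP_\infty$ is $H_4$-stable; a one-line computation then shows that $\ve_1,\ve_2,\ve_3$ induce on $\{(1,0,0,0),(0,1,0,0),(0,0,1,0),(0,0,0,1)\}$ the three double transpositions, e.g.\ $\ve_1$ exchanges the first two points and the last two. These three involutions generate a Klein four-group that acts transitively, so $\fP_\infty$ is a single $H_4$-orbit.

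For (2) I would first cut down the bookkeeping. Each $\fP_j$ with $j\neq\infty$ is a single $\G$-orbit, and $\ve_i$ normalizes $\G$ modulo scalars: one has $\c_i=\ve_i^2$, the quotient $H_4/\langle\d\rangle$ is abelian, and $\d$ acts as a scalar on $S_1^*=\wtQ_1^*$ and hence trivially on $\PP(\wtQ_1^*)$, so $\ve_i\c_j\ve_i^{-1}$ and $\c_j$ induce the same automorphism of $\PP(\wtQ_1^*)$. Consequently $\ve_i(\fP_j)=\ve_i(\G p)=\G(\ve_i p)$ for any $p\in\fP_j$, which is again a single $\G$-orbit; so once I compute $\ve_i p$ for one representative $p$ and recognise it as an entry of some column $\fP_\ell$ of \Cref{table.20.pts}, I may conclude $\ve_i(\fP_j)=\G(\ve_i p)=\fP_\ell$. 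For example $\ve_1(1,1,1,1)=(bc,-i,ib,c)\in\fP_1$ and $\ve_1(ac,-a,-i,-ic)=(-abc,-iac,bc,-ic)\sim(ab,ia,-b,i)\in\fP_3$, giving $\ve_1(\fP_0)=\fP_1$ and $\ve_1(\fP_2)=\fP_3$; the remaining values are obtained the same way, and the number of cases is reduced using the cyclic symmetry $\a_j\mapsto\a_{j+1}$, $y_j\mapsto y_{j+1}$ of \S\ref{ssect.symmetry}, which permutes $\ve_1,\ve_2,\ve_3$ and $\fP_1,\fP_2,\fP_3$ cyclically.

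The structural reason behind (2), which also serves as a consistency check, is that the points of $\fP_j$ ($j\neq\infty$) lie on $E$ and constitute the four cosets of $E[2]$ sitting inside $E[4]$, with $\fP_j$ corresponding to $\ve_j+E[2]$. By \Cref{prop.transl.by.E4} the automorphism $\ve_i$ acts on $E$ as translation by the $4$-torsion point $\ve_i$ (with $2\ve_i=\xi_i$), so the induced permutation of the four sets is addition by $\bar\ve_i$ in $E[4]/E[2]\cong(\bZ/2)^2$, which is precisely the pattern $\ve_i(\fP_0)=\fP_i$, $\ve_i(\fP_i)=\fP_0$, $\ve_i(\fP_j)=\fP_k$ asserted in the statement. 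The one genuinely delicate point is matching the labels: one must pass between the $(x_j)$- and $(y_j)$-coordinates via the convention of \S\ref{conv.identification} and track the scalar factors in (\ref{E4.action.formulas}) carefully enough to be certain the image lands in the claimed column and not in a sign- or $i$-twisted neighbour. This is exactly what the explicit evaluations above settle.
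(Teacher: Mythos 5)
Your core argument is correct and is essentially the paper's own proof: the paper establishes the proposition by exactly this kind of case-by-case evaluation of the formulas in (\ref{E4.action.formulas}) on the points of \Cref{table.20.pts}, and it even records the same sample computation $\ve_1(1,1,1,1)=(bc,-i,ib,c)\in\fP_1$. Your one genuine addition --- that $\d$ acts as a scalar on $\wtQ_1^*$, hence trivially on $\PP(\wtQ_1^*)$, so each $\ve_i$ commutes projectively with $\G$ and therefore $\ve_i(\fP_j)=\ve_i(\G p)=\G(\ve_i p)$, reducing each case to a single representative --- is valid and a useful economy, but it is a streamlining of the same computation rather than a different route.

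One warning: the ``structural reason'' offered in your final paragraph is false, and you should delete or repair it. The ordinary points do not lie on $E$, and they are not the cosets of $E[2]$ inside $E[4]$. Concretely, the point $(1,1,1,1)\in\fP_0$ has $x$-coordinates $(1,i,i,1)$ under the identification of \Cref{conv.identification}; this satisfies the first equation of (\ref{eqns.for.E}) but not the second, since $1+\b\c+\c+\b=(1+\b)(1+\c)\ne 0$ because $\b,\c\ne -1$. Moreover, by \Cref{cor.E4} every point of $E[4]$ has a vanishing coordinate, whereas every coordinate of every ordinary point is non-zero. (The objects indexed by $E[4]/E[2]$ in the geometric way you describe are the fat points $\tau'+\ve_j+E[2]$ of \Cref{th.fat_conic}, not the families $\fP_j$.) The correct structural statement concerns annihilators rather than positions in $\PP^3$: each point module in $\fP_j$ is annihilated by $\Omega(\tau+\xi_j)$ (\Cref{cor.equi_fat}), and $\ve_i$ carries $\Omega(z)$ to $\Omega(z+\xi_i)$ (\Cref{cor_H4_action_on_center}), which reproduces the asserted pattern $\ve_i(\fP_j)=\fP_k$ with $\xi_k=\xi_i+\xi_j$; this is the ``$\xi$-related'' bookkeeping of \Cref{def.rel} and \Cref{cor.autoequivs}. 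Since you present the false claim only as a consistency check and not as a step of the argument, your proof itself stands.
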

 \begin{proof}
Case-by-case calculations using the formulas in (\ref{E4.action.formulas}) prove the result.
 For example,
\begin{align*}
 \ve_1(1,1,1,1) & \; = \;  (bc,-i,ib,c) \in \fP_1,
\\
 \ve_2(1,1,1,1) & \; = \;   (ac,a,-i,ic)  \in \fP_2,
 \\
 \ve_3(1,1,1,1) & \; = \;   (ab,ia,b,-i)\in \fP_3.
 \end{align*}
Further details are unenlightening.
 \end{proof}

\subsection{Equivariant realization of the point modules for $\wtQ$} 
In the rest of this section, the letter $F$ denotes one of the modules $\wtV(\tau+\xi)$
defined in \S\ref{ssect.2-fat-pts}.
\Cref{pr.equi_fat} shows there are four $\G$-equivariant structures on the $S'$-module $F\otimes k^2$.
\Cref{pr.equi_four} shows that the 16 ordinary point modules for $\wtQ$ are isomorphic to $(F \otimes k^2)^\G$ as $F$ varies and each $F\otimes k^2$
takes on its four equivariant structures. 
Moreover, if $F=\wtV(\tau+\xi_j)$ the four point modules $(F \otimes k^2)^\G$ belong to  $\fP_j$. 

The four special point modules for $\wtQ$ arise from the four special point modules for $S$ in a similar way. 
If $P$ is one of the four special point modules for $S$, then 
$P^{\oplus 2}\otimes k^2$ has a unique $\G$-equivariant $S'$-module structure and $(P^{\oplus 2} \otimes k^2)^\G$ is one of the 4 special point modules for $\wtQ$.

\begin{proposition}\label{pr.equi_fat}
  Up to isomorphism, there are four $\G$-equivariant structures on the $S'$-module $F\otimes k^2$. 
\end{proposition}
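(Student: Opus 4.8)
The plan is to separate the statement into a rigidity computation and an existence construction, and then to count via a torsor argument. Write $M:=F\otimes k^2$, viewed as a graded $S'=S\otimes M_2(k)$-module with $S$ acting on $F=\wtV(\tau+\xi)$ and $M_2(k)$ acting on $k^2$; recall that $\G$ acts diagonally on $S'$, by \Cref{Gamma.action} on $S$ and by $\c_j(a)=q_jaq_j^{-1}$ on $M_2(k)$. A $\G$-equivariant structure is a family $\{\sigma_\c\}_{\c\in\G}$ of degree-preserving $k$-linear isomorphisms $\sigma_\c\colon M\to\c^*M$ with $\sigma_\c(xm)=\c(x)\sigma_\c(m)$ and $\sigma_{\c\c'}=\sigma_\c\sigma_{\c'}$. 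The first step is to compute $\operatorname{End}_{\Gr(S')}(M)=k$. Since $S'=S\otimes M_2(k)\cong M_2(S)$ is Morita equivalent to $S$, with the standard progenerator carrying $M=F\otimes k^2$ to $F$, it suffices to show $\operatorname{End}_{\Gr(S)}(F)=k$. This holds because $F=\wtV(\tau+\xi)=V(\tau+\xi)\otimes k[t]$ is generated in degree zero by the simple module $V(\tau+\xi)$, whose structure map $S_1\to\operatorname{End}_kV(\tau+\xi)$ is surjective (\Cref{prop.2-dim-irreps}): a degree-zero graded endomorphism restricts to a scalar on $V(\tau+\xi)\otimes 1$ by Schur's lemma, and this scalar propagates to every graded component because $S_1$ generates $F$ over its degree-zero part. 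Hence $\operatorname{Aut}_{\Gr(S')}(M)=k^\times$, acting by central scalars.

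The second step is to establish existence by exhibiting one structure. For $\xi=o$, the proof of \Cref{prop.V.tau.F.tau}(1) gives the isomorphism $\alpha_j\colon V(\tau)\to\c_j^*V(\tau)$, $v\mapsto q_jv$, which I extend to $F$ by the identity on $k[t]$; on the matrix factor, $\beta_j\colon k^2\to\c_j^*(k^2)$, $w\mapsto q_jw$, intertwines the conjugation action. Setting $\sigma_0:=\id$ and $\sigma_j:=\alpha_j\otimes\beta_j$, the quaternion relations following \eqref{defn.a_i}, namely $q_j^2=-1$ and $q_1q_2=q_3=-q_2q_1$, give $\sigma_j^2=(q_j^2)\otimes(q_j^2)=\id$ and $\sigma_1\sigma_2=(q_3)\otimes(q_3)=\sigma_3=\sigma_2\sigma_1$, so $\c_j\mapsto\sigma_j$ is a genuine $\G$-action. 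For general $\xi=\xi_\ell$ the same recipe applies with $\alpha_j$ the isomorphism $\c_j^*\wtV(\tau+\xi_\ell)\cong\wtV(\tau+\xi_\ell)$ furnished by \Cref{prop.V.tau.F.tau}(2); this is still essentially multiplication by $q_j$ on $V(\tau+\xi_\ell)$, now possibly twisted by a grading sign $(-1)^n$ on $V(\tau+\xi_\ell)\otimes t^n$, and the verification that the $\sigma_j$ obey the relations of $\G$ is again a direct quaternion computation: the grading signs square to $+1$, so $\sigma_j^2=\id$ rests on $q_j^2=-1$ on each tensor factor, while in the products the signs are consistent with the relation $q_1q_2=q_3$, giving $\sigma_1\sigma_2=\sigma_3=\sigma_2\sigma_1$.

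The final step is the count. Fixing the structure $\sigma$ just constructed, any other structure $\sigma'$ satisfies $\sigma'_\c=c(\c)\,\sigma_\c$ for a unique $c(\c)\in\operatorname{Aut}_{\Gr(S')}(M)=k^\times$, and the homomorphism requirement on $\sigma'$ forces $c\in\Hom(\G,k^\times)=\widehat{\G}$; conversely each character produces a structure. Because automorphisms of $M$ are central scalars, conjugating $\sigma$ by $\lambda\in k^\times$ leaves every $\sigma_\c$ unchanged, so two structures are isomorphic as equivariant modules exactly when their characters agree. Thus the isomorphism classes biject with $\widehat{\G}$, and since $\operatorname{char}k\ne 2$ we have $|\widehat{\G}|=|\Hom(\ZZ/2\times\ZZ/2,k^\times)|=4$. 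The main obstacle is precisely the existence step: a priori the $\G$-action on $S'$ need not lift to $M$, the obstruction lying in $H^2(\G,k^\times)$, and what forces it to vanish is the quaternionic coincidence that $q_j$ squares to $-1$ on both tensor factors (together with the consistency of the grading sign-twists when $\xi\ne o$), so that the explicit $\sigma_j$ genuinely define an action rather than merely a projective one.
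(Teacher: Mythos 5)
Your proof is correct, and its existence step takes a genuinely different route from the paper's. The paper proves existence abstractly: it identifies the obstruction $c\in H^2(\G,k^\times)\cong \ZZ/2$ to lifting the $\G$-action to $F\otimes k^2$, decomposes it as $c=c_1+c_2$ (the obstructions for the $S$-module $F$ and the $M_2(k)$-module $k^2$ separately), notes that $c_2\neq 0$ is easy, and then invests real work in \Cref{le.aux} — using $1$-criticality of $F$, faithfulness of some $\Omega(z)$, and a character-bookkeeping argument — to show $c_1\neq 0$, whence $c=0$. You instead exhibit the equivariant structure explicitly: $\sigma_j=\alpha_j\otimes\beta_j$, where each tensor factor is only a \emph{projective} representation of $\G$ (since $q_j^2=-1$ and $q_2q_1=-q_1q_2$) but the two $\pm 1$-valued cocycles cancel in the tensor product. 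This is a concrete realization of the same cancellation $c_1+c_2=0$, and it makes \Cref{le.aux} unnecessary for this proposition (though that lemma retains independent interest, and your construction together with $c_2\neq0$ actually re-proves it). Your counting step is essentially the paper's Step 2: with $\Aut_{\Gr(S')}(F\otimes k^2)=k^\times$ central, the structures form a torsor under $\widehat{\G}=H^1(\G,k^\times)$ and distinct characters give non-isomorphic structures, yielding exactly four classes. Two points deserve care in a final write-up: (i) the "Schur's lemma" computation of $\End_{\Gr(S)}(F)=k$ should be phrased via the commutation relations $\theta_{n+1}\rho(a)=\rho(a)\theta_n$ together with surjectivity of $\rho$ on $S_1$, since $F_0$ alone carries no module structure; (ii) for $\xi=\xi_\ell\neq o$ the grading sign $(-1)^n$ must be inserted in $\alpha_j$ precisely when $j\notin\{0,\ell\}$ (because $\phi_\ell\c_j=\d^2\c_j\phi_\ell$ in $\Aut(S)$), and one should record the one-line check that the sign-twisted $q_j$ really is a morphism $F\to\c_j^*F$; once this is stated, your quaternion verification of the $(\ZZ/2)^2$-relations goes through exactly as you indicate. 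The trade-off: the paper's argument is shorter modulo \Cref{le.aux} and generalizes to situations where no explicit model is available, while yours is self-contained, produces concrete formulas for all four structures (twist yours by the four characters), and is in the same spirit as the paper's later explicit construction for the special point modules in \Cref{pr.equi_four}.
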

\begin{proof}
First, we must show that $\c^*(F\otimes k^2) \cong F\otimes k^2$ as $S'$-modules for all  $\gamma\in \G$.

By \Cref{prop.V.tau.F.tau}(3), $\c^*F\cong F$ in $\Gr(S)$.  
Since the identification of the isomorphism classes of $S$-modules with those of $S'$-modules via the Morita equivalence $-\otimes k^2$, intertwines the actions of $\c^*$  on $\Gr(S)$ and $\Gr(S')$,
$F$ is a $\G$-invariant $S$-module if and only if $F\otimes k^2$  is a $\G$-invariant $S'$-module.
Since $\c^*F\cong F$, i.e., since $F$ is $\G$-invariant, 
we can construct actions of $\G$ on
$   \Aut_{S'}(F\otimes k^2)\cong k^\times\cong \Aut_S(F)$
as in the proof of \cite[Prop. 10.2]{CS15}; both will be trivial.

{\bf Step 1: Existence of an equivariant structure.} 
By Step 1  in the proof of \cite[Prop. 10.2]{CS15}, 
the existence of an equivariant structure is controlled by an obstruction 
$c \in H^2(\G,k^\times)$ where $H^2(\G,k^\times)$ is computed 
with respect to the trivial $\G$-action on $k^\times$ from above. 

This cohomology group is isomorphic to $\bZ/2$ so it is not immediately clear that $c=0$.

In the same way, there are obstructions $c_1,c_2\in H^2(\G,k^\times)$ to the existence of a $\G$-equivariant structure on the $S$-module $F$ and the $M_2(k)$-module $k^2$ respectively. 

Since $c$ is the obstruction for $F\otimes k^2$, we get $c=c_1+c_2$ by
construction (e.g. \cite[Equation (25)]{CS15}). It is easy to see that $c_2$ is non-zero, so it remains to show that 
$c_1$ is also non-zero. In other
words, the goal is to prove that the only $\G$-equivariant structure on the $S$-module $F$ is the trivial one. This is
what \Cref{le.aux} below does.

{\bf Step 2: Classification of equivariant structures.} As in \cite[Remark 10.3]{CS15}, once we know an equivariant structure exists we also know that the set of isomorphism classes of such structures is acted upon simply transitively by $H^1(\G,k^\times)\cong \G$. 
\end{proof}

\begin{lemma}\label{le.aux}
There are no $\G$-equivariant structures on the $S$-module $F$.  
\end{lemma}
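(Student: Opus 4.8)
The plan is to translate the non-existence of an equivariant structure into the failure of two semilinear automorphisms of $F$ to commute, and then to pin down the resulting scalar. Recall that a $\G$-equivariant structure on $F\in\Gr(S)$ is exactly a family $(g_\c)_{\c\in\G}$ of graded \emph{$\c$-semilinear} automorphisms of $F$ — that is, $k$-linear bijections with $g_\c(xm)=\c(x)g_\c(m)$ for $x\in S$, $m\in F$ — satisfying $g_1=\id$ and $g_\c g_{\c'}=g_{\c\c'}$. By \Cref{prop.V.tau.F.tau}(2) we have $\c^*F\cong F$ in $\Gr(S)$ for every $\c\in\G$, so $\c$-semilinear graded automorphisms of $F$ do exist; and from the first part of the proof of \Cref{pr.equi_fat}, $\Aut_{\Gr(S)}(F)\cong k^\times$, so any two $\c$-semilinear graded automorphisms differ by a non-zero scalar.

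First I would fix any $\c_1$-semilinear $g_1$ and any $\c_2$-semilinear $g_2$ and form the commutator $c:=g_1g_2g_1^{-1}g_2^{-1}$. Since $\G$ is abelian and $\c_1^2=\c_2^2=1$, this composite is $(\c_1\c_2\c_1\c_2)$-semilinear, i.e.\ $S$-linear, so it lies in $\Aut_{\Gr(S)}(F)\cong k^\times$; moreover rescaling $g_1,g_2$ by scalars leaves $c$ unchanged, so $c$ is a well-defined invariant of $F$. If a $\G$-equivariant structure existed, then taking $g_i=g_{\c_i}$ from it would give $g_1g_2=g_{\c_1\c_2}=g_{\c_2\c_1}=g_2g_1$, forcing $c=1$. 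It therefore suffices to show $c=-1$, which is $\ne 1$ because $\mathrm{char}(k)\ne 2$.

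To compute $c$ I would use the explicit model $F=V(\tau+\xi)\otimes k[t]$ of \S\ref{ssect.2-fat-pts}. Treat first $\xi=o$: the structure map restricts to a linear isomorphism $\rho\colon S_1\to M_2(k)$ with $\rho(x_i)=q_i$, and by \Cref{Gamma.action} this isomorphism intertwines the $\G$-action on $S_1$ with the action $\c_j\cdot m=q_jmq_j^{-1}$ on $M_2(k)$. Hence $g_j:=q_j\otimes\id_{k[t]}$ is a $\c_j$-semilinear graded automorphism of $F$, and
$$
c \;=\; q_1q_2q_1^{-1}q_2^{-1} \;=\; q_1q_2(-q_1)(-q_2) \;=\; -1
$$
by the relations $q_1^2=q_2^2=-1$ and $q_1q_2=-q_2q_1$. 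For $\xi=\xi_j$ the module is $V(\tau+\xi_j)=\phi_j^*V(\tau)$, so the two lifts are again (possibly degree-twisted) left multiplications by two distinct anticommuting elements of $\{q_1,q_2,q_3\}$; the scalar degree-twists are central in each degree and cancel in the commutator, giving $c=-1$ in every case. The one genuinely non-formal point — the crux — is precisely this computation $c=-1$: it records the fact that $\langle q_1,q_2\rangle\cong Q_8$ is a non-split central extension of $\G=(\ZZ/2)^2$ by $\{\pm1\}$, so the inner $\G$-action on $M_2(k)$ admits no linear lift, and hence $F$ carries no $\G$-equivariant structure.
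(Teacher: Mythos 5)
Your proof is correct, but it takes a genuinely different route from the paper's. The paper argues by contradiction via graded-eigenspace bookkeeping: assuming a structure exists, each $F_n$ splits into two $\G$-eigenlines with characters $\chi_n,\eta_n$; 1-criticality of $F$ together with \cite[Lem.~2.10]{LS93} and \cite{AV90} forces some $\Omega(z)$ (which is $\G$-fixed of degree two) to act faithfully, so the character pairs are 2-periodic; the shift-invariance $F(n)_{\ge 0}\cong F$ makes consecutive pairs differ by a fixed character; and a counting argument then forces two of the generators $x_i$ to annihilate $F$, which is false. You instead compute the cohomological obstruction head-on: since $\Aut_{\Gr(S)}(F)\cong k^\times$ (a fact the paper asserts in the proof of \Cref{pr.equi_fat} \emph{before} invoking \Cref{le.aux}, so there is no circularity in citing it), the commutator of a $\c_1$-semilinear and a $\c_2$-semilinear graded automorphism is a well-defined scalar that must equal $1$ if an equivariant structure exists, and the quaternionic model $\rho(x_i)=q_i$ of \Cref{2-dim-irreps} gives $c=(q_1q_2)^2=-1$. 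Your approach buys brevity and an explanation of \emph{why} the lemma holds — the obstruction is exactly the non-splitness of the central extension $Q_8\to(\ZZ/2)^2$ — and it delivers the statement $c_1\ne 0$ in precisely the form Step 1 of the proof of \Cref{pr.equi_fat} wants it; the paper's proof, by contrast, uses only structural properties of $F$ (criticality, shift-invariance, faithfulness of the $x_i$ and of some $\Omega(z)$), so it would apply to any fat point module with those properties, without knowing the explicit matrices. The one place you are terse is the case $\xi=\xi_j$: the claim that the lifts are again (degree-twisted) quaternion multiplications is correct but deserves a sentence — it holds because $\phi_j\c_i=\psi_{\pm 1}\,\c_i\phi_j$ in $\Aut(S)$, so transporting $q_i\otimes\id$ through $\phi_j^*$ yields a $(\psi_{\pm 1}\c_i)$-semilinear map, and composing with the canonical isomorphism of \Cref{old.lemma}(1) (multiplication by $(\pm 1)^n$ in degree $n$, which is scalar in each degree and hence drops out of any commutator of graded maps) produces the asserted $\c_i$-semilinear lift, leaving $c=-1$ in every case.
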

\begin{proof}
Suppose there were one. Then the space of equivariant structures would
be a homogeneous space over $H^1=H^1(\G,k^\times)\cong \G$ . 

If $n\ge 0$, then $\dim_k(F_n)=2$ so $F_n$ is a direct sum of two $\G$-eigenspaces. We denote the
constituent characters of the $\G$-action on $F_n$ by $\chi_n$ and $\eta_n$. 

Since $F$ is 1-critical, \cite[Lemma 2.10]{LS93} tells us that a central element of $S$ either annihilates 
$F$ or acts faithfully on it. By the remark after  \Cref{prop.2-dim-irreps}, some $\Omega(z)$ acts faithfully on $F$. 
Alternatively, if all $\Omega(z)$ annihilated $F$, then $F$ would be a module over 
$B=S/(\Omega_0,\Omega_1)$;
however, by the main result in \cite{AV90}, the 1-critical $B$-modules are shifts of point modules. 
Since all $\Omega(z)$ are fixed by $\G$ and have degree two, the multi-sets
$\{\chi_n,\eta_n\}$ and $\{\chi_{n+2},\eta_{n+2}\}$ are equal. 

On the one hand, equivariant structures can be twisted by
characters of $\G$ (this is the free transitive action of $H^1$ on the
set of equivariant structures). On the other hand though, $F(n)_{\ge 0} \cong F$ for all $n \ge 0$
so shifting the equivariant structure must produce the old equivariant structure twisted by some 
character. In other words, there is a character $\chi$ such that for all $n$ we have 
\begin{equation*}
  \{\chi_{n+1},\eta_{n+1}\} = \{\chi_n+\chi,\eta_n+\chi\}.
\end{equation*}
The upshot of this is that there are two characters $\eta$, $\eta'$ such that 
\begin{equation*}
  \{\chi_{n+1},\eta_{n+1}\}\cap\{\chi_n+\eta,\eta_n+\eta\}=\varnothing 
\end{equation*}
for all $n$ and similarly for $\eta'$ (take $\eta$, $\eta'$ to be distinct from either $\chi$ or $\eta_n-\chi_n+\chi$). In this case the two generators among $x_i$, $0\le i\le 3$,  that are $\eta$- and respectively $\eta'$-eigenvectors for the action of $\G$ must annihilate $F$. But this does not happen because all $x_i$ act faithfully on all $V(\tau+\xi)$. 
\end{proof}

For each $\xi_j \in E[2]$, the four equivariant structures on $\wtV(\tau+\xi_j) \otimes k^2$,  
give rise, by descent, to the four point modules for  $\wtQ$ that belong to $\fP_j$. 

\begin{corollary}
\label{cor.equi_fat}
Fix $j \in \{0,1,2,3\}$ and let $F=\wtV(\tau+\xi_j)$. If $p \in \fP_j$, there is a unique equivariant structure on 
the $S'$-module $F \otimes k^2$ such that $M_p \cong (F \otimes k^2)^\G$. 
\end{corollary}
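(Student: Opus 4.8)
The plan is to combine \Cref{pr.equi_fat}, which supplies exactly four $\G$-equivariant structures on $F\otimes k^2$, with the equivalence $\Gr(S')^\G\equiv\Gr(\wtQ)$, $M\mapsto M^\G$, of \cite[Prop. 3.9]{CS15}, and then to match the four resulting $\wtQ$-modules with the four points of $\fP_j$. First I would fix one equivariant structure and recall, as in Step~2 of the proof of \Cref{pr.equi_fat}, that the other three are its twists by the non-trivial characters of $\G$, with $\widehat{\G}=H^1(\G,k^\times)$ acting simply transitively. Twisting by a character $\chi$ replaces the subspace of $\G$-invariants by the $\chi$-isotypic component, so the four modules $(F\otimes k^2)^\G$, one per structure, are precisely the four $\G$-isotypic components of $F\otimes k^2$ taken with respect to the fixed structure.

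Next I would argue that each isotypic component is a point module. Since $\dim_k F_n=2$, each graded piece $(F\otimes k^2)_n$ is $4$-dimensional and carries a representation of $\G\cong(\ZZ/2)^2$ that is independent of $n$ (multiplication by the $\G$-invariant element $y_0$, which acts invertibly, gives degree-raising $\G$-isomorphisms). Because $M\mapsto M^\G$ is an equivalence, it carries the nonzero object $F\otimes k^2$, with any structure, to a nonzero object of $\Gr(\wtQ)$; hence every isotypic component is nonzero. As the four characters of $\G$ must then each appear, the representation on $(F\otimes k^2)_n$ is the regular representation, so each isotypic component has Hilbert series $(1-t)^{-1}$ and is generated in degree $0$. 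Thus each $(F\otimes k^2)^\G$ is a point module, so isomorphic to $M_p$ for a unique $p$ among the twenty points of \Cref{table.20.pts}.

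It then remains to locate these four points in $\fP_j$. By \Cref{cor.ann.V} the homogeneous degree-two central element $\Omega(\tau+\xi_j)$ annihilates $V(\tau+\xi_j)$, hence annihilates $F=V(\tau+\xi_j)\otimes k[t]$, the module $F\otimes k^2$, and each invariant submodule $M_p$. Comparing with \Cref{prop.ann.pts}, which records the central quadric killing each family, and using that an ordinary point module is annihilated by only a one-dimensional space of central quadrics (its point does not lie on the base locus $E/E[2]$ of \S\ref{ssect.B_tilde}, and the quadric has nonzero $y_0^2$-coefficient, ruling out the special points), identifies $\Omega(\tau+\xi_j)$ up to scalar with the quadric for $\fP_j$ and shows the only point modules it kills lie in $\fP_j$. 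Consequently all four points belong to $\fP_j$.

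Finally I would check the four points are distinct and exhaust $\fP_j$ by computing the $y_m$-action on the degree-zero isotypic vectors: realizing $\G$ on $V(\tau+\xi_j)\otimes k^2$ by $\c_i\mapsto q_i\otimes q_i$ (legitimate because $q_i\rho_j(x_m)q_i^{-1}=\rho_j\c_i(x_m)$, exactly as in \Cref{prop.V.tau.F.tau}(1)) and reading off the joint eigenvectors, one finds $y_m\cdot v=\lambda_m v$ in the next degree with $(\lambda_0,\dots,\lambda_3)$ ranging over the four columns of $\fP_j$ in \Cref{table.20.pts} as $v$ ranges over the isotypic vectors; the model case $j=0$ yields $(1,1,1,1),(1,1,-1,-1),(1,-1,1,-1),(1,-1,-1,1)$. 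Since the four structures then correspond bijectively to the four distinct points of $\fP_j$, for each $p\in\fP_j$ exactly one structure gives $M_p\cong(F\otimes k^2)^\G$, which is the assertion. The main obstacle is this last matching: the soft arguments show only that four point modules of $\fP_j$ arise, and ruling out collisions—so that \emph{all} of $\fP_j$ is obtained and the structure is \emph{unique}—requires the explicit eigenvector computation.
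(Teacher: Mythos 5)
Your first three steps track the paper's own proof: \Cref{pr.equi_fat} for the four structures, descent via $\Gr(S')^\G\equiv\Gr(\wtQ)$, annihilation by $\Omega(\tau+\xi_j)$ via \Cref{cor.ann.V}, and comparison with \Cref{prop.ann.pts}; your isotypic-component argument for the Hilbert series is a nice addition (though you assert rather than prove that each isotypic component is generated in degree zero). The genuine gap is the step you yourself call the crux. The intertwining identity you invoke, $q_i\rho_j(x_m)q_i^{-1}=\rho_j(\c_i(x_m))$, is \emph{false} for $j\neq 0$: taking $j=1$, $i=2$, $m=0$ gives $q_2\rho_1(x_0)q_2^{-1}=bc\,q_2q_1q_2^{-1}=-bc\,q_1$, while $\rho_1(\c_2(x_0))=\rho_1(x_0)=bc\,q_1$. (The identity holds only when $j=0$ or $i=j$.) This is exactly the subtlety that \Cref{le.aux} and \Cref{prop.V.tau.F.tau} are designed to handle: $\phi_j$ and $\c_i$ commute only up to the scalar automorphism $\psi_{-1}$, so for $0\neq i\neq j$ one has $\c_i^*V(\tau+\xi_j)\not\cong V(\tau+\xi_j)$ as \emph{ungraded} modules (the generator $x_j$ acts by a scalar whose sign $\c_i$ reverses, and no conjugation can fix that). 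Consequently no degree-independent recipe such as $\c_i\mapsto q_i\otimes q_i$ can define an equivariant structure on $F\otimes k^2$ when $j\neq 0$; any genuine structure must act on the degree-$n$ component with $n$-dependent scalars, which is why the paper works with the graded modules $\wtV(\tau+\xi_j)$ throughout. Your concluding eigenvector computation therefore collapses for three of the four values of $j$.

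The computation is also unnecessary, and this is the point of the paper's proof that your proposal misses: collisions are ruled out \emph{softly}. \Cref{pr.equi_fat} produces four \emph{pairwise non-isomorphic} equivariant structures (their isomorphism classes form a torsor under $H^1(\G,k^\times)\cong\widehat{\G}$), and since $M\mapsto M^\G$ is an equivalence $\Gr(S')^\G\equiv\Gr(\wtQ)$ (\cite[Props.~3.2 and 3.9]{CS15}), the four modules $(F\otimes k^2)^\G$ are automatically pairwise non-isomorphic. Once one knows each is a point module annihilated by $\Omega(\tau+\xi_j)$, and that $\fP_j$ is exactly the four-element family of point modules killed by $\Omega(\tau+\xi_j)$, counting gives a bijection between the four structures and the four points of $\fP_j$; existence and uniqueness follow with no explicit matching at all.
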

\begin{proof}
By \Cref{pr.equi_fat}, there are four pairwise non-isomorphic $\G$-equivariant $S'$-module structures on $F\otimes k^2$. 
Hence, by \cite[Proposition 3.2]{CS15}, the four $\wtQ$-modules $(F\otimes k^2)^\G$ are pairwise non-isomorphic. 
Since  $\Omega(\tau+\xi_j)$ annihilates $F$ and $F \otimes k^2$ it also annihilates each  
$(F \otimes k^2)^\G$. 
\end{proof}

\begin{proposition}\label{pr.equi_four}
If $P$ is a special point module for $S$, then there is a unique $\G$-equivariant structure on the $S'$-module $(P\oplus P)\otimes k^2$. 
\end{proposition}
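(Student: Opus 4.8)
The plan is to exploit the sharp contrast with the fat‑point case of \Cref{le.aux}. There the obstruction to equivariance arose precisely because $\wtV(\tau+\xi)$ carries \emph{no} genuine $\G$‑equivariant structure; for a special point module the opposite is true, and this is what will collapse the count from four to one. Write $P=M_{e_j}=S/Se_j^\perp$, where $e_j$ is the vertex of one of the singular quadrics containing $E$, so that $e_j^\perp\subseteq S_1$ is the span of the three coordinate functions $x_i$, $i\ne j$. Each diagonal automorphism $\c\in\G$ of \Cref{Gamma.action} sends $x_i\mapsto\pm x_i$, hence preserves $e_j^\perp$ and therefore the left ideal $Se_j^\perp$. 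Consequently $\G$ descends to an honest (untwisted) action $\{V_\c\}_{\c\in\G}$ on $P=S/Se_j^\perp$ compatible with the $S$‑action, satisfying $V_\c V_{\c'}=V_{\c\c'}$; in particular $V_j^2=\id$ and $V_iV_j=V_jV_i$. This honest structure is exactly what fails for the fat points.

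First I would record that $\End_{\Gr(S)}(P)=k$ (a graded endomorphism is a scalar on $P_0$ and $P=SP_0$), so under the Morita equivalence $-\otimes k^2\colon\Gr(S)\to\Gr(S')$ one has $\Aut_{S'}\big((P\oplus P)\otimes k^2\big)\cong\Aut_{\Gr(S)}(P\oplus P)=\GL_2(k)$. Thus, unlike the fat‑point case where the automorphism group was $k^\times$ and equivariant structures formed a torsor over $H^1(\G,k^\times)\cong\G$, the relevant classification here is nonabelian and genuinely different.

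The heart of the argument is to transport the problem to the crossed product $B:=(S\otimes M_2(k))\rtimes\G$, whose graded modules are exactly the $\G$‑equivariant $S'$‑modules. Since $\G$ acts on the matrix factor by $\c_j=\operatorname{Ad}(q_j)$ with the quaternions $q_j$ of \Cref{prop.2-dim-irreps} lying \emph{inside} $M_2(k)$, I would introduce $g_j:=\tilde\c_j(1\otimes q_j^{-1})\in B$. A direct check shows that $g_j$ implements $\c_j\otimes\id$, commutes with $1\otimes M_2(k)$, and satisfies $g_j^2=-1$ and $g_ig_j=-g_jg_i$ for $i\ne j$. Writing $(P\oplus P)\otimes k^2\cong P\otimes k^2_{\mathrm m}$ with $k^2_{\mathrm m}$ a multiplicity space, the intertwining condition $g_j(sm)=\c_j(s)(g_jm)$ forces the action of $g_j$ to factor as $V_j\otimes W_j$ with $W_j\in\GL(k^2_{\mathrm m})=\GL_2(k)$ (because $\End_S(P\otimes k^2_{\mathrm m})=k\otimes M_2(k)$). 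Since the $V_j$ form an honest commuting involutive action, the relations on the $g_j$ translate into the quaternion relations $W_j^2=-\id$ and $W_iW_j=-W_jW_i$ on $k^2_{\mathrm m}$.

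Existence then follows at once by taking $W_j=q_j$ (equivalently, tensoring the honest structure on $P$ with the unique $\G$‑equivariant $M_2(k)$‑module $k^2\oplus k^2$). For uniqueness, the triple $(W_1,W_2,W_3)$ is a two‑dimensional representation of the quaternion group in which the central element acts as $-\id$; such a representation is unique up to conjugation, and conjugating the $W_j$ by $\GL_2(k)$ is exactly an isomorphism of the resulting $B$‑module. Hence the $\G$‑equivariant structure on $(P\oplus P)\otimes k^2$ is unique up to isomorphism. I expect the main obstacle to be the crossed‑product bookkeeping — verifying $g_j^2=-1$ and $g_ig_j=-g_jg_i$ and the forced factorization $g_j=V_j\otimes W_j$ — which, combined with the honest $\G$‑structure on $P$ unavailable for the fat points, is precisely what reduces the classification to a single two‑dimensional quaternionic representation.
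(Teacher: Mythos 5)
Your proof is correct, but it takes a genuinely different route from the paper's. The paper argues concretely: it fixes generators $\gamma,\delta$ of $\G$ and a basis of $k^2$, writes down an explicit action of $\gamma$ and $\delta$ on the $4$-dimensional degree-zero component $(P_0\oplus P_0)\otimes k^2$ compatible with the relevant (anti)commutation constraints, extends it uniquely to all of $(P\oplus P)\otimes k^2$ by \cite[Lem.~10.3]{CS15}, and then proves uniqueness by an eigenspace/idempotent analysis showing any equivariant structure can be matched with the constructed one. You instead exploit two structural facts: first, that a special point module $P=S/Se_j^\perp$ carries an \emph{honest} $\G$-equivariant structure $\{V_\c\}$ (because each $\c\in\G$ maps each $x_i$ to $\pm x_i$ and so preserves the defining left ideal) --- precisely what fails for the fat points in \Cref{le.aux}; and second, that in the crossed product $(S\otimes M_2(k))\rtimes \G$ the modified units $g_j=\tilde\c_j(1\otimes q_j^{-1})$ commute with $1\otimes M_2(k)$, are $\c_j$-semilinear over $S\otimes 1$, and satisfy the quaternion relations $g_j^2=-1$, $g_ig_j=-g_jg_i$ (all of which check out, including $g_3=g_1g_2$). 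Combined with $\End_{\Gr(S)}(P)=k$, this forces any equivariant structure to factor as $V_j\otimes W_j\otimes\id$, reducing the classification to $2$-dimensional representations of $Q_8$ on the multiplicity space in which the central element acts as $-\id$; since $kQ_8$ modulo that central relation is the quaternion algebra $\cong M_2(k)$ over an algebraically closed field of characteristic $\ne 2$, existence and uniqueness up to conjugation come simultaneously, and conjugation by $\Aut_{S'}\big((P\oplus P)\otimes k^2\big)\cong \GL_2(k)$ is exactly isomorphism of equivariant structures. What your approach buys is a basis-free argument that explains the contrast with the fat-point count (four structures there, one here): the abelian torsor argument over $H^1(\G,k^\times)$ is replaced by a nonabelian classification that collapses to a single orbit. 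What the paper's approach buys is explicitness --- it produces the equivariant structure in coordinates, which is what gets used downstream. One small blemish: the identification ``$(P\oplus P)\otimes k^2\cong P\otimes k^2_{\mathrm m}$'' is off dimensionally as written; you should either write $(P\otimes k^2)\otimes k^2_{\mathrm m}$ or say you are working on the Morita-equivalent $S$-side with $P\oplus P=P\otimes k^2_{\mathrm m}$, and you should make explicit that all endomorphisms are taken in the graded category. Neither point affects the validity of the argument.
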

\begin{proof}
Recall that $P \cong S/I$ where $I$ is an  ideal generated by three of the $x_i$'s. 

Fix elements $\gamma$ and $\delta$ such that  $\G=\langle \c\rangle \times \langle \d \rangle$. 
Fix an ordered basis $\{v,v'\}$ for $k^2$ such that the actions of $\gamma$ and $\d$ on $M_2(k)$ are conjugation by $\big({{1 \,\,\phantom{-}0}\atop{0 \,\, -1}}\big)$ and $\big({{0 \,\ 1}\atop{1 \,\,\, 0}}\big)$ respectively.

{\bf Existence.} The simple structure of $P$ will make it easy to check directly that the following construction really is an equivariant structure. 

Make $\gamma$ act on $(P_0\oplus P_0)\otimes k^2 = (P_0\oplus P_0)\otimes v \oplus (P_0\oplus P_0)
\otimes v'$ as $1$ on the two outermost summands and as $-1$ on the two innermost ones. 
This ensures that the action of $\gamma$ on $(P_0\oplus P_0)\otimes k^2$ anti-commutes with the action of
$1 \otimes \big({{0 \,\ 1}\atop{1 \,\,\, 0}}\big) \in S \otimes M_2(k)$, as it should. 

Next, let $\delta$ interchange the two outermost $P_0$'s and also the two innermost ones, thus making the action of $\delta$ on $(P_0\oplus P_0)\otimes k^2$ commute with the actions of both $\gamma$ and $1 \otimes \big({{0 \,\ 1}\atop{1 \,\,\, 0}}\big)$. This action of $\G$ 
on the degree-zero  component of $(P\oplus P)\otimes k^2$ can be extended uniquely (the uniqueness follows from \cite[Lemma 10.3]{CS15}) 
to $(P\oplus P)\otimes k^2$ so as to make it compatible with the $S'$-action. 

{\bf Uniqueness.} Since $\gamma$ commutes with the idempotents $\big({{1\, \,\,\phantom{}0}\atop{0 \,\,\, 0}}\big)$ and
$\big({{0\, \,\,\phantom{}0}\atop{0 \,\,\, 1}}\big)$ , it must implement $\langle\gamma\rangle$-equivariant structures on the two copies $M$ and $N$ of $P\oplus P$ in $(P\oplus P)\otimes k^2$. Similarly, $\delta$ must interchange these two subspaces of $(P\oplus P)\otimes k^2$.

Because $\big({{0 \,\ 1}\atop{1 \,\,\, 0}}\big)$ anti-commutes with $\gamma$, it interchanges its $1$ and $-1$-eigenspaces. There are now two possibilities: Either $M_0$ and $N_0$ are eigenspaces for $\gamma$ with opposite eigenvalues, or both break up as direct sums of one-dimensional $1$ and $-1$-eigenspaces. The former case is impossible because then $\delta$, which commutes with $\gamma$, would not leave its eigenspaces invariant. This means that we are in the latter case, and we can choose decompositions $M\cong P\oplus P$ and $N\cong P\oplus P$ that make the present equivariant structure agree with the one constructed explicitly above. 
\end{proof}

\section{Commuting lines in $\Gr(\wtQ)$ and commuting subspaces of $\wtQ_1$}
\label{se.commuting_conic}

This section concerns line modules that correspond to pairs of commuting elements in $\wtQ_1$.

\subsection{}
A 2-dimensional subspace $ky+ky' \subseteq \wtQ_1$ is called a {\sf commuting subspace} of $\wtQ_1$ if $[y,y']=0$.
Such subspaces exist. For example, $[iy_0+bcy_1,cy_2+iby_3]=  [iy_0-bcy_1,cy_2-iby_3] =0$.\footnote{The automorphism $\c_2:\wtQ\to \wtQ$ sends the commuting pair $\{iy_0+bcy_1,cy_2+iby_3\}$ to $\{iy_0-bcy_1,cy_2-iby_3\}$.}

If $ky+ky'$ is a commuting subspace of $\wtQ_1$ we call the line $y=y'=0$ in $\PP(\wtQ_1^*)$ a {\sf commuting line}.

\Cref{prop.commute} shows that the set of commuting lines in $\PP(\wtQ_1^*)$, and therefore the set of commuting 
subspaces, is parametrized by a smooth conic in the Grassmannian $\GG(1,3)$. 

\begin{proposition}
If $ky+ky'$ is a commuting subspace of $\wtQ_1$, then $\wtQ/\wtQ y+\wtQ y'$ is a line module.
\end{proposition}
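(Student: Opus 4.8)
The two things to establish are that $M:=\wtQ/(\wtQ y+\wtQ y')$ is generated by $M_0$ and that its Hilbert series is $(1-t)^{-2}$. The first is immediate: $M$ is cyclic, generated by the image of $1$, and since the left ideal $\wtQ y+\wtQ y'$ has no component in degree $0$ we get $M_0=k$ and $M=\wtQ M_0$. For the Hilbert series the plan is to realize $M$ as the cokernel of a single linear map and then force that map to be injective. First I would quotient by $y$ alone: because $\wtQ$ is a domain, right multiplication by $y$ is injective, giving a short exact sequence $0\to\wtQ(-1)\xrightarrow{\,\cdot y\,}\wtQ\to P\to 0$ with $P:=\wtQ/\wtQ y$, so that $H(P;t)=(1-t)H(\wtQ;t)=(1-t)^{-3}$.

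Next I would use the hypothesis $[y,y']=0$ to descend right multiplication by $y'$ to $P$. Since $yy'=y'y$, we have $(\wtQ y)y'=\wtQ(y'y)\subseteq\wtQ y$, so the rule $\bar a\mapsto\overline{ay'}$ is a well-defined graded left $\wtQ$-module homomorphism $\rho\colon P(-1)\to P$ (right multiplication by a fixed element is always a left-module map), and its cokernel is exactly $M$. Reading Hilbert series off the four-term exact sequence $0\to\ker\rho\to P(-1)\xrightarrow{\rho}P\to M\to 0$ gives $H(M;t)=(1-t)^{-2}+H(\ker\rho;t)$. Thus the whole statement reduces to proving that $\rho$ is injective; equivalently, that the commuting pair $(y,y')$ is a regular sequence, or that $\wtQ y\cap\wtQ y'=\wtQ yy'$.

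The injectivity of $\rho$ is the main obstacle, and the key input is that $\wtQ$ is Auslander--regular and Cohen--Macaulay, which is among the ``excellent homological properties'' established for these algebras in \cite{CS15} and \cite{Davies16}. For any nonzero $y\in\wtQ_1$ the length-one projective resolution $0\to\wtQ(-1)\to\wtQ\to P\to 0$ shows $\Ext^i_{\wtQ}(P,\wtQ)=0$ for $i\ne 1$, so $P$ is Cohen--Macaulay with $\GKdim(P)=4-1=3$; hence $P$ is pure, meaning every nonzero submodule of $P$ has $\GKdim$ exactly $3$ and therefore positive multiplicity $e(-):=f(1)$. Now $\im\rho$ is a nonzero submodule of $P$ (indeed $\rho(\bar1)=\overline{y'}\ne 0$ since $y'\notin\wtQ y$), so $\GKdim(\im\rho)=3$; if $\ker\rho$ were nonzero it would likewise be a $\GKdim$-$3$ submodule of $P(-1)$, and additivity of multiplicity on $0\to\ker\rho\to P(-1)\to\im\rho\to 0$ would give $1=e(P)=e(\ker\rho)+e(\im\rho)\ge 2$, a contradiction. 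Hence $\ker\rho=0$, the sequence $0\to P(-1)\xrightarrow{\rho}P\to M\to 0$ is exact, and $H(M;t)=(1-t)^{-2}$, so $M$ is a line module. The one point needing care is that $P$ is pure for \emph{every} $y\in\wtQ_1$, not merely generic ones, which is precisely what the Cohen--Macaulay property supplies; the commutativity of $y$ and $y'$ enters only through the well-definedness of $\rho$, but it is exactly the feature that makes the argument run.
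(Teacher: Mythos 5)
Your proof is correct, but it takes a genuinely different route from the paper's. The paper handles this in one line: it invokes \cite[Prop.~2.8]{LS93} --- if $st=uv$ with $\{t,v\}$ linearly independent, then $\wtQ/(\wtQ t+\wtQ v)$ is a line module --- applied with $s=v=y$, $t=u=y'$ via the relation $yy'=y'y$, together with a footnote asserting that the proof of that proposition transfers to $\wtQ$ because it depends only on good homological properties, the Hilbert series, and generation in degree one. You instead prove the statement from scratch: quotient by $y$ to get $P=\wtQ/\wtQ y$ with Hilbert series $(1-t)^{-3}$, use $[y,y']=0$ to make right multiplication by $y'$ descend to a well-defined map $\rho\colon P(-1)\to P$ with cokernel $M$, and then kill $\ker\rho$ by purity and additivity of multiplicity. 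The two serious inputs you use are both legitimately available: that $\wtQ$ is an Auslander-regular, Cohen--Macaulay noetherian domain is established in \cite{CS15} and \cite{Davies16} (this is exactly what the paper's phrase ``excellent homological properties'' refers to), and the fact that over an Auslander-Gorenstein ring a module $N$ with $\Ext^i(N,\wtQ)=0$ for $i\neq j(N)$ is pure is a standard (if nontrivial) consequence of the Auslander condition --- the bidualizing filtration embeds such an $N$ into $\Ext^{j}(\Ext^{j}(N,\wtQ),\wtQ)$, which is pure; you treat this as known, which is a level of citation comparable to what the paper itself does. What your argument buys is self-containedness: it makes explicit precisely which homological facts the paper's footnote gestures at, and in effect reproves the commuting-pair case of \cite[Prop.~2.8]{LS93} for $\wtQ$. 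What the paper's route buys is brevity and generality: the cited proposition covers arbitrary relations $st=uv$, whereas your two-step quotient uses commutativity essentially (to make $\rho$ well defined); your purity argument could be adapted to the general case, but it would require replacing $\rho$ by an analysis of the left ideal $\{a : at\in \wtQ v\}$ rather than a map between cyclic modules.
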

\begin{proof} 
By \cite[Prop.2.8]{LS93},  if 
$s,t,u,v \in \wtQ_1$ are non-zero elements such that  $st=uv$  and $\{t,v\}$ is linearly independent, 
then $\wtQ/\wtQ t+\wtQ v$ is a line module.\footnote{The result in  \cite[Prop.2.8]{LS93} is stated for a class of algebras that does not include $\wtQ$ but its proof depends only on the 
good homological properties of the ring, its Hilbert series, and the fact that the ring in question is generated in degree one, 
so the result applies to $\wtQ$.} In particular, if $ky+ky'$ is a commuting subspace of $\wtQ_1$, then $yy'=y'y$ so  
$\wtQ/\wtQ y + \wtQ y'$ is a line module. 
\end{proof} 

We call line modules of this form {\sf commuting line modules}. 

\subsection{The commuting conic}
\label{ssect.comm.conic}  
The set of lines in $\PP(\wtQ_1^*)$ is a projective variety, the Grassmanian $\GG(1,3)$, which we will view as a closed subvariety of 
$\PP^5=\PP(\wedge^2 \wtQ_1^*)$.
The Pl\"ucker coordinates $z_{01},z_{02},z_{03},z_{12},z_{13},z_{23},$ on this $\PP^5$ are  defined as follows.

The Pl\"ucker coordinates of  the line $\sum_{j=0}^3 \l_jy_j=\sum_{j=0}^3\l_j' y_j=0$ in $\PP(\wtQ_1^*)$ or, more precisely, of the point in $\GG(1,3)  \subseteq \PP^5$ that corresponds to it, are $(M_{01},M_{02},M_{03},M_{12},M_{13},M_{23})$ where $M_{ij}$ is the determinant of the $2 \times 2$ sub-matrix of
\begin{equation}
\label{line.matrix}
M:=\begin{pmatrix}
 \l_0 & \l_1 & \l_2 & \l_3   \\
 \l_0' & \l_1' & \l_2' & \l_3'
\end{pmatrix}
\end{equation}
consisting of columns $i$ and $j$ where the columns are labelled 0, 1, 2, 3, starting 
from the left. For example,  $M_{01}=\l_0\l_1'-\l_0'\l_1$. It is also convenient to define $M_{ji}=-M_{ij}$ and $z_{ji}=-z_{ij}$.  

The Pl\"ucker coordinates satisfy the Pl\"ucker relation $z_{01}z_{23} + z_{02}z_{31}+z_{03}z_{12} = 0$. Indeed, $\GG(1,3)$
is the quadric hypersurface in $\PP^5$ cut out by this equation.

 In anticipation of the next result we call the subvariety of $\PP^5$  given by (\ref{eq.commuting.conic}) the {\sf commuting conic}.  We denote it by $C_0$.  It parametrizes the isomorphism classes of commuting line modules.

\begin{proposition}
\label{prop.commute}
A line $y=y'=0$ in $\PP(\wtQ_1^*)$ is a commuting line if and only if the corresponding point in $\GG(1,3)$ 
 lies on the smooth conic
\begin{equation}
\label{eq.commuting.conic}
z_{01}z_{23} + z_{02}z_{31}+z_{03}z_{12}   \; = \; z_{23} +\a z_{01} \; = \; z_{31} +\b z_{02} \; = \;  z_{12}+\c z_{03} \; = \; 0.
\end{equation}
There is an isomorphism $\psi: \PP^1 \to C_0$ given by the formulas
$$
\psi(s,t) \; = \; \big(ia^{-1}(s^2+t^2), \, 2b^{-1}st, \, c^{-1}(s^2-t^2),  \, -c(s^2-t^2),   \,    2bst, \,   -ia(s^2+t^2)            \big)
$$
and  $\psi^{-1}(z_{01}, \ldots, z_{23})= (cz_{03}-iaz_{01},bz_{02})$.
\end{proposition}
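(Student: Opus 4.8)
The plan is to reduce the commuting condition $[y,y']=0$ to linear conditions on the Pl\"ucker coordinates by expanding the commutator in a convenient basis of $\wtQ_2$. Writing $y=\sum_j\l_jy_j$ and $y'=\sum_j\l_j'y_j$, bilinearity gives $[y,y']=\sum_{a<b}M_{ab}[y_a,y_b]$, where the $M_{ab}=\l_a\l_b'-\l_b\l_a'$ are exactly the Pl\"ucker minors of the matrix $M$ in \Cref{line.matrix}. The defining relations \Cref{S-tilde-relns} express the three ``mixed'' commutators $[y_0,y_i]$ as scalar multiples of $[y_j,y_k]$, namely $[y_0,y_1]=\a[y_2,y_3]$, $[y_0,y_2]=-\b[y_1,y_3]$, and $[y_0,y_3]=\c[y_1,y_2]$. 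Substituting these and collecting terms, I would obtain
\[
[y,y']=(M_{23}+\a M_{01})[y_2,y_3]+(M_{13}-\b M_{02})[y_1,y_3]+(M_{12}+\c M_{03})[y_1,y_2].
\]

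The one structural point that must be pinned down is that $[y_1,y_2]$, $[y_1,y_3]$, $[y_2,y_3]$ are linearly independent in $\wtQ_2$. This follows from the Hilbert series $(1-t)^{-4}$: the six defining relations split into three lying in $\wedge^2\wtQ_1$ and three in $\mathrm{Sym}^2\wtQ_1$, so the image of $\wedge^2\wtQ_1$ in $\wtQ_2$ has dimension $6-3=3$, and the three displayed commutators span it, hence form a basis. Granting this, $[y,y']=0$ is equivalent to the simultaneous vanishing of the three coefficients, which, rewritten with $z_{ab}=M_{ab}$ and $z_{31}=-z_{13}$, are precisely the linear equations $z_{23}+\a z_{01}=z_{31}+\b z_{02}=z_{12}+\c z_{03}=0$ of \Cref{eq.commuting.conic}; the Pl\"ucker relation holds automatically on $\GG(1,3)$. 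This proves the first assertion and realizes $C_0$ as the intersection of the Pl\"ucker quadric with a $\PP^2$.

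For smoothness I would eliminate $z_{23},z_{31},z_{12}$ using the linear relations and substitute into the Pl\"ucker relation, which collapses it to $\a z_{01}^2+\b z_{02}^2+\c z_{03}^2=0$ in the $\PP^2$ with coordinates $(z_{01},z_{02},z_{03})$; since $\a\b\c\ne0$ this conic is nonsingular. To identify the parametrization I would first check directly, using $\a=a^2$, $\b=b^2$, $\c=c^2$, that each component of $\psi(s,t)$ satisfies the equations of \Cref{eq.commuting.conic}, and that the six defining quadratic forms have no common zero on $\PP^1$, so that $\psi$ is a genuine morphism $\PP^1\to C_0$. Composing with the proposed inverse gives $cz_{03}-ia z_{01}=2s^2$ and $bz_{02}=2st$ on the image of $\psi$, so $\psi^{-1}\circ\psi$ is the identity on the dense open locus $s\ne0$; hence $\psi$ is birational, and a birational morphism between smooth projective curves is an isomorphism, so $\psi$ and the displayed $\psi^{-1}$ are mutually inverse.

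The computations here are all routine, and I expect the only genuinely substantive step to be the linear-independence claim for the three commutators — which is settled immediately by the dimension count coming from the Hilbert series. The sole care required is purely bookkeeping: keeping the sign convention $z_{ji}=-z_{ij}$ straight when translating the coefficient equation $M_{13}-\b M_{02}=0$ into the form $z_{31}+\b z_{02}=0$ displayed in \Cref{eq.commuting.conic}, and tracking the scalars $\a=a^2$, $\b=b^2$, $\c=c^2$, $i^2=-1$ through the verification of $\psi$.
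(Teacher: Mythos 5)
Your proposal is correct and follows essentially the same route as the paper's proof: expand $[y,y']$ in the Pl\"ucker minors $M_{ij}$, use the defining relations to rewrite everything in terms of $[y_1,y_2]$, $[y_1,y_3]$, $[y_2,y_3]$, and obtain smoothness by substituting the three linear equations into the Pl\"ucker quadric to get $\a z_{01}^2+\b z_{02}^2+\c z_{03}^2$. The only differences are that you explicitly justify the linear independence of those three commutators via the symmetric/antisymmetric splitting of the relation space (a point the paper takes for granted) and you carry out the verification that $\psi$ is an isomorphism with the stated inverse, which the paper leaves to the reader.
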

\begin{proof}
Let $y=\l_0 y_0+\cdots +\l_3y_3$, $y'=\l_0' y_0+\cdots +\l_3' y_3$, and let $M$ be the matrix in (\ref{line.matrix}).
A calculation shows that
$$
[y,y']  \; = \;   \sum_{i,j=0}^3 \l_i\l_j'[y_iy_j]   \; = \;   \sum_{0\le i<j \le 3} (\l_i\l_j'-\l_i'\l_j)[y_i,y_j] \; = \;   \sum_{0\le i<j \le 3} M_{ij}[y_i,y_j].
$$
Since $[y_0,y_j]=\a_j[y_k,y_i]$ whenever $(i,j,k)$ is a cyclic permutation of $(1,2,3)$ this sum equals
$$
(\a M_{01}+M_{23})[y_2,y_3]   \, + \,  (\b M_{02}+M_{31})[y_3,y_1]  \, + \,  (\c M_{03}+M_{12})[y_1,y_2] .
$$
Thus, $[y,y']=0$ if and only if 
\begin{equation}
\label{P2.in.G13}
 \a M_{01}+M_{23} \; = \; \b M_{02}+M_{31} \; = \; \c M_{03}+M_{12} \; = \; 0.
 \end{equation}
The minors satisfy the Pl\"ucker relation  so $[y,y']=0$ if and only if the corresponding point in $\GG(1,3)$ lies on the subvariety of 
$\PP^5=\PP(\wedge^2\wtQ_1^*)$ given by (\ref{eq.commuting.conic}). We complete the proof by showing this subvariety is a smooth conic.

The equations $ z_{23} +\a z_{01} =z_{31}+\b z_{02}=z_{12}+\c z_{03} =0$ cut out a $\PP^2$ in $\PP^5$ so the commuting conic
is the zero locus of the quadratic form $z_{01}z_{23} + z_{02}z_{31}+z_{03}z_{12}$ on this $\PP^2$. By using the coordinate functions $z_{01},z_{02},z_{03}$, for this $\PP^2$ and making the substitutions  $z_{23} =-\a z_{01}$,
 $z_{13}=\b z_{02}$,  and $z_{12}=-\c z_{03}$, this quadratic form becomes $\a z_{01}^2+ \b z_{02}^2 + \c z_{03}^2$
 which is non-degenerate because $\a\b\c \ne 0$. Thus, the commuting conic is smooth as claimed.
 
We leave the reader to check that $\psi$ is an isomorphism with the claimed inverse. 
\end{proof}

\begin{proposition}
The commuting conic is cut out by the equations $ z_{23} +\a z_{01} =z_{31}+\b z_{02}=z_{12}+\c z_{03} =0$
and any one of the following four equations: 
\begin{align*}
\a z_{01}^2+ \b z_{02}^2 + \c z_{03}^2 & \; = \; 0,  \quad \hbox{or}
\\
\a\b\c z_{01}^2+ \b z_{12}^2 + \c z_{13}^2 & \; = \; 0,  \quad \hbox{or}
\\
\a\b\c z_{02}^2+ \a z_{12}^2 + \c z_{23}^2 & \; = \; 0,  \quad \hbox{or}
\\
\a\b\c z_{01}^2+ \a z_{13}^2 + \b z_{23}^2 & \; = \; 0. 
\end{align*}
\end{proposition}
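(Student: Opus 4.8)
The plan is to build directly on the computation in the proof of \Cref{prop.commute}. There it was shown that the three linear equations $z_{23}+\a z_{01}=z_{31}+\b z_{02}=z_{12}+\c z_{03}=0$ cut out a $\PP^2\subseteq\PP^5$, that $z_{01},z_{02},z_{03}$ serve as coordinate functions on this $\PP^2$, and that $C_0$ is the zero locus on it of the non-degenerate form $\a z_{01}^2+\b z_{02}^2+\c z_{03}^2$. Since this form is the first of the four displayed quadrics, the assertion for it is already in hand, and it suffices to prove that each of the other three quadrics restricts on the same $\PP^2$ to a non-zero scalar multiple of $\a z_{01}^2+\b z_{02}^2+\c z_{03}^2$; any two such restrictions then cut out the same conic there.

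To carry this out I would reuse the substitutions already recorded in that proof, namely $z_{23}=-\a z_{01}$, $z_{13}=\b z_{02}$, and $z_{12}=-\c z_{03}$, whence $z_{23}^2=\a^2 z_{01}^2$, $z_{13}^2=\b^2 z_{02}^2$, and $z_{12}^2=\c^2 z_{03}^2$ on the $\PP^2$. Substituting into the second quadric gives
\[
\a\b\c z_{01}^2+\b z_{12}^2+\c z_{13}^2 \;=\; \b\c\big(\a z_{01}^2+\c z_{03}^2+\b z_{02}^2\big),
\]
and the identical manipulation turns the third quadric into $\a\c\big(\a z_{01}^2+\b z_{02}^2+\c z_{03}^2\big)$ and the fourth into $\a\b\big(\a z_{01}^2+\b z_{02}^2+\c z_{03}^2\big)$. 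In every case the scalar prefactor ($\b\c$, $\a\c$, or $\a\b$) is non-zero because $\a\b\c\neq 0$, so all four quadrics cut out $C_0$ on the $\PP^2$, which is what is claimed.

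The computation is routine, so I do not expect a genuine obstacle; the only thing that needs care is the Pl\"ucker index bookkeeping, in particular the complementary-pair pattern $(01,23),(02,13),(03,12)$ and the sign flip $z_{31}=-z_{13}$ used to pass from the linear relation $z_{31}+\b z_{02}=0$ to the substitution $z_{13}=\b z_{02}$. This bookkeeping also pins down the leading term of the fourth quadric: for the factorization $\a\b\big(\a z_{01}^2+\b z_{02}^2+\c z_{03}^2\big)$ to emerge, the surviving primary variable must be $z_{03}$ (since $z_{13}^2$ and $z_{23}^2$ absorb $z_{02}^2$ and $z_{01}^2$), so the fourth equation should read $\a\b\c z_{03}^2+\a z_{13}^2+\b z_{23}^2=0$; with $\a\b\c z_{01}^2$ in front the identity fails, and I would correct the displayed equation accordingly. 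No appeal to the relation $\a+\b+\c+\a\b\c=0$ is required.
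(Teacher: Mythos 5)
Your proof is correct and follows essentially the same route as the paper's own (one-line) proof, which simply makes "the obvious substitutions" of the linear relations into the Pl\"ucker quadric; your reduction of each displayed quadric to a nonzero scalar multiple of $\a z_{01}^2+\b z_{02}^2+\c z_{03}^2$ modulo those relations is the same routine computation, organized through \Cref{prop.commute} rather than through the Pl\"ucker relation directly. You are also right that the fourth displayed equation is a typo: on the plane it reduces to $\a\b\bigl((\a+\c)z_{01}^2+\b z_{02}^2\bigr)$, whose zero locus is a pair of lines rather than the conic, and the correct equation is $\a\b\c z_{03}^2+\a z_{13}^2+\b z_{23}^2=0$, exactly as you say.
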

\begin{proof}
One obtains these equations by using  the equations $z_{23} +\a z_{01} =z_{31}+\b z_{02}= z_{12}+\c z_{03} =0$
and making the obvious substitutions into the equation  $z_{01}z_{23} + z_{02}z_{31}+z_{03}z_{12}=0$. 
\end{proof}

\begin{proposition}
\label{prop.commuting.quadric}
The commuting lines $y=y'=0$ in $\PP(\wtQ_1^*)$ are precisely the lines in one of the rulings on the quadric
$y_0^2+\b\c y_1^2 + \c\a y_2^2+ \a\b y_3^2=0$. 
\end{proposition}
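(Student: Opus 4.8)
The plan is to use the explicit parametrization $\psi:\PP^1\to C_0$ from \Cref{prop.commute} to write down every commuting line, recover its actual point set in $\PP(\wtQ_1^*)$, and verify by hand that it lies on the quadric
$$
Q \; : \; y_0^2+\b\c\, y_1^2+\c\a\, y_2^2+\a\b\, y_3^2 \; = \; 0,
$$
where the $y_j$ are the coordinate functions on $\PP(\wtQ_1^*)$. First I would record that $Q$ is smooth: its Gram matrix is $\mathrm{diag}(1,\b\c,\c\a,\a\b)$, whose determinant $\a^2\b^2\c^2$ is nonzero because $\a\b\c\ne 0$.

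Next, abbreviating $u=s^2+t^2$, $v=2st$, $w=s^2-t^2$, the formulas for $\psi(s,t)$ give the Pl\"ucker coordinates $M_{01}=ia^{-1}u$, $M_{02}=b^{-1}v$, $M_{03}=c^{-1}w$, $M_{12}=-cw$, $M_{13}=bv$, $M_{23}=-iau$ of the $2$-plane of coefficient vectors, and the single identity $u^2=v^2+w^2$ (which is exactly the condition that $\psi$ lands on the conic) will drive the whole computation. The line $\ell_{(s,t)}\subseteq\PP(\wtQ_1^*)$ cut out by the corresponding commuting subspace is the annihilator of this $2$-plane, so its Pl\"ucker coordinates are the Hodge dual $(M_{23},-M_{13},M_{12},M_{03},-M_{02},M_{01})$ of the $M_{ij}$. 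From these I would extract two points spanning $\ell_{(s,t)}$, for instance
$$
P=(0,\,iau,\,bv,\,cw),\qquad P'=(-iau,\,0,\,-c^{-1}w,\,b^{-1}v).
$$

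The core of the argument is the verification that the restriction of the quadratic form $Q$ to $\ell_{(s,t)}$ vanishes identically. Since a quadric contains the line through $P$ and $P'$ exactly when $Q(P)=Q(P')=0$ and the associated symmetric bilinear form satisfies $B(P,P')=0$, I would check these three polynomial identities in $s,t$. After the substitutions above each collapses to a multiple of $-u^2+v^2+w^2$, which vanishes: one finds $Q(P)=a^2b^2c^2(-u^2+v^2+w^2)=0$ and $Q(P')=a^2(-u^2+v^2+w^2)=0$, while the only two surviving terms of $B(P,P')$ cancel. As these hold for all $(s,t)$ and ``$\ell$ lies on $Q$'' is a closed condition on $C_0$, every commuting line lies on $Q$.

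Finally, to identify the commuting lines with a single ruling: $C_0\cong\PP^1$ is an irreducible closed subvariety of $\GG(1,3)$ which, by the previous step, is contained in the Fano scheme $F(Q)$ of lines on $Q$. For a smooth quadric surface one has $F(Q)\cong\PP^1\sqcup\PP^1$, the two rulings. Being connected, $C_0$ must map into one component $R\cong\PP^1$, and the resulting closed immersion $C_0\hookrightarrow R$ of integral curves of dimension one is forced to be an isomorphism; hence $C_0=R$ and the commuting lines are precisely the lines of one ruling of $Q$. The step I expect to be the main obstacle is the bookkeeping in recovering $\ell_{(s,t)}$ from the coefficient-plane data---pinning down the Hodge-dual identification and the correct scalings of $P,P'$---since once that is in place everything downstream reduces cleanly to the Pythagorean identity $u^2=v^2+w^2$.
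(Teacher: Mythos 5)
Your proof is correct, but it runs in the opposite direction to the paper's and uses somewhat different machinery. The paper starts from the quadric: it factors the defining equation as $(y_0+ibcy_1)(y_0-ibcy_1)=\a(icy_2+by_3)(icy_2-by_3)$, writes down one ruling explicitly as the pencil of lines $(y_0+ibcy_1)-ta(icy_2-by_3) = t(y_0-ibcy_1)-a(icy_2+by_3)=0$, $t\in\PP^1$, and then checks that the $2\times 2$ minors of the coefficient matrices of these pairs of linear forms satisfy the linear equations $z_{23}+\a z_{01}=z_{31}+\b z_{02}=z_{12}+\c z_{03}=0$ cutting out $C_0$; equality of the ruling with the set of commuting lines then follows because both are irreducible conics. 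Since the paper's lines are presented from the outset as zero loci of pairs of linear forms --- exactly the data whose minors give the Plücker point in the convention of \Cref{prop.commute} --- no dualization is needed and the parametrization never degenerates. You instead start from $\psi:\PP^1\to C_0$, convert each coefficient $2$-plane into an actual line via the Hodge-dual formula $(M_{23},-M_{13},M_{12},M_{03},-M_{02},M_{01})$ (which you state correctly), and check membership in the Fano scheme of $Q$ via $Q(P)=Q(P')=B(P,P')=0$. This costs you the duality bookkeeping plus the two degenerate parameter values $s^2+t^2=0$: there $u^2=v^2+w^2$ forces $P$ and $P'$ to become proportional, so they no longer span the line, and your closedness remark is genuinely needed rather than a throwaway --- it is what covers those two points of $C_0$. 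What your route buys is that it is mechanical given only \Cref{prop.commute} and the equation of $Q$, and it makes the reason for ``one ruling'' structurally explicit via $F(Q)\cong\PP^1\sqcup\PP^1$ and connectedness of $C_0$, whereas the paper's factorization silently selects the ruling; what the paper's route buys is a shorter computation with no degenerate cases and no passage between the two Plücker conventions.
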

\begin{proof}
The equation  for the quadric is $(y_0+ibcy_1)(y_0-ibcy_1)=\a(icy_2+by_3)(icy_2-by_3)$.
For all $t \in \PP^1$, the line
\begin{equation}
\label{eq.comm.lines}
(y_0+ibcy_1)-ta(icy_2-by_3) =  t(y_0-ibcy_1)-a(icy_2+by_3) =0
\end{equation}
lies on the quadric. As $t$ varies over $\PP^1$ one obtains all the lines in one of the rulings.
The Pl\"ucker coordinates of the line (\ref{eq.comm.lines}) are given by the $2 \times 2$ minors of the matrix
$$
\begin{pmatrix}
  1    & ibc & -ia ct & a b t   \\
   t   & -ibct & -iac & -ab 
\end{pmatrix}.
$$
The corresponding point in $\GG(1,3)$, namely
$$
\big( -2ibct, \, iac(t^2-1), \, -ab(t^2+1), \, ab\c( t^2+1), \, iac\b ( t^2-1), \, 2i\a bct  \big),
$$
lies on the plane $ z_{23} +\a z_{01} = z_{31}+\b z_{02} = z_{12}+\c z_{03}  =  0$ and hence on the commuting conic.
\end{proof}

\begin{proposition}
\label{prop.commuting.quadric.2}
\label{thm.pts.on.comm.lines}
Let $p \in \fP$. 
\begin{enumerate}
  \item 
The point $p$ lies on the quadric in \Cref{prop.commuting.quadric} if and only if $p \in \fP_1 \cup \fP_2  \cup \fP_3$.
  \item 
  If $p \in \fP_1 \cup \fP_2 \cup \fP_3$, there is a unique commuting line passing through $p$.
  \item{}
  None of the points in $\fP_\infty \cup \fP_0$ lies on a commuting line. 
\end{enumerate}
\end{proposition}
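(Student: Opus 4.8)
The plan is to establish (1) by directly substituting the coordinates recorded in \Cref{table.20.pts} into the quadric $Q:\ y_0^2+\b\c y_1^2+\c\a y_2^2+\a\b y_3^2=0$ of \Cref{prop.commuting.quadric}, and then to deduce (2) and (3) from (1) together with the fact that the commuting lines are precisely one of the two rulings on the \emph{smooth} quadric $Q$. The computations simplify considerably by exploiting $\G$-invariance, since $Q$ is defined by the $\G$-invariant quantities $y_j^2$ and each $\fP_j$ with $j\ne\infty$ is a single $\G$-orbit.

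For (1), the points of $\fP_\infty$ are the four coordinate points; substituting, say, $(1,0,0,0)$ or $(0,1,0,0)$ yields $1$ or $\b\c$, both nonzero, so none of $\fP_\infty$ lies on $Q$. For $\fP_1,\fP_2,\fP_3$ it suffices to test one representative of each orbit. Writing $\a=a^2$, $\b=b^2$, $\c=c^2$, a one-line check shows, for example, that $(bc,-i,-ib,-c)\in\fP_1$ contributes $\b\c-\b\c-\a\b\c+\a\b\c=0$, and the chosen representatives of $\fP_2$ and $\fP_3$ behave identically; hence every point of $\fP_1\cup\fP_2\cup\fP_3$ lies on $Q$.

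The crux is $\fP_0$. All four of its points have coordinates $\pm1$, so each yields the same value $1+\a\b+\b\c+\c\a$, and the one step that is not a brute substitution is the factorization
\begin{equation*}
1+\a\b+\b\c+\c\a \;=\; (1+\a)(1+\b)(1+\c),
\end{equation*}
which one verifies by expanding the right-hand side and using the defining relation $\a+\b+\c+\a\b\c=0$ to cancel the degree-one and degree-three terms. Since $\a,\b,\c\in k\setminus\{0,\pm1\}$, each factor is nonzero, so no point of $\fP_0$ lies on $Q$. This completes (1).

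For (2) and (3) I would invoke the geometry of rulings. The quadric $Q$ is diagonal with coefficients $1,\b\c,\c\a,\a\b$, all nonzero because $\a\b\c\ne0$, so $Q$ is smooth and carries exactly two rulings, the commuting lines being one of them by \Cref{prop.commuting.quadric}. Through each point of a smooth quadric surface in $\PP^3$ there pass exactly two lines lying on the surface, one from each ruling, so through any $p\in Q$ there is a unique commuting line; combined with (1) this gives (2). For (3), a commuting line lies entirely on $Q$, hence so does each of its points; by (1) the points of $\fP_\infty\cup\fP_0$ lie off $Q$ and therefore on no commuting line. I expect the factorization in the $\fP_0$ case to be the only genuine obstacle, everything else being either a routine substitution or a standard fact about rulings on a smooth quadric.
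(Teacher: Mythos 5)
Your proposal is correct and follows essentially the same route as the paper: part (1) by direct substitution of the coordinates from the point table into the quadric (the paper calls this "a simple calculation," and your factorization $1+\a\b+\b\c+\c\a=(1+\a)(1+\b)(1+\c)$ via $\a+\b+\c+\a\b\c=0$ is exactly the verification needed for $\fP_0$), and parts (2) and (3) from the fact that the commuting lines form a ruling on the smooth quadric, so that $Q$ is the disjoint union of those lines. Nothing is missing; your write-up simply makes explicit the computation the paper leaves to the reader.
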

\begin{proof}
Part (1)  is a simple calculation. Parts (2) and (3) follow from the fact that a smooth quadric in $\PP^3$ is the
disjoint union of the lines in one of the rulings on it.
\end{proof}

\subsection{Commuting subspaces of $\wtQ_1$}
Just as the commuting lines provide a ruling on the quadric $y_0^2+\b\c y_1^2 + \c\a y_2^2+ \a\b y_3^2=0$ in $\PP(\wtQ_1^*)$, the commuting subspaces of $\wtQ_1$ provide a ruling on a quadric in $\PP(\wtQ_1)$. 
We determine that quadric after the following routine lemma. 

Let $V$ be a finite dimensional vector space and $V^*$ its dual. If $L$ is a linear subspace of $\PP(V)$ we write $L^\perp$
for the linear subspace of $\PP(V^*)$ that vanishes on $L$.

\begin{lemma}
\label{lem.orthog.rulings}
Let $\l_0,\l_1,\l_2,\l_3 \in k^\times$. 
Let $V$ be a 4-dimensional vector space with basis $y_0,\ldots,y_3$. Let $w_0,\ldots,w_3$ be the dual basis for $V^*$. 
Let $S \subseteq \PP(V^*)$ and $S' \subseteq \PP(V)$
be the smooth quadrics defined by  
$\l_0^2y_0^2+\l_1^2y_1^2+\l_2^2y_2^2+\l_3^2y_3^2=0$  and
$\l_0^{-2}w_0^2+\l_1^{-2}w_1^2+\l_2^{-2}w_2^2+\l_3^{-2}w_3^2=0$.
\begin{enumerate}
  \item 
   For each $t \in \PP^1$, the lines 
  \begin{align*}
\phantom{xxxxxx}   L_t:  \; \; &
  (\l_0y_0 +i \l_1 y_1)-t (i\l_2y_2 + \l_3 y_3)\;=\; t(\l_0y_0 -i \l_1 y_1) -(i\l_2y_2 - \l_3 y_3)=0 \quad \hbox{and}
  \\
 \phantom{xxxxxx}   L'_t:  \; \;&
   (\l_0^{-1}w_0 +i \l_1^{-1} w_1)-t  (i\l_2^{-1}w_2 + \l_3^{-1} w_3)\;=\; t(\l_0^{-1}w_0 -i \l_1^{-1} w_1) - (i\l_2^{-1}w_2 - \l_3^{-1} w_3) =0
  \end{align*}
  lie on $S$ and $S'$ respectively.
   \item 
  $L_t'=L_t^\perp$.
\end{enumerate}
\end{lemma}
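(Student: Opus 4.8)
The plan is to reduce both assertions to elementary identities obtained by factoring the defining quadratic forms of $S$ and $S'$. First I would introduce the linear forms
\[
A = \l_0 y_0 + i\l_1 y_1,\quad B = \l_0 y_0 - i\l_1 y_1,\quad C = i\l_2 y_2 + \l_3 y_3,\quad D = i\l_2 y_2 - \l_3 y_3,
\]
so that, using $i^2=-1$ and the cancellation of cross terms, the form cutting out $S$ factors as $\l_0^2 y_0^2+\l_1^2 y_1^2+\l_2^2 y_2^2+\l_3^2 y_3^2 = AB - CD$; analogously I define $A',B',C',D'$ in the $w_i$, with the form of $S'$ equal to $A'B'-C'D'$. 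With this notation the defining equations of $L_t$ read $A = tC$ and $D = tB$, so substituting into $AB-CD$ gives $(tC)B - C(tB) = 0$, whence $L_t \subseteq S$; the identical computation with primes gives $L_t'\subseteq S'$. This settles part~(1). The value $t=\infty$ is handled by the limiting equations $B=C=0$ (resp. $B'=C'=0$), which visibly lie on the quadrics.

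For part~(2) I would first identify $L_t^\perp$ explicitly. Writing $u_1,u_2\in V$ for the two linear forms defining $L_t\subseteq\PP(V^*)$ and $U=ku_1+ku_2$, one has $L_t = \PP(U^\perp)$, hence $L_t^\perp = \PP\big((U^\perp)^\perp\big) = \PP(U) = \PP(ku_1+ku_2)\subseteq\PP(V)$; here $\perp$ for a subspace of $\PP(V^*)$ means the subspace of $\PP(V)$ vanishing on it, the mirror of the convention stated before the lemma. Thus it suffices to show that the two linear forms $v_1,v_2\in V^*$ defining $L_t'$ annihilate both $u_1$ and $u_2$ under the pairing $w_i(y_j)=\delta_{ij}$. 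The four evaluations $v_a(u_b)$ are each a sum of four terms that collapse to $0$ once $i^2=-1$ is used (for instance $v_1(u_1) = 1 + i^2 + t^2 i^2 + t^2 = 0$). This gives $[u_1],[u_2]\in L_t'$, i.e. $L_t^\perp \subseteq L_t'$.

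To upgrade containment to equality it remains to observe that both $L_t^\perp$ and $L_t'$ are genuine lines, i.e. that the coefficient matrices of $(u_1,u_2)$ and of $(v_1,v_2)$ have rank $2$ for every $t\in\PP^1$. I would verify this by exhibiting a nonvanishing $2\times 2$ minor for each $t$: the columns $\{0,1\}$ minor equals $-2it\l_0\l_1$ and is nonzero off $t=0$, while the $\{0,2\}$ minor equals $i\l_0\l_2(t^2-1)$ and covers $t=0$, with $t=\infty$ again handled by the independent limiting equations. Since a line contained in another line must coincide with it, $L_t^\perp = L_t'$. I expect the only mild obstacle to be bookkeeping — keeping the dual pairing, the biduality identity $(U^\perp)^\perp = U$, and the two flavours of $\perp$ straight, together with a uniform treatment of $t=\infty$; the algebraic identities themselves are immediate once the factorization $AB-CD$ is in place.
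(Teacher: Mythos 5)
Your proof is correct. Part (1) is exactly the paper's argument: write the form as $AB-CD$ with the same four linear factors and substitute the equations $A=tC$, $D=tB$ of $L_t$ (the paper phrases this as the factorization $(\l_0y_0+i\l_1y_1)(\l_0y_0-i\l_1y_1)=(i\l_2y_2+\l_3y_3)(i\l_2y_2-\l_3y_3)$ making the claim "obvious"). Part (2) reaches the same conclusion by a dual execution of the same computation-plus-dimension-count strategy. The paper exhibits two explicit points on $L_t$, namely $(\l_0^{-1},i\l_1^{-1},-it\l_2^{-1},-t\l_3^{-1})$ and $(-t\l_0^{-1},it\l_1^{-1},i\l_2^{-1},-\l_3^{-1})$, and two explicit points on $L_t'$, pairs the former against the latter, and concludes $L_t'\subseteq L_t^\perp$, with equality for dimension reasons. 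You instead invoke biduality, $L_t^\perp=\PP\bigl((U^\perp)^\perp\bigr)=\PP(U)$ where $U$ is spanned by the defining forms $u_1,u_2$ of $L_t$, and pair the defining forms $v_1,v_2$ of $L_t'$ against $u_1,u_2$, obtaining the opposite containment $L_t^\perp\subseteq L_t'$ before the same both-are-lines argument. Your route has the small advantage that you never need to solve for points on either line -- the given defining forms do all the work -- at the cost of the extra bookkeeping you flag (biduality and the rank-$2$ check, which the paper leaves implicit in "for dimension reasons"). Your explicit treatment of $t=\infty$ and of the nonvanishing minors is more careful than the paper's proof, which omits both.
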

\begin{proof}
It becomes obvious that $L_t$ lies on $S$ when we write the equation for $S$ as
$$
(\l_0y_0 +i \l_1 y_1)(\l_0y_0 -i \l_1 y_1) = (i\l_2y_2 + \l_3 y_3)(i\l_2y_2 - \l_3 y_3).
$$
The points $(\l_0^{-1},i\l_1^{-1},-it\l_2^{-1},-t\l_3^{-1})$ and $(-t\l_0^{-1},it\l_1^{-1},i\l_2^{-1},-\l_3^{-1})$
 lie on $L_t$.
 
 Similarly, the line $L'_t$ lies on $S'$ and passes through  the two points $(\l_0,i\l_1,-it\l_2,-t\l_3)$ and 
 $(-t\l_0,it\l_1,i\l_2,-\l_3)$, both of which belong to  
$$
(\l_0^{-1},i\l_1^{-1},-it\l_2^{-1},-t\l_3^{-1})^\perp 
\cap 
(-t\l_0^{-1},it\l_1^{-1},i\l_2^{-1},-\l_3^{-1})^\perp \; = \; L_t^\perp.
$$
Thus $L'_t \subseteq L_t^\perp$ and, for dimension reasons, $L'_t = L_t^\perp$.
\end{proof}
 
\begin{proposition}
\label{prop.comm.subspace.ruling}
Let $w_0,w_1,w_2,w_3$ be the basis for $\wtQ_1^*$ dual to the basis $y_0,y_1,y_2,y_3$. 
The commuting subspaces of $\wtQ_1$ form a ruling on the quadric 
$\a\b\c w_0^2 + \a w_1^2 + \b w_2^2 +\c w_3^2=0$ in $\PP(\wtQ_1)$.  The commuting subspace vanishing on
 the commuting line in (\ref{eq.comm.lines})  is
\begin{equation}
\label{eq.comm.subspace}
(abcw_0+iaw_1)-t(ibw_2-cw_3) =  t(abcw_0-iaw_1)-(ibw_2+cw_3) =0.
\end{equation}
\end{proposition}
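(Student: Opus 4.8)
The plan is to deduce the statement from \Cref{prop.commuting.quadric} together with the projective self-duality recorded in \Cref{lem.orthog.rulings}. The first observation is that a commuting subspace $ky+ky'\subseteq\wtQ_1$ and its commuting line $y=y'=0$ in $\PP(\wtQ_1^*)$ determine one another: the line is exactly the locus $L$ on which the two forms vanish, so the subspace $ky+ky'$, viewed as a line $W=\PP(ky+ky')$ in $\PP(\wtQ_1)$, is the perpendicular $W=L^\perp$. Hence describing the commuting subspaces amounts to taking perpendiculars of the commuting lines, and by \Cref{prop.commuting.quadric} the latter are exactly the lines \cref{eq.comm.lines}, which sweep out one ruling of the quadric $S:\,y_0^2+\b\c y_1^2+\c\a y_2^2+\a\b y_3^2=0$ in $\PP(\wtQ_1^*)$.

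First I would apply \Cref{lem.orthog.rulings} with the scalars $\l_0=1$, $\l_1=bc$, $\l_2=ac$, $\l_3=-ab$. Because $a^2=\a$, $b^2=\b$, $c^2=\c$, these satisfy $\l_0^2=1$, $\l_1^2=\b\c$, $\l_2^2=\c\a$, $\l_3^2=\a\b$, so the lemma's quadric $S$ is precisely the one in \Cref{prop.commuting.quadric}; and feeding these $\l_j$ into the lemma's family $L_t$ reproduces \cref{eq.comm.lines} on the nose. Part (2) of the lemma then identifies $L_t^\perp=L_t'$, which is exactly the commuting subspace attached to $L_t$, while part (1) puts $L_t'$ on the dual quadric $S':\,\sum_j\l_j^{-2}w_j^2=0$. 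Multiplying through by $(abc)^2=\a\b\c$ and using $\b\c=b^2c^2$, $\c\a=a^2c^2$, $\a\b=a^2b^2$ rewrites $S'$ as $\a\b\c\,w_0^2+\a w_1^2+\b w_2^2+\c w_3^2=0$, the asserted quadric.

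Next I would read off \cref{eq.comm.subspace} by substituting the same $\l_j$ into the lemma's formula for $L_t'$ and clearing denominators: multiplying each of its two equations by $abc$ yields precisely $(abcw_0+iaw_1)-t(ibw_2-cw_3)=0$ and $t(abcw_0-iaw_1)-(ibw_2+cw_3)=0$. Finally, to upgrade ``each $W_t=L_t'$ lies on $S'$'' to ``the $W_t$ form a ruling of $S'$'', I would note that $\perp$ carries the ruling $\{L_t\}$ of $S$ bijectively onto the family $\{W_t\}$ of lines on $S'$ and preserves incidence of lines in $\PP^3$, since for $2$-dimensional subspaces $\dim(W_1^\perp\cap W_2^\perp)=\dim(W_1\cap W_2)$; thus pairwise-disjoint lines of the ruling $\{L_t\}$ map to pairwise-disjoint lines $\{W_t\}$, which fill out one of the two rulings of the smooth quadric $S'$.

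I expect the main obstacle to be purely the bookkeeping in the first two steps: one must pin down the square roots and, in particular, the sign $\l_3=-ab$ so that the normalized lines $L_t$ of \Cref{lem.orthog.rulings} agree term by term with \cref{eq.comm.lines}, and then carry exactly those $\l_j$ through to the dual equations. Once this matching is in place the rest is formal.
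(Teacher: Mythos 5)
Your proposal is correct and follows essentially the same route as the paper: both apply \Cref{lem.orthog.rulings} with $(\l_0,\l_1,\l_2,\l_3)=(1,bc,ac,-ab)$ to match $L_t$ with \cref{eq.comm.lines}, identify the commuting subspace as $L_t^\perp=L_t'$, and clear the factor $abc$ to obtain \cref{eq.comm.subspace} and the quadric $\a\b\c w_0^2+\a w_1^2+\b w_2^2+\c w_3^2=0$. The extra details you supply (duality determining subspace from line, and disjointness under $\perp$ giving the ruling) are correct elaborations of points the paper leaves implicit.
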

\begin{proof} 
In order that the line in (\ref{eq.comm.lines}) be the line $L_t$ in   \Cref{lem.orthog.rulings}, 
we must take $(\l_0,\l_1,\l_2,\l_3)=(1,bc,ac,-ab)$. Hence  
 $ (\l_0^{-1},\l_1^{-1},\l_2^{-1},\l_3^{-1})=(abc)^{-1}(abc,a,b,-c)$.
The line $L_t^\perp$, i.e., the commuting subspace vanishing on $L_t$, is the line $L_t'$ in \Cref{lem.orthog.rulings}
so is given by (\ref{eq.comm.subspace}).  
\end{proof}

\begin{lemma}
There are no commuting subspaces in $S_1$.  
\end{lemma}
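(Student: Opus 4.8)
The plan is to expand the commutator of a general pair of degree-one elements in exactly the same way as in the proof of \Cref{prop.commute}, and to exploit the fact that the relations \eqref{S-relns} for $S$ are ``crossed'' (commutators are set equal to anticommutators and vice versa) rather than ``uncrossed'' as in \eqref{S-tilde-relns} for $\wtQ$. This crossing is precisely what will force the commutator to be a combination of \emph{linearly independent} elements of $S_2$, so that it can vanish only when the two elements are proportional.

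First I would record the six commutators $[x_i,x_j]$ in terms of anticommutators using \eqref{S-relns}: for a cyclic permutation $(i,j,k)$ of $(1,2,3)$ one has $[x_0,x_i]=\a_i\{x_j,x_k\}$, while $\{x_0,x_i\}=[x_j,x_k]$ yields $[x_2,x_3]=\{x_0,x_1\}$, $[x_1,x_3]=-\{x_0,x_2\}$, and $[x_1,x_2]=\{x_0,x_3\}$. Then, writing $x=\sum_i\l_i x_i$ and $x'=\sum_i\l_i' x_i$ and setting $M_{ij}=\l_i\l_j'-\l_i'\l_j$, substitution into $[x,x']=\sum_{i<j}M_{ij}[x_i,x_j]$ gives
\[
[x,x'] \;=\; \a_1 M_{01}\{x_2,x_3\}+\a_2 M_{02}\{x_1,x_3\}+\a_3 M_{03}\{x_1,x_2\}+ M_{12}\{x_0,x_3\}-M_{13}\{x_0,x_2\}+M_{23}\{x_0,x_1\}.
\]

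The key step is to observe that the six anticommutators $\{x_i,x_j\}$, $0\le i<j\le 3$, are linearly independent in $S_2$. This holds because \eqref{S-relns} expresses each commutator $[x_i,x_j]$ as a scalar multiple of one of the $\{x_\ast,x_\ast\}$; hence the images of the four squares $x_i^2$ together with the six anticommutators already span $S_2$, and since there are exactly $10$ of them and $\dim_k S_2=10$ (as $H(S;t)=(1-t)^{-4}$) they must form a basis. I expect this linear-independence observation to be the crux of the argument, everything else being a direct substitution.

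Granting independence, $[x,x']=0$ forces each of the six coefficients to vanish; because $\a_1,\a_2,\a_3\in k-\{0,\pm1\}$ are nonzero this yields $M_{ij}=0$ for all $i<j$, so the $2\times 4$ matrix with rows $(\l_0,\l_1,\l_2,\l_3)$ and $(\l_0',\l_1',\l_2',\l_3')$ has rank $\le 1$, i.e. $x$ and $x'$ are linearly dependent. Thus no two-dimensional subspace of $S_1$ can be commuting. It is worth contrasting this with \Cref{prop.commute}: there the ``uncrossed'' relations \eqref{S-tilde-relns} for $\wtQ$ make $[y,y']$ a combination of the commutators $[y_i,y_j]$, which satisfy one linear relation per cyclic triple, leaving room for the commuting conic $C_0$; the crossed relations for $S$ remove exactly that slack.
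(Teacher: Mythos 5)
Your proof is correct and follows essentially the same route as the paper's: both expand $[x,x']=\sum_{i<j}M_{ij}[x_i,x_j]$, use the defining relations together with the Hilbert-series fact $\dim_k S_2=10$ to exhibit a basis of $S_2$, and conclude that all the minors $M_{ij}$ vanish, forcing linear dependence. The only difference is cosmetic: the paper rewrites everything in the monomial basis $\{x_j^2,\ x_0x_i,\ x_ix_0\}$, whereas you substitute $[x_0,x_i]=\a_i\{x_j,x_k\}$ the other way and use the basis of squares and anticommutators.
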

\begin{proof}
Let $x=\ve x_0+\l x_1+\mu x_2+\nu x_3$ and $x'=\ve' x_0+\l' x_1+\mu' x_2+\nu' x_3$.  
We use the same notation as in \Cref{prop.commute} for the $2 \times 2$ minors of
$$
M=
\begin{pmatrix}
\ve & \l & \mu & \nu  \\
\ve' & \l' & \mu' & \nu' 
\end{pmatrix}.
$$

A computation gives
\begin{align*}
[x,x'] & \;=\; M_{01}[x_0,x_1] \, + \,  M_{02}[x_0,x_2] \, + \,  M_{03}[x_0,x_3] \, + \, M_{12}\{x_0,x_3\} \, - \, M_{13}\{x_0,x_2\} \, + \,  M_{23}\{x_0,x_1\}
\\
&  \;=\;  (M_{01}+M_{23})x_0x_1 + (M_{23} - M_{01})x_1x_0 + (M_{02}-M_{13})x_0x_2 - (M_{13} + M_{02})x_2x_0 
\\
&\;  \phantom{(M_{01}+M_{23})x_0x_1} \, + \, (M_{03}+M_{12})x_0x_3 \,+\, (M_{12} - M_{03})x_3x_0 
\end{align*}
The relations for $S$ can be written as 
$$
x_jx_k= \hbox{$\frac{1}{2}$}\Big(\alpha_i^{-1}[x_0,x_i]+\{x_0,x_i\}\Big) \qquad x_kx_j= \hbox{$\frac{1}{2}$}\Big(\alpha_i^{-1}[x_0,x_i]-\{x_0,x_i\}\Big) 
$$
as $(i,j,k)$ runs over all cyclic permutations of $(1,2,3)$ so $\{x_j^2, x_0x_i, x_ix_0 \; | \; 1 \le i \le 3, \, 0 \le j \le 3\}$
is a basis for $S_2$. Hence if $[x,x']=0$, then all the $2 \times 2$ minors of $M$ vanish. This implies that $x$ and $x'$ are linearly dependent. Therefore $S_1$ does not contain a commuting subspace. 
\end{proof}

In retrospect it is clear why $\wtQ_1$ but not $S_1$ contains commuting subspaces. 
The relations for $\wtQ$ are such that the
subspace of $\wtQ_2$ spanned by the commutators $\{[y,y'] \; | \; y,y' \in \wtQ_1\}$ has dimension 3 whereas 
the subspace of $S_2$ spanned by   $\{[x,x'] \; | \; x,x' \in S_1\}$ has dimension 6. This is due to the fact that $\wtQ$ has 
relations $[y_0,y_i]=\a_i[y_j,y_k]$ but $S$ has  relations $[x_0,x_i]=\a_i\{x_j,x_k\}$.

\section{More conic line modules via quantum symmetries}\label{se.other_conics} 
\label{sect.quantum.symm}

\subsection{}
The commuting conic, $C_0$,  parametrizing  the commuting line modules  is ``biased'' as far as point-line incidence is concerned: by \Cref{prop.commuting.quadric.2}, only points in the families 
$\fP_1 \cup \fP_2 \cup \fP_3$ lie on a commuting line. 
This section cures this bias by finding three more conics $C_1,C_2,C_3 \subseteq \GG(1,3)$ that parametrize 
line modules with the property that every  point in $\fP-(\fP_\infty \cup \fP_j)$ lies on exactly one line in $C_j$ and 
no points in $ \fP_\infty \cup \fP_j$ lie on a line in $C_j$.

The new conics will be obtained by ``moving $C_0$ around''. One possible meaning for this phrase would be to take an automorphism $\psi \in \Aut(\wtQ)$ and send a line module $L$  to $\psi^*L$. This  does not do what we want  because $\psi$ sends  commuting  subspaces to  commuting subspaces whence $\psi^*L$ belongs to $C_0$ if $L$ does.  In other words, $\wtQ$ does not have enough symmetries to ``move'' $C_0$. Nevertheless, we will show that $\wtQ$ has enough ``quantum symmetries'' for this purpose, in a sense that will be made precise below. The quantum symmetries are, in effect, auto-equivalences of $\Gr(\wtQ)$.

\subsection{} 
\label{ssect.construction.of.S-tilde}
When $G$ is a finite group we write $k(G)$ for the  algebra of $k$-valued functions 
on $G$ and $kG$ for its group algebra;  $k(G)$ and $kG$ are mutually dual Hopf algebras. 

The construction of $\wtQ$ from $S$ can be described in the following way. 
First regard $S$ as a $k(H_4)$-comodule algebra via the homomorphism  $H_4 \to \Aut(S)$ defined in
\Cref{prop.aut.S}. In other words, $S$ is an algebra object in the category $\cM^{k(H_4)}$ of right $k(H_4)$-comodules.
Then apply to $S$ the monoidal functor $\cM^{k(H_4)}\to \Vect$ corresponding to the 2-cocycle 
  $\sigma:k(H_4)^{\otimes 2}\to k$ that is the composition of the homomorphism $k(H_4)^{\otimes 2}\to k(\G)^{\otimes 2}$
  induced by the inclusion $\G=\langle \ve_1^2,\ve_2^2\rangle \subseteq H_4$ with the cocycle
 $\mu:k(\G)^{\otimes 2}= (k\widehat{\G})^{\otimes 2} \to k$ in \S\ref{ssect.davies.cocycle}. 
 By monoidality, the image of $S$ under this functor is an algebra in $\Vect$, namely $\wtQ$. 

By the general formalism of how cocycles classify such monoidal functors (as covered in \cite{Bic10}, say), our functor $\cM^{k(H_4)}\to \Vect$ factors as  
\begin{equation}\label{eq.Tan}
  \begin{tikzpicture}[auto,baseline=(current  bounding  box.center)]
    \path[anchor=base] (0,0) node (1) {$\cM^{k(H_4)}$} +(4,0) node (2) {$\Vect$} +(2,-1) node (3) {$\cM^H$};
         \draw[->] (1) to  (2);     
         \draw[->] (1) to[bend right=10]  node[left,pos=.55] {$\equiv\phantom{i}$}  (3);
         \draw[->] (3) to[bend right=10]  node[right,pos=.1] {$\phantom{xx}\text{forget}$}  (2);
  \end{tikzpicture}
\end{equation}  
where $H$ is defined below and all arrows are monoidal functors and 
$\cM^{k(H_4)}\to \cM^H$ is a monoidal equivalence, as indicated. 

\subsection{The Hopf algebra $H$}
\label{ssect.Hopf.alg.H}
The ``quantum symmetries'' of $\wtQ$ alluded to in the title of this section will be implemented by a new Hopf algebra 
$H$ which is a ``deformation'' of the Hopf algebra $k(H_4)$ by the cocycle $\s$. The construction
is due to Doi  \cite{Doi93}: 
\begin{enumerate}
  \item 
as a vector space, $H=k(H_4)$;  
  \item 
the coalgebra structure on $H$ is exactly the same as that on $k(H_4)$;  
  \item 
 as an algebra, $H$ is $k(H_4)$ with the new multiplication 
\begin{equation*}
  r*s \; :=\;  \sigma^{-1}(r_1\otimes s_1)\sigma(r_3\otimes s_3)r_2s_2,
\end{equation*}
where $rs$ is the product in $k(H_4)$, $\sigma^{-1}:k(H_4)^{\otimes 2}\to k$ is the convolution-inverse of $\sigma$, and we are using Sweedler's convention
$(\D \otimes \id)\D(r)=r_1 \otimes r_2 \otimes r_3$ with an implied summation. 
\end{enumerate} 

As we will see more explicitly below, 
$H$ is not commutative so is not isomorphic as a Hopf algebra to $k(G)$ for any group $G$. 
Instead, we think of  $H$ as {\it the ring of $k$-valued functions on a finite quantum group}
whose action as ``automorphisms'' of $\wtQ$ is made manifest by an algebra homomorphism $\wtQ\to \wtQ\otimes H$
that  implements the quantum symmetries in the title of this section. 

Since $A$ is obtained from $S$ by deforming via the cocycle $\sigma$, $A$ is the image of $S$ through the horizontal map in \Cref{eq.Tan}, and hence an $H$-comodule algebra (see \cite[pp. 25-26]{Bic10} for more about this).

\subsection{}
We now explain how these quantum symmetries (i.e., the action of the quantum group on $\Gr(\wtQ)$)
produce(s) new line modules. 

Let $M$ be a left $\wtQ$-module and $V$ a left $H$-module. Then $M\otimes V$ is  a left $\wtQ\otimes H$-module and therefore a left $\wtQ$-module via the comodule structure map $\wtQ\to \wtQ\otimes H$.
If $M$ is graded so is $M \otimes V$ with $\deg(M_n \otimes V)=n$. 
Specializing to the graded case,  we get an action 
\begin{equation}\label{eq.cat_act}
  \Gr(\wtQ)\, \boxtimes  \,\Mod(H)\to \Gr(\wtQ)
\end{equation}
of the monoidal category $\Mod(H)$ on the category $\Gr(\wtQ)$ for a categorical tensor product construction $\boxtimes$ that we will not make precise here.

\subsubsection{}
Let $\gamma:H\to k$ be an algebra homomorphism and let $k_\gamma$ be the corresponding 1-dimensional $H$-module.
Since $\gamma$ is a group-like element in the dual Hopf algebra $H^*$ it implements an automorphism, $\varphi_\gamma$ say,
of $\wtQ$ as a graded algebra. The action of  $k_\gamma$  on $\Gr(\wtQ)$ via (\ref{eq.cat_act})
is the same as the auto-equivalence that twists every module and morphism by $\varphi_\gamma$. 
We have already observed that such a twist applied to a line module in $C_0$  produces another line module in $C_0$, 
so that there is no chance of discovering new families of line modules in this fashion. However, since $H$ is not commutative, we might hope that for a line module $M \in \Gr(\wtQ)$ corresponding to a point in the commuting conic $C_0$ and some simple $H$-module $V$ of dimension $\ge 2$ the tensor product $M\otimes V$ (obtained by acting with $V\in \Mod(H)$ on $M\in\Gr(\wtQ)$ via \Cref{eq.cat_act}) might break up (decompose) as a direct sum of several line modules in these other as-yet hypothetical families $C_j$, $1\le j\le 3$. We will see presently that this is indeed what happens. 

\subsection{}
We first need a better understanding of the algebra structure on $H$. It is simpler to do this dually by understanding the co-multiplication 
on the dual Hopf algebra $H^*$ instead; we will then freely switch points of view between the equivalent categories of left $H$-modules and right $H^*$-comodules. In \Cref{eq.cat_act}, for instance, we might substitute $\cM^{H^*}$ for $\Mod(H)$. 

Since $H=k(H_4)$ we can, and will, use the usual isomorphism $kH_4 \to k(H_4)^*$ to identify $H^*$, as a vector space, with the group algebra $kH_4$. We will therefore write $H^*=kH_4$. 
Although the coproducts on $H^*$ and $kH_4$ are different the relation between them is quite simple.

\begin{lemma}
Let  $\xi_i=\varepsilon_i^2$. The coproduct on $H^*$ is the usual group algebra coproduct on $kH_4$ followed by conjugation in $kH_4 \otimes kH_4$ by the involutive element
$$
  J \;  := \; \hbox{$\frac{1}{2}$}
  \big( 1 \otimes 1\, + \, \xi_1\otimes 1\, + \,1\otimes \xi_2 \,- \,\xi_1\otimes \xi_2\big) \; \in \; k\G^{\otimes 2}  \;  \subseteq \; kH_4^{\otimes 2}.
$$
\end{lemma}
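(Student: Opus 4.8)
The plan is to read off the coproduct on $H^*$ by dualizing the Doi twist that defines $H$, and then to identify the resulting twisting element as $J$ by a finite Fourier computation. First I would set up the duality dictionary. Because $H$ and $k(H_4)$ share the same coalgebra, the algebra $H^*$ is the group algebra $kH_4$, and the coproduct $\Delta_0$ on $kH_4$ that is dual to the \emph{pointwise} multiplication of $k(H_4)$ is the ordinary group-algebra coproduct $g\mapsto g\otimes g$. Identifying $(k(H_4)\otimes k(H_4))^*$ with $kH_4\otimes kH_4$, the cocycle $\sigma$ corresponds to an element $J:=\widetilde{\sigma}\in kH_4\otimes kH_4$ determined by $\langle J,\,r\otimes s\rangle=\sigma(r\otimes s)$; since convolution on $(k(H_4)\otimes k(H_4))^*$ is precisely the algebra product of $kH_4\otimes kH_4$, the convolution-inverse $\sigma^{-1}$ corresponds to the ring inverse $J^{-1}$.

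Next I would dualize the defining identity $r*s=\sigma^{-1}(r_1\otimes s_1)\,\sigma(r_3\otimes s_3)\,r_2s_2$. For $\phi\in H^*$, pairing against $r*s$ and using $\langle\phi,r_2s_2\rangle=\langle\Delta_0\phi,\,r_2\otimes s_2\rangle$ gives
$$\langle\phi,\,r*s\rangle \;=\; \sum \sigma^{-1}(r_1\otimes s_1)\,\langle\Delta_0\phi,\,r_2\otimes s_2\rangle\,\sigma(r_3\otimes s_3).$$
The right-hand side is the threefold convolution of the functionals $\sigma^{-1}$, $\Delta_0\phi$, and $\sigma$ on $k(H_4)\otimes k(H_4)$, i.e.\ the value on $r\otimes s$ of the product $J^{-1}\,\Delta_0(\phi)\,J$ in $kH_4\otimes kH_4$. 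Hence the coproduct of $H^*$ sends $\phi$ to $J^{-1}\Delta_0(\phi)J$.

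It then remains to compute $J$ and to check that it is involutive. Because $\sigma$ factors through the restriction $k(H_4)\to k(\G)=k\widehat{\G}$, the element $J$ lies in $k\G\otimes k\G$; writing $J=\sum_{g,h\in\G}c_{gh}\,g\otimes h$ and evaluating against characters $\chi=\chi_1^p\chi_2^q$, $\chi'=\chi_1^r\chi_2^s$ gives $\sum_{g,h}c_{gh}\,\chi(g)\chi'(h)=\sigma(\chi\otimes\chi')=\mu(\chi\otimes\chi')=(-1)^{ps}$. Inverting this finite Fourier transform over $\G\cong(\bZ/2)^2$ produces exactly $J=\tfrac12\big(1\otimes1+\xi_1\otimes1+1\otimes\xi_2-\xi_1\otimes\xi_2\big)$. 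Finally, setting $e_\pm=\tfrac12(1\pm\xi_1)$, which are orthogonal idempotents with $e_++e_-=1$, we have $J=e_+\otimes1+e_-\otimes\xi_2$, and a one-line computation using $\xi_2^2=1$ gives $J^2=1\otimes1$. Thus $J^{-1}=J$, the map $\phi\mapsto J^{-1}\Delta_0(\phi)J$ is literally conjugation by $J$, and the lemma follows.

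The only genuine obstacle I anticipate is pinning down the Hopf-algebraic conventions in the dualization step—matching the placement of $\sigma^{\pm1}$ in the Doi formula with left versus right multiplication by $J^{\pm1}$ on the dual side, and confirming that convolution-inverse translates to ring inverse. The Fourier computation of $J$ is routine, and its involutivity $J^2=1\otimes1$ is the clean fact that reconciles the two a priori distinct sides of the conjugation.
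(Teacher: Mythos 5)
Your proof is correct and is essentially the argument the paper intends: the paper's own proof is a one-sentence appeal to the explicit form of the cocycle $\mu$, and your dualization of the Doi twist (identifying $\sigma$ with an element $J \in kH_4^{\otimes 2}$ so that the coproduct of $H^*$ becomes $\phi \mapsto J^{-1}\Delta_0(\phi)J$), followed by Fourier inversion over $\G$ and the check that $J^2 = 1\otimes 1$, is exactly the computation being gestured at. The only point to nail down---as you note yourself---is the convention pairing $\chi_1,\chi_2\in\widehat{\G}$ with $\xi_1,\xi_2\in\G$, since swapping it transposes the two tensor legs of $J$; with the convention $\chi_i(\xi_j)=(-1)^{\delta_{ij}}$ your Fourier inversion yields precisely the stated $J$, and this is the choice consistent with the paper's subsequent formulas for $\Delta(\ve_1)$.
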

\begin{proof}
 This can be seen by using the explicit form of the cocycle on $k(\G) = k\widehat{\G}$ in \S\ref{ssect.davies.cocycle}.
 \end{proof}

Using the fact that conjugation by $\xi_2$ multiplies $\varepsilon_1$ by $\eta=\delta^2$ and $\xi_1=\varepsilon_1^2$ commutes with $\varepsilon_1$,   the comultiplication $\Delta:H^* \longrightarrow H^* \otimes H^*$   is such that
\begin{align*}
\label{eq.C1}
\Delta(  \varepsilon_1) & \;= \;  
  \hbox{$\frac{1}{2}$} 
  \big(
  \varepsilon_1\otimes\varepsilon_1
  \phantom{xxxi} \, \,  \, + \;
   \varepsilon_1\otimes\varepsilon_1\eta 
  \phantom{xxxi} \, +\, 
  \varepsilon_1^{-1}\otimes\varepsilon_1
\phantom{xx}  \; - \;
 \varepsilon_1^{-1}\otimes \varepsilon_1\eta \big)
  \nonumber 
      \\
\Delta(  \varepsilon_1\eta) & \;= \;  
  \hbox{$\frac{1}{2}$} 
  \big(\varepsilon_1\eta\otimes\varepsilon_1\eta
  \phantom{xii} \, \,  +\;
  \varepsilon_1\eta\otimes\varepsilon_1
    \phantom{xxii} \, \,  +\;
    \varepsilon_1^{-1}\eta\otimes\varepsilon_1\eta
    \; -\;
    \varepsilon_1^{-1}\eta\otimes\varepsilon_1\big)
   \\
\Delta(  \varepsilon_1^{-1}) & \;= \;     \hbox{$\frac{1}{2}$} 
  \big(  \varepsilon_1^{-1}\otimes\varepsilon_1^{-1}
    \phantom{xii}  \, +\;
    \varepsilon_1^{-1}\otimes\varepsilon_1^{-1}\eta
 \phantom{ix}     + \;
    \varepsilon_1\otimes \varepsilon_1^{-1}
  \phantom{xi} \, \; - \;  \varepsilon_1\otimes\varepsilon_1^{-1}\eta 
  \big)
  \nonumber
    \\
\Delta(  \varepsilon_1^{-1}\eta) & \;= \;    \hbox{$\frac{1}{2}$} 
  \big(
  \varepsilon_1^{-1}\eta\otimes\varepsilon_1^{-1}\eta
 \,\, + \, \varepsilon_1^{-1}\eta\otimes\varepsilon_1^{-1}
\phantom{xi}  \; + \; 
 \varepsilon_1\eta\otimes \varepsilon_1^{-1}\eta
 \; - \; \varepsilon_1\eta\otimes \varepsilon_1^{-1} 
  \big)
   \nonumber
  \end{align*}
 
\begin{lemma}
\label{lem.comult.on.H*}
The subspaces 
\begin{align*}
D_1 & \; :=\; \rm{span}\{\ve_1^{\pm 1}, \ve_1^{\pm 1}\eta\},
\\
D_2 & \; :=\; \rm{span}\{\ve_2^{\pm 1}, \ve_2^{\pm 1}\eta\},
\\
D_3 & \; :=\; \rm{span}\{\  \varepsilon_1\varepsilon_2,\ \varepsilon_1^{-1}\varepsilon_2^{-1},\ \varepsilon_1\varepsilon_2\eta,\ \varepsilon_1^{-1}\varepsilon_2^{-1}\eta\},
\end{align*}
of $H^*$ are sub-coalgebras and each is isomorphic to $M_2(k)$ as a coalgebra. For example, the elements
\begin{equation}
\label{eq.abcd}
\begin{cases}
\quad
  a  \; := \;  \hbox{$\frac{1}{2}$} 
  \big( 
   \varepsilon_1+\varepsilon_1\eta
   \big)
   \qquad 
   \phantom{xxxxxx}
    b    \; := \;  \hbox{$\frac{1}{2}$} 
  \big( 
 \varepsilon_1-\varepsilon_1\eta
    \big)
  \\
\quad  c      \; := \;   \hbox{$\frac{1}{2}$} 
  \big( 
\varepsilon_1^{-1}-\varepsilon_1^{-1}\eta
\big)
\qquad
   \phantom{xxxx}
 d   \; := \; \hbox{$\frac{1}{2}$} 
  \big( 
\varepsilon_1^{-1}+\varepsilon_1^{-1}\eta
\big)
\end{cases}
\end{equation}
are matrix co-units for $D_1$ in the sense that the comultiplication $\D$ on $M_2(H^*)$ has the property that
\begin{equation}\label{eq.matrix_co}
\D  \begin{pmatrix}
    a&b\\c&d
  \end{pmatrix}
\;=\;
  \begin{pmatrix}
    a&b\\c&d
  \end{pmatrix}
  \otimes
  \begin{pmatrix}
    a&b\\c&d
  \end{pmatrix}.
\end{equation}
\end{lemma}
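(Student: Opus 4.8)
The plan is to prove everything by direct computation from the coproduct formula $\Delta(g)=J(g\otimes g)J$ (recall $J^2=1$, so $J^{-1}=J$), which is exactly what produced the four displayed expressions for $\Delta(\varepsilon_1^{\pm1})$ and $\Delta(\varepsilon_1^{\pm1}\eta)$ just above. For each $i$ it suffices to check two things: that $\Delta(D_i)\subseteq D_i\otimes D_i$, so that $D_i$ is a subcoalgebra; and that $D_i$ carries a linear basis behaving like the matrix units $e_{11},e_{12},e_{21},e_{22}$ of the comatrix coalgebra, i.e. a basis on which $\Delta$ is given by the packaged identities (\ref{eq.matrix_co}) and on which the counit $\epsilon$ takes the values $\epsilon(e_{11})=\epsilon(e_{22})=1$, $\epsilon(e_{12})=\epsilon(e_{21})=0$. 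Here $\epsilon$ is the group-algebra counit $\epsilon(g)=1$, which remains the counit for the twisted coproduct because $\epsilon$ is an algebra map and $(\epsilon\otimes\id)(J)=(\id\otimes\epsilon)(J)=1$, so that $(\epsilon\otimes\id)\bigl(J(g\otimes g)J\bigr)=\epsilon(g)\,g=g$. Any four-dimensional coalgebra admitting such a basis is isomorphic to $M_2(k)$.

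For $D_1$ this is immediate from the preceding display. Expanding $\Delta(a),\Delta(b),\Delta(c),\Delta(d)$ by linearity in the four given formulas, using the definitions (\ref{eq.abcd}), every term reassembles into an element of the basis $\{a,b,c,d\}$ of $D_1$; collecting them produces exactly the four comatrix identities encoded by (\ref{eq.matrix_co}), while the counit values come out as $\epsilon(a)=\epsilon(d)=1$ and $\epsilon(b)=\epsilon(c)=0$. In particular $\Delta(D_1)\subseteq D_1\otimes D_1$, and the basis $\{a,b,c,d\}$ is a system of matrix co-units, so $D_1\cong M_2(k)$ as coalgebras.

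For $D_2$ and $D_3$ I would run the same computation, starting from $\Delta(g)=J(g\otimes g)J$ with $g=\varepsilon_2^{\pm1}$ and $\varepsilon_2^{\pm1}\eta$, and with $g=(\varepsilon_1\varepsilon_2)^{\pm1}$ and $(\varepsilon_1\varepsilon_2)^{\pm1}\eta$, and then read off the matrix co-units. This is where I expect the only real friction. The element $J$ treats the two tensor factors asymmetrically — it carries $\xi_1$ in the left slot and $\xi_2$ in the right slot — so one cannot simply transport the $D_1$ formulas by a symmetry of the indices. Concretely, for $g=\varepsilon_2$ the left-slot conjugation by $\xi_1$ produces the stray factor $\xi_1\varepsilon_2=\eta\,\varepsilon_2\xi_1$, which is \emph{not} in the family $\{\varepsilon_2^{\pm1},\varepsilon_2^{\pm1}\eta\}$ at the half-step $J(g\otimes g)$; the point is that after multiplying on the right by the second copy of $J$ these off-family terms cancel in $\pm$ pairs, so that $\Delta(\varepsilon_2)$ lands back in $D_2\otimes D_2$, and — because of the swapped slots — the $\eta$-twist now appears in the \emph{left} tensor factor. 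One checks this explicitly (the cancellation is forced), obtaining $\Delta(\varepsilon_2)=\tfrac12\bigl(\varepsilon_2\otimes\varepsilon_2+\varepsilon_2\eta\otimes\varepsilon_2+\varepsilon_2\otimes\varepsilon_2^{-1}-\varepsilon_2\eta\otimes\varepsilon_2^{-1}\bigr)$ together with its three companions, whence a (transposed) system of matrix co-units for $D_2$ follows as for $D_1$. The case of $D_3$ is the same bookkeeping with both slots nontrivially conjugated, and the same pairing argument removes all terms outside $D_3\otimes D_3$.

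As a conceptual sanity check I would note the alternative route: $H$ is a cocycle twist of the semisimple Hopf algebra $k(H_4)$ and is therefore itself semisimple, so $H^*$ is cosemisimple and splits as a direct sum of comatrix coalgebras; granting that the four-dimensional spans $D_1,D_2,D_3$ are $\Delta$-stable, each is then automatically a matrix subcoalgebra of the expected type. The main obstacle, on either route, is purely the asymmetry of $J$ between the two tensor factors, which forces the $D_2$ and $D_3$ verifications to be carried out by hand rather than deduced from $D_1$ by a symmetry of the indices.
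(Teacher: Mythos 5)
Your proposal is correct and takes essentially the same route as the paper, which treats the lemma as a direct computation from the conjugation-by-$J$ description of $\Delta$ together with the displayed expansions of $\Delta(\varepsilon_1^{\pm 1})$ and $\Delta(\varepsilon_1^{\pm 1}\eta)$. Your explicit formula for $\Delta(\varepsilon_2)$ — with the off-family $\xi_1$-terms cancelling in $\pm$ pairs and the $\eta$-twist landing in the left tensor factor — checks out, so the verification for $D_2$ and $D_3$ goes through exactly as you describe (up to the harmless transpose ambiguity in reading \Cref{eq.matrix_co}, which does not affect the isomorphism with $M_2(k)$).
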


\begin{lemma}
\label{lem.subgp.G}
The restriction of the comultiplication on $H^*$ to $k \langle \ve_1^2,\ve_2^2,\d\rangle \subseteq  kH_4=H^* $ 
is the usual group algebra comultiplication.
\end{lemma}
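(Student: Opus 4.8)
The plan is to read off the statement from the lemma that describes the coproduct on $H^*$ as conjugation of the ordinary group-algebra coproduct by $J$; that is, for a group element $g\in H_4$ one has $\Delta(g) = J\,(g\otimes g)\,J^{-1}$. Since $J$ is involutive we have $J^{-1}=J$, so it suffices to show that $J$ commutes with $g\otimes g$ for every $g$ in the subgroup $K:=\langle \ve_1^2,\ve_2^2,\d\rangle$. Once this is established, $\Delta(g)=g\otimes g$ for all $g\in K$, which exhibits $kK$ as a subcoalgebra of $H^*$ on which $\Delta$ restricts to the group-algebra coproduct; the claim for all of $k\langle\ve_1^2,\ve_2^2,\d\rangle$ then follows by $k$-linearity.

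First I would record the structural fact that $J$ is a $k$-linear combination of the four elements $\xi_1^a\otimes\xi_2^b$ with $a,b\in\{0,1\}$, where $\xi_i=\ve_i^2$. Concretely, writing $e^{\pm}:=\tfrac12(1\pm\xi_1)$, one has $J=e^{+}\otimes 1+e^{-}\otimes\xi_2$, which also makes the relation $J^2=1\otimes 1$ transparent from $e^{+}e^{-}=0$, $(e^{\pm})^2=e^{\pm}$, and $\xi_2^2=1$. Consequently $g\otimes g$ commutes with $J$ as soon as $g$ commutes with both $\xi_1$ and $\xi_2$ inside $H_4$, since $J$ involves only $\xi_1$ in its first tensor factor and only $\xi_2$ in its second.

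The single computation to carry out is the verification that $\xi_1$ and $\xi_2$ are central in $K$. Using the one nontrivial Heisenberg relation $\ve_1\ve_2=\d\ve_2\ve_1$ together with the centrality of $\d$, one finds $\ve_1^2\ve_2=\eta\,\ve_2\ve_1^2$ with $\eta=\d^2$, and squaring the second factor gives $\xi_1\xi_2=\eta^2\,\xi_2\xi_1=\xi_2\xi_1$ because $\eta^2=\d^4=1$. Thus $\xi_1$ and $\xi_2$ commute, and $\d$ is central in $H_4$ by hypothesis, so all three generators of $K$ commute with $\xi_1$ and with $\xi_2$; hence so does every element of $K$. This yields $[J,\,g\otimes g]=0$ and therefore $\Delta(g)=g\otimes g$ for each $g\in K$, completing the argument.

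I expect no serious obstacle here: the whole content is that $J$ is assembled from elements that are central in $K$, so conjugation by $J$ acts trivially on the group-likes coming from $K$. The only point demanding a little care is the commutation bookkeeping in $H_4$, in particular confirming $[\xi_1,\xi_2]=1$ even though $[\ve_1,\ve_2]=\d\neq 1$; this is precisely where passing from $\ve_i$ to the even powers $\xi_i=\ve_i^2$ (so that the obstruction $\eta^2$ collapses) is used.
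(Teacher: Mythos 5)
Your proposal is correct and takes essentially the same route as the paper's proof: both rest on the $J$-conjugation description of $\Delta$ together with the observation that $J$ lies in $k\langle\xi_1,\xi_2\rangle\otimes k\langle\xi_1,\xi_2\rangle$ and that $\xi_1$, $\xi_2$, $\d$ pairwise commute, so $J$ commutes with $g\otimes g$ for every $g$ in the subgroup. The only difference is one of detail — the paper simply asserts that $\langle \ve_1^2,\ve_2^2,\d\rangle$ is abelian, whereas you verify $\xi_1\xi_2=\xi_2\xi_1$ from the Heisenberg relation and package $J$ via the idempotents $\tfrac12(1\pm\xi_1)$ — but the substance is identical.
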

\begin{proof}
The subgroup $G:=\langle \ve_1^2=\xi_1,\ve_2^2=\xi_2,\d\rangle$ of $H_4$ is abelian.
Since $J$ belongs to $kG \otimes kG$ it commutes with $\ve_1^2$, $\ve_2^2$, and $\d$. 
Now apply \Cref{lem.comult.on.H*}.
\end{proof}

 Multiplying $D_1$, $D_2$ and $D_{3}$ in $H^*$ by elements of $\langle \ve_1^2,\ve_2^2,\d\rangle$ 
 we get translates that are again $2\times 2$ matrix subcoalgebras of $H$. It is easy to see that each $D$ has four such translates, and the twelve translates are pairwise distinct.

\begin{proposition}
 As a coalgebra, $H^*$ is a direct sum of 16 one-dimensional coalgebras spanned by 
 the elements in $\langle \ve_1^2,\ve_2^2,\d\rangle$ and twelve $2\times 2$ 
 matrix subcoalgebras. In particular, $H^*$ is cosemisimple. 
 \end{proposition}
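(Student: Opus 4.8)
The plan is to assemble \Cref{lem.comult.on.H*}, \Cref{lem.subgp.G}, and the remark preceding the proposition into an internal direct sum decomposition of $H^*=kH_4$. The guiding observation is that a direct sum of subcoalgebras \emph{as vector spaces} is automatically a direct sum \emph{as coalgebras}: if $H^*=\bigoplus_i C_i$ with each $C_i$ a subcoalgebra, then $\D(C_i)\subseteq C_i\otimes C_i$ forces $\D$ to respect the decomposition. So the entire content of the proposition reduces to the purely combinatorial claim that the $16+12=28$ subcoalgebras already produced — the one-dimensional spans $kg$ for $g\in G:=\langle\ve_1^2,\ve_2^2,\d\rangle$ (\Cref{lem.subgp.G}) and the twelve $G$-translates of $D_1,D_2,D_3$ (\Cref{lem.comult.on.H*} and the following remark) — are spanned by pairwise disjoint subsets of the group basis $H_4$ of $kH_4$ whose union is all of $H_4$.

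First I would record the bookkeeping: $16\cdot 1 + 12\cdot 4 = 64 = |H_4| = \dim_k H^*$, so it suffices to prove disjointness and covering. Each of the $28$ subcoalgebras is, by construction, the linear span of a $1$- or $4$-element subset of $H_4$, and I want these subsets to \emph{partition} $H_4$. To see this I would write every element of $H_4$ uniquely as $\ve_1^a\ve_2^b\d^c$ (a normal form, since $\d$ is central and $H_4/\langle\d\rangle\cong(\bZ/4)^2$) and sort by the parities of $a$ and $b$. The subgroup $G$ consists exactly of the elements with $a,b$ both even, while the translates of $D_1$ (resp.\ $D_2$, resp.\ $D_3$) consist exactly of the elements whose $(a,b)$ has parity $(\mathrm{odd},\mathrm{even})$ (resp.\ $(\mathrm{even},\mathrm{odd})$, $(\mathrm{odd},\mathrm{odd})$), because multiplication by an element of $G$ alters the $\d$-exponent but not the parities of $a,b$. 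This separates the four families. Within each of the three ``matrix'' parity classes (each of size $16$) I would observe that $D_i$ is a coset $g_iK_i$ of an order-$4$ subgroup $K_i\le H_4$ — for instance $D_1=\ve_1\langle\ve_1^2,\d^2\rangle$ — so each $G$-translate $g D_i=(gg_i)K_i$ is again a coset of $K_i$ and hence is disjoint from or equal to any other; counting then gives exactly four of them, covering the whole $16$-element class. This yields the partition of $H_4$.

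With the partition in hand the decomposition $H^*=\big(\bigoplus_{g\in G}kg\big)\oplus\big(\bigoplus_{t}D_t\big)$ holds as vector spaces, hence as coalgebras by the first paragraph, which is the first assertion. For cosemisimplicity I would invoke the standard fact that a coalgebra which is a direct sum of simple subcoalgebras is cosemisimple: each $kg$ with $g$ group-like is a simple one-dimensional coalgebra, and each matrix coalgebra $\cong M_2(k)$ is simple (it is the linear dual of the simple algebra $M_2(k)$, and the finite-dimensional simple coalgebras are precisely such comatrix coalgebras). Therefore $H^*$ is cosemisimple. The only genuine work is the combinatorial partition step; the coset description of $D_1,D_2,D_3$ is what makes the within-family disjointness clean, and everything else is either formal or a dimension count.
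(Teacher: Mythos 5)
Your proof is correct and takes essentially the same route as the paper, which states this proposition without a formal proof and relies on the preceding remark that the twelve $G$-translates of $D_1,D_2,D_3$ (for $G=\langle \ve_1^2,\ve_2^2,\d\rangle$) are pairwise distinct $2\times 2$ matrix subcoalgebras, so that the decomposition follows from the dimension count $16+12\cdot 4=64=\dim_k H^*$. Your parity-and-coset partition of $H_4$ just makes precise the paper's ``it is easy to see'' claim about distinctness and covering, and your closing reductions (a vector-space direct sum of subcoalgebras is a coalgebra direct sum; a direct sum of simple coalgebras is cosemisimple) are the standard steps the paper leaves implicit.
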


We could have seen that $H^*$ is cosemisimple from general principles which ensure that cosemisimplicity is preserved 
under the kind of twisting procedure by which $H^*$ was obtained from $kH_4$.

\subsection{The comodules $V_j$ and the endofunctors   $\bullet\otimes V_j$ of $\Gr(A)$}

For $j =1,2,3$, there is a unique two-dimensional right $D_j$-comodule up to isomorphism. 
We will describe one such $D_1$-comodule (equivalently, $D_1^*$-module) which  we will denote by $V_1$.

Let $\{a^*,b^*,c^*,d^*\}$ be  the basis for $D_1^*\subset H$ that is dual to the basis $\{a,b,c,d\}$ for $D_1$ in 
(\ref{eq.abcd}). They are matrix units for the algebra $D_1^*$ in the sense that the linear map 
\begin{alignat*}{2}
  a^*&\mapsto \begin{pmatrix}1&0\\0&0\end{pmatrix},\quad & b^*&\mapsto \begin{pmatrix}0&1\\0&0\end{pmatrix}\\
  c^*&\mapsto \begin{pmatrix}0&0\\1&0\end{pmatrix},\quad & d^*&\mapsto \begin{pmatrix}0&0\\0&1\end{pmatrix} 
\end{alignat*}
is an algebra isomorphism $D_1^* \to M_2(k)$. 
We define $V_1$ to be the left ideal of $D_1^*$, 
$$
V_1:= k a^*+kc^*.
$$ 

We define $V_2$ and $V_3$ in a similar way.

The functor $\bullet \otimes V_j:\Gr(\wtQ) \to \Gr(\wtQ)$ is the functor $M \rightsquigarrow M \otimes V_j$ where 
$M \otimes V_j$ is made into an $\wtQ$-module through the comodule structure map $\wtQ\to \wtQ\otimes H$.

Our goal, as hinted at before, is to show that if $M$ is a commuting line module, then $M\otimes V_j$ is a direct sum of two 
line modules each of which belongs to a family of line modules parametrized by some other conic $C_j$. 
The next result is the first step in this direction.

\begin{proposition}
\label{pr.Omega_transl}
Let $M \in \Gr(\wtQ)$ and $j \in \{1,2,3\}$.  If $\Omega(z)$ annihilates $M$, then  $\Omega(z+\xi_j)$ annihilates $M\otimes V_j$. 
\end{proposition}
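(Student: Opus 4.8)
The plan is to compute directly how the central element $\Omega(z+\xi_j)$ acts on a pure tensor $m\otimes v\in M\otimes V_j$ and to show the result is zero whenever $\Omega(z)m=0$. By the categorical action \eqref{eq.cat_act}, this action is
\[
\Omega(z+\xi_j)\cdot(m\otimes v)\;=\;\sum a_{(0)}\,m\;\otimes\; a_{(1)}\,v,\qquad a=\Omega(z+\xi_j),
\]
where $\rho(a)=\sum a_{(0)}\otimes a_{(1)}$ is the comodule structure map $\rho\colon A\to A\otimes H$ and $a_{(1)}\in H$ acts on $v\in V_j$. So everything hinges on identifying $\rho(\Omega(z+\xi_j))$ and on seeing which of its components survive the $H$-action on $V_j$.

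First I would record that, since the monoidal equivalence $\cM^{k(H_4)}\to\cM^H$ of \eqref{eq.Tan} is the identity on underlying comodules (the coalgebras $k(H_4)$ and $H$ coincide, only the monoidal structure is twisted), the coaction $\rho$ on $A$ is the same linear map as the $k(H_4)$-coaction on $S$ coming from $H_4\to\Aut(S)$. Writing $\{p_g\}_{g\in H_4}$ for the basis of $H=k(H_4)$ of delta-functions, this coaction sends a $\G$-fixed central element $\Omega(w)$ to $\sum_{g\in H_4} g(\Omega(w))\otimes p_g$, where $g(\Omega(w))$ is computed using the automorphism $g$ of $S$ and is again one of the central elements $\Omega(\cdot)$, hence lies in the copy of $k[\Omega_0,\Omega_1]$ inside $A$.

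The second step localizes the sum to $D_j$. By construction $V_j$ is a module over the quotient algebra $D_j^*$ of $H$ dual to the subcoalgebra $D_j\subseteq H^*=kH_4$; the quotient map $H\to D_j^*$ sends $p_g$ to the restriction of $\langle p_g,-\rangle$ to $D_j$, which vanishes unless $g$ lies in the group-element support $\operatorname{supp}(D_j)$ of $D_j$. Hence in $\Omega(z+\xi_j)\cdot(m\otimes v)=\sum_g g(\Omega(z+\xi_j))\,m\otimes p_g v$ only the terms with $g\in\operatorname{supp}(D_j)$ contribute: namely $g\in\{\varepsilon_1^{\pm1},\varepsilon_1^{\pm1}\eta\}$ for $D_1$, the analogous $\varepsilon_2$-elements for $D_2$, and $(\varepsilon_1\varepsilon_2)^{\pm1},\,(\varepsilon_1\varepsilon_2)^{\pm1}\eta$ for $D_3$.

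The third step evaluates $g(\Omega(z+\xi_j))$ for exactly these $g$. By \Cref{cor_H4_action_on_center} each $\varepsilon_i$ translates the parameter of $\Omega$ by $\xi_i$; since $\varepsilon_1\varepsilon_2$ then translates by $\xi_1+\xi_2=\xi_3$, every generator in $\operatorname{supp}(D_j)$ translates the parameter by $\pm\xi_j$, and $\pm\xi_j=\xi_j$ because $\xi_j$ is $2$-torsion. The factor $\eta=\delta^2$ scales a degree-$2$ element by $(-1)^2=1$, so it acts trivially on $\Omega(z+\xi_j)$. Consequently $g(\Omega(z+\xi_j))=\Omega(z+\xi_j+\xi_j)=\Omega(z)$ for every contributing $g$, giving
\[
\Omega(z+\xi_j)\cdot(m\otimes v)=\sum_{g\in\operatorname{supp}(D_j)}\Omega(z)\,m\otimes p_g v=0,
\]
since $\Omega(z)m=0$ by hypothesis. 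I expect the only genuinely delicate point to be the clean justification of the first step — that $\rho$ agrees with the $H_4$-coaction on $S$, so that \Cref{cor_H4_action_on_center} applies — together with the bookkeeping of the central $\delta$-powers appearing in $\operatorname{supp}(D_3)$. The rest is the short pairing computation above, which is moreover insensitive to the usual $g$-versus-$g^{-1}$ ambiguity in the coaction, since $\operatorname{supp}(D_j)$ is closed under inversion and $\xi_j$ is $2$-torsion.
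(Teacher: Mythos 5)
Your proposal is correct and is essentially the paper's own argument: both decompose the coaction $\rho\colon\wtQ\to\wtQ\otimes H$ according to the matrix-coalgebra summands of $H$, observe via \Cref{cor_H4_action_on_center} that the terms supported on $D_j^*$ have left tensorand $\Omega(z)$, and note that all other summands act as zero on $V_j$. Your write-up merely spells out the bookkeeping (the delta-function basis, the trivial action of $\eta$ on degree-two elements, the $\varepsilon_1\varepsilon_2$ case for $D_3$) that the paper leaves implicit.
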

\begin{proof}
By \Cref{cor_H4_action_on_center}, the automorphism $\ve_1 \in H_4$ sends $\Omega(z+\xi_1)$ to $\Omega(z)$, as do $\varepsilon_1\eta$, $\varepsilon_1^{-1}$ and $\varepsilon_1^{-1}\eta$. Thinking of $\wtQ$ and $H^*$ as having the same underlying vector spaces as $S$ and $kH_4$ respectively, the previous sentence applies to them as well. In other words, the coaction $\rho:\wtQ\to \wtQ\otimes H$ sends $\Omega(z+\xi_1)\in \wtQ$ to
\begin{equation}\label{eq.one_term}
  \Omega(z)\otimes (\text{some element in the matrix algebra summand }D_1^*\text{ of }H)+\cdots
\end{equation}
where the dots stand for terms whose right hand tensorands lie in other matrix algebra summands of $H$. Since $H$ acts on $V_1$ via the surjection $H\to D_1^*$, the terms after the $+$ sign in (\ref{eq.one_term}) act as zero on  $M\otimes V_1$. But the left-hand tensorand 
$\Omega(z)$ annihilates $M$ by assumption,  so $\Omega(z+\xi_1)$ annihilates $M\otimes V_1$. 

The same argument works for $M\otimes V_2$ and $M\otimes V_{3}$.
\end{proof}

The following decomposition result describes $M\otimes V_j$ in more detail. Before stating it, recall that
$k\langle \ve_1^2,\ve_2^2,\d\rangle \subset H^*$ is a subcoalgebra with its usual group algebra comultiplication, so the subgroup $\langle \ve_1^2=\xi_1,\ve_2^2=\xi_2,\d\rangle< H_4$ acts on $\wtQ$ by graded algebra automorphisms. 
In particular, $\G = \langle \c_1,\c_2\rangle$ does too so we can define the twist autoequivalences 
$\c_i^*$ on $\Mod(\wtQ)$ or $\Gr(\wtQ)$ as in \cite[\S7.2]{CS15}. We will use the identification $\G=E[2]$ with $\c_j=\xi_j$.

\begin{proposition}
\label{pr.sum_2_copies}
If  $M \in \Gr(\wtQ)$, then $  M\otimes V_1 \cong N\oplus \xi_1^* N$ for some $N\in\Gr(\wtQ)$.
\end{proposition}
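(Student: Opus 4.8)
The plan is to make the functor $\bullet\otimes V_1$ completely explicit and then exhibit the splitting by hand. Since $H$ acts on $V_1$ through the matrix quotient $H\twoheadrightarrow D_1^*\cong M_2(k)$ (the action being left multiplication on the left ideal $V_1=ka^*+kc^*$), I would first compose the coaction $\rho\colon\wtQ\to\wtQ\otimes H$ with this projection to obtain an algebra homomorphism $\bar\rho\colon\wtQ\to\wtQ\otimes D_1^*\cong M_2(\wtQ)$. Writing $\bar\rho(q)=(q^{ij})$, the module $M\otimes V_1$ is then identified, as a graded vector space, with $M\oplus M$, on which $q$ acts by the matrix $(q^{ij})$; the entries $q^{ij}\in\wtQ$ are extracted by pairing the $H$-component of $\rho(q)$ against the matrix co-units $a,b,c,d\in D_1$ of \Cref{lem.comult.on.H*}.

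The first real computation is to evaluate these entries. Using $a=\tfrac12(\ve_1+\ve_1\eta)$, $b=\tfrac12(\ve_1-\ve_1\eta)$, and $c,d$ as in \eqref{eq.abcd}, together with the facts that $\eta=\delta^2$ acts on $\wtQ_n$ as the scalar $(-1)^n$, that $\xi_1=\ve_1^2=\c_1$, and that $\ve_1^{-1}=\c_1\ve_1$, I expect to find that an element $q$ of even degree acts diagonally, as $\mathrm{diag}\big(\ve_1(q),\ve_1^{-1}(q)\big)$, while an element of odd degree acts off-diagonally, interchanging the two copies of $M$, with the same two ``characters'' $\ve_1,\ve_1^{-1}$. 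In other words, parity of degree governs whether $q$ preserves or swaps the two summands, and the two diagonal actions differ precisely by the twist $\c_1$, since $\ve_1^{-1}=\c_1\ve_1$.

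With this description in hand the splitting is produced by two explicit maps. First, define $P\colon M\oplus M\to M\oplus M$ by sending $(x_1,x_2)$ to $(x_1^{\mathrm{ev}},x_2^{\mathrm{od}})$, where $x^{\mathrm{ev}},x^{\mathrm{od}}$ denote the even- and odd-degree parts; a short parity check, using that even elements act diagonally and odd elements swap, shows that $P$ is an idempotent in $\End_{\wtQ}(M\otimes V_1)$, so $M\otimes V_1=N\oplus N'$ with $N=\operatorname{im}P$ and $N'=\ker P$. Second, let $\iota\colon M\oplus M\to M\oplus M$ be the coordinate swap $(x_1,x_2)\mapsto(x_2,x_1)$. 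Because swapping interchanges the roles of $\ve_1$ and $\ve_1^{-1}=\c_1\ve_1$, I expect $\iota$ to be a $\wtQ$-module isomorphism $M\otimes V_1\xrightarrow{\sim}\c_1^*(M\otimes V_1)$; since $\iota$ also interchanges $N$ and $N'$, restricting it to $N$ yields $N'\cong\c_1^*N$. As $\c_1=\xi_1$, this gives $M\otimes V_1\cong N\oplus\xi_1^*N$, as required.

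The step I expect to be the main obstacle is justifying the explicit form of $\bar\rho$, i.e.\ that it really is the algebra homomorphism described above. The subtlety is that $\ve_1$ is an automorphism of $S$ but \emph{not} of $\wtQ$, so the bookkeeping of the $2$-cocycle $\sigma$ relating the products of $S$ and $\wtQ$ must be handled carefully, both to confirm that the entries $q^{ij}$ take the stated values and to confirm that $\bar\rho$ is multiplicative (for instance that $\ve_1(q\ast q')=\ve_1(q)\ast\ve_1^{-1}(q')$ for odd $q,q'$, which is exactly what the identity ``off-diagonal times off-diagonal equals diagonal'' demands). Once the form of $\bar\rho$ is pinned down, the remaining verifications that $P$ and $\iota$ have the claimed properties are routine parity calculations.
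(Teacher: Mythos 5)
Your proof is correct and is essentially the paper's own argument written out in explicit matrix coordinates: your summand $N=\operatorname{im}P$ is exactly the $(+1)$-eigenspace of the parity operator $\eta=\delta^2$ (acting as $(-1)^n$ on $M_n$, fixing $a^*$ and negating $c^*$) that the paper uses, and your swap $\iota$ is exactly the action of the involution $b^*+c^*\in D_1^*$, which the paper shows satisfies $(b^*+c^*)x=\xi_1(x)(b^*+c^*)$ on $M\otimes V_1$. The step you flag as the main obstacle is not actually one: cocycle twisting leaves the comodule structure map of $S$ unchanged (only the multiplications on $S$ and on $k(H_4)$ change), so $\bar\rho=(\id\otimes\pi)\circ\rho$ is a composite of algebra homomorphisms and identities such as $\ve_1(q\ast q')=\ve_1(q)\ast\ve_1^{-1}(q')$ for odd $q,q'$ follow formally from its multiplicativity rather than needing separate verification.
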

\begin{proof}
We will write $ \bZ/2$ for the subgroup $\langle \eta=\d^2 \rangle \subseteq H_4$. Since $\d$ acts as multiplication by $i$ 
on $\wtQ_1$, $\eta$ acts on $\wtQ$ as multiplication by $(-1)^n$ on $\wtQ_{n}$. A coaction by the function algebra $k(\bZ/2)$ is the same thing as a $\bZ/2$-action, so the map
\begin{equation*}
  \begin{tikzpicture}[auto,baseline=(current  bounding  box.center)]
    \path[anchor=base] (0,0) node (1) {$H$} +(2,0) node (2) {$H\otimes H$} +(7,0) node (3) {$k(\bZ/2)\otimes H$};
         \draw[->] (1) to node[auto,pos=.5] {$\Delta$} (2);    
         \draw[->] (2) to node[auto,pos=.5] {$\text{projection}\otimes\id$} (3);
  \end{tikzpicture}  
\end{equation*}
gives an action of $\ZZ/2$ as algebra automorphisms of $H$. Thus, $\wtQ\otimes H$ is $\bZ/2$-module algebra.

We now make $M\otimes V_1$ a $\bZ/2$-equivariant $A \otimes H$-module: we make $M$ a $\bZ/2$-equivariant $\wtQ$-module by having $\eta \in \bZ/2$ act on $M_{n}$ as multiplication by $(-1)^n$, and 
make $V_1$ a $\bZ/2$-equivariant $H$-module by having $\eta$ fix $a^*$ and send $c^*$ to $-c^*$. 

The image of  the map $\wtQ \to \wtQ\otimes H$ is contained in the cotensor product
\begin{equation*}
  \wtQ\square_H H \; \subseteq \; \wtQ\square_{k(\bZ/2)}H \; = \; (\wtQ\otimes H)^{\eta}
\end{equation*}
so the action of $\eta$ on $M\otimes V_1$ commutes with the $\wtQ$-action on it. Hence the $(+1)$- and $(-1)$-eigenspaces of $\eta$ are graded $\wtQ$-submodules of $M \otimes V_1$. 
Finally, the involution $b^*+c^* = { 0 \; 1 \choose 1 \; 0}
\in M_2(k)\cong D_1^*\subset H$ interchanges these eigenspaces and commutes with the action of $\xi_1$ on $\wtQ$; i.e.,
\begin{equation*}
  (b^*+c^*)x = \xi_1(x)(b^*+c^*) \text{ on } M\otimes V_1 \text{ for all } x\in \wtQ. 
\end{equation*}
This shows that the two $\eta$-eigenspaces are, as $\wtQ$-modules, twists of each other by the automorphism $\xi_1$. We denote one of these eigenspaces by $N$; the other is then isomorphic to $\xi_1^*N$.  
\end{proof}

There are analogues of \Cref{pr.sum_2_copies} with $V_2$ and $\xi_2$ (or $V_{3}$ and $\xi_1\xi_2$) 
in place of $V_1$ and $\xi_1$.

\subsection{The autoequivalences $a_\ve$ of $\Gr(A)$}
\label{ssect.E4_acts}
Let $\ve_1 \in H_4$ be the automorphism of $S$ defined in \Cref{prop.aut.S}.  
We define the auto-equivalence $a_{\ve_1}$ of $\Gr(\wtQ)$ by declaring that 
\begin{equation}
\label{eq.defn.a.epsilon}
a_{\ve_1}(M):=(M \otimes V_1)^\eta,
\end{equation}
the $\eta$-invariant subspace, or $(+1)$-eigenspace, for the action of $\eta$ defined in  the proof of \Cref{pr.sum_2_copies}; i.e.,  $\eta$ acts diagonally with $\eta$ acting on $M_n$ as multiplication by $(-1)^n$ and on $V_1$ by fixing $a^*$ and
sending $c^*$ to $-c^*$. Thus, $a_{\ve_1}(M)$ is one of the summands in \Cref{pr.sum_2_copies}.

Since $\eta$ fixes $a^*$ and changes the sign of $c^*$,  the underlying graded vector space of $a_{\ve_1}(M)$ is
\begin{equation}\label{eq.alt1}
  \bigoplus_{\text{even }n}M_n\otimes\langle a^*\rangle \; \oplus \;  \bigoplus_{\text{odd }n}M_n\otimes\langle c^*\rangle. 
\end{equation}

The algebra $\wtQ$ acts on this graded vector space through scalar restriction via the comodule structure map $\wtQ\to \wtQ\otimes H$. As a coalgebra, $H$ is isomorphic to the function algebra $k(H_4)$. The restriction $A_1 \to A_1 \otimes H$ is
the comodule structure map corresponding to the action of $H_4$ as linear automorphisms of $A_1$. Thus, if 
$y \in A_1$, then its image through this structure map is
\begin{equation*}
  \ve_1(y)\otimes b^*+\ve_1^{-1}(y)\otimes c^*+\cdots
\end{equation*}
where $\ve_1(y)$ is the result of the action of $H_4$ on $A_1$ dual to the $H\cong k(H_4)$-coaction (equivalently, it is the action of $H_4$ on $S_1$ transported to $A_1$ through our identification $A_1\cong S_1$) and the dots represent summands that vanish on $a_{\ve_1}(M)$ because their right hand tensorands vanish on $V_1$.

To see that $a_{\ve_1}$  is an equivalence  we note that a quasi-inverse to it is the functor 
defined by the formula (\ref{eq.defn.a.epsilon})  with $V_1$ having its ``other'' $\bZ/2$-equivariant structure, i.e.,
the action of $\eta$ that fixes $c^*$ and sends $a^*$ to $-a^*$.

By \Cref{pr.sum_2_copies}, $a_{\ve_1}$ preserves dimensions of homogeneous components.

In \Cref{sect.ann.pt.mods} we saw that each ordinary point module associated to $\fP_j$ is annihilated by 
$\Omega(\tau+\xi_j)$. This allows us to define the following relations on the four-element set of ordinary families.

\begin{definition}\label{def.rel}
Let $\xi\in E[2]$. Two ordinary families $\fP_i$ and $\fP_j$ of point modules are {\sf $\xi$-related} if they are annihilated by $\Omega(z)$ and $\Omega(z+\xi)$ respectively for some $z \in E$. 
\end{definition}

By \Cref{prop.H4.moves.points.for.A}, the action of $H_4$ on $\PP(A_1^*)$ is such that $\ve_i$ sends $\fP_j$ to $\fP_k$
where $k$ is determined by the requirement that $2\ve_1+\xi_j=\xi_k$.

As a consequence of the proof of \Cref{pr.sum_2_copies} we get

\begin{corollary}\label{cor.autoequivs}
For each 2-torsion point $ \xi\in E$ there are four order-4 autoequivalences $a_\ve$ of 
$\Gr(\wtQ)$ that preserve Hilbert series, permute the special point modules, and interchange the point modules in any two $\xi$-related ordinary families. 
\end{corollary}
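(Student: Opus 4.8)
The plan is to establish the four asserted properties of the $a_\ve$ one at a time, using the explicit description of $a_\ve$ on generators in \eqref{eq.alt1} and the annihilator shift of \Cref{pr.Omega_transl}. Fix $\xi\in E[2]\setminus\{o\}$ and write $\xi=\xi_m$; the case $\xi=o$ only produces the order-$\le 2$ group-like twists by $\G$ and is excluded. The four $4$-torsion points $\ve$ with $2\ve=\xi$ correspond to the matrix subcoalgebra $D_m\subseteq H^*$ and its three $\G$-translates, hence to four two-dimensional comodules $V_\ve$ and four functors $a_\ve(M)=(M\otimes V_\ve)^\eta$ as in \eqref{eq.defn.a.epsilon}; this gives the asserted count. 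That each $a_\ve$ is an autoequivalence preserving Hilbert series was already noted before the statement: a quasi-inverse is the functor built from the opposite $\bZ/2$-equivariant structure on $V_\ve$, and \Cref{pr.sum_2_copies} yields $M\otimes V_\ve\cong a_\ve(M)\oplus\xi^*a_\ve(M)$, so $a_\ve$ preserves the dimension of every graded piece.

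Next I would check that $a_\ve$ takes point modules to point modules, so that it induces a permutation of the twenty isomorphism classes in $\fP$. If $M=M_p$ is a point module, then $a_\ve(M)_0$ is one-dimensional by \eqref{eq.alt1}, and the coaction formula displayed just after \eqref{eq.alt1} shows that $y\in A_1$ acts on this line through $\ve^{-1}(y)$. Since $\ve^{-1}$ is a bijection of $A_1$ and $A_1M_0=M_1$, the degree-zero line generates $a_\ve(M)_1$; iterating, $a_\ve(M)$ is cyclic and generated in degree zero, and as its Hilbert series is $(1-t)^{-1}$ it is a point module.

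The order-$4$ claim is the heart of the matter. Tracking the same coaction formula through the description \eqref{eq.alt1} of $a_\ve(M)$, the generators of $A$ act out of even degrees through $\ve^{-1}$ and out of odd degrees through $\ve$. A second application of $a_\ve$ therefore composes these twists in pairs, so that $y\in A_1$ acts on $a_\ve^2(M)$ through $\ve^{-2}=\c_m$ (\Cref{prop.aut.S}); that is, $a_\ve^2\cong\xi^*$. Since $\c_m$ moves points inside each ordinary $\G$-orbit $\fP_j$, the twist $\xi^*$ is a nontrivial autoequivalence, while $\xi^2=o$ in $\G$ forces $a_\ve^4\cong(\xi^*)^2\cong\id$; hence $a_\ve$ has order exactly $4$.

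Finally, for the incidence with the families I would argue via annihilators. For $p\in\fP_j$ the module $M_p$ is annihilated by $\Omega(\tau+\xi_j)$ (\Cref{prop.ann.pts} and the discussion preceding \Cref{def.rel}), so \Cref{pr.Omega_transl}, applied to the graded submodule $a_\ve(M_p)\subseteq M_p\otimes V_\ve$, shows $a_\ve(M_p)$ is annihilated by $\Omega(\tau+\xi_j+\xi)=\Omega(\tau+\xi_k)$ with $\xi_k=\xi_j+\xi$. A point module killed by the central element characterizing $\fP_k$ lies in $\fP_k$, so $a_\ve$ carries $\fP_j$ into $\fP_k$; since $2\xi=o$, the same computation with $\xi_k$ in place of $\xi_j$ shows it carries $\fP_k$ back to $\fP_j$, so it interchanges these two $\xi$-related families, exactly as \Cref{prop.H4.moves.points.for.A} predicts. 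This exhausts the sixteen ordinary point modules; as $a_\ve$ permutes $\fP$ and preserves the ordinary locus, it must permute the four special point modules in $\fP_\infty$ as well. The main obstacle is the order computation: one must keep the even/odd parity in \eqref{eq.alt1} straight in order to recognize the iterated twist as $\ve^{-2}=\c_m$, and for the three $\G$-translates of $D_m$ I would invoke the symmetry of \S\ref{ssect.symmetry} rather than repeat the calculation verbatim.
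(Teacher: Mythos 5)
Your core mechanisms are the right ones and largely match the paper: the family-interchange and the permutation of the special point modules both come down to the annihilator shift of \Cref{pr.Omega_transl} (the paper handles the specials directly --- each is annihilated by exactly one $\Omega(\omega)$, $\omega\in E[2]$ --- where you argue by elimination, which is also fine), and your explicit verification that $a_\ve^2\cong\xi^*$, hence that $a_\ve$ has order exactly $4$, is a genuine plus: the paper's proof of this corollary does not actually address the order-$4$ claim, which is only settled later, in the proof of \Cref{pr.E4_acts}, by essentially your parity computation.

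The genuine gap is in your construction of the \emph{four} autoequivalences. You assert that the four $4$-torsion points over $\xi_m$ correspond to ``$D_m$ and its three $\G$-translates.'' But $D_m$ is stable under translation by $\ve_m^2\in\G$: for instance $\ve_1^2D_1=\mathrm{span}\{\ve_1^3,\ve_1,\ve_1^3\eta,\ve_1\eta\}=D_1$. So $\G$-translation yields only \emph{two} distinct subcoalgebras, namely $D_m$ and $\ve_{m'}^2D_m$ with $m'\ne m$; the four translates mentioned in the paper are taken with respect to the larger group $\langle\ve_1^2,\ve_2^2,\d\rangle$. Worse, even those four translates produce only two autoequivalences up to isomorphism: translating by the central $\d$ merely rescales the $A_1$-action by $\pm i$ on even/odd components, and this scalar discrepancy is absorbed by rescaling the homogeneous components of a graded module as in \Cref{old.lemma}(1), so the functor attached to $\d D_m$ is isomorphic to the one attached to $D_m$. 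What your count is missing is that the second factor of two comes from the choice of $\bZ/2$-equivariant structure (the $\eta$-action) on each comodule --- precisely the datum you deploy only to produce a quasi-inverse. The clean fix is the paper's: take the four functors to be $\{\gamma^*a_\ve \mid \gamma\in\G\}$; these are pairwise non-isomorphic because the twists $\gamma^*$ are, they share the permutation properties because each $\fP_j$ is $\G$-stable, and they have order $4$ by your own computation since $\gamma^*$ commutes with $a_\ve$ up to isomorphism. Finally, your closing appeal to the symmetry of \S\ref{ssect.symmetry} is off target: that symmetry cyclically permutes the indices $1,2,3$, i.e.\ it relates $D_1$, $D_2$, $D_3$ and hence the three different $2$-torsion points, and says nothing about the four functors attached to a fixed $\xi$.
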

\begin{proof}
Let $\xi=\xi_1$ and $\ve=\ve_1$. 

Since $a_{\ve}$ sends $\wtQ$ to a module isomorphic to $\wtQ$, $a_\ve$ sends cyclic modules to cyclic modules.  By \Cref{pr.Omega_transl}, $a_\ve$ sends the point modules in an ordinary family to the point modules in the 
$\xi$-related family and permutes the four special point modules since each of those is annihilated by exactly one central element $\Omega(\omega)$, $\omega\in E[2]$.

The four auto-equivalences in $\{\gamma^*a_\ve \; | \; \gamma \in \Gamma\}$ have the same permutation properties because each $\fP_j$ is stable under the action of $\G$. 

The argument for other $\xi$ is analogous, substituting $V_2$ or $V_{3}$ for $V_1$.    
\end{proof}

\begin{corollary}
\label{cor.autoequivs2}
The autoequivalence $a_\ve$ sends cyclic modules to cyclic modules and, since  it preserves Hilbert series,  it
 induces  automorphisms of the point and line schemes for $A$.
\end{corollary}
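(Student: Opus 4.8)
The plan is to leverage that $a_\ve$ is an exact auto-equivalence of $\Gr(\wtQ)$ that preserves Hilbert series, both recorded in \Cref{ssect.E4_acts} and the proof of \Cref{cor.autoequivs}. I would first establish that $a_\ve(\wtQ)\cong\wtQ$. Using the description of the underlying graded vector space in (\ref{eq.alt1}) together with the coaction formula $y\mapsto \ve_1(y)\otimes b^*+\ve_1^{-1}(y)\otimes c^*+\cdots$ for $y\in\wtQ_1$, one checks directly that $a_\ve(\wtQ)$ is generated by its one-dimensional degree-zero component $\wtQ_0\otimes\langle a^*\rangle$: acting by $\wtQ_1$ on $1\otimes a^*$ produces all of $\wtQ_1\otimes\langle c^*\rangle$ because $\ve_1^{-1}$ is bijective on $\wtQ_1$, and generation in the remaining degrees follows since $\wtQ$ is generated in degree one. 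Being cyclic and generated in degree $0$, $a_\ve(\wtQ)$ is a quotient $\wtQ/I$ by a graded left ideal $I$; as $a_\ve$ preserves Hilbert series we get $\dim_k(\wtQ/I)_n=\dim_k\wtQ_n$ for every $n$, forcing $I=0$ and hence $a_\ve(\wtQ)\cong\wtQ$.

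Next I would deduce that $a_\ve$ sends cyclic modules to cyclic modules. A module $M$ generated in degree $0$ is a quotient $\wtQ\twoheadrightarrow M$, and since $a_\ve$ is exact it carries this to an epimorphism $\wtQ\cong a_\ve(\wtQ)\twoheadrightarrow a_\ve(M)$, so $a_\ve(M)$ is again cyclic and generated in degree $0$ (the general cyclic case follows by the analogous computation with $\wtQ(-d)$ in place of $\wtQ$). Combined with the preservation of Hilbert series, this shows $a_\ve$ carries point modules---cyclic, generated in degree $0$, with Hilbert series $(1-t)^{-1}$---to point modules, and line modules---the same but with Hilbert series $(1-t)^{-2}$---to line modules. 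The quasi-inverse of $a_\ve$ from \Cref{ssect.E4_acts} enjoys the same properties, so $a_\ve$ induces mutually inverse bijections on the isomorphism classes of point modules and of line modules.

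Finally I would promote these bijections to automorphisms of the point scheme and the line scheme of $A$, which represent the functors of flat families of point, respectively line, modules (\cite{ShV02}). The operation $a_\ve=(-\otimes_k V_1)^\eta$ consists of tensoring with the fixed finite-dimensional space $V_1$ and then splitting off the $\eta=+1$ eigenspace as a direct summand; both steps commute with arbitrary base change and preserve flatness over the base, so $a_\ve$ extends to a natural transformation of the representing moduli functors and hence to morphisms of the two schemes. The quasi-inverse functor furnishes two-sided inverses, so these morphisms are automorphisms.

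I expect the last step to be the main obstacle: upgrading the bijection on isomorphism classes (closed points) to an actual scheme morphism is the only point with real content, since the point and line schemes can be non-reduced and a bijection on closed points does not by itself determine a morphism. The work there is to verify that $(-\otimes_k V_1)^\eta$ is defined relatively over an arbitrary base, commutes with the universal families, and respects flatness over the base; the cyclicity and Hilbert-series bookkeeping in the earlier steps is routine by comparison.
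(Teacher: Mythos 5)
Your proposal is correct and follows essentially the same route as the paper: show $a_\ve(\wtQ)\cong\wtQ$ so that exactness carries cyclic modules to cyclic modules, combine this with preservation of Hilbert series, and invoke the fine-moduli description of the point and line schemes from \cite{ShV02}. The only difference is one of detail: the paper asserts the final step in a single line, whereas you supply the base-change and flatness argument needed to upgrade a bijection on closed points to an actual scheme automorphism --- a worthwhile elaboration, since these schemes may be non-reduced, but not a different method.
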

\begin{proof}
We noticed in the proof of \Cref{cor.autoequivs} that  $a_\ve$ sends cyclic modules to cyclic modules.
Since  the point and line schemes of $A$  classify isomorphism classes of cyclic modules with certain Hilbert series (e.g., \cite{ShV02}), $a_\ve$ induces  an automorphism of these schemes. 
\end{proof}

We can say more.

\begin{proposition}\label{pr.E4_acts}
The autoequivalences $a_\ve$ generate an action of $(\bZ/4)^2\cong E[4]$ on $\Gr(\wtQ)$.
\end{proposition}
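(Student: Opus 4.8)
The plan is to extract the module-level description of $a_{\ve_i}$ from \Cref{ssect.E4_acts}, compute squares and commutators of these functors by hand, and then read off the group they generate by a short abstract argument. Throughout I identify $\wtQ_1=S_1$ as in \Cref{conv.identification}, so that each $\ve_i$ acts on $\wtQ_1$ as the linear automorphism of \Cref{prop.aut.S}. Up to the natural identification of graded vector spaces $a_{\ve_i}(M)_n=M_n$ recorded in \Cref{ssect.E4_acts}, an element $y\in\wtQ_1$ acts on $a_{\ve_i}(M)$ by $y\cdot m=\ve_i^{\,\sigma(n)}(y)\,m$ for $m\in M_n$, where $\sigma(n)\in\{+1,-1\}$ alternates with the parity of $n$ and the product on the right is formed in $M$.

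First I would compute $a_{\ve_i}^2$. Iterating the displayed formula, $y\in\wtQ_1$ acts on $a_{\ve_i}^2(M)_n=M_n$ through the automorphism $\ve_i^{\,\sigma(n)}\ve_i^{\,\sigma(n)}=\ve_i^{\,2\sigma(n)}$. Since $\ve_i^2=\c_i$ by \Cref{prop.aut.S} and $\c_i$ is an involution, $\ve_i^{\pm 2}=\c_i$ regardless of the sign, so $a_{\ve_i}^2\cong\c_i^{*}$. As $\c_i^{*}\neq\id$ — for instance $\c_1^{*}$ and $\c_2^{*}$ send point modules in $\fP_0$ to non-isomorphic point modules, by \Cref{table.20.pts} — each $a_{\ve_i}$ has order exactly $4$, and $a_{\ve_i}^4\cong(\c_i^{*})^2=\id$.

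Next I would check that $a_{\ve_1}$ and $a_{\ve_2}$ commute up to natural isomorphism. Iterating the formula, $y\in\wtQ_1$ acts on $a_{\ve_1}a_{\ve_2}(M)_n=M_n$ through $\ve_2^{\,\sigma(n)}\ve_1^{\,\sigma(n)}$ and on $a_{\ve_2}a_{\ve_1}(M)_n=M_n$ through $\ve_1^{\,\sigma(n)}\ve_2^{\,\sigma(n)}$. By \Cref{prop.aut.S} we have $\ve_1\ve_2=\d\ve_2\ve_1$, so in each degree these two automorphisms differ by a power of the central element $\d$, which acts on $\wtQ_1$ as the scalar $i$. Hence $a_{\ve_1}a_{\ve_2}(M)$ and $a_{\ve_2}a_{\ve_1}(M)$ differ only by the scalar twist $\psi_i^{*}$, and the degreewise map $m\mapsto i^{n}m$ is a natural isomorphism between them by \Cref{old.lemma}(1). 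Thus $a_{\ve_1}a_{\ve_2}\cong a_{\ve_2}a_{\ve_1}$.

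These relations show that $\ve\mapsto a_\ve$ descends to a homomorphism from the abelian group $\langle\ve_1,\ve_2\rangle\cong(\bZ/4)^2\cong E[4]$ into the group of autoequivalences of $\Gr(\wtQ)$ modulo natural isomorphism, carrying $\xi_j=2\ve_j$ to $\c_j^{*}$. To finish I would prove injectivity. Since $\c_1^{*}$ and $\c_2^{*}$ induce distinct nontrivial permutations of $\fP_0$ (\Cref{table.20.pts}), they generate a Klein four-group, so the restriction of our homomorphism to the subgroup $2(\bZ/4)^2$ of elements of order dividing $2$ is injective. Any kernel $K$ is then a subgroup of $(\bZ/4)^2$ meeting $2(\bZ/4)^2$ trivially; since every nontrivial subgroup of $(\bZ/4)^2$ contains an element of order $2$, we get $K=0$. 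Hence the homomorphism is injective and its image is all of $(\bZ/4)^2\cong E[4]$, as claimed.

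The main obstacle is the commutator computation: one must correctly track the central generator $\d$ of $H_4$ when composing $a_{\ve_1}$ with $a_{\ve_2}$. Conceptually, although $\ve_1$ and $\ve_2$ fail to commute in $H_4$, their defining relation $\ve_1\ve_2=\d\ve_2\ve_1$ departs from commutativity only by the central $\d$, which acts on $\wtQ_1$ by a scalar; since scalar twists are naturally trivial on $\Gr(\wtQ)$ by \Cref{old.lemma}, this non-commutativity is absorbed, and the composition law of the functors $a_\ve$ is governed by the quotient $H_4/\langle\d\rangle\cong E[4]$.
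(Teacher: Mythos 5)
Your proof is correct and takes essentially the same route as the paper: you derive the same degreewise description of $a_{\ve_i}$ (acting by $\ve_i^{-1}$ on even components and $\ve_i$ on odd ones) and, exactly as in the paper's proof, you absorb the central element $\d$ coming from $\ve_1\ve_2=\d\ve_2\ve_1$ by noting that it produces only a scalar twist, which is naturally trivial by \Cref{old.lemma}(1). The one genuine addition is your final injectivity argument (via the faithful action of $\c_1^*,\c_2^*$ on $\fP_0$ from \Cref{table.20.pts}), which shows the group generated really is $(\bZ/4)^2$ rather than a proper quotient — a point the paper's proof leaves implicit.
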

\begin{proof}
By construction,  the underlying graded vector space of $a_{\ve_1}(M)$ is
\begin{equation}\label{eq.alt}
  \bigoplus_{\text{even }n}M_n\otimes\langle a^*\rangle \; \oplus \;  \bigoplus_{\text{odd }n}M_n\otimes\langle c^*\rangle
\end{equation}
and $y\in\wtQ_1$ acts on it as $\ve_1(y)\otimes b^*+\ve_1^{-1}(y)\otimes c^*+\cdots$. 
Since
$$
\begin{pmatrix} b^* \\ c^*  \end{pmatrix}    \begin{pmatrix} a^* & c^*  \end{pmatrix} \; = \;  \begin{pmatrix} 0 & \phantom{-}a^* \\ c^* & 0 \end{pmatrix},
$$ 
the action of $y$ on the left hand tensorands of the even components in \Cref{eq.alt} is the initial action of $\varepsilon_1^{-1}\triangleright y$. Similarly, its action on the left hand tensorands of the odd components is the initial action of $\varepsilon_1\triangleright y$. In conclusion, identifying \Cref{eq.alt} with $M$ by 
\begin{equation*}
\langle a^*\rangle\cong k\cong \langle c^*\rangle,\quad a^*=c^*=1,
\end{equation*}
$y \in A_1$ acts as $\ve_1^{-1}(y)$ on the even-degree components of $a_{\ve_1}(M)$ and as $\ve_1(y)$ on the odd-degree components.

The same discussion applies to the other autoequivalences defined above and shows that they do indeed comprise an action of $(\bZ/4)^2$ on $\Gr(\wtQ)$, with $a_{\ve_1}$ and $a_{\ve_2}$ as generators. The fact that $a_{\ve_1}$ and $a_{\ve_2}$ commute amounts to their commutator scaling the homogeneous components of a graded module as in \Cref{old.lemma}(1), and hence acting trivially. 
\end{proof}

Now, as before, the action of $E[4]$ on $\Gr(\wtQ)$ induces one on point and line schemes. In addition, since $\GG(1,3)$ parametrizes the isomorphism classes of cyclic graded $\wtQ$-modules with Hilbert series $1+2t$, the $E[4]$-action on the line scheme extends to one on $\GG(1,3)$.  

\begin{proposition}\label{pr.theyre_conics}
The images of the conic $C_0\subset \PP^5$ under the action of $E[4]$ on $\GG(1,3)$ are conics.    
As subvarieties of $\GG(1,3)$ the four different images, $C_0$, $C_1$, $C_2$, $C_3$, are cut out by the Pl\"ucker relation $ z_{01}z_{23} - z_{02}z_{13}+z_{03}z_{12} =0$ together with three linear relations which are:
 \begin{align*}
 C_0: \phantom{xxxx}   &  z_{23} +\a z_{01} \; = z_{31}+\b z_{02}  \,\; = \; z_{12}+\c z_{03}\; \, = \; 0;       \\
 C_1: \phantom{xxxx}   & z_{23} -\a  z_{01}  \;  = \;  z_{31}-  z_{02}   \phantom{\b}   \; = \; z_{12} +  z_{03}  \phantom{\c} \; =\;  0;       \\
 C_2: \phantom{xxxx}   & z_{23} +  z_{01}   \phantom{\a}  \;  =\; z_{31} - \b z_{02}  \;  = \; z_{12} -  z_{03}  \phantom{\c}  \; = \; 0;       \\
 C_3: \phantom{xxxx}   & z_{23} -  z_{01}   \phantom{\a}     \; = \; z_{31} +  z_{02}   \phantom{\b}  \; =\;  z_{12} - \c z_{03}  \;  = \; 0.
  \end{align*}
These conics are pairwise disjoint. 
\end{proposition}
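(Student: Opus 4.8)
The plan is to reduce the whole statement to linear algebra by recognizing the $E[4]$-action on $\GG(1,3)$ as a genuine projective-linear action. First I would pin down how the autoequivalence $a_{\ve_1}$ of \Cref{pr.E4_acts} moves a point of $\GG(1,3)$, which we think of as a $2$-dimensional subspace $W\subseteq \wtQ_1$ with associated cyclic module $\wtQ/\wtQ W$. By the explicit description in the proof of \Cref{pr.E4_acts}, after identifying the underlying graded vector space of $a_{\ve_1}(\wtQ/\wtQ W)$ with that of $\wtQ/\wtQ W$, an element $y\in\wtQ_1$ acts on the degree-zero generator exactly as $\ve_1^{-1}(y)$ did originally; hence $y$ kills that generator precisely when $\ve_1^{-1}(y)\in W$, i.e. when $y\in\ve_1(W)$. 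Since $a_{\ve_1}$ preserves Hilbert series and sends cyclic modules to cyclic modules (\Cref{cor.autoequivs2}), it follows that $a_{\ve_1}(\wtQ/\wtQ W)\cong \wtQ/\wtQ\,\ve_1(W)$, where $\ve_1$ is the linear automorphism of $\wtQ_1=S_1$ coming from \Cref{prop.aut.S} via the identification of \S\ref{conv.identification}. Thus $a_{\ve_1}$ acts on $\GG(1,3)\subseteq \PP(\wedge^2\wtQ_1)$ as $\wedge^2\ve_1$, and likewise $a_{\ve_2},a_{\ve_3}$ act as $\wedge^2\ve_2,\wedge^2\ve_3$; each is a projective-linear automorphism of $\PP^5$ preserving the Pl\"ucker quadric.

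Granting this, claim $(1)$ is immediate: a projective-linear automorphism carries the $\PP^2$ cut out by the three linear relations of \Cref{prop.commute} to another $\PP^2$ and the Pl\"ucker quadric to itself, hence carries the smooth conic $C_0$ to a smooth conic. To count the images I would use that $\c_1=\ve_1^2$ and $\c_2=\ve_2^2$ are \emph{genuine} graded algebra automorphisms of $\wtQ$, in contrast to the $\ve_j$ themselves. An algebra automorphism carries commuting subspaces to commuting subspaces, so $\wedge^2\c_j(C_0)=C_0$; since $E[2]=\langle\c_1,\c_2\rangle$ acts on $\GG(1,3)$ through the maps $\wedge^2\c_j$, it stabilizes $C_0$. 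Hence the $E[4]$-orbit of $C_0$ has at most $[E[4]:E[2]]=4$ elements, represented by $o,\ve_1,\ve_2,\ve_3$ (using $2\ve_j=\xi_j$ and $\ve_3\equiv\ve_1+\ve_2\bmod E[2]$), and I set $C_j:=\wedge^2\ve_j(C_0)$.

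The explicit equations for $C_j$ are then obtained by transport of structure: a point $z$ lies on $C_j$ if and only if $\wedge^2\ve_j^{-1}(z)$ satisfies the defining relations of $C_0$ in \Cref{prop.commute}. Reading the matrix of $\ve_j$ on $\wtQ_1$ off \Cref{autom} and \S\ref{conv.identification}, computing the induced action $\wedge^2\ve_j$ on the Pl\"ucker coordinates $z_{pq}$, and substituting into $z_{23}+\a z_{01}=z_{31}+\b z_{02}=z_{12}+\c z_{03}=0$ produces exactly the three linear relations listed for each $C_j$; together with the Pl\"ucker relation these cut out $C_j$. This is the one genuinely computational step, and the place where sign and coefficient bookkeeping must be done with care, but it is entirely mechanical; note also that $\ve_j$ and $\ve_j^{-1}$ differ by $\c_j\in E[2]$, so the resulting conic does not depend on the choice between them.

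Finally, for pairwise disjointness I would argue on the ambient planes. Writing $\Pi_j\subset\PP^5$ for the plane cut out by the three linear relations of $C_j$, it suffices to show $\Pi_i\cap\Pi_j=\varnothing$ for $i\ne j$, since then $C_i\cap C_j\subseteq\Pi_i\cap\Pi_j=\varnothing$. Combining the two triples of linear equations and using $\a,\b,\c\in k-\{0,\pm1\}$ together with $\mathrm{char}(k)\ne 2$ forces every Pl\"ucker coordinate to vanish; for the pair $(C_0,C_1)$, for instance, the first relations give $2\a z_{01}=0$, the second give $(1+\b)z_{02}=0$, and the third give $(\c-1)z_{03}=0$, whence $z=0$, and the remaining pairs are entirely analogous. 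Thus the six equations are independent, $\Pi_i\cap\Pi_j=\varnothing$, the four conics are pairwise disjoint and in particular distinct, so the orbit has exactly four elements. I expect the conceptual crux to be the identification of the $\GG(1,3)$-action with $\wedge^2\ve_j$ in the first paragraph, while the coordinate computation of the third paragraph is the main labor.
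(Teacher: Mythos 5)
Your proof is correct, and its computational core (transporting the equations of $C_0$ through the induced action on Pl\"ucker coordinates, and proving pairwise disjointness from the linear relations using $\a,\b,\c\notin\{0,\pm1\}$ and ${\rm char}(k)\neq 2$) coincides with the paper's. Where you genuinely differ is the justification that the images of $C_0$ are conics at all. The paper does not first identify the $E[4]$-action on $\GG(1,3)$ as projective-linear; it argues abstractly that since $\Pic(\GG(1,3))=\bZ$ is generated by the Pl\"ucker class $[\cL]$, any automorphism of $\GG(1,3)$ fixes $[\cL]$, so the action of the central extension $H_4$ lifts, via an equivariant structure on $\cL$, to a linear action on the ambient $\PP^5$, which necessarily carries conics to conics. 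You instead use the description of $a_{\ve_1}$ in \Cref{pr.E4_acts} (degree-one elements act on even-degree components through $\ve_1^{-1}$) to conclude $a_{\ve_1}(\wtQ/\wtQ W)\cong \wtQ/\wtQ\,\ve_1(W)$, so that the action on $\GG(1,3)$ is simply $\wedge^2\ve_1$ and the extension to $\PP^5$ is tautological. Your route is the leaner one: the paper needs essentially this identification anyway to carry out its coordinate computation (it computes images of lines under $\psi_j=\iota^{-1}\phi_j\iota$), so the Picard-group step becomes redundant in your treatment; what the paper's step buys is robustness, since it applies to any action by abstract automorphisms of $\GG(1,3)$ without knowing the action is induced by linear maps on $\wtQ_1$. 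You also make explicit something the paper leaves implicit, namely that there are exactly four images: $E[2]=\langle\c_1,\c_2\rangle$ acts by genuine graded algebra automorphisms of $\wtQ$, which preserve commuting subspaces and hence fix $C_0$, so the orbit has at most $[E[4]:E[2]]=4$ members, and disjointness shows it has exactly four. The one caveat is that you describe rather than execute the Pl\"ucker bookkeeping that produces the displayed linear relations for $C_1,C_2,C_3$; the paper carries this out in full, and your argument is only complete once that (admittedly mechanical) computation is done.
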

\begin{proof}
By the remark before the proposition the action of $E[4]$ on $\GG(1,3)$ preserves the line scheme. 

The Pl\"ucker embedding $\GG(1,3) \to \PP^5$ is effected via the top exterior power $\cL$ on $\GG(1,3)$ of the dual tautological rank-two bundle on $\GG(1,3)$. Since the Picard group of $\GG(1,3)$ is $\bZ$ and $[\cL]$ generates it \cite[Theorem 2.10]{EH13}, every automorphism of $\GG(1,3)$ leaves $[\cL]$ invariant. 

In particular, $[\cL]$ is invariant under the action of $E[4]$ on the Picard group of $\GG(1,3)$ so it admits an equivariant structure for the action of the universal central extension $H_4$ of $E[4]\cong (\bZ/4)^2$ on $\GG(1,3)$. The equivariant structure allows us to lift the $H_4$-action on $\GG(1,3)$ to an action on the ambient space $\PP^5$ of the Pl\"ucker embedding. Being a conic is a statement about how $C_0$ sits inside $\PP^5$, so an element of $H_4$ sends $C_0$ to 
another conic. 

The $H_4$ action on $\wtQ_1$ is obtained by transferring the $H_4$ action on $S_1$ to $\wtQ_1$ by using  the linear isomorphism 
$\iota:\wtQ_1 \to S_1$
$$
\iota(y_0)= x_0, \qquad \iota(y_1)= -ix_1, \qquad \iota(y_2)= -ix_2, \qquad \iota(y_3)= x_3
$$
in \Cref{conv.identification}. 
Let $\phi_j$, $j=1,2,3$,  be the automorphisms of $S$ in \Cref{prop.aut.S} and define $\psi_j:=\iota^{-1}\phi_j\iota:\wtQ_1 \to \wtQ_1$.  The action  of $\psi_j$ on $\wtQ_1$ is given by the following table: 
\begin{table}[htp]
\begin{center}
\begin{tabular}{|c|c|c|c|c|c|c|c|}
\hline
 & $y_0$ &  $y_1$ &  $y_2$ &  $y_3$  
\\
\hline
$\psi_1 $  & $ibc y_1$ &   $-y_0$ &  $-by_3$ &  $-ic y_2$ $\phantom{\Big)}$
\\
\hline
$\psi_2$  & $iac y_2$ &   $ia y_3$ &  $-y_0$ &  $cy_1$ $\phantom{\Big)}$
\\
\hline
$\psi_3$  & $ab y_3$ &   $ -ia y_2$ &  $- b y_1$ &  $-iy_0$ $\phantom{\Big)}$
\\
\hline
\end{tabular}
\end{center} 
\end{table}
\newline

A linear automorphism  $\psi:\wtQ_1 \to \wtQ_1$ induces a linear automorphism of $\PP(\wtQ_1^*)$ that
sends the plane $y=0$  to the plane $\psi(y)=0$, and so on.  For example, $\psi_3$ sends the plane 
$\ve y_0+\l y_1+\mu y_2+\nu y_3=0$ to the plane $\ve ab y_3 -\l ia y_2-\mu b y_1 -i \nu y_0=0$. 

Let $L$ be a commuting line defined by $\ve y_0+\l y_1+\mu y_2+\nu y_3=\ve' y_0+\l' y_1+\mu' y_2+\nu' y_3=0$.
The Pl\"ucker coordinates of  $L$  are given by the $2 \times 2$ minors of
$$
\begin{pmatrix}
\ve & \l & \mu & \nu 
   \\
\ve' & \l' & \mu' & \nu'
\end{pmatrix}.
$$
Since $\psi_3(L)$ is given by the equations $\psi_3(\ve y_0+\l y_1+\mu y_2+\nu y_3)=\psi_3(\ve' y_0+\l' y_1+\mu' y_2+\nu' y_3)=0$,  
the Pl\"ucker coordinates of $\psi_3(L)$  are given by the $2 \times 2$ minors of
$$
\begin{pmatrix}
-i\nu  & -b \mu  & -ia \l  & ab \ve 
   \\
-i\nu'  & -b \mu'  & -ia \l ' & ab \ve'
\end{pmatrix}.
$$
Therefore 
\begin{align*}
(z_{23}-z_{01})(\psi_3(L)) & \; = \;  -i\a b(\l\ve'-\l'\ve) -ib(\nu\mu'-\nu'\mu)  \; = \;   ib(\a z_{01}+z_{23})(L)=0,
\\
(z_{13}-z_{02})(\psi_3(L)) & \; = \;  -a\b (\mu\ve'-\mu'\ve) +a(\nu\l'-\nu'\l)  \; = \;   a(\b z_{02} - z_{13})(L)=0,
\\
(z_{12}-\c z_{03})(\psi_3(L)) & \; = \;  ia b(\mu\l'-\mu'\l) +iab\c (\nu\ve'-\nu'\ve)  \; = \;   iab(-z_{12}-\c z_{03})(L)=0. 
\end{align*}
In conclusion, the point on $\GG(1,3)$ corresponding to the line $\psi_3(L)=0$ lies on the plane $z_{23}-z_{01}=z_{13}-z_{02}=z_{12}-\c z_{03}=0$.

Similarly, the Pl\"ucker coordinates of $\psi_2(L)$  are given by the minors of
$$
\begin{pmatrix}
-\mu  & c \nu  & iac\ve  & ia\l 
   \\
-\mu'  & c \nu'  & iac\ve'  & ia\l' 
\end{pmatrix}.
$$
Therefore 
\begin{align*}
(z_{23}+z_{01})(\psi_2(L)) & \; = \;  -\a c(\ve\l'  -\l\ve') -c(\mu\nu'-\mu'\nu)  \; = \;   c(-\a z_{01}- z_{23})(L)=0,
\\
(z_{13}+\b z_{02})(\psi_2(L)) & \; = \;  iac (\nu\l'-\nu'\l) -iac \b (\mu\ve'-\mu'\ve)  \; = \;   iac (-z_{13} + \b z_{02})(L)=0,
\\
(z_{12} - z_{03})(\psi_2(L)) & \; = \;  ia \c(\nu\ve'-\nu'\ve) +ia(\mu\l'-\mu'\l)  \; = \;   ia(-\c z_{03}- z_{12})(L)=0. 
\end{align*}
Thus, the point on $\GG(1,3)$ corresponding to $\psi_1(L)=0$ lies on the plane $z_{23}+z_{01}=z_{13}+\b z_{02}=z_{12}-z_{03}=0$.

The Pl\"ucker coordinates of $\psi_1(L)$  are given by the minors of
$$
\begin{pmatrix}
-\l  & ibc\ve  &-ic\nu  & -b\mu 
   \\
-\l'  & ibc\ve'  &-ic\nu'  & -b\mu'
\end{pmatrix}.
$$
Therefore 
\begin{align*}
(z_{23}-\a z_{01})(\psi_1(L)) & \; = \;  ibc(\nu\mu'  -\nu'\mu) +ibc\a(\l\ve'-\l'\ve)  \; = \;   ibc(-z_{23}- \a z_{01})(L)=0,
\\
(z_{13}+ z_{02})(\psi_1(L)) & \; = \;  -i\b c (\ve\mu' -\ve'\mu) + ic (\l\nu'-\l'\nu)  \; = \;   i c (-\b z_{02} + z_{13})(L)=0,
\\
(z_{12} + z_{03})(\psi_1(L)) & \; = \;  b \c(\ve\nu'-\ve'\nu) +b(\l\mu'-\l'\mu)  \; = \;   b(\c z_{03} + z_{12})(L)=0. 
\end{align*}
Thus, the point on $\GG(1,3)$ corresponding to $\psi_2(L)=0$ lies on the plane $z_{23}-\a z_{01}=z_{13}+ z_{02}=z_{12}+z_{03}=0$.

To see that the conics are pairwise disjoint consider, first, $C_0 \cap C_1$. Since $z_{23}+\a z_{01}$ and $z_{23}-\a z_{01}$ vanish
on $C_0 \cap C_1$, so do $z_{01}$ and $z_{23}$. Since $z_{13}-\b z_{02}$ and $z_{13}+z_{02}$ vanish on $C_0 \cap C_1$ so do 
$z_{13}$ and $z_{02}$. Since $z_{12}+\c z_{03}$ and $z_{12}+z_{03}$ vanish on $C_0 \cap C_1$ so do $z_{12}$ and $z_{03}$. 
Thus all the Pl\"ucker coordinates vanish on $C_0 \cap C_1$. Hence $C_0 \cap C_1 = \varnothing$. 

The other cases are similar.
\end{proof}

In the previous proof, the $\psi_j$'s act on the Pl\"ucker coordinates as 
\begin{table}[htp]
\begin{center}
\begin{tabular}{|c|c|c|c|c|c|c|c|}
\hline
$\phantom{\Big)}$ & $z_{01}$ &  $z_{02}$ &  $z_{03}$ &  $z_{12}$ &  $z_{13}$ &  $z_{23}$
\\
\hline
$\phantom{\Big)} \psi_1$  & $ibcz_{01}$ &  $i cz_{13}$ &  $b  z_{12}$ &  $b\c z_{03}$ &  $-i\b c z_{02}$ &  $-ibcz_{23}$
\\
\hline
$\phantom{\Big)} \psi_2$  & $- c z_{23}$ &  $iac z_{02}$ &  $ia z_{12}$ &  $-ia\c z_{03}$ &  $-iac z_{13}$ &  $-\a c z_{01}$
\\
\hline
$\phantom{\Big)} \psi_3$  & $-i b z_{23}$ &  $ a z_{13}$ &  $iab z_{03}$ &  $-iab z_{12}$ &  $a\b z_{02}$ &  $i\a b z_{01}$
\\
\hline
\end{tabular}
\end{center}
\end{table}

\section{Line modules for $\wtQ$}

\subsection{Classification of line modules}
Let $\LL \subset \GG(1,3) \subseteq \PP(\wedge^2 \wtQ_1^*)\cong \PP^5$ denote the line scheme for $\wtQ$.

\begin{theorem}
\label{thm.main1}
Let  $\xi_1,\xi_2,\xi_3$ be the 2-torsion points of $E$.
The line scheme for $\wtQ$ is a reduced and irreducible curve of degree 20. It is  the union of 3 disjoint quartic 
elliptic curves and 4 disjoint plane conics, 
$$
\LL \; = \; \Big((E/\langle \xi_1 \rangle) \; \sqcup \; (E/\langle \xi_2 \rangle)  \; \sqcup \; (E/\langle \xi_3 \rangle) \Big)  \; \bigcup \; 
\Big(C_0  \; \sqcup \; C_1 \; \sqcup \; C_2  \; \sqcup \;   C_3\Big)
$$
having the property that $\big\vert (E/\langle \xi_i \rangle) \cap C_j \big\vert =2$ for all $(i,j) \in \{1,2,3\} \times \{0,1,2,3\}$.
\end{theorem}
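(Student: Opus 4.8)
The plan is to prove the asserted equality by a double inclusion. First I would exhibit all seven curves inside $\LL$. The three quartic elliptic curves $E/\langle\xi_i\rangle$ occur as the elliptic line modules of \cite[\S\S10--11]{CS15}: the closed immersion $E/\langle\xi_i\rangle\hookrightarrow\GG(1,3)$ sends a class to the Pl\"ucker point of the secant line $\overline{p,\,p+\xi_i}=\overline{p,\,\c_i(p)}$, whose image is a quartic elliptic space curve. The commuting conic $C_0$ parametrizes line modules by the opening results of \Cref{se.commuting_conic} (in particular \Cref{prop.commute}), and $C_1,C_2,C_3$ are the images of $C_0$ under the autoequivalences $a_\ve$; since these send cyclic modules to cyclic modules and preserve Hilbert series (\Cref{cor.autoequivs2}), they carry line modules to line modules, so $C_1,C_2,C_3\subseteq\LL$ as well. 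Thus the entire union lies in $\LL$.

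Next I would check the mutual-position statements. The four conics are pairwise disjoint by \Cref{pr.theyre_conics}. The three elliptic curves are pairwise disjoint because a common line would give an equality of degree-two divisors $(p)+(p+\xi_i)=(q)+(q+\xi_{i'})$ on $E$, which forces $\xi_i=\xi_{i'}$ and hence $i=i'$. For the crucial count $\big\vert C_j\cap E_i\big\vert=2$, I would work in Pl\"ucker coordinates: each $C_j$ is cut out inside the quadric $\GG(1,3)$ by the three linear forms of \Cref{pr.theyre_conics}, so $C_j\cap E_i$ equals the intersection of $E_i$ with the corresponding $2$-plane $\Pi_j$. Substituting the explicit parametrization of the scroll $E_i$ by the secants $\overline{p,\c_i(p)}$, whose Pl\"ucker coordinates are computed from \Cref{cor.transl.by.E4}, into these three equations reduces the problem to counting the common zeros of three sections of the degree-$4$ bundle $\cL|_{E_i}$, and a direct calculation gives exactly two.

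Finally I would close the reverse inclusion and fix the scheme structure. By Tomlin's appendix, $\dim(\LL)\le 1$ and $\LL_{\rm red}$ is precisely the union of the seven curves; with the forward inclusion this yields set-theoretic equality, while \cite[Thm.~1.1]{CSV15} (every component of the line scheme has dimension $\ge 1$) makes $\LL$ purely one-dimensional and gives $\deg(\LL)=20$. Since the reduced union has degree $3\cdot 4+4\cdot 2=20$ as well, $\LL$ is generically reduced of multiplicity one along every component; ruling out embedded points, via Cohen--Macaulayness of the line scheme, then forces $\LL$ to equal the reduced union, which is the claimed decomposition.

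The main obstacle is the reverse inclusion, i.e.\ the bound $\dim(\LL)\le 1$: there is no a priori reason the line modules written down above should exhaust all of them, and establishing that they do is exactly the hard computational input provided by Tomlin. The intersection count is also delicate but, being an explicit coordinate computation, is routine once the parametrizations are in hand; everything else then follows formally from the dimension bound together with the degree formula of \cite{CSV15}.
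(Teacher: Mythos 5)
Your proposal follows essentially the same skeleton as the paper's proof: exhibit the seven curves inside $\LL$ using the earlier sections, use Tomlin's bound $\dim(\LL)\le 1$ (\Cref{thm.D}) together with the lower bound of \cite[Cor.~2.6]{ShV02} to force every component to be a curve, invoke \cite{CSV15} for $\deg(\LL)=20$, and close with the degree count $3\cdot 4+4\cdot 2=20$. Two divergences are worth recording. First, for the count $\big\vert C_i\cap E_j\big\vert=2$ you propose a direct Pl\"ucker-coordinate computation; the paper instead derives this conceptually in \Cref{cor.ell_line=conic_line} from the fat-point incidence results (every line parametrized by $C_i$ passes through the fat point $\tau'+\ve_i+E[2]$, and exactly two lines in each elliptic family pass through any given fat point), while explicitly noting that the appendix computation gives it as well --- so your route is legitimate, but it is precisely the computational alternative the paper delegates to the appendix rather than its own argument. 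Second, your final appeal to Cohen--Macaulayness of the line scheme to exclude embedded points is an unsupported claim: no such property is established in the paper, in \cite{ShV02}, or in \cite{CSV15}. To your credit, this is aimed at a genuine subtlety (embedded zero-dimensional components, which the Hilbert-polynomial degree cannot detect) that the paper's own proof passes over silently, since the degree comparison only rules out one-dimensional components of multiplicity greater than one. You should therefore either justify that claim or weaken the conclusion to what the degree argument actually yields, namely that $\LL$ has exactly the seven listed components, each generically reduced; this matches the strength of the paper's argument.
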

\begin{proof}
We consider $\GG(1,3)$ as a subvariety of $\PP^5$ via the Pl\"ucker embeddding.
 By  \cite[Cor. 2.6]{ShV02}, every component of $\LL$ has dimension $\ge 1$.
By \Cref{thm.D}, $\dim(\LL)\le 1$ so every component of $\LL$ has dimension $=1$.
Hence, by \cite{CSV15}, $\deg(\LL)=20$. 

By \cite[Prop. 11.12]{CS15}, the image of each $E/\langle \xi \rangle$ in $\GG(1,3)$ has degree 4 as a curve in $\PP^5$.
Each of the conics $C_0,C_1,C_2,C_3$ has degree two. Thus, the union of these 7 components of $\LL$ has degree 
$3 \times 4 + 4 \times 2 =20$. It follows that $\LL$ is as claimed. 

The claim about the intersection points of the components of $\LL$ is proved in \Cref{cor.ell_line=conic_line}. It is also
a consequence of the calculations in the Appendix.
\end{proof}

\begin{proposition}
\label{prop.Ej}
For $j=1,2,3$, the image of $E/\langle \xi_j \rangle$ in $\GG(1,3)$ is the curve $E_j$ described in the Appendix. 
Defining equations for $E_j$ are given at the end of the Appendix and the points in $C_i \cap E_j$ are given in Table
\ref{E.cap.C}.
\end{proposition}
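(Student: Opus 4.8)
The plan is to write down the morphism $E\to\GG(1,3)$ explicitly and then match it against the description of $E_j$ in the Appendix. First I would recall from \cite[\S\S10,11]{CS15} that the line modules in the $j$-th elliptic family correspond to the secant lines $\overline{p,\,p+\xi_j}=\overline{p,\,\c_j(p)}$ as $p$ ranges over $E$, so that there is a closed immersion $E/\langle\xi_j\rangle\hookrightarrow\GG(1,3)$ sending the class of $p$ to the Plücker point of this secant line. Writing $p=(\lambda_0,\lambda_1,\lambda_2,\lambda_3)$ in the coordinates $(y_0,y_1,y_2,y_3)$ on $\PP(\wtQ_1^*)$ and using that $\c_j$ acts on these coordinates by the sign changes of \Cref{Gamma.action}, the Plücker coordinates of $\overline{p,\c_j(p)}$ are the $2\times2$ minors of the matrix with rows $p$ and $\c_j(p)$. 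Since $\c_j(p)$ differs from $p$ only by signs, each minor is either identically zero or a scalar multiple of a quadratic monomial $\lambda_a\lambda_b$; for $j=1$ one finds $z_{01}=z_{23}=0$ and $(z_{02},z_{03},z_{12},z_{13})$ proportional to $(\lambda_0\lambda_2,\lambda_0\lambda_3,\lambda_1\lambda_2,\lambda_1\lambda_3)$, and the cyclic symmetry $\phi$ of \S\ref{ssect.symmetry} produces the analogous parametrizations for $j=2,3$. These monomials are precisely the Segre coordinates of $((\lambda_0\!:\!\lambda_1),(\lambda_2\!:\!\lambda_3))$, both of which are $\c_1$-invariant, which re-confirms that the map factors through $E/\langle\xi_1\rangle$ and lands on the Plücker quadric.

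Second, I would verify that this parametrized image satisfies the equations defining $E_j$ recorded at the end of the Appendix. The linear relations (for $j=1$, the vanishing of $z_{01}$ and $z_{23}$) hold by inspection, while the remaining quadratic relations among the $z_{ab}$ are checked by substituting the monomials $\lambda_a\lambda_b$ and invoking the quadric equations (\ref{eqns.for.E}) that cut out $E$ in $\PP(S_1^*)$, translated into $\wtQ$-coordinates via the identification of \S\ref{conv.identification}. Once the image is seen to lie in $E_j$, equality is immediate: the image is a closed, irreducible curve (it is the image of the elliptic curve $E/\langle\xi_j\rangle$ under a closed immersion), and the Appendix exhibits $E_j$ as a reduced, irreducible curve of dimension one, so an irreducible curve contained in $E_j$ must be all of $E_j$. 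The degree bookkeeping of \Cref{thm.main1}, together with \cite[Prop. 11.12]{CS15}, gives the consistency check that both curves have degree $4$.

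Third, for the intersection points $C_i\cap E_j$ tabulated in Table \ref{E.cap.C}, I would substitute the monomial parametrization of $E_j$ into the three linear forms cutting out $C_i$ in \Cref{pr.theyre_conics}; combined with the equations for $E$, these produce the finitely many values of $p$ listed. It is cleanest to compute $C_0\cap E_j$ by hand and then transport: by \Cref{pr.E4_acts,pr.theyre_conics} the autoequivalences $a_\ve$ act on $\GG(1,3)$ by translating the corresponding secant lines by the $4$-torsion point $\ve$, hence preserve each elliptic family $E_j$ (translation does not change the difference $\xi_j$ of the endpoints) while carrying $C_0$ to $C_i$; therefore $C_i\cap E_j$ is the $a_\ve$-image of $C_0\cap E_j$, which both explains the uniform count $\vert C_i\cap E_j\vert=2$ and pins down the points. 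This is the abstract content of \Cref{cor.ell_line=conic_line}, on which I would rely.

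The step I expect to be the main obstacle is the bookkeeping around coordinate conventions rather than any single hard idea. The parametrization is most naturally written using the quartic $E\subseteq\PP(S_1^*)$ and the $S$-relations, whereas the equations for $E_j$ and for the conics live in the $\wtQ$/Plücker coordinates $z_{ab}$; keeping the identification $x_1=iy_1$, $x_2=iy_2$ of \S\ref{conv.identification} consistent through every minor and every substitution — so that the quadratic monomials match the Appendix's equations up to the correct scalars, and so that each conic $C_i$ is attached to the correct family $\fP_j$ — is where errors are most likely to enter. This is exactly the place where the independent machine computation of $\LL_{\rm red}$ credited to the Appendix provides a valuable cross-check.
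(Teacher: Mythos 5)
Your proposal is correct, and it reaches the same first milestone as the paper but finishes differently. The paper's proof is four sentences: it quotes \cite[Lem.~11.7]{CS15} to say that a secant line $\overline{p,p+\xi_1}$ is cut out by equations of the form $\b_0y_0+\b_1y_1=\b_2y_2+\b_3y_3=0$, notes that the corresponding Pl\"ucker point therefore lies in the $3$-plane $z_{01}=z_{23}=0$, and then concludes \emph{softly}: the image of $E/\langle\xi_1\rangle$ is an irreducible curve contained in $\LL$, and by \Cref{thm.D} the only component of $\LL$ contained in that $3$-plane is $E_1$, so the image is $E_1$ (with the degree count $4=4$ from \cite[Prop.~11.12]{CS15} closing the containment argument). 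Your Segre-monomial parametrization is an explicit version of the same $3$-plane observation, but where the paper invokes the classification of the components of $\LL$, you instead verify directly that the parametrized image satisfies the two quadratic equations defining $E_1$, reducing modulo the ideal of $E$ translated into $y$-coordinates. That is sound — and the place you flag as dangerous, the factors of $i$ coming from $x_1=iy_1$, $x_2=iy_2$ in \S\ref{conv.identification}, is indeed the only real hazard — but it is computation the paper avoids by exploiting the a priori fact that the image sits inside the line scheme; conversely, your route does not need that fact, only the Appendix's equations and the irreducibility of $E_1$. For the intersection table, the paper simply defers to the Appendix's machine computation, whereas you recover it by computing $C_0\cap E_j$ by hand and transporting by the $E[4]$-action (\Cref{pr.theyre_conics}, \Cref{pr.E4_act_lines}) together with \Cref{cor.ell_line=conic_line}; this is legitimate and non-circular, since \Cref{cor.ell_line=conic_line} is proved from \Cref{prop.commuting.subspaces}, \Cref{pr.fat_pts_on_elliptic_lines} and \Cref{pr.E4_act_lines}, none of which depends on \Cref{prop.Ej}.
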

\begin{proof}
We just do the case $j=1$. The other cases are similar. By \cite[Lem. 11.7]{CS15}, a secant line of the form $\overline{p,p+\xi_1}$ is cut out by equations of the form $\b_0 y_0+\b_1y_1=\b_2y_2+\b_3y_3=0$. It is easy to see that the point in 
$\GG(1,3)$ corresponding to this line lies on the 3-plane $z_{01}=z_{23}=0$. But the only component of $\LL$ 
contained in that 3-plane  is the curve $E_1$ in the Appendix.
\end{proof}

  \subsection{}
\label{sect.E4.action}

We now give a more detailed description of the action of $E[4]$ on the elliptic and conic families of lines induced by the action on $\Gr(\wtQ)$ discussed above.

\begin{proposition}\label{pr.E4_act_lines}
The group $E[4]\cong (\bZ/4)^2$ acts on the three elliptic curves and the four conics that parametrize the line modules as follows: 
\begin{enumerate}
\renewcommand{\theenumi}{\arabic{enumi}}
  \item $E[4]$ fixes each elliptic family individually and acts on the family $E/\langle \xi\rangle$, $\xi\in E[2]$, as translations by the image of $E[4]$ through $E\to E/\langle \xi\rangle$.  
  \item The subgroup $\G\cong E[2]\subset E[4]$ fixes each of the conics $\{C_i\ |\ 0\le i\le 3\}$ individually and acts on the commuting conic $C_0$ by twisting by the algebra automorphisms 
    \begin{equation}\label{eq.E4_act_lines}
      y_0\mapsto y_0,\ y_i\mapsto y_i,\ y_j\mapsto -y_j,\ y_k\mapsto -y_k
    \end{equation}
    for the three cyclic permutations $(i,j,k)$ of $(1,2,3)$. 
\end{enumerate}
The quotient group $E[4]/E[2]\cong (\bZ/2)^2$ acts on the set $\{C_i \; | \; 0\le i\le 3\}$ as the regular permutation representation. 
\end{proposition}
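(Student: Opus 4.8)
The plan is to prove both statements by identifying, for each generator $a_{\ve_i}$ of the $E[4]$-action, the automorphism it induces on $\GG(1,3)$ with the projective transformation of $\PP(\wtQ_1^*)$ coming from the linear map $\ve_i$ on $\wtQ_1=S_1$. First I would record the relevant computation on a cyclic module $L=\wtQ/\wtQ K$ with $\dim_k K=2$, so that its class in $\GG(1,3)$ is the line $\PP(\Ann K)\subseteq\PP(\wtQ_1^*)$. By the explicit description of $a_{\ve_i}$ in the proof of \Cref{pr.E4_acts}, the element $y\in\wtQ_1$ acts from the (even) degree-zero component into the degree-one component as $\ve_i^{-1}(y)$; hence the kernel of $\wtQ_1\to a_{\ve_i}(L)_1$ is $\ve_i(K)$ and $a_{\ve_i}(L)\cong\wtQ/\wtQ\,\ve_i(K)$. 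Thus $a_{\ve_i}$ induces on $\GG(1,3)$ exactly the transformation $K\mapsto\ve_i(K)$, i.e. the contragredient action of $\ve_i$ on $\PP(\wtQ_1^*)$; by \Cref{prop.transl.by.E4} this restricts on $E$ to translation by a $4$-torsion point (either $\ve_i$ or $-\ve_i$, which is immaterial below).

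For part (1): under the embedding $E/\langle\xi_j\rangle\hookrightarrow\GG(1,3)$ sending $\overline p$ to the $\xi_j$-secant $\overline{p,p+\xi_j}$ (the component $E_j$ of \Cref{prop.Ej}), translation by a $4$-torsion point $\ve$ carries $\overline{p,p+\xi_j}$ to $\overline{p+\ve,p+\ve+\xi_j}$, again a $\xi_j$-secant. Hence each $a_{\ve_i}$ maps the family $E/\langle\xi_j\rangle$ to itself and acts on it as translation by the image of $\ve_i$ in $E/\langle\xi_j\rangle$. Since $a_{\ve_1}$ and $a_{\ve_2}$ generate the $E[4]$-action, $E[4]$ fixes each of the three elliptic families and acts on $E/\langle\xi\rangle$ through the image of $E[4]$ there, which is the content of (1).

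For part (2) and the final assertion: a degree-parity computation with the same description of $a_{\ve_i}$ gives $a_{\ve_i}^2\cong\c_i^*$, where $\c_i$ is the algebra automorphism of $\wtQ$ in \Cref{Gamma.action} — on even-degree components the double twist is by $\ve_i^{-2}$ and on odd-degree ones by $\ve_i^{2}$, and both equal the involution $\c_i$ since $\ve_i^2=\c_i=\c_i^{-1}$. Therefore $\G=\langle\xi_1,\xi_2\rangle\subset E[4]$ acts by twisting by the honest automorphisms $\c_1,\c_2,\c_3$, which are precisely the three maps displayed in \Cref{eq.E4_act_lines}. As an algebra automorphism preserves commutators, each $\c_i$ permutes the commuting subspaces of $\wtQ_1$, so $\c_i^*$ fixes $C_0$; and since $E[4]$ is abelian while each $C_j$ is an $E[4]$-translate of $C_0$ by \Cref{pr.theyre_conics}, the subgroup $\G$ fixes every $C_j$. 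The $E[4]$-action on $\{C_0,C_1,C_2,C_3\}$ therefore factors through $E[4]/\G\cong(\bZ/2)^2$; by \Cref{pr.theyre_conics} these four conics are distinct and comprise a single $E[4]$-orbit, so the order-$4$ group acts transitively, hence simply transitively, i.e. as the regular representation.

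The main obstacle is the bridge in the first paragraph: the $a_{\ve_i}$ are genuine quantum symmetries — $\wtQ$ admits no algebra automorphism $\ve_i$ — and the functor $a_{\ve_i}$ twists by $\ve_i$ and $\ve_i^{-1}$ in alternating degrees, so it is not a priori a geometric transformation. The point to get right is that the map it induces on the line scheme, which only involves the degree-$0$ and degree-$1$ parts of a module, nevertheless coincides with the single projective transformation attached to $\ve_i$; the short calculation $K\mapsto\ve_i(K)$ isolates exactly this. After that, both parts reduce to elementary facts about translations on $E$, the involutions $\c_i$, and the orbit count already furnished by \Cref{pr.theyre_conics}.
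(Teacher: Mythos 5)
Your proposal is correct. For part (1) it is essentially the paper's own argument: both proofs hinge on the observation that, in the even/odd description of $a_{\ve_i}$ from \Cref{pr.E4_acts}, the two-dimensional annihilator $K\subseteq\wtQ_1$ of a line module's generator is carried to $\ve_i(K)$, so that on $\GG(1,3)$ the autoequivalence acts as the projective transformation attached to $\ve_i$, which by \Cref{prop.transl.by.E4} restricts on $E$ to translation by a $4$-torsion point and therefore sends $\xi$-secants to $\xi$-secants.

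For part (2) and the final claim, however, you take a genuinely different route. The paper uses two external inputs: \Cref{cor.autoequivs} (the permutation of the ordinary point families) to see that each generator $a_{\ve_i}$ acts on the four-element set $\{C_i\}$ as a product of two transpositions, whence its square, i.e.\ $E[2]$, preserves each conic; and the Hopf-theoretic \Cref{lem.subgp.G} --- the group algebra of $\langle \ve_1^2,\ve_2^2,\d\rangle$ keeps its undeformed coalgebra structure --- to conclude that $\G$ acts on $\wtQ$ by honest graded algebra automorphisms, which under the identification $x_i\leftrightarrow(\text{scalar})\,y_i$ are exactly \Cref{eq.E4_act_lines}. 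You replace both inputs by the direct degree-parity computation $a_{\ve_i}^2=\c_i^*$ (correct, since $\ve_i^2=\c_i=\c_i^{-1}$ on $\wtQ_1$), then note that an algebra automorphism preserves commutators and hence commuting subspaces, so $\c_i^*$ fixes $C_0$, and finally propagate to the remaining conics using that $E[4]$ is abelian and acts transitively on $\{C_0,\dots,C_3\}$ by \Cref{pr.theyre_conics}. Your version is more elementary and self-contained, and it makes the regular-representation claim explicit (a transitive action of a group of order $4$ on a $4$-element set is simply transitive), a point the paper leaves implicit; what the paper's route buys is the structural explanation of \emph{why} $\G\subset E[4]$ survives the cocycle twist as a group of genuine automorphisms, a fact it reuses later (e.g.\ in \Cref{se.seq}). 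The only place to be slightly careful in your write-up is the phrase $a_{\ve_i}(L)\cong\wtQ/\wtQ\,\ve_i(K)$: what your computation literally gives, and all that is needed, is that the degree-one annihilator of the generator of $a_{\ve_i}(L)$ equals $\ve_i(K)$ (it contains $\ve_i(K)$ and both are $2$-dimensional since $a_{\ve_i}$ preserves Hilbert series and cyclicity by \Cref{cor.autoequivs2}), which pins down the corresponding point of the line scheme; this is exactly the level at which the paper argues as well.
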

\begin{proof}
{(1)} 
Fix some $\xi\in E[2]$. 
Let $g:\bP(S_1^*)\to \bP(\wtQ_1^*)$ be the identification made in \Cref{conv.identification}.
By \cite[Proposition 10.10]{CS15},  the closed immersion of 
$E/\langle \xi\rangle$ into the line scheme for $\wtQ$ is given by
\begin{equation}\label{eq.E4_act_elliptic_lines} 
  p+\langle \xi\rangle \; \mapsto \; g(\text{the line in }\bP(S_1^*)\text{ passing through }p\text{ and }p+\xi).
\end{equation}

Translation by the image of $\varepsilon_1\in E[4]$ in $E/\langle \xi\rangle$ moves the line through $p$ and $p+\xi$ to that  through $p+\varepsilon_1$ and $p+\varepsilon_1+\xi$. Let $U\subseteq S_1$ be the two-dimensional subspace annihilating a generator of the line module $M_{p,p+\xi}$, and hence the points $p$ and $p+\xi$.
Applying the automorphism $\varepsilon_1\in H_4 \subseteq \Aut(S)$ to $U$, $\varepsilon_1 U$ annihilates
a generator of the new line module.

The action of $H_4$ on $\wtQ_1$ is defined so that the identification in \Cref{conv.identification},
and hence the map $g$, is $H_4$-equivariant. Hence, transporting the $\varepsilon_1$-translation to the right hand side of \Cref{eq.E4_act_elliptic_lines}, we see that the new line 
module obtained in this fashion is again annihilated by $\varepsilon_1 U$. This, however, matches the description of the action of the autoequivalence $a_{\ve_1}$ on graded $\wtQ$-modules given in \Cref{ssect.E4_acts}: 

namely, for any such module $M$, $a_{\ve_1}M$ coincides with $M$ as a graded vector space and the action of the degree-one elements of $\wtQ$ on even-degree elements of $M$ is twisted by $\varepsilon_1^{-1}$. Hence, if $U\subset \wtQ_1$ annihilates the degree-zero component of $M$, then $\varepsilon_1U$ will annihilate the degree-zero component of $a_{\ve_1}M$.  

{(2)} The fact that two generators of $E[4]\cong (\bZ/4)^2$ ($a_{\ve_1}$ and $a_{\ve_2}$ say) act on the four-element set $\{C_i\}$ as products of two involutions follows from \Cref{cor.autoequivs}. In particular, the $2$-torsion $E[2]$ of $E[4]$ preserves each conic $C_i$. 

On the other hand, by \Cref{lem.subgp.G}, the group algebra of the subgroup
$\langle \ve_1^2,\ve_2^2,\d\rangle \subseteq H_4$ retains its coalgebra structure after the comultiplication of $kH_4$ has been deformed into that of $H^*$. Therefore $\langle \ve_1^2,\ve_2^2,\d\rangle$,
and in particular $\G$, acts on $\wtQ$ by graded algebra automorphisms.
Since the identification of $S_1$ and $A_1$ is of the form 
$x_i\ \leftrightarrow$ a scalar multiple of $y_i$, this $\G$-action is precisely \Cref{eq.E4_act_lines}. 
\end{proof}

\subsection{Elliptic line modules}

The following is an easy consequence of \Cref{prop.line.killers}.

\begin{proposition}
\label{cor.LS}
Let $\xi$, $\xi'$, and $\xi+\xi'$, be the 2-torsion points in $E$. 
Let $p,z \in E$. Define $x=p+E[2]$. The following statements are equivalent:
\begin{enumerate}
  \item 
  \label{p1}
  $\Omega(z)$ annihilates $M_{p,p+\xi}$;
  \item 
  \label{p2}
  $\Omega(z)$ annihilates $M_{p+\xi',p+\xi'+\xi}$;
  \item 
   \label{p3}
  $\Omega(z)$ annihilates $M_{x,\xi}$, the module defined in (\ref{defn.M.x.xi});
  \item{}
   \label{p4}
  $2p+\xi \in \{z,-z-2\tau\}$;
  \item
   \label{p5}
  $z \in \{2p+\xi,-2p-2\tau+\xi\}$.
\end{enumerate}
\end{proposition}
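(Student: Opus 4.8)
The plan is to reduce every implication to \Cref{prop.line.killers}(1), which already characterizes annihilation of a line module $M_{r,s}$ over $S$ by $\Omega(z)$: this happens exactly when $r+s\in\{z,-z-2\tau\}$. Everything else is bookkeeping with the group law on $E$, the only genuinely used facts being that $\xi$ and $\xi'$ are $2$-torsion and that a central element of $S$ acts diagonally on $M_{x,\xi}$.

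First I would dispatch (1)$\Leftrightarrow$(4) and (2)$\Leftrightarrow$(4). Applying \Cref{prop.line.killers}(1) to $M_{p,p+\xi}$ and computing the divisor sum $p+(p+\xi)=2p+\xi$ shows that (1) is equivalent to $2p+\xi\in\{z,-z-2\tau\}$, i.e. to (4). For $M_{p+\xi',p+\xi'+\xi}$ the divisor sum is $(p+\xi')+(p+\xi'+\xi)=2p+2\xi'+\xi$, which equals $2p+\xi$ because $\xi'\in E[2]$ forces $2\xi'=o$; hence (2) is equivalent to the same condition (4). In particular (1)$\Leftrightarrow$(2), a fact I will reuse below.

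Next I would record (4)$\Leftrightarrow$(5), a one-line computation. The two alternatives in (4) are $z=2p+\xi$ and $2p+\xi=-z-2\tau$, the latter reading $z=-(2p+\xi)-2\tau$. Since $\xi\in E[2]$ we have $-\xi=\xi$, so this rearranges to $z=-2p-2\tau+\xi$, and the two alternatives are precisely the two points listed in (5). It then remains to bring in the $S'$-module $M_{x,\xi}$ of (\ref{defn.M.x.xi}) and prove (3)$\Leftrightarrow$(1). Here I would invoke the discussion in \S\ref{ssect.2centers}: the element $\Omega(z)$ acts on $M_{x,\xi}=(M_{p,p+\xi}\oplus M_{p+\xi',p+\xi'+\xi})\otimes k^2$ as $\Omega(z)\otimes 1$, i.e. through its action on the left tensorand alone, so the $\oplus$ and the $\otimes k^2$ are irrelevant to annihilation. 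Consequently $\Omega(z)$ annihilates $M_{x,\xi}$ if and only if it annihilates both $M_{p,p+\xi}$ and $M_{p+\xi',p+\xi'+\xi}$, i.e. if and only if both (1) and (2) hold; and since (1)$\Leftrightarrow$(2) is already established, this coincides with (1). This closes the chain.

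I do not anticipate any real obstacle. All the mathematical content sits in \Cref{prop.line.killers}, and the only steps requiring attention are the two reductions $2\xi'=o$ and $-\xi=\xi$ for $2$-torsion points, together with the elementary observation that a central element of $S$ acts diagonally on the direct sum and passes through the $\otimes k^2$ factor.
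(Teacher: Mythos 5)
Your proof is correct and follows exactly the route the paper intends: the paper offers no written proof beyond the remark that the proposition ``is an easy consequence of \Cref{prop.line.killers},'' and your argument is precisely that reduction, with the bookkeeping for $2$-torsion ($2\xi'=o$, $-\xi=\xi$) and the observation that $\Omega(z)\otimes 1$ acts through the left tensorand of $(M_{p,p+\xi}\oplus M_{p+\xi',p+\xi'+\xi})\otimes k^2$ filled in correctly. Nothing is missing.
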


\begin{proposition}
\label{prop.ann'or.elliptic.lines}
Let $z \in E$. Then $\Omega(z)$ annihilates 
\begin{enumerate}
  \item 
  exactly  six modules of the form $M_{x,\xi}$ if $z+\tau \notin E[2]$, and 
  \item 
 exactly three modules of the form $M_{x,\xi}$ if  $z+\tau \in E[2]$. 
\end{enumerate}
\end{proposition}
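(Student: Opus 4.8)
The plan is to use \Cref{cor.LS} to convert the annihilation condition into a counting problem for pairs $(x,\xi)$, and then to solve that problem by elementary group theory on $E$.

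First I would record the structural fact underlying everything: the multiplication-by-two map $[2]\colon E\to E$ is surjective with kernel $E[2]$, so it descends to a group isomorphism $E/E[2]\xrightarrow{\ \sim\ }E$ carrying $x=p+E[2]$ to $2p$. I shall abbreviate the image of $x$ under this map by $2x$; thus for every $w\in E$ there is a unique $x\in E/E[2]$ with $2x=w$. Note also that $2p+\xi$ and $-2p-2\tau+\xi$ depend only on $x$ and $\xi$, since $2\eta=o$ for every $\eta\in E[2]$, so these expressions may be written $2x+\xi$ and $-2x-2\tau+\xi$.

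Next, by \Cref{cor.LS}, for a representative $p$ of $x$ and a nonzero two-torsion point $\xi\in\{\xi_1,\xi_2,\xi_3\}$, the element $\Omega(z)$ annihilates $M_{x,\xi}$ if and only if $z\in\{2x+\xi,\,-2x-2\tau+\xi\}$. Hence the modules $M_{x,\xi}$ killed by $\Omega(z)$ are indexed by the pairs $(x,\xi)\in(E/E[2])\times\{\xi_1,\xi_2,\xi_3\}$ satisfying this disjunction; and because distinct pairs yield non-isomorphic $S'$-modules, by the uniqueness statement $M_{x,\xi}\cong M_{y,\omega}\iff(x,\xi)=(y,\omega)$ recorded after \eqref{defn.M.x.xi}, counting such pairs is the same as counting such modules. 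I would then fix $\xi$ and solve the two equations $2x=z-\xi$ (call it (A)) and $2x=-z-2\tau+\xi$ (call it (B)); each has a unique solution $x$ by the bijectivity above. Their solutions coincide exactly when $z-\xi=-z-2\tau+\xi$, that is when $2(z+\tau)=2\xi=o$, i.e. when $z+\tau\in E[2]$ — a condition independent of $\xi$. Consequently, when $z+\tau\notin E[2]$ each of the three choices of $\xi$ contributes the two distinct pairs coming from (A) and (B), for a total of $3\times 2=6$; when $z+\tau\in E[2]$ each $\xi$ contributes a single pair, for a total of $3$. Pairs arising from different values of $\xi$ are automatically distinct (they differ in the second coordinate), and within a fixed $\xi$ the only possible collision is the coincidence of (A) and (B) already accounted for, so the six (resp.\ three) pairs are pairwise distinct.

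The only real obstacle is the bookkeeping of coincidences rather than any conceptual difficulty: one must verify that no candidate pairs are inadvertently identified — in particular across different values of $\xi$ — and that the enumeration of pairs coincides with the enumeration of isomorphism classes, which is precisely where the uniqueness statement for the modules $M_{x,\xi}$ is invoked. One should also be careful to use the nonzero two-torsion points $\xi_1,\xi_2,\xi_3$ (three values), consistent with the definition of $M_{x,\xi}$, since it is exactly this restriction to three directions that turns the naive count of $8$ into $6$.
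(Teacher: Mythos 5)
Your proposal is correct and follows essentially the same route as the paper: both apply \Cref{cor.LS} to reduce the question to counting pairs $(x,\xi)$ with $2x+\xi \in \{z,\,-z-2\tau\}$, use the fact that multiplication by two induces a bijection $E/E[2]\to E$ to get exactly one solution $x$ per $\xi$ per equation, and observe that the two solution sets coincide precisely when $z=-z-2\tau$, i.e.\ when $z+\tau\in E[2]$, yielding $6$ versus $3$. Your explicit appeal to the uniqueness statement $M_{x,\xi}\cong M_{y,\omega}\iff(x,\xi)=(y,\omega)$ is a point the paper leaves implicit, but the argument is the same.
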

\begin{proof}
Let $A(z)=\{(x,\xi)\; | \; \Omega(z) M_{x,\xi}=0\}$ and $S(z)=\{(p+E[2] ,\xi) \; | \;  2p+\xi =z\}$. 
By Corollary \ref{cor.LS}, $A(z)=S(z) \cup S(-z-2\tau)$.
 
For each of the three 2-torsion points $\xi$  there is a unique coset $p+E[2]$ such that $2p+\xi=z$.  Hence $|S(z)|=3$. 
If $z+\tau \notin E[2]$, then $z \ne -z-2\tau$ so $S(z) \cap S(-z-2\tau)=\varnothing$, whence $|A(z)|=6$. 
If $z+\tau \in E[2]$, then $z=-z-2\tau$ so $S(z)=S(-z-2\tau)$, whence $|A(z)|=3$. 
\end{proof}

\begin{proposition}
Let $z \in E$. Then $\Omega(z)$ annihilates 
\begin{enumerate}
  \item 
  exactly four elliptic line modules in each elliptic family if $z+\tau \notin E[2]$, and 
  \item 
 exactly two elliptic line modules in each elliptic family if $z+\tau \in E[2]$. 
\end{enumerate}
Let $\xi$ and $\xi'$ be different 2-torsion points.  If $p$ is one of the four points on $E$ such that $2p=z+\xi$, then the line modules 
parametrized by $E/\langle \xi \rangle$ that are annihilated by $\Omega(z)$ correspond to the points on 
$E/\langle \xi \rangle$ that are the images of $p$, $p+\xi'$, $-p-\tau$, and $-p-\tau+\xi'$.  
\end{proposition}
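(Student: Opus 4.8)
The plan is to transfer the problem to the level of the $S'$-modules $M_{x,\xi}$ of \Cref{prop.ann'or.elliptic.lines} and then read off the count on each individual family from the geometry of the double cover $E/\langle\xi\rangle\to E/E[2]$.

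First I would recall that the elliptic line modules in the family $E/\langle\xi\rangle$ are the $\wtQ$-modules $(M_{x,\xi})^\G$ produced, through the equivalence $\Gr(S')^\G\equiv\Gr(\wtQ)$, from the two $\G$-equivariant structures on each $S'$-module $M_{x,\xi}$, $x\in E/E[2]$. Concretely, the line module attached to $p+\langle\xi\rangle$ descends from $M_{x,\xi}$ with $x=p+E[2]$, so that sending $p+\langle\xi\rangle$ to $p+E[2]$ is the two-to-one map $E/\langle\xi\rangle\to E/E[2]$ whose fibre over $x$ is $\{p+\langle\xi\rangle,\,p+\xi'+\langle\xi\rangle\}$ and encodes the two equivariant structures. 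The central element $\Omega(z)\in\wtQ$ corresponds to the $\G$-fixed central element $\Omega(z)\otimes 1\in S'$, so it annihilates $(M_{x,\xi})^\G$ if and only if it annihilates the $S'$-module $M_{x,\xi}$; in particular annihilation does not depend on the equivariant structure, so over any coset $x$ both fibre points are annihilated or neither is. By \Cref{cor.LS}, $\Omega(z)\otimes 1$ annihilates $M_{x,\xi}$ (with $x=p+E[2]$) exactly when $2p+\xi\in\{z,-z-2\tau\}$.

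Next I would count the cosets $x\in E/E[2]$ meeting this condition. Using $-\xi=\xi$, the condition reads $2p=z+\xi$ or $2p=-z-2\tau+\xi$, and each equation determines a single coset $x$ in $E/E[2]$ (its four solutions in $E$ form one $E[2]$-coset). These two cosets coincide precisely when $z=-z-2\tau$, i.e. when $z+\tau\in E[2]$. Hence there is one annihilated coset when $z+\tau\in E[2]$ and two distinct annihilated cosets when $z+\tau\notin E[2]$. Multiplying by the fibre size $2$, and noting that fibres over distinct cosets are disjoint while the two points $p+\langle\xi\rangle\ne p+\xi'+\langle\xi\rangle$ of a single fibre are distinct (as $\xi'\notin\langle\xi\rangle$), gives exactly two annihilated elliptic line modules per family in case (2) and exactly four in case (1); this proves (1) and (2). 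For the explicit description I would take $p$ with $2p=z+\xi$, equivalently $2p+\xi=z$, whose fibre $\{p+\langle\xi\rangle,\,p+\xi'+\langle\xi\rangle\}$ supplies the first two points; a one-line computation gives $2(-p-\tau)+\xi=-2p-2\tau+\xi=-(z+\xi)-2\tau+\xi=-z-2\tau$, so $-p-\tau$ solves the second equation and its fibre $\{-p-\tau+\langle\xi\rangle,\,-p-\tau+\xi'+\langle\xi\rangle\}$ supplies the remaining two.

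The step I expect to be the main obstacle is the equivalence, in the first paragraph, between annihilation of the descended $\wtQ$-module $(M_{x,\xi})^\G$ by $\Omega(z)$ and annihilation of the underlying $S'$-module $M_{x,\xi}$ by $\Omega(z)\otimes 1$, together with its independence of the equivariant structure. It is exactly this insensitivity that lets each annihilated coset contribute both of its fibre points, converting the coset counts $1$ and $2$ into the module counts $2$ and $4$; once it is in place, the remaining group arithmetic on $E$ is routine.
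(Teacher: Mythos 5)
Your proof is correct and takes essentially the approach the paper intends: the paper states this proposition without a separate proof, as an immediate consequence of \Cref{cor.LS} and the preceding count of annihilated modules $M_{x,\xi}$, restricted to one family $\xi$ and doubled because each annihilated coset carries two equivariant structures, i.e.\ two line modules. The bridging fact you flag as the main obstacle—that the $\G$-fixed central element $\Omega(z)\otimes 1$ annihilates $M_{x,\xi}$ if and only if $\Omega(z)$ annihilates the descended module $(M_{x,\xi})^\G$—is exactly what the paper leaves implicit, and it holds by centrality of $\Omega(z)\otimes 1$ together with the equivalence $\Gr(S')^\G\equiv\Gr(\wtQ)$.
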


 \subsection{}
 \label{sect.F.tau+xi}
By \Cref{prop.L->F},  
$\Hom_{\Gr(S)}(M_{p,q},\wtV(\tau+\xi))\ne 0$ for all $p+q=\tau+\xi$. 

Let $F_{\tau+\xi}=\wtV(\tau+\xi) \otimes k^2 \in \Gr(S')$.

\begin{lemma}
\label{lem.line.fat}
Let $p \in E$ and write $x=p+E[2]$. The following conditions are equivalent:
\begin{enumerate}
  \item 
  $\Hom_{\Gr(S)}(M_{p,p+\xi},\wtV({\tau+\xi})) \ne 0$;
  \item 
    $\Hom_{\Gr(S)}(M_{p+\xi',p+\xi'+\xi},\wtV({\tau+\xi})) \ne 0$;
  \item 
    $\Hom_{\Gr(S')}(M_{x,\xi},F_{\tau+\xi}) \ne 0$.
\end{enumerate}
\end{lemma}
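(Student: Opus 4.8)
The plan is to derive all three equivalences from two general principles rather than from any fresh computation: the invariance of the line modules and of $\wtV(\tau+\xi)$ under twisting by $2$-torsion points, and the Morita equivalence $-\otimes k^2\colon\Gr(S)\to\Gr(S')$. In outline, I would prove $(1)\Leftrightarrow(2)$ by transporting one line module to the other through a twist autoequivalence of $\Gr(S)$, and then prove $(3)\Leftrightarrow(1)\vee(2)$ by unwinding the Morita equivalence and splitting a $\Hom$ out of a direct sum; combining the two then gives the full three-way equivalence.

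For $(1)\Leftrightarrow(2)$, let $\c_{\xi'}\in\G$ be the automorphism of $S$ corresponding to $\xi'\in E[2]$ under the identification $\G=E[2]$, and let $\c_{\xi'}^*$ be the associated twist autoequivalence of $\Gr(S)$. By the translation property of line modules (\cite[Prop.~7.7]{CS15}, recalled in the proof of \Cref{cor_H4_action_on_center}) we have $\c_{\xi'}^*M_{p,p+\xi}\cong M_{p+\xi',\,p+\xi'+\xi}$, where I note that $2\xi'=o$ is what makes the second index come out correctly. On the target side, \Cref{prop.V.tau.F.tau}(2) gives $\c_{\xi'}^*\wtV(\tau+\xi)\cong\wtV(\tau+\xi)$, since $\c_{\xi'}\in\G\subseteq\G\times Z$. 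As $\c_{\xi'}^*$ is an equivalence it induces an isomorphism
\[
\Hom_{\Gr(S)}\big(M_{p+\xi',\,p+\xi'+\xi},\,\wtV(\tau+\xi)\big)\;\cong\;\Hom_{\Gr(S)}\big(M_{p,p+\xi},\,\wtV(\tau+\xi)\big),
\]
which is exactly $(2)\Leftrightarrow(1)$.

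For $(3)\Leftrightarrow(1)\vee(2)$, recall $M_{x,\xi}=(M_{p,p+\xi}\oplus M_{p+\xi',p+\xi'+\xi})\otimes k^2$ and $F_{\tau+\xi}=\wtV(\tau+\xi)\otimes k^2$. Since $-\otimes k^2$ is a graded Morita equivalence $\Gr(S)\to\Gr(S')$, it identifies
\[
\Hom_{\Gr(S')}(M_{x,\xi},F_{\tau+\xi})\;\cong\;\Hom_{\Gr(S)}\big(M_{p,p+\xi}\oplus M_{p+\xi',p+\xi'+\xi},\,\wtV(\tau+\xi)\big),
\]
and the right-hand side splits as the direct sum of the $\Hom$-spaces appearing in $(1)$ and $(2)$. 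Hence $(3)$ holds iff at least one of $(1)$, $(2)$ does, and together with $(1)\Leftrightarrow(2)$ this closes the loop. I would close with the remark that one need not recompute the vanishing condition here — by \Cref{prop.L->F} each of these $\Hom$-spaces is nonzero precisely when $2p=\tau$ — only that the functors are equivalences carrying the stated isomorphisms. The main point requiring care, and the nearest thing to an obstacle, is the bookkeeping in the first step: checking that $\c_{\xi'}^*$ genuinely fixes $\wtV(\tau+\xi)$ and sends $M_{p,p+\xi}$ to $M_{p+\xi',p+\xi'+\xi}$ with the indices matching (which rests on $2\xi'=o$), and that the Morita equivalence respects the grading; both follow at once from the cited results.
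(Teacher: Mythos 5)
Your proof is correct, and it reaches the conclusion by a genuinely different route than the paper on the key point. The Morita step is identical: both you and the paper deduce (3) from (1)/(2) by identifying $\Hom_{\Gr(S')}(M_{x,\xi},F_{\tau+\xi})$ with the direct sum $\Hom_{\Gr(S)}(M_{p,p+\xi},\wtV(\tau+\xi))\oplus\Hom_{\Gr(S)}(M_{p+\xi',p+\xi'+\xi},\wtV(\tau+\xi))$. Where you diverge is in (1)$\Leftrightarrow$(2). The paper's one-line argument observes that the two divisors have the same sum, $p+(p+\xi)=(p+\xi')+(p+\xi'+\xi)=2p+\xi$ (this is where $2\xi'=o$ really enters), and invokes \Cref{prop.L->F}; that is, it treats nonvanishing of these Hom spaces as a condition depending only on that sum. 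You instead exhibit an autoequivalence $\c_{\xi'}^*$ of $\Gr(S)$ carrying $M_{p,p+\xi}$ to $M_{p+\xi',p+\xi'+\xi}$ and fixing $\wtV(\tau+\xi)$ up to isomorphism (\Cref{prop.V.tau.F.tau}(2)), so the two Hom spaces are literally isomorphic. Your route has the advantage of needing no vanishing criterion at all; in particular it sidesteps the fact that \Cref{prop.L->F} only asserts the implication ``sum $=\tau+\xi$ $\Rightarrow$ $\Hom\cong k$,'' not its converse, on which the paper's terse phrasing implicitly leans. The price is the (mild) extra input of $\Gamma$-invariance of $\wtV(\tau+\xi)$, which the paper proves anyway and uses elsewhere. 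One small inaccuracy in your closing aside: \Cref{prop.L->F} does not by itself say these Hom spaces are nonzero \emph{precisely} when $2p=\tau$ --- only the ``if'' direction is stated there --- but, as you yourself note, that claim plays no role in your argument.
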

\begin{proof}
Because $p+p+\xi=(p+\xi')+(p+\xi'+\xi)$ this follows from \Cref{prop.L->F} and the Morita equivalence between $S$ and $S'$. 
\end{proof}

\begin{proposition}
\label{prop.line.fat}
Fix $\xi \in E[2]$. For each 2-torsion point $\omega$, there is a unique $x \in E/E[2]$ such that   
$\Hom_{\Gr(S')}(M_{x,\omega}, F_{\tau+\xi}) \ne 0$, namely $x=  p +E[2]$ where $2p=\tau+\xi+\omega$.
\end{proposition}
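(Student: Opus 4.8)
The plan is to reduce the computation of $\Hom_{\Gr(S')}(M_{x,\omega}, F_{\tau+\xi})$ to a statement about line modules for $S$ and then apply a sharpened form of \Cref{prop.L->F}. Write $x = p + E[2]$ and let $\omega'$ be a second $2$-torsion point, so that by \Cref{defn.M.x.xi} we have $M_{x,\omega} = (M_{p,p+\omega} \oplus M_{p+\omega',p+\omega'+\omega}) \otimes k^2$, while $F_{\tau+\xi} = \wtV(\tau+\xi) \otimes k^2$. The Morita equivalence $\Gr(S) \equiv \Gr(S')$ given by $-\otimes k^2$ (used already in \Cref{lem.line.fat}) yields
\[
\Hom_{\Gr(S')}(M_{x,\omega}, F_{\tau+\xi}) \;\cong\; \Hom_{\Gr(S)}(M_{p,p+\omega}, \wtV(\tau+\xi)) \;\oplus\; \Hom_{\Gr(S)}(M_{p+\omega',p+\omega'+\omega}, \wtV(\tau+\xi)),
\]
so the left-hand side is nonzero if and only if one of the two summands on the right is.

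First I would establish the sharp criterion
\[
\Hom_{\Gr(S)}(M_{a,b}, \wtV(\tau+\xi)) \neq 0 \quad\Longleftrightarrow\quad a + b = \tau + \xi .
\]
The implication $(\Leftarrow)$ is exactly \Cref{prop.L->F}(1). For $(\Rightarrow)$, a graded homomorphism $f\colon M_{a,b} \to \wtV(\tau+\xi)$ is determined by the image $f(\bar 1) = v\otimes 1 \in \wtV(\tau+\xi)_0 = V(\tau+\xi)\otimes 1$ of the degree-zero generator, and $f \neq 0$ forces $v \neq 0$. Since $M_{a,b} = S/S(\overline{ab})^\perp$, well-definedness of $f$ requires $(\overline{ab})^\perp(v\otimes 1) = 0$, i.e. $(\overline{ab})^\perp \subseteq \Ann_{S_1}(v)$. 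As the structure map $\rho\colon S_1 \to \End_k V(\tau+\xi)$ is an isomorphism and $v \neq 0$, the image $\rho(\Ann_{S_1}(v)) = \{A : Av = 0\}$ is a minimal left ideal of $M_2(k)$, so $\Ann_{S_1}(v)$ is two-dimensional; since $(\overline{ab})^\perp$ is also two-dimensional, the two subspaces coincide. I would then invoke the correspondence worked out inside the proof of \Cref{prop.Mpq.onto.V}: as $v$ runs over $V(\tau+\xi)$, the lines $\big(\Ann_{S_1}(v)\big)^\perp = \overline{ab}$ are precisely the secant lines $\overline{pq}$ with $p+q = \tau+\xi$, forming one ruling of the smooth quadric $Q(\tau+\xi)$. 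This identifies $\overline{ab}$ as such a secant line, whence $a+b = \tau+\xi$.

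This converse is the crux; the remainder is bookkeeping. Applying the criterion to the two summands above gives
\[
\Hom_{\Gr(S')}(M_{x,\omega}, F_{\tau+\xi}) \neq 0 \;\Longleftrightarrow\; 2p + \omega = \tau + \xi \;\Longleftrightarrow\; 2p = \tau + \xi + \omega ,
\]
where I used $2\omega' = 0$ to see that the second summand yields the same condition as the first, and $-\omega = \omega$ throughout. The equation $2p = \tau+\xi+\omega$ has exactly four solutions in $E$, forming a single coset of $E[2]$ under multiplication by two; hence there is one and only one $x = p+E[2] \in E/E[2]$ satisfying it, which is the asserted description.

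The hard part is the sharpened converse of \Cref{prop.L->F}: one must show that a nonzero map forces $(\overline{ab})^\perp$ to equal the full vector annihilator $\Ann_{S_1}(v)$, and that this annihilator selects exactly the ruling with $p+q = \tau+\xi$ rather than the companion ruling $p+q = -3\tau-\xi$. A cheaper central-character argument only gives the weaker conclusion $a+b \in \{\tau+\xi,\,-3\tau-\xi\}$: a nonzero hom makes $\Omega(a+b)$ annihilate a cofinite submodule of $\wtV(\tau+\xi)$, forcing $\Omega(a+b)$ and $\Omega(\tau+\xi)$ to agree projectively, i.e. $a+b \in \{\tau+\xi,\,-\tau-\xi-2\tau\}$. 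Since these two values of $a+b$ generally give distinct cosets $x$, ruling out $-3\tau-\xi$ is essential for the uniqueness of $x$, and it is exactly the dimension count together with the quadric-ruling correspondence of \Cref{prop.Mpq.onto.V} that accomplishes this.
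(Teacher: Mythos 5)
Your proposal is correct and follows the same overall route as the paper: decompose via the Morita equivalence $-\otimes k^2$ (this is exactly \Cref{lem.line.fat}), apply \Cref{prop.L->F}(1) for existence, and for uniqueness use the converse implication that $\Hom_{\Gr(S)}(M_{a,b},\wtV(\tau+\xi))\ne 0$ forces $a+b=\tau+\xi$. The genuine difference is that the paper's proof \emph{asserts} this converse without comment (its uniqueness step reads ``$\Hom_{\Gr(S)}(M_{q,q+\omega},\wtV(\tau+\xi))\ne 0$ \dots so $2q+\omega=\tau+\xi$''), whereas you actually prove it: the degree-zero generator maps to some $v\otimes 1\ne 0$, so $(\overline{ab})^\perp\subseteq\{x\in S_1 : \rho(x)v=0\}$, equality follows by a dimension count because $\rho$ restricted to $S_1$ is a linear isomorphism, and the ruling correspondence established inside the proof of \Cref{prop.Mpq.onto.V} then pins $\overline{ab}$ to the ruling $\{\,\overline{pq} : p+q=\tau+\xi\,\}$. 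Your diagnosis of why this extra work is needed is also on target: since $\Omega(z)=\Omega(-z-2\tau)$, the elements $\Omega(\tau+\xi)$ and $\Omega(-3\tau-\xi)$ are proportional, so a central-character argument only yields $a+b\in\{\tau+\xi,\,-3\tau-\xi\}$, and these two values determine distinct $E[2]$-cosets (because $4\tau\ne o$ under the standing hypotheses on $\tau$), so uniqueness genuinely requires selecting the correct ruling. The one point to tighten is that the correspondence you invoke is proved in \Cref{prop.Mpq.onto.V} only for $\xi=o$; for general $\xi$ it should be transported via the autoequivalence $\phi_j^*$, using $\phi_j^*M_{p,q}\cong M_{p+\ve_j,q+\ve_j}$ and $2\ve_j=\xi_j$, exactly as that proof does elsewhere — routine, but worth a sentence.
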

\begin{proof} 
The points $p \in E$ such that $2p=\tau+\xi+\omega$ form a single $E[2]$-coset;
hence $x$ does not depend on the choice of $p$.  
Since $2p+\omega=\tau+\xi$, $\Hom_{\Gr(S)}(M_{p,p+\omega},\wtV(\tau+\xi)) \ne 0$ by \Cref{prop.L->F}. 
 Hence $\Hom_{\Gr(S')}(M_{x,\omega}, F_{\tau+\xi}) \ne 0$ by \Cref{lem.line.fat}. 

Let  $y  = q+E[2]$. If $\Hom_{\Gr(S')}(M_{y,\omega}, F_{\tau+\xi}) \ne 0$, then  
$\Hom_{\Gr(S)}(M_{q,q+\omega}, \wtV({\tau+\xi}) )\ne 0$ by \Cref{lem.line.fat} so 
$2q+\omega=\tau+\xi=2p+\omega$. Hence $p-q \in E[2]$ and $x=y$. 
\end{proof}

Let  $p \in \fP_\infty$ and let $M_{p}\in \Gr(A)$ be the  corresponding point module. Since three of the coordinate functions $y_0,y_1,y_2,y_3$ vanish at $p$, $M_{p}$ is annihilated by $\Omega(\xi)$ for some $\xi \in E[2]$. Since $\tau\notin E[2]$, there are, up to isomorphism, exactly twelve line modules  in $\Gr(A)$ that are annihilated by $\Omega(\xi)$. 
%Do all  twelve of those $\wtQ$-line modules map onto $M_{p}$?

\section{Points on conic lines}
 
\subsection{}
This section answers the following question: if $p \in \fP$, i.e., $M_p=A/Ap^\perp$ is a point module in $\Gr(A)$, and $L=A/A\ell^\perp$ is a {\it conic} line module, when is there an epimorphism $\pi^*L \to \pi^*M_p$ in $\QGr(A)$. 
Since $M_p$ is 1-critical, this is  equivalent to the question ``if $p \in \fP$ and $\ell$ is a conic line in $\PP(A_1^*)$ when is there a non-zero homomorphism $A/A\ell^\perp \to A/Ap^\perp$?'' Clearly, there is a non-zero homomorphism $A/A\ell^\perp \to A/Ap^\perp$
if and only if $p \in \ell$ so the original question about morphisms in $\Projnc(A)$ is equivalent to the geometric 
question ``if $p \in \fP$ which conic lines does $p$ lie on?''

We use the explicit descriptions of $\fP$ and the conic line modules to answer the question. 

\begin{theorem}
\label{cor.conics}
The lines in $\Projnc(\wtQ)$, which are also lines in $\PP(\wtQ_1^*)$, parametrized by the conics $C_0,\ldots,C_3,$ are such that
\begin{enumerate}
\item 
every point in $\fP-(\fP_\infty \cup \fP_j)$ lies on exactly one line from $C_j$ and
\item 
no point in $\fP_\infty\cup \fP_j$ lies on any lines from $C_j$. 
\end{enumerate}
If $0 \le i \ne j \le 3$, the lines in $C_i$ that pass through a point in $\fP_j$ form a single $\G$-orbit in the sense of \Cref{pr.E4_act_lines}. 
\end{theorem}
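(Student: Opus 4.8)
The plan is to reduce everything to the single commuting conic $C_0$ and then transport the statement to $C_1,C_2,C_3$ using the autoequivalences $a_{\ve_j}$ defined in \Cref{ssect.E4_acts}. For $C_0$ there is nothing new to prove: since each $M_p$ is $1$-critical, an epimorphism $\pi^*L\to\pi^*M_p$ from a conic line module $L=\wtQ/\wtQ\ell^\perp$ exists precisely when $p\in\ell$, so parts (1) and (2) for $j=0$ are simply reformulations of \Cref{prop.commuting.quadric.2}(2),(3): a point of $\fP$ lies on a (unique) commuting line exactly when it lies in $\fP_1\cup\fP_2\cup\fP_3=\fP-(\fP_\infty\cup\fP_0)$, and no point of $\fP_\infty\cup\fP_0$ lies on a commuting line.

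Next I would transport this along $a_{\ve_j}$. The facts I would assemble are that $a_{\ve_j}$ preserves Hilbert series and the dimensions of all homogeneous components (\Cref{cor.autoequivs}, \Cref{pr.sum_2_copies}), hence preserves $\Fdim(\wtQ)$ and descends to an auto-equivalence of $\QGr(\wtQ)$ commuting with $\pi^*$; being an equivalence it preserves epimorphisms and therefore the incidence relation ``a point lies on a line''. By \Cref{cor.autoequivs2} it induces automorphisms of the point and line schemes, and by \Cref{prop.H4.moves.points.for.A} together with \Cref{pr.theyre_conics} and \Cref{pr.E4_act_lines} these act as $\ve_j$ on $\fP$ and carry $C_0$ isomorphically onto $C_j$. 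Applying $a_{\ve_j}$ to the $C_0$ statement then yields the $C_j$ statement with each family $\fP_\bullet$ replaced by its $\ve_j$-image; the only computation is the index bookkeeping $\ve_j(\fP_1\cup\fP_2\cup\fP_3)=\fP-(\fP_\infty\cup\fP_j)$ and $\ve_j(\fP_\infty\cup\fP_0)=\fP_\infty\cup\fP_j$, which follow from \Cref{prop.H4.moves.points.for.A} together with the fact that $\fP_\infty$ is a single $H_4$-orbit. The ``exactly one'' clause transports automatically, as $a_{\ve_j}$ is a bijection preserving incidence.

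For the final $\G$-orbit assertion, I would fix $0\le i\ne j\le 3$ and use part (1) to attach to each of the four points of $\fP_j$ its unique line $\ell_p\in C_i$, giving a map $\fP_j\to C_i$ whose image is the set of lines in question. Then I would observe that $\G\cong E[2]$ acts simply transitively on $\fP_j$ (it is a single $\G$-orbit of size $|\G|=4$), fixes $C_i$ setwise by \Cref{pr.E4_act_lines}(2), and preserves incidence since it acts by graded algebra automorphisms of $\wtQ$. Equivariance and the uniqueness in part (1) force $\gamma\cdot\ell_p=\ell_{\gamma\cdot p}$ for all $\gamma\in\G$, so the image is a single $\G$-orbit.

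The one delicate point I expect is the compatibility check in the second paragraph: one must ensure that the single autoequivalence $a_{\ve_j}$ simultaneously realizes the automorphism of the line scheme carrying $C_0$ to $C_j$ and the permutation $\ve_j$ of the families $\fP_\bullet$ on the point scheme, and that both are compatible with incidence. The cleanest way to pin this down is to note that $a_{\ve_j}$ is induced by the single element $\ve_j\in H_4$ acting linearly on $\PP(\wtQ_1^*)$, so that incidence of points and lines is preserved on the nose. A minor subtlety worth recording is that the induced permutation of the four families depends only on the class of $\ve_j$ in $E[4]/E[2]$, hence is unchanged upon replacing $\ve_j$ by $\ve_j^{-1}$; this is exactly what makes the labelings of the $C_i$ and of the $\fP_i$ consistent. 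Once this is settled, the remainder is routine index-chasing.
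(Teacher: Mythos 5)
Your proposal is correct and follows essentially the same route as the paper's proof: establish the $j=0$ case from \Cref{prop.commuting.quadric.2}, transport it to $C_1,C_2,C_3$ via the autoequivalences $a_{\ve_j}$ (using \Cref{pr.theyre_conics} and the $E[4]$-permutation of the families $\fP_\bullet$ from \Cref{prop.H4.moves.points.for.A}), and deduce the final claim from the fact that $\fP_j$ is a single $\G$-orbit. Your extra care about incidence-preservation and the $\ve_j$ versus $\ve_j^{-1}$ ambiguity (resolved because they differ by $\ve_j^2=\xi_j\in\G$, which stabilizes every $\fP_i$ and every $C_i$) merely makes explicit what the paper leaves implicit.
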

\begin{proof}
Since $C_j$ is the image of $C_0$ under the autoequivalences $a_{\ve_j}$, both (1) and (2) follow from \Cref{thm.pts.on.comm.lines,pr.theyre_conics} together with the fact that the $E[4]$-action on the point scheme permutes the ordinary points, and therefore permutes the special points too. The last sentence follows from the fact that the points in $\fP_j$ form a single $\G$-orbit. 
\end{proof}

The obvious way to get a family of lines in $\PP^3$ parametrized by a smooth conic is to take the lines in a ruling on a smooth quadric. We now determine quadrics $Q_j$ such that the lines parametrized by $C_j$ belong to a ruling on  $Q_j$.
The following result is a standard exercise. 

\begin{proposition}
\label{prop.quadric.ruling}
Let $\l,\mu,\nu \in k^\times$ and $(s,t) \in \PP^1$. Let $Q$ be the quadric $y_0^2-\l^2 y_1^2 +\mu^2 y_2^2 -\nu^2 y_3^2=0$. The line
\begin{equation}
\label{eq.line.on.quadric}
s(y_0-\l y_1) + t(\mu y_2- \nu y_3) \; = \; t(y_0+\l y_1) - s(\mu y_2+\nu y_3) \; = \; 0
\end{equation}
lies on $Q$ and the corresponding point on $\GG(1,3)$ is 
\begin{equation}
\label{eq.P.coords}
(2\l st, \, -\mu(s^2+t^2), \, \nu(t^2-s^2), \, \l\mu(s^2-t^2), \, \l\nu(s^2+t^2), \, -2\mu\nu st)
\end{equation}
which lies on the intersection of $\GG(1,3)$ with the plane
\begin{equation}
\label{eq.quadric.conic.lines}
z_{23}+\l^{-1}\mu\nu z_{01} \; = \; z_{13}+ \mu^{-1}\l\nu z_{02}   \; = \;  z_{12} +\nu^{-1}\l\mu z_{03} \; = \; 0.
\end{equation}
The lines in the other ruling on $Q$ correspond to the points on the  intersection of $\GG(1,3)$ with the plane
\begin{equation}
\label{eq.quadric.other.conic.lines}
z_{23}-\l^{-1}\mu\nu z_{01} \; = \; z_{13}- \mu^{-1}\l\nu z_{02}   \; = \; z_{12} -\nu^{-1}\l\mu z_{03} \; = \; 0.
\end{equation}
\end{proposition}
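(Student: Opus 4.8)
The plan is to exploit a factorization of the quadratic form defining $Q$. Setting $u=y_0-\l y_1$, $u'=y_0+\l y_1$, $w=\mu y_2-\nu y_3$, and $w'=\mu y_2+\nu y_3$, one has $uu'=y_0^2-\l^2y_1^2$ and $ww'=\mu^2y_2^2-\nu^2y_3^2$, so the equation of $Q$ becomes $uu'+ww'=0$, and the two linear forms cutting out the line in \eqref{eq.line.on.quadric} are $f_1=su+tw$ and $f_2=tu'-sw'$. To show the line lies on $Q$ I would first establish the two polynomial identities
$$sQ=u'f_1-wf_2\qquad\text{and}\qquad tQ=uf_2+w'f_1,$$
each obtained by a one-line expansion. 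These exhibit $sQ$ and $tQ$ as elements of the ideal $(f_1,f_2)$, so both vanish along the line; since $(s,t)\ne(0,0)$ this forces $Q$ to vanish there, which is exactly the assertion that the line lies on $Q$. Note that $Q$ is smooth, as $\l\mu\nu\ne0$, so it carries precisely two rulings.

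Next I would compute the Pl\"ucker coordinates directly. With respect to the basis $(y_0,y_1,y_2,y_3)$ the coefficient matrix of $(f_1,f_2)$ is
$$\begin{pmatrix} s & -s\l & t\mu & -t\nu\\[2pt] t & t\l & -s\mu & -s\nu\end{pmatrix},$$
and forming its six $2\times2$ minors by the recipe recalled in \Cref{ssect.comm.conic} reproduces the vector \eqref{eq.P.coords}. Substituting this vector into the three forms of \eqref{eq.quadric.conic.lines} and cancelling then gives $0$ in each case --- for instance $z_{23}+\l^{-1}\mu\nu z_{01}=-2st\mu\nu+\l^{-1}\mu\nu\cdot 2st\l=0$ --- so the corresponding point indeed lies on that plane. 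This handles the first ruling.

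For the remaining claim I would treat the second ruling by the same method applied to the family
$$s(y_0-\l y_1)+t(\mu y_2+\nu y_3)\;=\;t(y_0+\l y_1)-s(\mu y_2-\nu y_3)\;=\;0,$$
which is obtained from \eqref{eq.line.on.quadric} by interchanging $w$ and $w'$ (equivalently by $\nu\mapsto-\nu$). The analogues of the two identities above, with $w$ and $w'$ swapped, show these lines also lie on $Q$, and the parallel minor computation lands them on the plane \eqref{eq.quadric.other.conic.lines}. To conclude that these two $\PP^1$-families are the two rulings, I would argue that each is a connected family of lines on $Q$ parametrized injectively by $(s:t)$, that they are distinct because the planes \eqref{eq.quadric.conic.lines} and \eqref{eq.quadric.other.conic.lines} are distinct, and that a smooth quadric surface has exactly two rulings; hence the two families exhaust them. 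The map sending $(s:t)$ to the point \eqref{eq.P.coords} is given by base-point-free quadrics and is injective, so it is an isomorphism of $\PP^1$ onto the conic cut out on $\GG(1,3)$ by \eqref{eq.quadric.conic.lines}, which makes the stated correspondence a bijection; the same holds for the other ruling.

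Essentially every step is a direct computation, so there is no serious obstacle. The one point that calls for more than bookkeeping is the final identification: checking that the two explicitly parametrized families really are both rulings of $Q$ in their entirety, rather than proper subfamilies or the same ruling described twice. This follows cleanly from the structure of the scheme of lines on a smooth quadric surface (two disjoint conics), together with the injectivity of the two parametrizations and the fact that they lie in distinct plane sections of $\GG(1,3)$.
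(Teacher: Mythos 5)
Your proposal is correct and takes essentially the same route as the paper's proof: the same factorization of the equation of $Q$ into the linear forms $y_0\pm\l y_1$ and $\mu y_2\pm\nu y_3$, the identical $2\times 4$ coefficient matrix whose minors give \Cref{eq.P.coords}, and a sign substitution to produce the second ruling. The only cosmetic differences are that you make the ideal-membership identities $sQ=u'f_1-wf_2$ and $tQ=uf_2+w'f_1$ explicit where the paper says this step is easy to see, and you pass to the other ruling via $\nu\mapsto-\nu$ rather than the paper's $(\l,\mu,\nu)\mapsto(-\l,-\mu,-\nu)$ --- both of which flip all three coefficients in \Cref{eq.quadric.conic.lines}, so the arguments coincide.
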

\begin{proof}
If we write the equation defining $Q$ as $(y_0+\l y_1)(y_0-\l y_1) = (\nu y_3+\mu y_2)(\nu y_3-\mu y_2)$ it is easy to see that the line (\ref{eq.line.on.quadric})
lies on $Q$. The Pl\"ucker coordinates of the corresponding point on $\GG(1,3)$ are given by the minors of the matrix
$$
\begin{pmatrix}
   s & - s \l & t\mu    & - t\nu   \\
    t & t\l & -s\mu & -s\nu
\end{pmatrix}.
$$
Those minors are the entries in (\ref{eq.P.coords}). The point in (\ref{eq.P.coords}) obviously lies on the plane in  (\ref{eq.quadric.conic.lines}). 

Replacing $(\l,\mu,\nu)$ by $(-\l,-\mu,-\nu)$ gives the other ruling on $Q$ and
(\ref{eq.quadric.conic.lines}) changes to (\ref{eq.quadric.other.conic.lines}).
\end{proof}

\begin{corollary}
\label{cor.S123}
The conics $C_0,\ldots,C_3$, are the intersections of $\GG(1,3)$ with the planes in $\PP(\wedge^2\wtQ_1^*)$ 
in the second column of \Cref{table.conics.quadrics}, and the lines in $\PP(\wtQ_1^*)$ that correspond to the points on 
$C_j$ provide a ruling on the quadric $Q_j$ in the third column of the following table:
\begin{table}[htp]
\begin{center}
\begin{tabular}{|c|c|c|}
\hline    
& $C_j=\GG(1,3) \cap \text{\rm (the plane below)}$                 &     $\phantom{\Big\vert}\text{\rm The quadric $Q_j$}$   
\\
\hline
\hline
$\phantom{\Big(} C_0$ & $z_{23}+ \a  z_{01}  =  z_{13} -\b  z_{02}    =  z_{12} + \c  z_{03}  =  0$ & $y_0^2 +\b\c y_1^2+\c \a y_2^2 +\a\b y_3^2=0$  
\\ 
\hline
$\phantom{\Big(} C_1$ & $z_{23}-\a  z_{01}  =  z_{13}+  z_{02}    =  z_{12} +  z_{03}  =  0$ & $y_0^2 -y_1^2-\a y_2^2 +\a y_3^2=0$    
\\ 
\hline
$\phantom{\Big(} C_2$ & $ z_{23} +  z_{01}  = z_{13} + \b z_{02}    =  z_{12} -  z_{03}  =  0$ &  $y_0^2 +\b y_1^2- y_2^2 -\b y_3^2=0$  
\\
\hline
$\phantom{\Big(} C_3$ & $  z_{23} -  z_{01}  = z_{13} -  z_{02}    = z_{12} - \c z_{03}  = 0$ &  $y_0^2 - \c y_1^2 + \c  y_2^2 - y_3^2=0$  
  \\
\hline
\end{tabular}
\end{center}
\vskip .12in
\caption{The quadric ruled by the lines in $C_j$.}
\label{table.conics.quadrics}
\end{table}
\newline
Further, $\fP-(\fP_\infty \cup \fP_j) \subseteq Q_j$ and each point in $ \fP - (\fP_\infty \cup \fP_j)$ lies on a unique line belonging to $C_j$.
\end{corollary}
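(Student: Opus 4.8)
The plan is to assemble \Cref{cor.S123} from three ingredients already in hand: the Pl\"ucker equations for the conics recorded in \Cref{pr.theyre_conics}, the general ruling computation of \Cref{prop.quadric.ruling}, and the point--line incidence of \Cref{cor.conics}. Each of the three assertions in the corollary corresponds to one of these, and the work is essentially the explicit matching of equations.

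First I would verify the second column of \Cref{table.conics.quadrics}. The planes listed there are exactly the ones appearing in \Cref{pr.theyre_conics}; the only difference is cosmetic, since \Cref{pr.theyre_conics} is written with $z_{31}$ whereas the table uses $z_{13}$, and $z_{31}=-z_{13}$. Thus each equation $z_{31}\pm\b z_{02}=0$ becomes $z_{13}\mp\b z_{02}=0$, and running through the four rows shows the planes coincide. Hence $C_j=\GG(1,3)\cap(\text{plane})$ as claimed.

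Next, for the ruling statement I would apply \Cref{prop.quadric.ruling} to each $Q_j$ after writing it in the normal form $y_0^2-\l^2 y_1^2+\mu^2 y_2^2-\nu^2 y_3^2=0$, using the square roots $a,b,c,i$ fixed in \Cref{sect.defn.S}. For $Q_0=\{y_0^2+\b\c y_1^2+\c\a y_2^2+\a\b y_3^2=0\}$ one takes $(\l,\mu,\nu)=(ibc,ca,iab)$, and a short computation gives $\l^{-1}\mu\nu=\a$, $\mu^{-1}\l\nu=-\b$, and $\nu^{-1}\l\mu=\c$; the ``plus'' ruling plane $z_{23}+\l^{-1}\mu\nu z_{01}=z_{13}+\mu^{-1}\l\nu z_{02}=z_{12}+\nu^{-1}\l\mu z_{03}=0$ supplied by \Cref{prop.quadric.ruling} is then precisely the $C_0$ plane. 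The remaining cases are analogous, with $(\l,\mu,\nu)$ equal to $(1,ia,ia)$, $(ib,i,b)$, and $(c,c,1)$ for $Q_1,Q_2,Q_3$ respectively; in each case the $C_j$ plane coincides with one of the two rulings given by \Cref{prop.quadric.ruling}. This shows the lines parametrized by $C_j$ form a ruling of $Q_j$.

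Finally, each $Q_j$ is a smooth quadric, since all four of its coefficients are nonzero (as $\a,\b,\c\in k-\{0,\pm1\}$), so the lines of a single ruling are pairwise disjoint and partition $Q_j$. Combined with the previous paragraph, every line parametrized by $C_j$ lies on $Q_j$, so any point lying on such a line lies on $Q_j$; this also recovers \Cref{prop.commuting.quadric.2} in the case $j=0$. By \Cref{cor.conics}(1) every point of $\fP-(\fP_\infty\cup\fP_j)$ lies on exactly one line from $C_j$, which simultaneously yields the inclusion $\fP-(\fP_\infty\cup\fP_j)\subseteq Q_j$ and the uniqueness of the line from $C_j$ through such a point. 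The only real obstacle here is bookkeeping: one must keep the choices of square roots and the resulting orientation of the rulings consistent across the four conics, since for some $Q_j$ the family $C_j$ matches the ``minus'' ruling of \Cref{prop.quadric.ruling} rather than the ``plus'' one (for instance $C_3$, where $(\l,\mu,\nu)=(c,c,1)$ forces the minus ruling); this does not affect the conclusion.
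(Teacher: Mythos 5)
Your proof is correct, and its core step---applying \Cref{prop.quadric.ruling} with explicit values of $(\l,\mu,\nu)$ to identify the lines parametrized by each $C_j$ with a ruling on $Q_j$---is exactly the paper's argument; the paper's choices are $(-ibc,ac,iab)$, $(1,ia,ia)$, $(ib,i,b)$, $(c,c,-1)$, which differ from yours only by signs, so it lands on the opposite ruling from yours in the $C_0$ and $C_3$ cases (as you correctly note, this is immaterial). Where you genuinely diverge is in the final incidence claims. The paper proves $\fP-(\fP_\infty\cup\fP_j)\subseteq Q_j$ by directly evaluating the equation of $Q_j$ at the points listed in \Cref{table.20.pts}, using the shortcut that $(y_0^2,y_1^2,y_2^2,y_3^2)$ takes the values $(1,1,1,1)$, $(\b\c,-1,-\b,\c)$, $(\a\c,\a,-1,-\c)$, $(\a\b,-\a,\b,-1)$ on $\fP_0,\ldots,\fP_3$, and only then deduces existence and uniqueness of the line in $C_j$ through each such point from the disjointness of the ruling. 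You run the logic in the opposite direction: you import existence and uniqueness from \Cref{cor.conics}(1) and obtain the inclusion in $Q_j$ as a consequence. This is legitimate---\Cref{cor.conics} precedes \Cref{cor.S123} and its proof (via \Cref{prop.commuting.quadric.2}, \Cref{pr.theyre_conics}, and the $E[4]$-action on the point scheme) makes no use of \Cref{cor.S123}, so there is no circularity---and it saves a computation. What the paper's route buys instead is independence: the direct verification keeps the corollary self-contained given the ruling statement, and in effect re-derives \Cref{cor.conics}(1) by purely geometric means rather than leaning on the quantum-symmetry transfer from $C_0$ to the other conics.
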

\begin{proof}
Apply \Cref{prop.quadric.ruling} with $(\l,\mu,\nu)$ equal to
$(-ibc,ac,iab)$, $(1,ia,ia)$, $(ib,i,b)$, and $(c,c,-1)$.

 One sees that $\fP-(\fP_\infty \, \cup \, \fP_j) \subseteq Q_j$ by evaluating the equation for $Q_j$ at the points in $\fP-(\fP_\infty \cup \fP_j)$. The calculation
 is simplified by  noticing that the values of $(y_0^2,y_1^2,y_2^2,y_3^2)$ on $\fP_0$, $\fP_1$, $\fP_2$, and $\fP_3$, are 
 $(1,1,1,1)$, $(\b\c,-1,-\b,\c)$, $(\a\c,\a,-1,-\c)$, and $(\a\b,-\a,\b,-1)$, respectively. Finally, since $Q_j$ is the disjoint union of the lines in the 
 ruling on it parametrized by $C_j$, each point in $ \fP - (\fP_\infty \cup \fP_j)$ lies on a unique line belonging to $C_j$.
\end{proof}

\subsubsection{Remark}
If we identify $\PP(A_1^*)$ with $\PP(S_1^*)$ according to the convention in \S\ref{conv.identification}  
the quadrics $Q_j$ become quadrics in $\PP(S_1^*)$. For example, in the $x_0,x_1,x_2,x_3$ coordinates, 
$Q_0$ is the zero locus of $x_0^2-\b\c x_1^2-\c\a x_2^2 +\a\b x_3^2$. It is reasonable to ask if these quadrics contain $E$.
They do not. For example, $Q_0$ does not contain the point $\tau'+\ve_1=(a,-ia,i,1)$.

\section{Points  on elliptic lines}
\label{se.pts}

In this section we determine which point modules are quotients of which elliptic line modules.

\subsection{}

Let $\xi \in E[2]$. As in \S\ref{sect.F.tau+xi} we write $F_{\tau+\xi}$ for $\wtV(\tau+\xi) \otimes k^2$.
By \Cref{prop.line.fat}, there are at most three 
modules $M_{x,\omega} \in \Gr(S')$ such that  $\Hom_{\Gr(S')}(M_{x,\omega},F_{\tau+\xi}) \ne 0$.

\begin{lemma}\label{pr.few_lines}
Fix a $\G$-equivariant structure on $F_{\tau+\xi} \in \Gr(S')$ and a non-zero morphism 
$f:M_{x,\omega} \to F_{\tau+\xi}$  in $\Gr(S')$.  There is at most one $\G$-equivariant structure on 
$M_{x,\omega}$ for which $f$ is $\G$-equivariant.
\end{lemma}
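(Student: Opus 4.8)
The plan is to measure the difference between two equivariant structures by a group-cohomology class and to show that the hypotheses force that class to vanish. Write $M:=M_{x,\omega}$ and recall from \eqref{defn.M.x.xi} that $M=(L_1\oplus L_2)\otimes k^2$, where $L_1:=M_{p,p+\omega}$ and $L_2:=M_{p+\xi',p+\xi'+\omega}$ for a $2$-torsion point $\xi'\ne\omega$. The first step is to compute $\End_{\Gr(S')}(M)$. The effective divisors $(p)+(p+\omega)$ and $(p+\xi')+(p+\xi'+\omega)$ on $E$ are distinct, so $L_1\not\cong L_2$ as line modules by \cite[Thm.~4.5]{LS93}. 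Since each $L_i$ is cyclic with one-dimensional degree-zero component, any nonzero graded homomorphism between them is surjective, hence (equal Hilbert series) an isomorphism; therefore $\Hom_{\Gr(S)}(L_1,L_2)=\Hom_{\Gr(S)}(L_2,L_1)=0$ and $\End_{\Gr(S)}(L_1\oplus L_2)=k\times k$. Transporting through the Morita equivalence $-\otimes k^2$ gives $\End_{\Gr(S')}(M)\cong k\times k$, so $\Aut_{\Gr(S')}(M)\cong (k^\times)^2$, acting by a scalar on each of the summands $L_1\otimes k^2$ and $L_2\otimes k^2$.

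Next, suppose $\rho$ and $\rho'$ are two $\G$-equivariant structures on $M$ that both make $f$ equivariant with respect to the fixed structure $\sigma$ on $F:=F_{\tau+\xi}$, and set $u_\c:=\rho'(\c)^{-1}\rho(\c)$. As $\rho(\c)$ and $\rho'(\c)$ are degree-preserving $\c$-semilinear maps, $u_\c\in\Aut_{\Gr(S')}(M)=(k^\times)^2$; write $u_\c=(s_\c,t_\c)$. A routine computation shows $u$ is a $1$-cocycle, $u_{\c\c'}=\bigl(\rho'(\c')^{-1}u_\c\,\rho'(\c')\bigr)\,u_{\c'}$, for the conjugation action of $\G$ on $(k^\times)^2$. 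Equivariance of $f$ under both structures gives $f\circ\rho(\c)=\sigma(\c)\circ f=f\circ\rho'(\c)$, hence $f\circ u_\c=f$ for all $\c$; writing $f=(f_1,f_2)$ with $f_i\colon L_i\otimes k^2\to F$, this reads $s_\c f_1=f_1$ and $t_\c f_2=f_2$.

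The crux is how this cocycle interacts with the way $\G$ permutes the two summands. Since $\c_{\xi'}$ acts on $E$ as translation by $\xi'$ and $\c_{\xi'}^*M_{a,b}\cong M_{a+\xi',b+\xi'}$ (as in the proof of \Cref{cor_H4_action_on_center}), the map $\rho'(\c_{\xi'})$ carries $L_1\otimes k^2$ onto $L_2\otimes k^2$ and back, so conjugation by it interchanges the two coordinates of $(k^\times)^2$, whereas $\c_\omega$ preserves each summand. Computing $u_{\c_\omega\c_{\xi'}}=u_{\c_{\xi'}\c_\omega}$ in two ways, together with $\c_\omega^2=\c_{\xi'}^2=1$, then forces $s_{\c_\omega}=t_{\c_\omega}$ and $s_{\c_{\xi'}}t_{\c_{\xi'}}=1$. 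This is the key point: the swap ties the two scalar coordinates together, ruling out a nontrivial difference supported on a single summand. Finally, because $f\ne0$ at least one $f_i$ is nonzero, say $f_1$ (the case $f_2\ne0$ is symmetric); then $s_\c=1$ for every $\c$, whence $s_{\c_\omega}=1$ gives $t_{\c_\omega}=1$, and $s_{\c_{\xi'}}=1$ gives $t_{\c_{\xi'}}=1$. The cocycle identity propagates this to the remaining element, so $u\equiv 1$ and $\rho=\rho'$, which is exactly the asserted uniqueness.

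The only genuinely delicate step is the first one: identifying $\Aut_{\Gr(S')}(M)\cong(k^\times)^2$ and pinning down that $\G$ acts on its two factors by the summand-interchange coming from $\c_{\xi'}$. Once these are in place the cohomological vanishing is essentially forced, and the remaining computations with $2$-element-order scalars are routine.
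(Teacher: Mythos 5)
Your proof is correct, but it takes a genuinely different route from the paper's. The paper's argument leans on the specific target module: by \Cref{prop.L->F}, each summand of $M_{x,\omega}$ admits, up to scaling, a unique non-zero map to $F_{\tau+\xi}$; the argument of \Cref{prop.Mpq.onto.V}(2) shows that $\wtV(\tau+\xi)_0$ is the direct sum of the images of the degree-zero components of those maps, so an equivariant $f$ must match the $(\pm 1)$-eigenspaces of $\omega$ on $(M_{x,\omega})_0$ with those on $(F_{\tau+\xi})_0$; and then \cite[Thm.~11.6(3)]{CS15} --- which says the two equivariant structures on $M_{x,\omega}$ differ precisely by interchanging those eigenspaces --- pins the structure down. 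Your argument, by contrast, uses no property of $F_{\tau+\xi}$ beyond $f\ne 0$: it is a descent/cocycle computation internal to $M_{x,\omega}$, resting on $\Aut_{\Gr(S')}(M_{x,\omega})\cong (k^\times)^2$ (which you correctly derive from the Hom-vanishing between the non-isomorphic summands) and on the fact that $\rho'(\c_{\xi'})$ swaps the two factors while $\rho'(\c_\omega)$ preserves them, so that the difference cocycle $u_\c=\rho'(\c)^{-1}\rho(\c)$ of two structures fixing $f$ is forced to be trivial; your relations $s_{\c_\omega}=t_{\c_\omega}$ (from commutativity) and $s_{\c_{\xi'}}t_{\c_{\xi'}}=1$ (from $\c_{\xi'}^2=1$) check out. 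What your route buys: a stronger and more general statement (uniqueness of the literal action, with an arbitrary equivariant target in place of $F_{\tau+\xi}$), proved without invoking \cite[Thm.~11.6(3)]{CS15} or the Hom-computations with the fat point module. What the paper's route buys: the explicit eigenspace matching, which is then reused in the existence half of \Cref{th.pts_on_elliptic_lines}. One small expository point: when you assert that $\rho'(\c_{\xi'})$ carries $L_1\otimes k^2$ onto $L_2\otimes k^2$, the justification needs the Hom-vanishing from your first step (the image is a graded submodule isomorphic to $L_2\otimes k^2$, hence projects to zero in $L_1\otimes k^2$); you have that ingredient in hand, but you should say explicitly that this is what makes the swap claim rigorous.
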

\begin{proof}
Let $f:M_{x,\omega} \to F_{\tau+\xi}$ be a non-zero morphism in $\Gr(S')$ that is $\G$-equivariant  for some $\G$-equivariant structure on $M_{x,\omega}$. 

Suppose $x=p+E[2]$ and let $\{\omega,\omega',\omega''\}$ be the 2-torsion points on $E$.
By \Cref{prop.L->F} and the equivalence $\Gr(S) \equiv \Gr(S')$, there are, up to scaling, unique non-zero  
morphisms $M_{p,p+\omega}\otimes k^2\to F_{\tau+\xi}$ and $M_{p+\omega',p+\omega''}\otimes k^2\to F_{\tau+\xi}$ in 
$\Gr(S')$. If these homomorphisms are the restrictions of $f$ then they are non-zero because $f$ is $\G$-equivariant.  
Therefore $f$ restricts to non-zero $\langle \omega \rangle$-equivariant maps 
$M_{p,p+\omega}\otimes k^2\to F_{\tau+\xi}$ and $M_{p+\omega',p+\omega''}\otimes k^2\to F_{\tau+\xi}$.
The ideas in the proof of \Cref{prop.Mpq.onto.V}(2) show that $\wtV(\tau+\xi)_0$ is the direct sum of 
the images of the degree zero
components of the non-zero maps $M_{p,p+\omega} \to \wtV(\tau+\xi)$ and $M_{p+\omega',p+\omega''} \to \wtV(\tau+\xi)$. 
 
But $f$  commutes with the action of $\omega$,  so it maps the $(+1)$- and $(-1)$-eigenspaces for the action of 
$\omega$ on the degree-zero component of $M_{x,\omega}\otimes k^2$ isomorphically to  the 
$(+1)$- and $(-1)$-eigenspaces for the action of $\omega$ on $(F_{\tau+\xi})_0$. 
This uniquely determines the $\G$-equivariant structure on $M_{x,\omega}$ because, 
by \cite[Thm. 11.6(3)]{CS15}, interchanging the $(+1)$- and $(-1)$-eigenspaces for $\omega$ on $M_{x,\omega}$ 
switches between the two equivariant structures. 
\end{proof}

\Cref{pr.few_lines} puts an upper bound on the number of  elliptic line modules in $\Gr(\wtQ)$ 
that map onto a given ordinary point module.
We will soon see that this bound is achieved.

Moreover, the situation is different for the four  special point modules.
If  $P \in \Gr(S)$ is a special point module, then $(P^{\oplus 2})\otimes k^2$  is a quotient of  exactly three 
$S'$-modules of the form $M_{x,\xi}$. Indeed, this is a simple count once we recall that $P$ is a quotient of $M_{p,q}$ precisely when $p+q$ is the element of $E[2]$ associated to $P$.

\begin{theorem}\label{th.pts_on_elliptic_lines}
Let $\xi \in E[2]-\{o\}$ and let $p \in \fP$. 
 \begin{enumerate}
      \item{} 
      If $p \in \fP-\fP_\infty$, then $p$ lies on exactly one line in the family parametrized by $E/\langle \xi \rangle$.
      \item{}
If $p \in \fP_\infty$, then $p$ lies on exactly two lines in the family parametrized by $E/\langle \xi \rangle$.
  \end{enumerate}
\end{theorem}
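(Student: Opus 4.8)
The plan is to reformulate incidence in terms of $\G$-equivariant homomorphisms and then to reduce the problem to a representation-theoretic count. Via the equivalence $\Gr(S')^\G\equiv\Gr(\wtQ)$ of \cite[Prop.~3.9]{CS15}, every elliptic line module in the family $E/\langle\xi\rangle$ is $M_{x,\xi}^\G$ for a point $x\in E/E[2]$ equipped with one of its two $\G$-equivariant structures (the two structures corresponding to the two preimages of $x$ in $E/\langle\xi\rangle$), and every point module $M_p$ is $F^\G$ for a suitable $\G$-equivariant $S'$-module $F$: by \Cref{cor.equi_fat} one has $F=\wtV(\tau+\xi_j)\otimes k^2$ when $p\in\fP_j$ is ordinary, and by \Cref{pr.equi_four} one has $F=(P\oplus P)\otimes k^2$ with its unique equivariant structure when $p\in\fP_\infty$ is special. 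Since $M_p$ is $1$-critical, $p$ lies on the line $M_{x,\xi}^\G$ if and only if there is a nonzero morphism $M_{x,\xi}^\G\to M_p$ in $\Gr(\wtQ)$, equivalently a nonzero $\G$-equivariant morphism $M_{x,\xi}\to F$ in $\Gr(S')$; any such morphism is automatically surjective in large degrees onto $M_p$.

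Next I would pin down $x$. Writing $M_{x,\xi}=(M_{p_0,p_0+\xi}\oplus M_{p_0+\xi',p_0+\xi'+\xi})\otimes k^2$ with $x=p_0+E[2]$, the existence of a nonzero (not necessarily equivariant) morphism $M_{x,\xi}\to F$ forces a single value of $x$: in the ordinary case this is \Cref{prop.line.fat}, giving $2p_0=\tau+\xi_j+\xi$; in the special case it follows from the fact that $P$ is a quotient of $M_{p,q}$ exactly when $p+q$ equals the $2$-torsion point $\xi_P$ attached to $P$, forcing $2p_0+\xi=\xi_P$. In either case the two preimages of this $x$ in $E/\langle\xi\rangle$ produce at most two candidate lines, so the content of the theorem is to decide, for each of the two equivariant structures on $M_{x,\xi}$, whether the associated space of equivariant morphisms is nonzero.

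The heart of the argument is the $\G$-module structure of $V:=\Hom_{\Gr(S')}(M_{x,\xi},F)$. Because $\langle\xi\rangle$ stabilizes each of the two summands of $M_{x,\xi}$ while the remaining involutions interchange them, $V\cong\mathrm{Ind}_{\langle\xi\rangle}^{\G}W$, where $W=\Hom_{\Gr(S')}\big(M_{p_0,p_0+\xi}\otimes k^2,\,F\big)$ carries the residual $\langle\xi\rangle$-action. Frobenius reciprocity gives $V^{\G}\cong W^{\langle\xi\rangle}$, and the two equivariant structures on $M_{x,\xi}$, which differ by twisting the source by the character $\chi$ with $\chi(\xi)=-1$ (cf. the use of \cite[Thm.~11.6(3)]{CS15} in \Cref{pr.few_lines}), pick out the $(+1)$- and $(-1)$-eigenspaces of $\xi$ on $W$ respectively. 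Thus the number of elliptic lines in $E/\langle\xi\rangle$ through $p$ equals the number of distinct eigenvalues of $\xi$ occurring on $W$. In the ordinary case Morita equivalence together with \Cref{prop.L->F} gives $\dim_k W=1$, so exactly one eigenvalue occurs and there is exactly one line, proving (1). In the special case the same computation gives $\dim_k W=2$, with $W$ the multiplicity space of $P$ inside $P\oplus P$; I would then read off from the explicit unique equivariant structure of \Cref{pr.equi_four} that $\xi$ acts on these two copies of $P$ with opposite signs, so that both eigenvalues $\pm1$ occur and $p$ lies on exactly two lines, proving (2).

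Two simplifications make the final eigenvalue computation manageable. Using that the autoequivalences $a_\varepsilon$ of \Cref{pr.E4_acts} preserve Hilbert series and incidence while inducing a transitive action of $E[4]$ on the sixteen ordinary points and on the four special points (\Cref{cor.autoequivs}, \Cref{pr.E4_act_lines}), together with the cyclic symmetry of \S\ref{ssect.symmetry} permuting the three families, one reduces (1) and (2) to a single ordinary point and a single special point for a fixed $\xi$. The main obstacle is the last step of the special case: correctly transporting the explicit equivariant structure of \Cref{pr.equi_four} through the Morita equivalence to verify that $\xi$ acts on the two-dimensional multiplicity space $W$ with eigenvalues $+1$ and $-1$ rather than as a scalar; this is precisely the feature that distinguishes the special points from the ordinary ones and is responsible for the jump from one line to two.
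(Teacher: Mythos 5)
Your proposal is correct, and it rests on the same skeleton as the paper's own proof: descent through the equivalence $\Gr(S')^\G\equiv\Gr(\wtQ)$, the determination of a unique coset $x\in E/E[2]$ (via \Cref{prop.line.fat} in the ordinary case, and via $2p_0+\xi=$ the $2$-torsion point attached to the special point in the special case), and the identification of the two candidate lines over $x$ with the two $\G$-equivariant structures on $M_{x,\xi}$. Where you genuinely diverge is in the counting step. The paper proves part (1) in two separate moves -- uniqueness via \Cref{pr.few_lines}, then existence by explicitly constructing an equivariant morphism one summand at a time -- whereas your observation that $\Hom_{\Gr(S')}(M_{x,\xi},F)\cong \mathrm{Ind}_{\langle\xi\rangle}^{\G}W$, so that the two equivariant structures see exactly the $(+1)$- and $(-1)$-eigenspaces of the involution by which $\xi$ acts on $W$, delivers existence and uniqueness simultaneously: on the one-dimensional space $W$ exactly one eigenvalue can occur. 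That is tidier than the paper's treatment. The trade-off surfaces in part (2): the paper gets both equivariant structures to admit equivariant morphisms with essentially no computation, by noting that the degree-zero components of source and target are each the regular representation of $\G$ and matching eigenspaces; your route instead demands the explicit verification that $\xi$ acts on the two-dimensional multiplicity space $W$ with both eigenvalues $\pm 1$ rather than as a scalar. That verification does succeed, so there is no gap: transporting the structure of \Cref{pr.equi_four} through the Morita equivalence, one checks in each of the three cases (according to whether $\xi$ plays the role of $\gamma$, $\delta$, or $\gamma\delta$ in that construction) that the resulting involution on $W$ has trace zero, hence eigenvalues $+1$ and $-1$. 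It is worth noting, though, that the paper's regular-representation matching is precisely the device that renders this check unnecessary -- it is the structural reason the special points behave differently from the ordinary ones. Your preliminary $E[4]$-equivariance reduction to a single point in each case is also sound (\Cref{cor.autoequivs}, \Cref{pr.E4_act_lines}), although neither your argument nor the paper's actually requires it.
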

\begin{proof}
Let $P \in \Gr(S)$ be the point module corresponding to $p$.   

{(1)} 
If $p \in \fP_j$, let $F=\wtV(\tau+\xi_j) \otimes k^2$, and give $F$ the unique equivariant $S'$-module structure  such that $P\cong F^\G$.

By \Cref{prop.line.fat}, there is a unique $x \in E[2]$ such that $\Hom_{\Gr(S')}(M_{x,\xi}, F)\ne 0$.
Fix that $x$ and fix $q \in E$ such that $x=q+E[2]$. 
Let $f$ be a non-zero map in $\Hom_{\Gr(S')}(M_{x,\xi}, F )$.  
By \Cref{pr.few_lines}, there is at most one equivariant structure on $M_{x,\xi}$ for which $f$ is $\G$-equivariant. 
We give $M_{x,\xi}$ that equivariant structure.

Given that equivariant structure, every non-zero graded $S'$-module homomorphism from the summand 
$M_{q,q+\xi}\otimes k^2$ of $M_{x,\xi}$ to $F$ is equivariant for the $\xi$-action in degree zero. 
That it is equivariant in all degrees then follows from the fact that $M_{q,q+\xi}\otimes k^2$ is  a cyclic $S'$-module
because every homogeneous element 
in $M_{q,q+\xi}\otimes k^2$ can be obtained by acting on a degree-zero element by an eigenvector of $\xi$. 

A non-zero graded $S'$-module map from the other summand $M_{q+\xi',q+\xi''}$ of $M_{x,\xi}$ to $F $ 
can then be chosen uniquely so that the resulting map 
\begin{equation*}
  M_{x,\xi} = (M_{q,q+\xi} \oplus M_{q+\xi',q+\xi''})\otimes k^2\to F 
\end{equation*}
intertwines the $\G$-action at the degree-zero level. Once more it will then be equivariant in all degrees, since higher-degree homogeneous elements can be obtained by acting on degree-zero elements with eigenvectors of $\G$ in $S'$ and the map is an $S'$-module map.

{(2)} Fix equivariant structures on $M_{x,\xi}$ and $(P^{\oplus 2})\otimes k^2$. 
The degree-zero component of each module is isomorphic as a $\G$-module to the regular representation of $\G$, 
so there is a unique way to match up the respective eigenspaces. Moreover, there is, up to scaling, a unique 
non-zero morphism $M_{x,\xi}\to (P^{\oplus 2})\otimes k^2$
in  $\Gr(S')$ that agrees with this matching. This morphism must then be $\G$-equivariant as in the proof of part (1).  
\end{proof}

\section{Fat points on elliptic lines}
\label{se.fat_elliptic}

This section determines which of the fat point modules $M_p \otimes k^2$ described in \Cref{ssect.B_tilde} lie on which  elliptic lines. Thus we answer the following question:  if $L$ is an elliptic line module, when is $\Hom_{\Gr(A)}(L,M_p \otimes k^2) \ne 0$. If $f:L \to M_p \otimes k^2$ is a non-zero morphism in $\Gr(A)$, then $f$ becomes an epimorphism in $\QGr(A)$.
Recall that  $M_p \otimes k^2$ is a simple object in $\QGr(A)$ and corresponds to the skyscraper sheaf $\cO_{p+E[2]}$ under the equivalence $\QGr(\wtB) \equiv \qcoh(E/E[2])$. 

\subsection{}
Elliptic line modules are obtained by descent from the $\G$-equivariant structures on the $S'$-modules $M_{x,\xi}$.
To understand their relation to the modules $M_p \otimes k^2$ we need an equivariant description of  
$M_p \otimes k^2$ too. 

The $\wtQ$-module $M_p \otimes k^2$ is isomorphic to 
$N^\G$ for some $\G$-equivariant  $S'$-module $N=N'\otimes k^2$. 
Since $\omega^*M_p \cong M_{p+\omega}$ for all $\omega \in \G \equiv E[2]$, the obvious candidate for $N$ is the module  in 
(\ref{eq.module.N}).

\begin{lemma}\label{le.equiv_fat}
Let $p \in E$. There is a unique $\G$-equivariant structure on the $S'$-module 
\begin{equation}
\label{eq.module.N}
N =\bigoplus_{\omega \in E[2]} M_{p+\omega} \otimes k^2
\end{equation}
and $N^\G \cong M_p \otimes k^2$ as $A$-modules.
\end{lemma}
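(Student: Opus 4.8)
The proof naturally splits into two parts: existence and uniqueness of the $\G$-equivariant structure, and the identification of the invariants. The plan for the first part is to recognise $N$ as an \emph{induced} object. First I would record that $\c_j^*\big(M_{p+\omega}\otimes k^2\big)\cong M_{p+\omega+\xi_j}\otimes k^2$ for every $j$ and every $\omega\in E[2]$: indeed $\c_j^*M_{p+\omega}\cong M_{p+\omega+\xi_j}$ because $\c_j$ acts on $E$ as translation by $\xi_j$ (by \cite[\S7]{CS15}, as used in \S\ref{sect.Gamma.action}), while $\c_j^*k^2\cong k^2$ as $M_2(k)$-modules because $\c_j$ acts on $M_2(k)$ as the inner automorphism $a\mapsto q_jaq_j^{-1}$, and inner automorphisms fix isomorphism classes of modules (the map $v\mapsto q_jv$ is an intertwiner). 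Consequently $\G\cong E[2]$ permutes the four summands of $N$ freely and transitively by translation; in particular $\c^*N\cong N$ for all $\c\in\G$, so $N$ is $\G$-invariant and $N\cong\bigoplus_{\c\in\G}\c^*\big(M_p\otimes k^2\big)$ is the object induced from $M_p\otimes k^2$ along the $\G$-action on $\Gr(S')$.

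Next I would compute the automorphism group governing the obstruction theory. The summands $M_{p+\omega}\otimes k^2$ are pairwise non-isomorphic, since distinct points give non-isomorphic point modules of $S$ and any non-zero graded map between two point modules sends the degree-zero generator to a non-zero degree-zero element, hence is surjective, hence an isomorphism by equality of Hilbert series; and each summand has graded endomorphism ring $k$ by the Morita equivalence $-\otimes k^2$ together with $\End_{\Gr(S)}(M_{p+\omega})=k$. Thus $\End_{\Gr(S')}(N)\cong k^{E[2]}$ and $\Aut_{\Gr(S')}(N)\cong(k^\times)^{E[2]}$, on which $\G$ acts by permuting the factors via the regular (translation) action. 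This is exactly the coinduced module $\mathrm{Coind}^{\G}_{\{1\}}k^\times$, so Shapiro's lemma gives $H^n\big(\G,\Aut_{\Gr(S')}(N)\big)\cong H^n(\{1\},k^\times)=0$ for all $n\ge 1$. Feeding this into the same obstruction-theoretic framework as in \Cref{pr.equi_fat} (i.e. \cite[Prop. 10.2, Rem. 10.3]{CS15}), now with coefficient module $\Aut_{\Gr(S')}(N)$ in place of $k^\times$: the existence obstruction lives in $H^2$ and therefore vanishes, and the set of equivariant structures is a torsor under $H^1$ and is therefore a single point. This proves both existence and uniqueness in one stroke.

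For the identification $N^\G\cong M_p\otimes k^2$, the plan is to combine the induced description with projection onto the distinguished summand. Equip $N\cong\bigoplus_{\c\in\G}\c^*\big(M_p\otimes k^2\big)$ with its canonical equivariant structure, which by the uniqueness just established is the one in the statement, and let $\pi\colon N\to M_p\otimes k^2$ be the $S'$-linear projection onto the $\c=1$ summand. Restricting $\pi$ to $N^\G$ yields a map $N^\G\to (M_p\otimes k^2)\big\vert_{\wtQ}$ that is $\wtQ$-linear, because $\pi$ is $S'$-linear and $\wtQ\subseteq S'$, with target the $A$-module structure described in \S\ref{ssect.B_tilde}. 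Since $\G$ acts freely and transitively on the summands, in each degree $N_n$ is the $\G$-representation induced from the trivial subgroup by $(M_p\otimes k^2)_n$, so the projection identifies $N^\G_n$ with $(M_p\otimes k^2)_n$ by the standard property of the regular representation. Hence $\pi\vert_{N^\G}$ is a degreewise isomorphism, and therefore a $\wtQ$-module isomorphism.

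The only genuinely delicate point is the vanishing of the existence obstruction, i.e. the $H^2$ computation; everything else is either isomorphism-class bookkeeping or a standard fact about induced representations. It is precisely the Shapiro-lemma identification of $\Aut_{\Gr(S')}(N)$ as a coinduced $\G$-module that forces this vanishing, in welcome contrast to \Cref{pr.equi_fat}, where $\Aut$ is merely $k^\times$ and $H^2(\G,k^\times)\cong\bZ/2$ is nonzero, so that the obstruction had to be killed by the more laborious argument of \Cref{le.aux}.
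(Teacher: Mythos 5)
Your proposal is correct and follows essentially the same route as the paper: recognize that $\G$ permutes the four pairwise non-isomorphic summands, identify $\Aut_{S'}(N)\cong (k^\times)^{E[2]}$ with the regular translation action as a coinduced (relatively injective) $\G$-module so that $H^1=H^2=0$ (your Shapiro's lemma step is the paper's citation of \cite[Prop. VII.2.1]{Ser79}), and feed this into the obstruction framework of \cite{CS15} to get existence and uniqueness at once. Your projection-onto-a-summand argument for $N^\G\cong M_p\otimes k^2$ is just a more detailed writing-out of the paper's terse final sentence, not a different method.
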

\begin{proof}
If $\omega \in E[2]$, then $\omega^*M_p \cong M_{p+\omega}$. Hence $\omega^*N \cong N$. 
Since the $S$-modules $M_p$ and $M_q$ are isomorphic if and only if $p=q$, $\Aut_{S'}(N)$ is isomorphic to $(k^\times)^4$. We label the elements of  $\Aut_{S'}(N)$ as 4-tuples $\l=(\l_\omega)_{\omega \in E[2]}$ with the convention that $\l$ acts on the summand 
$M_{p+\omega} \otimes k^2$ of $N$ as multiplication by $\l_\omega$.
Arguing as in \cite[Lem. 10.3]{CS15}, $\Aut_{S'}(N)$ is a $\G$-module with $\xi \in \G$ acting as follows:
$$
\xi \triangleright (\l_\omega)_{\omega \in E[2]} \; = \;  (\mu_\omega)_{\omega \in E[2]} 
$$
where $\mu_\omega = \l_{\omega+\xi}$. 

To prove the existence and uniqueness of a $\G$-equivariant structure on $N$ we argue as in \cite[Prop. 10.5]{CS15}.
Thus,  it suffices to show that 
\begin{equation}\label{eq.coh_vanish}
H^1\big(\G,\Aut_{S'}(N)\big) = H^2\big(\G,\Aut_{S'}(N)\big) =0  
\end{equation}
because the vanishing of $H^2$ implies the existence of at least one equivariant structure and, since
$H^1$ acts simply transitively on the set of equivariant structures, the vanishing of $H^1$ implies there is at most one equivariant structure.

To check \Cref{eq.coh_vanish} note that $\Aut_{S'}(N)$ can be identified with the group of $k^\times$-valued functions on $\G$, acted upon by $\G$ via translation on the domain of the functions. In other words, there is an isomorphism of $k\G$-modules
\begin{equation*}
  \Aut_{S'}(N)\cong \Hom_\bZ\big(k\G,(k^\times)^4\big)
\end{equation*}
 where, on the right, the $\bZ$-action is additive on $k\G$ and multiplicative on $(k^\times)^4$.  This is sometimes called a \define{relatively injective} $\G$-module, and its higher cohomology vanishes \cite[Prop. VII.2.1]{Ser79}.
 
The summands $M_{p+\omega}\otimes k^2$, $\omega \in E[2]$, 
are isomorphic to each other so the invariant part $N^\G\in \Gr(\wtQ)$ is isomorphic to $M_p\otimes k^2$. 
\end{proof}

Let $\xi  \in E[2]-\{o\}$ and let $x \in E/\langle \xi\rangle$.  The space $\Hom_{\Gr(S')}(M_{x,\xi},M_p \otimes k^2)$ is non-zero 
if and only if the image of $x$ in $E/E[2]$ equals $p+E[2]$ in which case it is isomorphic to $k^2$. 
Hence $\Hom_{\Gr(S')}(M_{x,\xi},N) \cong k^8$.
We would like some of these maps to be equivariant for at least one of the two $\G$-equivariant structures on $M_{x,\xi}$, in order to have them descend to $\wtQ$. The next result shows this is the case.

\begin{proposition}\label{pr.fat_pts_on_elliptic_lines}
If $p \in E$, then  $M_p\otimes k^2$ is a quotient in $\QGr(\wtQ)$ of exactly two line modules in each of the three elliptic families.   
\end{proposition}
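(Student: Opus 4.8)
The plan is to reduce the statement, via the descent equivalence $\Gr(S')^\G\equiv\Gr(\wtQ)$ of \S\ref{ssect.quat.basis} together with \Cref{le.equiv_fat}, to a count of $\G$-equivariant homomorphisms. Recall from \Cref{le.equiv_fat} that $M_p\otimes k^2\cong N^\G$, where $N=\bigoplus_{\omega\in E[2]}M_{p+\omega}\otimes k^2$ carries its unique $\G$-equivariant structure, and that every elliptic line module in the family $E/\langle\xi\rangle$ is of the form $(M_{x,\xi})^\G$ for some $x\in E/E[2]$ and one of the two $\G$-equivariant structures on $M_{x,\xi}$ supplied by \cite[\S10]{CS15}. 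Under the descent equivalence a nonzero morphism $L\to M_p\otimes k^2$ in $\Gr(\wtQ)$ corresponds to a nonzero $\G$-equivariant morphism $M_{x,\xi}\to N$ in $\Gr(S')$, i.e. to a nonzero element of $\Hom_{\Gr(S')}(M_{x,\xi},N)^\G$; and since $M_p\otimes k^2$ is $1$-critical, any such nonzero morphism is an epimorphism in $\QGr(\wtQ)$ (as noted at the start of \S\ref{se.fat_elliptic}). So it suffices to count nonzero $\G$-equivariant homomorphisms $M_{x,\xi}\to N$ for each of the three $\xi\in E[2]-\{o\}$.

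For the upper bound I would fix $\xi$ and observe that $\Hom_{\Gr(S')}(M_{x,\xi},N)=0$ unless the image of $x$ in $E/E[2]$ equals $p+E[2]$: this follows from the criterion recorded just before \Cref{pr.fat_pts_on_elliptic_lines}, since each summand $M_{q,q'}\otimes k^2$ of $M_{x,\xi}$ admits a nonzero map to a summand $M_{p+\omega}\otimes k^2$ of $N$ only when $p+\omega\in\{q,q'\}$. Hence the only line modules in $E/\langle\xi\rangle$ that can map onto $M_p\otimes k^2$ are the two lying over $p+E[2]\in E/E[2]$ under the double cover $E/\langle\xi\rangle\to E/E[2]$, namely $(M_{x,\xi})^\G$ for $x=p+E[2]$ and the two equivariant structures on $M_{x,\xi}$. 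This gives at most two per family.

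For the lower bound I would show that both equivariant structures admit a nonzero equivariant map, arguing exactly as in the proof of \Cref{th.pts_on_elliptic_lines}(2). The key input is the $\G$-module structure of the degree-zero components. On one side, $(M_{x,\xi})_0$ is the regular representation of $\G$; the two equivariant structures are related by the eigenspace interchange of \cite[Thm.~11.6(3)]{CS15}, i.e. by a character twist, and since the regular representation is stable under character twists, $(M_{x,\xi})_0$ is the regular representation for \emph{both} structures. On the other side, $\G=E[2]$ permutes the four summands $M_{p+\omega}\otimes k^2$ of $N$ simply transitively, so in degree zero $N_0$ is a $\G$-module free over the regular $\G$-set; hence $N_0$ is a sum of two copies of the regular representation and every character of $\G$ occurs in it. Matching the one-dimensional $\chi$-eigenspace of $(M_{x,\xi})_0$ with a $\chi$-eigenspace of $N_0$ character by character produces a $\G$-equivariant linear map in degree zero, and since $M_{x,\xi}$ is cyclic and generated in degree zero a homomorphism equivariant in degree zero is equivariant in all degrees (as in \Cref{th.pts_on_elliptic_lines}(1)). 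This yields a nonzero equivariant homomorphism $M_{x,\xi}\to N$ for each of the two structures; combined with the upper bound, exactly two line modules in each of the three families $E/\langle\xi\rangle$ map onto $M_p\otimes k^2$.

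The main obstacle is the final extension step: one must verify that an abstract $\G$-equivariant matching of degree-zero eigenspaces is realized by an honest $S'$-module homomorphism, i.e. that $\Hom_{\Gr(S')}(M_{x,\xi},N)$ has nonzero $\G$-invariants for \emph{each} equivariant structure, and is not killed by a Schur-multiplier obstruction from the projective (quaternionic) action on the $k^2$ factors. As in the proofs of \Cref{pr.few_lines} and \Cref{th.pts_on_elliptic_lines}, this reduces to a finite eigenvalue bookkeeping carried out summand by summand along the decomposition of $M_{x,\xi}\to N$ into the pieces $M_{q,q+\xi}\otimes k^2\to M_{p+\omega}\otimes k^2$, using that each summand of $M_{x,\xi}$ is cyclic with degree-zero eigenvectors; this is entirely parallel to the special-point case already treated and is where the substantive (though routine) verification lies.
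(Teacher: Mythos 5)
Your reduction and overall architecture coincide with the paper's proof: descend to $\G$-equivariant $S'$-modules, observe that $\Hom_{\Gr(S')}(M_{x,\xi},N)$ vanishes unless $x$ maps to $p+E[2]$ (this gives the upper bound of two per family, and your argument for it is fine), and then show that both equivariant structures on $M_{x,\xi}$ admit a nonzero equivariant map to $N$. The genuine gap is in that last step, and it is exactly the step you flag as ``the main obstacle'' and then defer as routine. A $\G$-equivariant, even $M_2(k)$-linear, map $(M_{x,\xi})_0\to N_0$ obtained by matching eigenspaces character by character need not be the degree-zero component of \emph{any} $S'$-module homomorphism, and the cyclicity principle you invoke cannot manufacture one: it only upgrades an already-existing module homomorphism that happens to be equivariant in degree zero to one equivariant in all degrees. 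The failure is quantitative, not hypothetical: restriction to degree zero embeds $\Hom_{\Gr(S')}(M_{x,\xi},N)$ into $\Hom_{M_2(k)}\big((M_{x,\xi})_0,N_0\big)$, but the first space has dimension $4$ --- for each $\omega\in E[2]$ exactly one of the two summands of $M_{x,\xi}$ maps nontrivially to $M_{p+\omega}\otimes k^2$, since a nonzero map $M_{q,q'}\to M_r$ in $\Gr(S)$ forces $r\in\{q,q'\}$ and is then unique up to scalar --- whereas the second space has dimension $8$. So the fact that the trivial character occurs in the degree-zero Hom space (which is all your eigenspace matching establishes) says nothing about whether it occurs in the proper $4$-dimensional subspace of genuine module maps. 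That is precisely the assertion to be proved; it is the content of the proposition, not bookkeeping.

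To close the gap, argue directly on the module-Hom space: $\Hom_{\Gr(S')}(M_{x,\xi},N)=\bigoplus_{\omega\in E[2]}kf_\omega$, where $f_\omega$ is the (up to scalar unique) map supported on the summand of $M_{x,\xi}$ having $p+\omega$ as a parameter, with image in $M_{p+\omega}\otimes k^2$. Whatever equivariant structures are chosen on $M_{x,\xi}$ and $N$, the induced $\G$-action sends $f_\omega$ into $kf_{\omega+\c}$, because the (unique) equivariant structure on $N$ must permute its four pairwise non-isomorphic summands simply transitively; hence every $\c\ne 1$ acts with trace zero on this space, and
$$
\dim \Hom_{\Gr(S')}(M_{x,\xi},N)^\G \;=\; \tfrac{1}{4}(4+0+0+0) \;=\; 1
$$
for \emph{each} of the two equivariant structures on $M_{x,\xi}$. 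This gives the lower bound, and with your upper bound, exactly two lines per family. For comparison, the paper's own proof also works in degree zero: it asserts that restriction is a $\G$-isomorphism of $\Hom_{\Gr(S')}(M_{x,\xi},N)$ onto $\Hom_{M_2(k)}\big((M_{x,\xi})_0,N_0\big)$ and then invokes descent for equivariant $M_2(k)$-modules; the dimension count above shows that restriction is really only an inclusion, so the trace argument on the lines $kf_\omega$ is the reliable way to finish --- both for your write-up and as a patch of the step the paper elides.
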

\begin{proof}
Let $x=p+E[2]$ and fix $\xi\in E[2]$.
Let $N$ be the $\Gamma$-equivariant $S'$-module in (\ref{eq.module.N}) equipped with the $\G$-module structure in \Cref{le.equiv_fat}. 
We must show that for each of the two $\G$-equivariant structures on $M_{x,\xi}$ there is a non-zero 
homomorphism  $M_{x,\xi}\to N$ of $\G$-equivariant $S'$-modules. 

The actions of $\G$ on the $\G$-equivariant $S'$-modules $M_{x,\xi}$ and $N$ induce a $\G$-module structure on $\Hom_{\Gr(S')}(M_{x,\xi},N)$. 
We must show that $\Hom_{\Gr(S')}(M_{x,\xi},N)^\G \ne 0$.  

Since $M_{x,\xi}$ is generated in degree zero, $\Hom_{\Gr(S')}(M_{x,\xi},N)$ is isomorphic as a $\G$-module to $\Hom_{M_2(k)}((M_{x,\xi})_0,N_0)$. 
The fact that the trivial $\G$-representation occurs in it follows from the observation that both $(M_{x,\xi})_0$ and $N_0$ are $\G$-equivariant $M_2(k)$-modules and, by descent, the category of $\G$-equivariant $M_2(k)$-modules is equivalent to $\Vect$.
\end{proof}

Using this, we can determine how the group $E[4]$ of autoequivalences of $\Gr(\wtQ)$ acts on the modules 
$M_p\otimes k^2$. Let us first record the following observation.

\begin{proposition}\label{pr.E4_acts_B}
  The action of $E[4]$ on $\Gr(\wtQ)$ introduced in \Cref{ssect.E4_acts}  restricts to an action  of $E[4]$ as auto-equivalences of $\Gr(\wtB)$. 
\end{proposition}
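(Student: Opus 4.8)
The plan is to identify $\Gr(\wtB)$ with the full subcategory of $\Gr(\wtQ)$ whose objects are the graded $\wtQ$-modules annihilated by every central element $\Omega(z)$, $z \in E$, and then to show that each generator $a_{\ve_j}$ of the $E[4]$-action of \Cref{pr.E4_acts} carries this subcategory into itself. Since $a_{\ve_j}$ is an autoequivalence of $\Gr(\wtQ)$ whose quasi-inverse is of the same type (the functor defined by the other $\bZ/2$-equivariant structure on $V_j$, as noted in \S\ref{ssect.E4_acts}), once we know that both $a_{\ve_j}$ and this quasi-inverse preserve the subcategory, the restriction is automatically an autoequivalence of $\Gr(\wtB)$, and the whole $E[4]$-action restricts.

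First I would record that $\Gr(\wtB)$ really is this annihilator subcategory. By definition (\S\ref{ssect.B_tilde}) $\wtB$ is the quotient of $\wtQ$ by the ideal generated by all the $\Omega(z)$, so a graded $\wtQ$-module descends to a $\wtB$-module precisely when it is killed by that ideal. This gives a full embedding $\Gr(\wtB) \hookrightarrow \Gr(\wtQ)$ whose essential image is exactly the set of $M$ with $\Omega(z)M = 0$ for all $z \in E$.

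The key step is then immediate from \Cref{pr.Omega_transl}. Let $M \in \Gr(\wtB)$, so that $\Omega(z)$ annihilates $M$ for every $z \in E$. By \Cref{pr.Omega_transl}, $\Omega(z+\xi_j)$ annihilates $M \otimes V_j$ for every $z$. Because translation $z \mapsto z + \xi_j$ is a bijection of $E$, it follows that every $\Omega(w)$, $w \in E$, annihilates $M \otimes V_j$. Now $a_{\ve_j}(M)$ is, by its very construction (the $\eta$-eigenspace appearing in the decomposition of \Cref{pr.sum_2_copies}), a graded $\wtQ$-submodule of $M \otimes V_j$, so it too is annihilated by all $\Omega(w)$. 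Hence $a_{\ve_j}(M) \in \Gr(\wtB)$. The identical computation applied to the quasi-inverse shows it preserves $\Gr(\wtB)$ as well, so $a_{\ve_j}$ restricts to an autoequivalence of $\Gr(\wtB)$; as the $a_{\ve_j}$ generate the $E[4]$-action, the action restricts.

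I do not expect a genuine obstacle here: essentially all of the content is carried by \Cref{pr.Omega_transl} together with the bijectivity of the shift $z \mapsto z + \xi_j$ on $E$. The only points requiring a sentence of care are the identification of $\Gr(\wtB)$ with the annihilator subcategory and the observation that $a_{\ve_j}(M)$ sits inside $M \otimes V_j$ as a submodule, so that the annihilation property is inherited from the tensor product.
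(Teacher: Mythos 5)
Your proposal is correct and follows essentially the same route as the paper: both arguments rest on \Cref{pr.Omega_transl} together with the characterization of $\Gr(\wtB)$ as the graded $\wtQ$-modules annihilated by all $\Omega(z)$, $z \in E$. Your extra observations (that $a_{\ve_j}(M)$ inherits annihilation as a submodule of $M\otimes V_j$, and that the quasi-inverse preserves the subcategory) are details the paper's two-sentence proof leaves implicit, but they are the same argument.
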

\begin{proof}
By \Cref{pr.Omega_transl}, the auto-equivalence $a_{\ve_j}$ sends modules annihilated by $\Omega(z)$ to modules annihilated  by $\Omega(z+\xi_j)$. The result now follows from the fact that $\Gr(\wtB)$ consists of the $A$-modules that are annihilated by all $\Omega(z)$, $z\in E$.
\end{proof}

Since the modules $M_p\otimes k^2$ are, up to isomorphism, the only 1-critical $\wtB$-modules with Hilbert series
$2(1-t)^{-1}$, it follows from \Cref{pr.E4_acts_B} that $E[4]$ acts on the elliptic curve that parametrizes them.

\begin{proposition}\label{pr.E4_act_fat}
  The action of $E[4]$ on the set of 1-critical $\wtB$-modules with Hilbert series
$2(1-t)^{-1}$ induces an action of $E[4]$ on $E/E[2]$ as translations by  the image of $E[4]$ in $E/E[2]$. 
\end{proposition}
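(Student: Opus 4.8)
The plan is to leverage the incidence relation between the fat point modules $M_p\otimes k^2$ and the elliptic line modules, on which the $E[4]$-action is already understood. By \Cref{pr.E4_acts_B} the autoequivalences $a_{\ve_j}$ restrict to $\Gr(\wtB)$, and since they preserve Hilbert series and $1$-criticality they permute the $1$-critical $\wtB$-modules of Hilbert series $2(1-t)^{-1}$; as these are exactly the modules $M_p\otimes k^2$, parametrized by $E/E[2]$, we obtain the asserted action of $E[4]$ on $E/E[2]$. It then suffices to show that the generators $a_{\ve_1},a_{\ve_2}$ act as translation by the images of $\ve_1,\ve_2$ in $E/E[2]$, for then all of $E[4]=\langle\ve_1,\ve_2\rangle$ acts by translation by the image of $E[4]$.

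First I would record the incidence correspondence. Fix a nonzero $\xi\in E[2]$ and write $\pi_\xi\colon E/\langle\xi\rangle\to E/E[2]$ for the natural two-to-one projection. I claim that the elliptic line module in the family $E/\langle\xi\rangle$ parametrized by a class $[q]_\xi$ surjects, in $\QGr(\wtQ)$, onto exactly one fat point of the form $M_p\otimes k^2$, namely the one with $[p]=\pi_\xi([q]_\xi)$. Indeed, with $x=[p]\in E/E[2]$, the two $\G$-equivariant structures on $M_{x,\xi}$ give precisely the two elliptic line modules lying over the two points of $\pi_\xi^{-1}(x)$; and the proof of \Cref{pr.fat_pts_on_elliptic_lines}, via the equivariant module $N$ of \Cref{le.equiv_fat} with $N^\G\cong M_p\otimes k^2$, produces a nonzero $\G$-equivariant map $M_{x,\xi}\to N$ for each of these two structures. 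Hence both lines over $x$ are incident to the fat point at $x$; since \Cref{pr.fat_pts_on_elliptic_lines} asserts that $M_p\otimes k^2$ lies on exactly two lines of this family, these exhaust its incidences, and no line over a class $\ne x$ can be incident to it. This is precisely the incidence dictionary $[q]_\xi\leftrightarrow\pi_\xi([q]_\xi)$.

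Next I would combine this with the known action on lines. Since $a_{\ve_j}$ preserves $\Fdim(\wtQ)$ (it preserves Hilbert series), it induces an autoequivalence of $\QGr(\wtQ)$ and so carries epimorphisms to epimorphisms, whence it respects incidence. Writing $\sigma_j$ for the induced permutation of $E/E[2]$ and $T_{\ve_j}$ for the action of $a_{\ve_j}$ on $E/\langle\xi\rangle$, equivariance of incidence reads $\sigma_j\circ\pi_\xi=\pi_\xi\circ T_{\ve_j}$. By \Cref{pr.E4_act_lines}(1), $T_{\ve_j}$ is translation by the image of $\ve_j$ in $E/\langle\xi\rangle$, so for any $[q]_\xi$,
\[
\sigma_j([q]) \;=\; \sigma_j\big(\pi_\xi([q]_\xi)\big) \;=\; \pi_\xi\big(T_{\ve_j}([q]_\xi)\big) \;=\; \pi_\xi\big([q+\ve_j]_\xi\big) \;=\; [q]+[\ve_j]
\]
in $E/E[2]$. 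As $\pi_\xi$ is surjective, $\sigma_j$ is translation by the image $[\ve_j]$ of $\ve_j$ in $E/E[2]$; running over $j=1,2$ gives the full statement.

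The main obstacle is the bookkeeping in the incidence step: one must be sure that the two equivariant structures on $M_{x,\xi}$ correspond to the two points of the fibre $\pi_\xi^{-1}(x)$, and that the maps produced in \Cref{pr.fat_pts_on_elliptic_lines} descend to genuine surjections $\pi^*L\to\pi^*(M_p\otimes k^2)$ in $\QGr(\wtQ)$ rather than merely nonzero $S'$-module maps. Both points are, however, already essentially contained in the construction of the elliptic line modules in \cite[\S10]{CS15} and in the proof of \Cref{pr.fat_pts_on_elliptic_lines}; once the dictionary $[q]_\xi\leftrightarrow\pi_\xi([q]_\xi)$ is secured, the remainder is the purely formal equivariance argument above.
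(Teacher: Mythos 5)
Your proposal is correct and follows essentially the same route as the paper: the paper's proof likewise combines \Cref{pr.fat_pts_on_elliptic_lines} (the fat point $M_p\otimes k^2$ lies on exactly the two elliptic lines in the fibre of $E/\langle\xi\rangle\to E/E[2]$ over $p+E[2]$) with \Cref{pr.E4_act_lines}(1) (translation action on the domain of this covering map) to conclude that $E[4]$ acts by translation on the codomain $E/E[2]$. Your write-up merely makes explicit the equivariance bookkeeping that the paper leaves implicit.
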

\begin{proof}
Fix $\xi\in E[2]$. By \Cref{pr.fat_pts_on_elliptic_lines}, $M_p\otimes k^2$ (whose isomorphism class only depends on $p+E[2]$) lies on both lines that comprise the preimage of $p+E[2]$ through $E/\langle 
\xi\rangle\to E/E[2]$ and only on those. By part (1) of \Cref{pr.E4_act_lines}, 
$E[4]$ acts as translations on the domain of this covering map, and hence it must act in the same fashion on its codomain $E/E[2]$.
\end{proof}

\section{Fat points on conic lines}
\label{subse.fat_conic}

This section determines the incidence relations between the ``lines'' in $\Projnc(\wtQ)$ parametrized by the conics $C_0,C_1,C_2,C_3$, and the ``points'' on the curve $E/E[2] \subseteq \Projnc(\wtQ)$.

\subsection{}
Since $\wtB$ is a quotient of $\wtQ$, 
\cite[Thm.1.2]{SPS15} shows that  $\Projnc(\wtB)$ is a closed subspace of 
$\Projnc(\wtQ)$ in the sense of \cite[\S3.3]{VdB-blowup} and \cite[Defn. 2.4]{SPS02}. 
Since $\QGr(\wtB) \equiv \qcoh(E/E[2])$ we say that 
$\Projnc(\wtB)$ is isomorphic to   $E/E[2]$ and think of $E/E[2]$ as a closed curve in the ambient non-commutative variety 
$\Projnc(\wtQ)$.

\begin{lemma}
\label{lem.commuting.subspaces}
Fix a point $p=(\d_0,\d_1,\d_2,\d_3) \in E \subseteq \PP(S_1^*)$. Let $f_p: \wtQ_1\to M_2(k)$ be the linear map
\begin{align*}
f_p(\l_0y_0+\l_1y_1+\l_2y_2+\l_3y_3) \; : = & \; 
\begin{pmatrix}
  \l_0\d_0+i\l_1\d_1   & i\l_2\d_2-\l_3 \d_3   \\
  i \l_2\d_2+\l_3 \d_3   &   \l_0 \d_0 -i \l_1\d_1  
\end{pmatrix}
\\
 \; =& \;  \l_0\d_0 q_0+  \l_1\d_1 q_1+  \l_2\d_2 q_2+ \l_3\d_3q_3.
\end{align*}
\begin{enumerate}
  \item 
  Let $y \in \wtQ_1$. 
If $e \otimes v \in (M_p \otimes k^2)_0 - \{0\}$,
then $y(e \otimes v)=0$ if and only if $f_p(y)v=0$.  
  \item 
$f_p$ is a linear isomorphism if and only if  $p \notin E[4]$.
  \item 
Suppose $p \notin E[4]$. As $kv$ varies over the points in $\PP(k^2)$, 
$f_p$ provides a bijection between the 2-dimensional subspaces
$ky+ky'$ of $\wtQ_1$ such that $\Hom_{\Gr(\wtQ)}(\wtQ/\wtQ y+\wtQ y',M_p \otimes k^2) \ne 0$ and the simple left ideals of $M_2(k)$.
\end{enumerate}
\end{lemma}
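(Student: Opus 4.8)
The plan is to treat the three parts in order, since part (3) rests on (1) and (2). For (1) I would compute the action of $\wtQ_1$ on the degree-zero component directly. Let $e_0$ span $(M_p)_0$ and $e_1$ span $(M_p)_1$. Because $M_p=S/Sp^\perp$ is the point module at $p$ and $\delta_j=x_j(p)$ is the image of $x_j$ under $S_1\twoheadrightarrow S_1/p^\perp=(M_p)_1\cong k$, we have $x_je_0=\delta_je_1$. Since $y_j=x_j\otimes q_j$ acts by $y_j(e_0\otimes v)=(x_je_0)\otimes(q_jv)=e_1\otimes\delta_jq_jv$, summing over $y=\sum_j\lambda_jy_j$ gives $y(e_0\otimes v)=e_1\otimes f_p(y)v$; as $e_1\neq 0$ this vanishes iff $f_p(y)v=0$, and rescaling $e_0$ handles an arbitrary nonzero $e\otimes v$. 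For (2), note $f_p(y_j)=\delta_jq_j$, so $f_p$ carries the basis $y_0,\dots,y_3$ of $\wtQ_1$ to $\delta_0q_0,\dots,\delta_3q_3$; since $\{q_0,q_1,q_2,q_3\}$ is a basis of $M_2(k)$, these form a basis — hence $f_p$ is an isomorphism — exactly when all $\delta_j\neq 0$. By \Cref{cor.E4}, some $\delta_j$ vanishes iff $p$ lies on a coordinate plane $\{x_j=0\}$ iff $p\in E[4]$, so $f_p$ is an isomorphism iff $p\notin E[4]$.

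For (3), assuming $p\notin E[4]$ so that $f_p$ is a linear isomorphism, the first step is to translate the $\Hom$ condition into linear algebra. The module $\wtQ/(\wtQ y+\wtQ y')$ is cyclic, generated in degree $0$, so a nonzero graded homomorphism to $M_p\otimes k^2$ amounts to a nonzero $w=e_0\otimes v\in(M_p\otimes k^2)_0$ with $yw=y'w=0$; by (1) this says $f_p(y)v=f_p(y')v=0$ for some $0\neq v\in k^2$. Writing $L:=f_p(ky+ky')$, a genuinely $2$-dimensional subspace of $M_2(k)$ (since $f_p$ is injective on the independent pair $y,y'$), the existence of such $v$ is equivalent to $L\subseteq\Ann(v):=\{A:Av=0\}$; as $\Ann(v)$ is a $2$-dimensional simple left ideal and $L$ is $2$-dimensional, the containment forces $L=\Ann(v)$. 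Hence $\Hom_{\Gr(\wtQ)}(\wtQ/(\wtQ y+\wtQ y'),M_p\otimes k^2)\neq 0$ iff $f_p(ky+ky')$ is a simple left ideal of $M_2(k)$. Since $f_p$ is a linear isomorphism, $ky+ky'\mapsto f_p(ky+ky')$ is a bijection on $2$-dimensional subspaces, and restricting it yields a bijection onto the simple left ideals; these are exactly the $\Ann(v)$ with $\Ann(v)=\Ann(v')$ iff $kv=kv'$, so they are indexed by $kv\in\PP(k^2)$, giving the asserted correspondence.

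The computations are routine; the one point needing genuine care is the translation in (3) from a nonzero graded $\Hom$ to the linear-algebra condition. There one must use that the source is cyclic in degree $0$, so a morphism is pinned down by its degree-zero image and exists precisely when that image is annihilated by $y$ and $y'$, together with the dimension count that upgrades the containment $L\subseteq\Ann(v)$ to equality. That equality is exactly what makes the assignment a bijection rather than merely a surjection onto the simple left ideals.
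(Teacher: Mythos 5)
Your proof is correct and takes essentially the same approach as the paper's: the same degree-zero computation $y(e\otimes v)=e'\otimes f_p(y)v$ for (1), the same basis argument combined with \Cref{cor.E4} for (2), and the same composed correspondence $kv \leftrightarrow \Ann_{M_2(k)}(v) \leftrightarrow f_p^{-1}\big(\Ann_{M_2(k)}(v)\big)$ for (3). The only difference is that you make explicit two points the paper leaves implicit — that cyclicity of $\wtQ/(\wtQ y+\wtQ y')$ reduces the $\Hom$ condition to annihilation of a degree-zero element, and that the dimension count upgrades $f_p(ky+ky')\subseteq\Ann_{M_2(k)}(v)$ to equality.
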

\begin{proof}
(1)
 There is a basis $e'$ for $(M_p)_1$ such that $x_je=\d_j e'$ for $j=0,1,2,3$.  
Since $\sum_{j=0}^3 \l_j y_j= \sum_{j=0}^3 \l_j  x_j \otimes q_j$,
$$
\Bigg(\sum_{j=0}^3 \l_j y_j\Bigg) (e \otimes v) \; = \; \sum_{j=0}^3 x_j e \otimes \l_jq_j v \; = \; e' \otimes f_p\Bigg(\sum_{j=0}^3 \l_j y_j\Bigg) v.
$$ 
Thus, $y \in \wtQ_1$ annihilates $e \otimes v$ if and only if $f_p(y) $ annihilates $v$. 

(2)
Since $\{1,q_1,q_2,q_3\}$ is linearly independent, $f_p$ is an isomorphism if and only if $\d_0\d_1\d_2\d_3\ne 0$. 
By \Cref{cor.E4}, $\d_0\d_1\d_2\d_3 = 0$ if and only if $p \in E[4]$. 

(3)
The correspondence $kv \longleftrightarrow \Ann_{M_2(k)}(v)$ is a bijection between the points in $\PP(k^2)=\PP^1$ and 
the simple left ideals of $M_2(k)$. The composed bijection $kv \longleftrightarrow \Ann_{M_2(k)}(v)
\stackrel{f_p}{ \longleftrightarrow} \Ann_{\wtQ_1}(e \otimes v)$ is a bijection between the points in $\PP(k^2)=\PP^1$
and 2-dimensional subspaces $ky+ky'$ of $\wtQ_1$ that annihilate a non-zero element in $(M_p \otimes k^2)_0$.  
\end{proof}

We now apply \Cref{lem.commuting.subspaces} with $p$ equal to $\tau':=(abc,a,b,c)$. By \Cref{prop.abc.a.b.c},   $2\tau'=-\tau$. 

\begin{proposition}
\label{prop.commuting.subspaces}
Let $f=f_{\tau'}: \wtQ_1\to M_2(k)$ be the  linear isomorphism defined  in \Cref{lem.commuting.subspaces}.
 \begin{enumerate}
  \item 
$f$ gives a bijection between commuting subspaces of $\wtQ_1$ and  simple left ideals in $M_2(k)$.
  \item 
Each commuting subspace of $\wtQ_1$ annihilates a unique 1-dimensional subspace of $(M_{\tau'} \otimes k^2)_0$
and each 1-dimensional subspace of $(M_{\tau'} \otimes k^2)_0$ is annihilated by a unique commuting subspace.
\item{}
If the fat point $\tau'+E[2]$ lies on a  ``line'' $L$, then $L$ is a commuting line.
\item{}
The only fat point $p+E[2]$ that lies on a commuting line is  $\tau'+E[2]$.
\end{enumerate}
\end{proposition}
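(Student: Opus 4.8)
The plan is to prove (1) by a determinant computation plus a ruling argument, read off (2), and then reduce (3) and (4) to a single question about the algebra $M_2(k)$. For (1), first I would compute, using $a^2=\a$, $b^2=\b$, $c^2=\c$ and the matrices $q_j$, that
\[
\det f\Big(\textstyle\sum_j\l_jy_j\Big)=\a\b\c\,\l_0^2+\a\,\l_1^2+\b\,\l_2^2+\c\,\l_3^2 .
\]
This is exactly the quadratic form of \Cref{prop.comm.subspace.ruling} whose isotropic planes in one ruling are the commuting subspaces of $\wtQ_1$. Hence the linear isomorphism $f$ carries the quadric in $\PP(\wtQ_1)$ ruled by the commuting subspaces onto the quadric $\{\det=0\}\subseteq\PP(M_2(k))$, whose two rulings are the simple left ideals and the simple right ideals. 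Since a projective isomorphism of smooth quadric surfaces maps each ruling onto a ruling, I need only test one commuting subspace to see which ruling is hit: applying $f$ to $iy_0+bcy_1$ and $cy_2+iby_3$ gives scalar multiples of $\big(\begin{smallmatrix}1&0\\0&0\end{smallmatrix}\big)$ and $\big(\begin{smallmatrix}0&0\\1&0\end{smallmatrix}\big)$, which span the simple left ideal $M_2(k)\big(\begin{smallmatrix}1&0\\0&0\end{smallmatrix}\big)$. Thus $f$ restricts to a bijection between commuting subspaces and simple left ideals.

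Part (2) is then immediate: a simple left ideal $I\subseteq M_2(k)$ is the annihilator of a unique line $kv\subseteq k^2$, and by \Cref{lem.commuting.subspaces}(1) the commuting subspace $f^{-1}(I)$ annihilates exactly the line $k(e\otimes v)\subseteq(M_{\tau'}\otimes k^2)_0$, where $e$ spans $(M_{\tau'})_0$; reading the chain of bijections in either direction yields both uniqueness statements. For (3), I would first replace the incidence relation by a degree-zero homomorphism. One direction is easy: since $M_{\tau'}\otimes k^2$ is $1$-critical, a nonzero graded map from a line module onto it is surjective in all large degrees. For the converse I would use that the point module $M_{\tau'}$, hence $M_{\tau'}\otimes k^2$, is saturated, so that $\Hom_{\QGr(\wtQ)}(\pi^*L,\pi^*(M_{\tau'}\otimes k^2))\cong\Hom_{\Gr(\wtQ)}(L,M_{\tau'}\otimes k^2)$. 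Consequently ``$\tau'+E[2]$ lies on $L=\wtQ/\wtQ U$'' is equivalent to ``$U$ annihilates a nonzero element of $(M_{\tau'}\otimes k^2)_0$'', and by (2) this forces $U$ to be commuting, i.e. $L$ to be a commuting line.

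For (4), fix $p=(\d_0,\d_1,\d_2,\d_3)\in E$ and suppose $M_p\otimes k^2$ lies on a commuting line $\wtQ/\wtQ U$. As in (3), $U$ is commuting and annihilates a nonzero vector of $(M_p\otimes k^2)_0$; by \Cref{lem.commuting.subspaces}(1) this says $f_p(U)$ is a simple left ideal, while $U$ commuting says $f_{\tau'}(U)$ is a simple left ideal. Setting $\Psi:=f_p f_{\tau'}^{-1}\in\GL(M_2(k))$, a direct check gives $\Psi(q_j)=t_jq_j$ with $t_j=\d_j/\d^{\tau'}_j$, where $(\d^{\tau'}_0,\d^{\tau'}_1,\d^{\tau'}_2,\d^{\tau'}_3)=(abc,a,b,c)$ are the coordinates of $\tau'$; so the requirement becomes that $\Psi$, which is diagonal in the basis $q_0,\dots,q_3$, send some simple left ideal to a simple left ideal. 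I would then determine exactly when such a quaternion-diagonal map does this: the $t_j$ must, for one of the three pairings of $\{0,1,2,3\}$ into two pairs, agree within each pair up to a sign, the same sign being used for both pairs. Each of the six resulting conditions is a pair of linear equations in $(\d_0,\dots,\d_3)$ and so cuts out a line in $\PP(\wtQ_1^*)$, which I would identify as the secant joining two of the four points of $\tau'+E[2]$. Because $E$ is the base locus of a pencil of quadrics, any line either lies in $E$ (impossible, as $E$ contains no lines) or meets $E$ in at most two points; hence each of these six secants meets $E$ in exactly the two points of $\tau'+E[2]$ that it joins, and taking the union over the six lines gives $p\in\tau'+E[2]$.

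The main obstacle is the classification step inside (4): pinning down precisely which quaternion-diagonal $\Psi$ carry a left ideal to a left ideal, and then recognizing the six resulting loci as the secants of $\tau'+E[2]$. The sign bookkeeping there is essential, since it is exactly what excludes the spurious candidates $-\tau'+E[2]$ — these satisfy only the weaker condition $t_j^2=t_k^2$, for which $\Psi$ sends the relevant ideal to a \emph{right} ideal rather than a left one — while the absence of trisecants on $E$ is what finally collapses every admissible $p$ onto the single coset $\tau'+E[2]$. A secondary technical point, in (3), is the saturation of $M_{\tau'}\otimes k^2$ used to pass from $\QGr(\wtQ)$ back to honest graded homomorphisms.
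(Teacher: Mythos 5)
Parts (1)--(3) of your proposal follow the paper's own proof essentially verbatim: the same determinant computation $\det f\big(\sum_j\l_jy_j\big)=\a\b\c\l_0^2+\a\l_1^2+\b\l_2^2+\c\l_3^2$, the same ruling-to-ruling argument for the induced isomorphism of quadrics, the same test pair $\{iy_0+bcy_1,\,cy_2+iby_3\}$ landing in a left ideal, and for (3) the same reduction to the fact that the annihilator in $\wtQ_1$ of a non-zero element of $(M_{\tau'}\otimes k^2)_0$ is exactly two-dimensional (the paper gets this from \cite[Prop. 8.6]{CS15}; your chain of bijections gets it from injectivity of $f_{\tau'}$ — both are fine, and your explicit handling of the saturation point is something the paper glosses over). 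Where you genuinely diverge is (4). The paper disposes of (4) in one sentence, citing (2) and \Cref{lem.commuting.subspaces}(3), which leaves implicit precisely the incompatibility you work out: that $f_{\tau'}(U)$ and $f_p(U)$ cannot both be simple left ideals unless $p\in\tau'+E[2]$. Your route — classify the quaternion-diagonal maps $\Psi$ carrying some left ideal to a left ideal, note that each of the six admissible pairing-with-common-sign conditions is a pair of linear equations cutting out one of the six secants of the four points of $\tau'+E[2]$, and finish with the fact that a line meets the quartic $E$ in at most two points — is correct. I checked the classification: same-sign pairings fix or swap the distinguished left ideals lying on both quadrics; mixed signs (e.g. the pattern $(1,1,1,-1)$, which is the transpose, or $(1,-1,-1,-1)$, the adjugate, which is what $-\tau'+E[2]$ produces) send left ideals to right ideals; and when the $t_j^2$ satisfy no pairing at all, the quadrics $\{\det=0\}$ and $\Psi^{-1}\{\det=0\}$ share no lines. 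So your sign bookkeeping correctly eliminates $-\tau'+E[2]$, and your argument supplies an honest proof where the paper only gestures.

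There is one genuine, though small and patchable, gap: you set $\Psi:=f_pf_{\tau'}^{-1}\in\GL(M_2(k))$, but by \Cref{lem.commuting.subspaces}(2) the map $f_p$ fails to be injective exactly when $p\in E[4]$, and part (4) quantifies over \emph{all} fat points $p+E[2]$, including the four cosets in $E[4]/E[2]$. (The paper's citation of \Cref{lem.commuting.subspaces}(3) suffers the same restriction.) The fix is short: a commuting subspace $U$ lies on the quadric of \Cref{prop.comm.subspace.ruling}, which contains none of the coordinate points $[y_j]$, so $U\cap\ker f_p=0$ and $f_p(U)$ is still a simple left ideal; but then the left ideal $I=f_{\tau'}(U)$ would lie on $\{\det\circ\Psi=0\}$, which for $p\in E[4]$ is a rank-three cone with vertex $[q_j]$, forcing $q_j\in I$ — impossible, since $q_j$ is invertible while left ideals consist of singular matrices. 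Equivalently, extend your classification to possibly singular diagonal $\Psi$, observing that a vanishing $t_j$ can never satisfy a pairing condition because no two coordinates of a point of $E$ vanish simultaneously. With that remark added, part (4) is complete.
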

\begin{proof}
(1)
Since $E[4]=E \cap\{x_0x_1x_2x_3=0\}$ and $abc \ne 0$,  
$f$ is a linear isomorphism by \Cref{lem.commuting.subspaces}(2). 

Let  $w_0,w_1,w_2,w_3$ be the basis for $\wtQ_1^*$ dual to the basis $y_0,y_1,y_2,y_3$. 
By \Cref{prop.comm.subspace.ruling},  the commuting subspaces of $\wtQ_1$ are the planes
$$
(abcw_0+iaw_1)-t(ibw_2-cw_3) =  t(abcw_0-iaw_1)-(ibw_2+cw_3) =0, \qquad t \in \PP^1.
$$
The union of these subspaces is the quadric  $\a\b\c w_0^2 + \a w_1^2 + \b w_2^2 +\c w_3^2=0$.

If $y=\sum \l_j y_j \in \wtQ_1$, then $\det f(y) = \a\b\c \l_0^2 + \a \l_1^2 + \b \l_2^2 +\c \l_3^2$.
Hence $f$ sends the quadric  $\a\b\c w_0^2 + \a w_1^2 + \b w_2^2 +\c w_3^2=0$ in $\PP(\wtQ_1)$ 
isomorphically to the quadric  $\{\det=0\}$ in $\PP(M_2(k))$ and sends the ruling by commuting subspaces
to one of the rulings on $\{\det=0\}$. One of the rulings on $\{\det=0\}$ is given by the simple left ideals in $M_2(k)$,
the other by  the simple right ideals. 

The commuting subspace spanned by $iy_0+bc y_1$ and $cy_2+iby_3$ is sent by $f$ to the linear span of 
$$
f(iy_0+bc y_1) = 
\begin{pmatrix}
2iabc  & 0   \\
0   &  0 
\end{pmatrix}
\qquad \hbox{and} \qquad 
f(cy_2+iby_3)=
\begin{pmatrix}
0  &0   \\
2ibc   & 0
\end{pmatrix}
$$
which is a {\it left} ideal. Hence $f$ sends commuting subspaces of $\wtQ_1$ to simple left ideals of $M_2(k)$.

(2)
This follows from \Cref{lem.commuting.subspaces}(1) because every simple left ideal in $M_2(k)$ annihilates 
a unique 1-dimensional subspace of $k^2$ and every 1-dimensional subspace of $k^2$ is annihilated by a unique simple left ideal.

(3)
Let $L$ be a line module for $\wtQ$ for which there is a non-zero homomorphism $\phi:L \to M_{\tau'} \otimes k^2$. 
There is a 2-dimensional subspace $ky+ky'\subseteq \wtQ_1$ such that $L=\wtQ/\wtQ y + \wtQ y'$. Hence $ky+ky'$
annihilates a non-zero element  $e \otimes v \in (M_{\tau'} \otimes k^2)_0$. By (2), $e \otimes v$ is also annihilated by 
a commuting subspace of $\wtQ_1$. However, by \cite[Prop. 8.6]{CS15}, $\wtQ_1(e \otimes v) = (M_{\tau'} \otimes k^2)_1$
which is 2-dimensional. It follows that $ky+ky'$ is the commuting subspace of $\wtQ_1$ that annihilates $e \otimes v$.
Thus, $L$ is a commuting line module.  

(4)
To show there are no other points in $E/E[2]$ that lie on a commuting line we must show that if $q \in E-(\tau'+E[2])$, then 
$\Hom_{\Gr(\wtQ)}(L,M_q \otimes k^2)=0$.   This follows from (2) and \Cref{lem.commuting.subspaces}(3). 
\end{proof}

\begin{theorem}\label{th.fat_conic}
For $j \in \{0,1,2,3\}$, $\tau'+\ve_j+E[2]$ is the unique fat point in  $\Projnc(\wtQ)$  that lies on all the lines in $\Projnc(\wtQ)$
that are parametrized by the conic $C_j$. There are no other incidence relations between conic lines and fat points in 
$E/E[2] =\Projnc(\wtB)$.   
\end{theorem}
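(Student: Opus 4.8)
The plan is to reduce the whole statement to the commuting conic $C_0$ (where $\ve_0=o$), which \Cref{prop.commuting.subspaces} has essentially already settled, and then to transport the conclusion to $C_1,C_2,C_3$ using the auto-equivalences $a_{\ve_1},a_{\ve_2},a_{\ve_3}$ of $\Gr(\wtQ)$. Before doing either, I would record the reduction that makes incidence checkable by a $\Hom$-computation: since a conic line module $L$ is generated in degree zero and each fat point $M_p\otimes k^2$ is $1$-critical, any non-zero homomorphism $L\to M_p\otimes k^2$ in $\Gr(\wtQ)$ has cofinite-dimensional image, so $\pi^*$ turns it into an epimorphism onto the simple object $\pi^*(M_p\otimes k^2)$. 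Thus ``$M_p\otimes k^2$ lies on $L$'' is equivalent to $\Hom_{\Gr(\wtQ)}(L,M_p\otimes k^2)\ne 0$.

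First I would dispose of $j=0$. Given a commuting line module $L=\wtQ/(\wtQ y+\wtQ y')$ with $ky+ky'$ a commuting subspace, \Cref{prop.commuting.subspaces}(2) produces a non-zero $e\otimes v\in (M_{\tau'}\otimes k^2)_0$ annihilated by $ky+ky'$; sending the generator of $L$ to $e\otimes v$ gives a non-zero map $L\to M_{\tau'}\otimes k^2$, so by the previous paragraph the fat point $\tau'+E[2]$ lies on every line of $C_0$. Then \Cref{prop.commuting.subspaces}(4), which says no fat point other than $\tau'+E[2]$ lies on any commuting line, supplies both the uniqueness and the absence of any further incidences for $C_0$.

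For $j\in\{1,2,3\}$ I would transport this along $a_{\ve_j}$ using three facts: (i) $a_{\ve_j}$ preserves Hilbert series (\Cref{cor.autoequivs2}), hence preserves $\Fdim(\wtQ)$, descends to $\QGr(\wtQ)$ commuting with $\pi^*$, and so preserves the incidence relation; (ii) the induced automorphism of the line scheme carries $C_0$ onto $C_j$ (\Cref{pr.theyre_conics} and the proof of \Cref{cor.conics}), so $a_{\ve_j}$ sends the commuting line modules to those parametrized by $C_j$; and (iii) $a_{\ve_j}$ restricts to an auto-equivalence of $\Gr(\wtB)$ (\Cref{pr.E4_acts_B}) that acts on the fat points $M_p\otimes k^2$ as translation by $\ve_j$ in $E/E[2]$ (\Cref{pr.E4_act_fat}), whence $a_{\ve_j}(M_{\tau'}\otimes k^2)\cong M_{\tau'+\ve_j}\otimes k^2$. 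Writing a line of $C_j$ as $a_{\ve_j}(L)$ with $L$ a commuting line, the natural isomorphism $\Hom_{\Gr(\wtQ)}(a_{\ve_j}L,\,a_{\ve_j}(M_{p-\ve_j}\otimes k^2))\cong \Hom_{\Gr(\wtQ)}(L,\,M_{p-\ve_j}\otimes k^2)$ converts the $j=0$ statement into the statement for $C_j$: every line of $C_j$ passes through $\tau'+\ve_j+E[2]$ and through no other fat point.

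The main obstacle here is bookkeeping rather than genuine difficulty, since the substance lives in the cited propositions. I must be careful that the \emph{same} auto-equivalence $a_{\ve_j}$ realizes both $C_0\mapsto C_j$ in (ii) and $\tau'+E[2]\mapsto \tau'+\ve_j+E[2]$ in (iii), and that the $E[2]$-indeterminacy of the four auto-equivalences $a_\ve$ in \Cref{cor.autoequivs} is harmless because $\ve_j+\xi\equiv\ve_j\pmod{E[2]}$ for all $\xi\in E[2]$, so the translated fat point is unambiguous in $E/E[2]$. I would also double-check that the coordinate forms $\tau'+\ve_1=(a,-ia,i,1)$, etc., recorded in \Cref{prop.abc.a.b.c}, match the indexing used in (ii)–(iii), so that the fat point attached to $C_j$ is indeed $\tau'+\ve_j+E[2]$ and not some permutation of it.
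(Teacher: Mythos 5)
Your proposal is correct and takes essentially the same route as the paper's own proof: both settle the case of the commuting conic $C_0$ via \Cref{prop.commuting.subspaces} and then transport the statement to $C_1,C_2,C_3$ using the $E[4]$ auto-equivalences, invoking \Cref{pr.E4_acts_B} and \Cref{pr.E4_act_fat} to see that the fat points $\tau'+\ve_j+E[2]$ are the corresponding translates of $\tau'+E[2]$. Your additional bookkeeping (the $\Hom$-criterion for incidence, the check that the same $a_{\ve_j}$ realizes both $C_0\mapsto C_j$ and the translation on $E/E[2]$, and the observation that the sign and $E[2]$-indeterminacy are harmless since $2\ve_j\in E[2]$) only makes explicit what the paper leaves implicit.
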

\begin{proof}
By \Cref{prop.commuting.subspaces}, the result is true for the commuting conic $C_0$.

We use the quantum symmetry technique described in \Cref{se.other_conics} to transfer the result from $C_0$
to the other conics. By \Cref{pr.E4_acts_B}, the action of the autoequivalence group $E[4]$ permutes the conics 
$C_j$ or, more precisely, the line modules they parametrize, and preserves the variety $E/E[2]$ 
parametrizing fat $\wtB$-points. 
By \Cref{pr.E4_act_fat}, the four fat points $\tau'+\ve_j+E[2]$ form an orbit under the action of the two-torsion
subgroup $E[4]/E[2]$ of $E/E[2]$.
 \end{proof}

\begin{corollary}
\label{cor.ell_line=conic_line}
Let $(i,j) \in \{0,1,2,3\} \times \{1,2,3\}$.
The image of $E/\langle \xi_j \rangle$ in $\GG(1,3)$ meets $C_i$ at 2 points.
In total, 6 of the conic lines parametrized by $C_i$ are elliptic lines, two for each $E/\langle \xi_j \rangle$.
\end{corollary}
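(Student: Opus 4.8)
The plan is to prove the count $\big\vert (E/\langle\xi_j\rangle)\cap C_i\big\vert = 2$ for a single convenient pair and then propagate it to all pairs by symmetry. First I would reduce to the case $i=0$: by \Cref{pr.E4_act_lines} the group $E[4]$ acts on $\GG(1,3)$ in a way that fixes each elliptic family $E/\langle\xi_j\rangle$ setwise, while the quotient $E[4]/E[2]$ permutes $\{C_0,C_1,C_2,C_3\}$ as the regular representation. Hence for each $i$ there is an automorphism of $\GG(1,3)$, induced by one of the autoequivalences $a_\ve$ of $\Gr(\wtQ)$ (which acts on $\GG(1,3)$ preserving the Pl\"ucker class, as recorded around \Cref{pr.theyre_conics}), carrying $C_0$ onto $C_i$ and carrying $E/\langle\xi_j\rangle$ onto itself. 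Such an automorphism matches $(E/\langle\xi_j\rangle)\cap C_0$ bijectively with $(E/\langle\xi_j\rangle)\cap C_i$, so it suffices to count the former.

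The key step is to identify $(E/\langle\xi_j\rangle)\cap C_0$ with the set of elliptic lines in the $j$-th family on which the fat point $\tau'+E[2]$ lies. A point of this intersection is a line module that is simultaneously elliptic in the $j$-th family and a commuting line module, i.e.\ a point of $C_0$. For the upper bound, every commuting line passes through $\tau'+E[2]$ by \Cref{th.fat_conic} (the case $i=0$, where $\ve_0=o$), so each intersection point is an elliptic line on which $\tau'+E[2]$ lies; by \Cref{pr.fat_pts_on_elliptic_lines} there are exactly two such elliptic lines in each family, giving $\big\vert (E/\langle\xi_j\rangle)\cap C_0\big\vert \le 2$. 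For the reverse inequality, \Cref{prop.commuting.subspaces}(3) tells us that any line on which $\tau'+E[2]$ lies is a commuting line; hence each of the two elliptic lines through $\tau'+E[2]$ is a commuting line, so it lies on $C_0$ and therefore belongs to $(E/\langle\xi_j\rangle)\cap C_0$. This pins the intersection to exactly these two lines.

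Combining the two inclusions gives $\big\vert (E/\langle\xi_j\rangle)\cap C_0\big\vert = 2$, and the symmetry reduction of the first paragraph upgrades this to $\big\vert (E/\langle\xi_j\rangle)\cap C_i\big\vert = 2$ for every $(i,j)\in\{0,1,2,3\}\times\{1,2,3\}$. Summing over $j\in\{1,2,3\}$ then yields the stated total of $6$ conic lines in $C_i$ that are simultaneously elliptic, two from each elliptic family. I expect the only genuine subtlety to be the two-sided matching in the middle step: the upper bound is an immediate consequence of the fat-point-on-conic statement \Cref{th.fat_conic} together with the elliptic incidence count \Cref{pr.fat_pts_on_elliptic_lines}, whereas the lower bound rests on the converse supplied by \Cref{prop.commuting.subspaces}(3), namely that passing through $\tau'+E[2]$ forces a line to be commuting. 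Some care is also needed to confirm that the $E[4]$-symmetry genuinely identifies the two intersection sets, which is exactly what the fact that $a_\ve$ descends to an automorphism of $\GG(1,3)$ fixing each elliptic family provides.
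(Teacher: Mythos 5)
Your proposal is correct and takes essentially the same approach as the paper: both arguments get the $i=0$ case by combining \Cref{th.fat_conic}, the count of two elliptic lines through the fat point from \Cref{pr.fat_pts_on_elliptic_lines}, and the converse statement \Cref{prop.commuting.subspaces}(3), and then transfer to the remaining conics via the $E[4]$-action of \Cref{pr.E4_act_lines}. The only differences are cosmetic — you perform the symmetry reduction before treating $C_0$ rather than after, and you spell out explicitly the two-sided bound that the paper's terser wording uses implicitly.
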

\begin{proof}
Let $p_i \in E$ be the point such that every line parametrized by $C_i$ passes through the fat 
point module $M_{p_i} \otimes k^2$. By \Cref{pr.fat_pts_on_elliptic_lines}, exactly two elliptic lines in each family 
$E/\langle \xi_j \rangle$ pass through that point. 

If $i=0$,  \Cref{prop.commuting.subspaces}(3) tells us those elliptic lines are, in fact, conic lines. 
Hence $C_0$ meets the image of $E/\langle \xi_j \rangle$ in $\GG(1,3)$ at exactly 2 points.  
By \Cref{pr.E4_act_lines}, the action of $E[4]$ as auto-equivalence of $\Gr(\wtQ)$ permutes the four conics and 
sends the set of line modules parametrized by $E/\langle \xi_j \rangle$ to itself. The result follows.
\end{proof}

\subsubsection{Remark}
The 24 points in $\GG(1,3)$ that belong to 
$$
\bigcup_{0 \le i \le 3 \atop 1 \le j \le 3} C_i \cap (E/\langle \xi_j \rangle)
$$
are given in Table \ref{E.cap.C}. Since $E/\langle \xi_j \rangle$ parametrizes lines in $\PP(S_1^*)$ of the 
form $\overline{p,p+\xi_j}$ we obtain, in this way,  24 distinguished secant lines in $\PP(S_1^*)$.

\section{Exact sequences arising from incidences}
\label{se.seq}

In order to get a more complete picture of the incidence geometry of $\wtQ$ we now identify the kernels of the surjections from elliptic  and conic lines to points and fat points.

Throughout this section we write $\G=\{\c_0,\c_1,\c_2,\c_3\}$ where $\c_0=1$ and $\c_j$ fixes $x_0$ and $x_j$.
As before, we identify $\G$ with $E[2]$.

\subsection{Ordinary points}
\label{subse.ordinary}

Throughout \Cref{subse.ordinary} we fix $j \in \{0,1,2,3\}$ and an ordinary point module $P\in \Gr(\wtQ)$ 
that corresponds to a point in $\fP_j$. Let $F=\wtV(\tau+\xi_j)$. Thus $F$ is a 1-critical $S$-module 
of multiplicity two and, by \Cref{cor.equi_fat}, there is a $\G$-equivariant structure on $F\otimes k^2$  such that 
$P\cong (F \otimes k^2)^\G$.  

If $p+q=\tau+\xi_j$, there is a non-zero homomorphism $M_{p,q} \to F$.  

By \Cref{th.pts_on_elliptic_lines}(1), $P$ is a quotient in $\QGr(\wtQ)$ of exactly three elliptic lines, one in each of the three elliptic families. By \Cref{cor.conics}, $P$ is also a quotient of exactly three conic lines, one in each of three conic families $C_i$, $i\ne j$.

The following result shows that the kernels of the resulting surjections from elliptic $\wtQ$-lines onto $P$ are shifted conic lines and vice versa. 

Recall $E[4]$ acts on the category $\Gr(\wtQ)$ and the restriction of the action to $E[2]=\G$ 
is induced by the action of $\G$ as automorphisms of $\wtQ$. For $\varepsilon\in E[4]$ we write $\varepsilon^*$ for corresponding autoequivalence. 

Throughout \Cref{subse.ordinary} whenever we work with some general element $\xi\in E[2]$ we denote the other two non-zero elements of $E[2]$ by $\xi'$ and $\xi''$.

\begin{proposition}\label{pr.ord_seq}
The kernels of the surjections from line modules onto the ordinary point modules are as follows. 
  \begin{enumerate}
  \item 
  If $P\in \fP_j$ is a quotient of a line module $M$ in the $E/\langle \xi\rangle$-family, there is an exact sequence 
        \begin{equation*}
          0\to L(-1)\to M\to P\to 0
        \end{equation*}
in which $L$ is the line module in the $C_k$-family that maps onto $\xi_k^*P$, where $\xi_k=\xi+\xi_j$.
      \item 
        If $P\in \fP_j$ is a quotient of a line module $L$ in the $C_i$-family,\footnote{Recall that this implies $i \ne j$.} 
        there is an exact sequence 
        \begin{equation*}
          0\to M(-1)\to L\to P\to 0
        \end{equation*}
in which $M$ is a line module in the $E/\langle \xi_i+\xi_j \rangle$-family that corresponds to a $\G$-equivariant 
structure on $(M_{p-2\tau,p+\xi-2\tau}\oplus M_{p+\xi'-2\tau,p+\xi''-2\tau})\otimes k^2$ where $\xi=\xi_i+\xi_j$ and 
$2p=\tau+\xi_k$. 
  \end{enumerate}  
\end{proposition}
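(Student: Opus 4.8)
The plan is to deduce both exact sequences from the Morita--equivariant description over $S'=S\otimes M_2(k)$ and then descend to $\wtQ$ via the equivalence $\Gr(S')^\G\equiv\Gr(\wtQ)$. Recall that $P=(F\otimes k^2)^\G$ with $F=\wtV(\tau+\xi_j)$, that the elliptic line module is $M_{x,\xi}^\G$, and that the conic line modules are produced from the commuting conic by the quantum symmetries of \Cref{ssect.Hopf.alg.H}. For \Cref{pr.ord_seq}(1) I would first realize the surjection $M\to P$ as the $\G$-invariant part of the $S'$-map $M_{x,\xi}\to F\otimes k^2$. By \Cref{prop.L->F}(2) each of the two summand maps $M_{q,q+\xi}\otimes k^2\to F\otimes k^2$ is onto in positive degrees, and by the complementarity of the two degree-zero images recorded in the proof of \Cref{prop.Mpq.onto.V}(2) the combined map is an isomorphism in degree $0$. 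Hence it is surjective, and the Hilbert series identity $(1-t)^{-2}-(1-t)^{-1}=t(1-t)^{-2}$ forces its kernel to be $L(-1)$ for a line module $L$; the reciprocal computation handles \Cref{pr.ord_seq}(2).

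The substance is to identify $L$ (resp. $M$). I would extract the structure of the kernel from \Cref{prop.L->F}(4): on each summand the map to $F\otimes k^2$ has kernel $M_{\bullet-2\tau,\bullet-2\tau}(-2)\otimes k^2$, and the total kernel $K$ over $S'$ is the fibre product of the two summands along the degree-zero isomorphism. Shifting by one degree, one checks that $K(1)$ is \emph{not} a direct sum of two line-module-times-$k^2$ factors of the shape $M_{y,\eta}$; this is what distinguishes it from an elliptic $S'$-module and forces $L=K(1)^\G$ into one of the conic families. The $-2\tau$ shifts and the two-secant pattern read off here are exactly the data displayed in \Cref{pr.ord_seq}(2), and the unique $\G$-equivariant structure on the kernel is the one making the inclusion into $M_{x,\xi}$ equivariant, pinned down as in the proof of \Cref{pr.few_lines}.

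It remains to name the conic $C_k$ and the target. Since $L$ is a line module surjecting, via the residual translation built into \Cref{prop.L->F}(4), onto the point obtained from $P$, I would compute this target to be $\xi_k^*P$ with $\xi_k=\xi+\xi_j$ and then invoke the incidences already established: by \Cref{cor.conics} there is a unique conic line in each admissible family through a given ordinary point, so matching the target to $\xi_k^*P$ identifies the family as $C_k$. The bookkeeping is most cleanly organized through the $E[4]$-action of \Cref{pr.E4_acts}: these autoequivalences are exact and shift-preserving, they carry the commuting conic to $C_i$ (\Cref{pr.theyre_conics}), translate within each elliptic family while fixing it (\Cref{pr.E4_act_lines}(1)), and permute the families $\fP_j$, so the twist $\xi_k^*$ in the target is the $E[4]/E[2]$-shift they introduce. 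For \Cref{pr.ord_seq}(2) the same machinery runs in reverse: reading \Cref{prop.L->F}(4) from the conic side produces the elliptic kernel obtained by descent from $(M_{p-2\tau,p+\xi-2\tau}\oplus M_{p+\xi'-2\tau,p+\xi''-2\tau})\otimes k^2$, with $\xi=\xi_i+\xi_j$ and $2p=\tau+\xi_k$ fixed by \Cref{prop.line.fat}.

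The main obstacle is precisely this identification step. Over $S'$ the kernel is a fibre product glued along the degree-zero isomorphism rather than a transparent elliptic-type module, and the conic line modules are not descents of obvious $S'$-modules but arise through the Hopf algebra $H$; consequently one cannot simply exhibit an $S'$-module isomorphism. The identification must therefore be forced indirectly, by combining the $S$-level sequences of \Cref{prop.L->F} with the central-element annihilators (the $\Theta_j$ of \Cref{central.A.elts} as well as the $\Omega(z)$) and with the incidence and symmetry theorems; keeping track of exactly which conic $C_k$ and which twist $\xi_k^*$ occur is where the care is needed.
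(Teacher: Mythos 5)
Your overall skeleton — realize $M\to P$ as the $\G$-invariant part of an $S'$-map $M_{x,\xi}\to F\otimes k^2$, note it is an isomorphism in degree $0$ and onto in higher degrees, read the kernel's Hilbert series, then decide "elliptic vs.\ conic" using the classification in \Cref{thm.main1} and organize the bookkeeping via the $E[4]$-action — is the same as the paper's. But the step you offer for pinning down the conic family is wrong, not just incomplete. Knowing that $L$ surjects onto $\xi_k^*P$ cannot identify the family: since each $\fP_j$ is a single $\G$-orbit, $\xi_k^*P$ is again a point of $\fP_j$, and by \Cref{cor.conics} that point lies on exactly one line of \emph{every} conic family $C_i$ with $i\ne j$, i.e.\ on three distinct conic lines. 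So "matching the target to $\xi_k^*P$" is compatible with $L$ lying in any of the three admissible families; incidence with ordinary points is blind to the index $k$. What actually separates the four conic families is their incidence with \emph{fat} points, and this is how the paper's Claim~3 proceeds: for a central element $\Omega(z)\ne\Omega(\tau+\xi_j)$ the quotient $M/\Omega(z)M$ is, in $\QGr(\wtQ)$, a fat point $M_q\otimes k^2$; the composite $L(-1)\to M\to M_q\otimes k^2$ then exhibits $M_{q-\tau}\otimes k^2$ as a quotient of $L$; by \Cref{th.fat_conic} every line of $C_k$ passes through $\tau'+\ve_k+E[2]$ and through no other fat point, so $2q=\tau+\xi_k$, while \Cref{pr.fat_pts_on_elliptic_lines} forces $q\in p+E[2]$, whence $\tau+\xi_k=2p=\tau+\xi+\xi_j$. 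Some argument of this kind (your closing paragraph gestures at it but does not supply it) is indispensable.

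A second, smaller gap: your assertion that "one checks that $K(1)$ is not a direct sum of two line-module-times-$k^2$ factors of the shape $M_{y,\eta}$" is precisely the paper's Claim~1 and is not routine; the fibre-product description alone does not yield it, since $K$ genuinely contains the elliptic-shaped submodule $\ker f_A\oplus\ker f_B$, only with shift $(-2)$ instead of $(-1)$. The paper's argument is: if $K$ had the elliptic shape, projecting onto a summand of $M_{x,\xi}$ would embed a shifted line module into $M_{p,p+\xi}$ (or $M_{p+\xi',p+\xi''}$) with point-module cokernel; but since $p+(p+\xi)=\tau+\xi_j\notin E[2]$, the cited results of Levasseur--Smith say the only such kernels are $M_{p+\tau,p+\xi-\tau}(-1)$ and $M_{p-\tau,p+\xi+\tau}(-1)$, whose two defining points differ by $\xi\mp2\tau\notin E[2]$ — so they are not of elliptic shape, a contradiction. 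Without this (or an equivalent) argument your dichotomy step is unsupported. Finally, your derivation of the target point from the "residual translation" in \Cref{prop.L->F}(4) is under-justified; the paper instead uses $\xi^*M\cong M$ (from \Cref{pr.E4_act_lines}(1)), the fact that $\xi^*P\not\cong P$ for ordinary $P$, and the shift rule $M_p(1)_{\ge0}\cong\c_j^*M_p$ on $\fP_j$, which is the clean way to produce the twist $\xi+\xi_j$. Your sketch of part (2) does match the paper's argument.
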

\begin{proof}
{(1)} 
The kernel of the surjection $M \to P$ is equal to $L(-1)$ for some line module $L$.
There is a point  $p\in E$ such that $2p+\xi=\tau+\xi_j$ and $M$ corresponds to an equivariant structure on 
$(M_{p,p+\xi}\oplus M_{p+\xi',p+\xi''})\otimes k^2$. By the proof of \Cref{th.pts_on_elliptic_lines}, the 
surjection $M\to P$ comes from a $\G$-equivariant homomorphism of $S'$-modules
\begin{equation*}
  (M_{p,p+\xi}\oplus M_{p+\xi',p+\xi''})\otimes k^2\to F\otimes k^2.
\end{equation*}
Its kernel $K$ must be a $\G$-equivariant $S'$-module and $K^\Gamma \cong L(-1)$. 

{\bf Claim 1: $L$ is a conic line.} 

Suppose to the contrary that $L$ is an elliptic line module. Then $K
\cong (M_{q,q+\omega}\oplus M_{q+\omega',q+\omega''})\otimes k^2$ for some $q \in E$ 
where $\{\omega,\omega',\omega''\}=E[2]-\{o\}$. There would then be a non-zero morphism in $\Gr(S)$ from $M_{q,q+\omega}(-1)$ or $M_{q+\omega',q+\omega''}(-1)$ to either $M_{p,p+\xi}$ or $M_{p+\xi',p+\xi''}$, and its cokernel would be a 
point module. This, however, contradicts the fact that when $u+v\not\in E[2]$, which is the case for $\{u,v\}=\{p,p+\xi\}$ 
and $\{u,v\}=\{p+\xi',p+\xi''\}$,  the only point modules that are quotients of $M_{u,v}$ are those arising from the exact sequences
\begin{equation*}
  0\to M_{u+\tau,v-\tau}(-1)\to M_{u,v}\to M_u\to 0
\end{equation*}
and the analogous one for $M_v$. This proves Claim 1.

{\bf Claim 2: $L$ surjects onto $(\xi+\xi_j)^*P$.} 
By \Cref{pr.E4_act_lines}(1), $E[4]$ acts on $E/\langle\xi\rangle$ as translations by the image of 
$E[4]$ through $E\to E/\langle \xi\rangle$. Hence $\xi^*M \cong M$. Thus, applying $\xi^*$ to $M\to P$ results in a surjection $M\to \xi^*P$. Since $P$ is an ordinary point module, $\xi^*P \not\cong P$. The composition
$L(-1)\to M\to \xi^*P$  is therefore non-zero. Hence there is a surjection $L \to (\xi^*P)(1)_{\ge 0} \cong \c_j^*\xi^*P$
(see the remark after \Cref{eq.theta} ). Thus, $L$ maps onto $(\xi+\xi_j)^*P$ as claimed.

{\bf Claim 3: $L$ belongs to the conic family $C_k$, where $\xi_k=\xi+\xi_j$.} 
Let $C_k$ be the conic family to which $L$ belongs. We will show that $\xi_k=\xi+\xi_j$.

Since $P$ is in $\fP_j$, it is annihilated by $\Omega(\tau+\xi_j)$. 
Since the line module $M$ is annihilated by a unique $\Omega(z)$, it must be annihilated by 
$\Omega(\tau+\xi_j)$. Therefore, if $\Omega=\Omega(z) \ne \Omega(\tau+\xi_j)$,  $M/\Omega M$ is a $\wtB$-module with
Hilbert series $(1-t^2)(1-t)^{-2}$ so is isomorphic in $\QGr(\wtQ)$ to $M_q\otimes k^2$ for some $q\in E$. 
As in the proof of Claim 2,  the composition $L(-1)\to M\to M_q\otimes k^2$ leads to a 
surjection $L \to (M_q \otimes k^2)(1)_{\ge 0} \cong M_{q-\tau}\otimes k^2$. 
Since $L$ is in $C_k$, \Cref{th.fat_conic} tells us that $q-\tau \in \tau'+\ve_k+E[2]$.
By \Cref{prop.abc.a.b.c}, $2\tau'=-\tau$ so $2q=\tau+\xi_k$. 
By  \Cref{pr.fat_pts_on_elliptic_lines} and the remarks before it, since $M_q\otimes k^2$ is a quotient of
$M$, $q \in p+E[2]$. But $2p=\tau+\xi_j+\xi$ so $\tau+\xi_k=2q=\tau+\xi+\xi_j$.
 The claim follows.

{(2)} 
Consider for a moment   the proof of (1). The $\langle\xi\rangle$-equivariant morphisms $M_{p,p+\xi}\otimes k^2\to F\otimes k^2$ and $M_{p+\xi',p+\xi''}\otimes k^2\to F\otimes k^2$ give rise to $\langle\xi\rangle$-equivariant structures on the kernels which are, by \Cref{prop.L->F}, $M_{p-2\tau,p+\xi-2\tau}(-2)\otimes k^2$
and $M_{p+\xi'-2\tau,p+\xi''-2\tau}(-2)\otimes k^2$
respectively. The action of $\xi'$ then interchanges these two kernels, giving rise to a $\G$-equivariant structure on their direct sum and hence to a shifted elliptic line module $M(-2)$, with $M$ as in the statement of (2). 

We have an embedding $M(-2)\subset L(-1)$ and so $L/M(-1)$ is a point module 
which must necessarily coincide with $P$ since a conic line module admits a single surjection onto a 
point module, up to isomorphism (\Cref{cor.conics}).  

To see that {\it every} surjection $L\to P$ as in the statement of (2) fits into this framework simply observe that the group $E[4]$ of autoequivalences of $\Gr(\wtQ)$ acts transitively on the set of isomorphism classes of such surjections. 
\end{proof}

\subsection{Special points}
\label{subse.special}

Let $P$ be a special point module. 
By \Cref{th.pts_on_elliptic_lines}, if $L$ is a conic line, then $\Hom_{\Gr(\wtQ)}(L,P)=0$. 
In \S\ref{subse.special} we describe the kernels of the surjections $M \to P$ when $M$ is an elliptic line module.  

Under the equivalence between $\Gr(\wtQ)$ and $\Gr(S')^\G$,  $\{P \; | \; P \in \fP_\infty\}$  corresponds to 
$$
\big\{(M_e \oplus M_e)\otimes k^2 \in \Gr(S')^\G \; \big\vert \;  M_e \; \hbox{is a special point module for } \; S\big\}.
$$  
Each $M_e$ is annihilated by three of the four $x_j$'s so 
each special point module for $\wtQ$ is annihilated by three of the $y_j$'s.

\begin{proposition}\label{pr.spec_seq}
Let $M_x$ be the line module corresponding to a point $x \in E/\langle \xi_i\rangle$. 
Let $P\in \Gr(\wtQ)$ be the special point module that is annihilated by $\{y_0,y_1,y_2,y_3\}-\{y_j\}$.
If $f:M_x\to P$ is a surjection in $\Gr(\wtQ)$, then
\begin{equation}
\label{ker.elliptic.line.to.fat.pt}
\ker(f:M_x \to P) \;  \cong \;
\begin{cases} 
 M_{x-\tau}(-1) & \text{if $j\not\in\{i,0\}$,}
 \\
 M_{x+\xi_i'-\tau}(-1)& \text{if $j\in\{i,0\}$.}
 \end{cases}
 \end{equation}
\end{proposition}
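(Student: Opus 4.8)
The plan is to do everything in the category $\Gr(S')^\G$ of $\G$-equivariant $S'$-modules, which is equivalent to $\Gr(\wtQ)$ via $M\mapsto M^\G$ by \cite[Prop. 3.9]{CS15}; since $\mathrm{char}(k)\nmid|\G|$ this equivalence is exact, so I may compute kernels downstairs and then apply $(-)^\G$. Write $x=p+\langle\xi_i\rangle$ and let $\{\xi_i,\xi_i',\xi_i''\}=E[2]\setminus\{o\}$. The line module $M_x$ is the invariant part of a $\G$-equivariant structure on $M_{x,\xi_i}=(M_{p,p+\xi_i}\oplus M_{p+\xi_i',p+\xi_i''})\otimes k^2$, while the special point module $P$ annihilated by $\{y_0,y_1,y_2,y_3\}\setminus\{y_j\}$ is the invariant part of $(M_{e_j}\oplus M_{e_j})\otimes k^2$ with its unique equivariant structure (here the $S$-point module $M_{e_j}$ sits at the vertex where $x_m=0$ for $m\ne j$). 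A surjection $f\colon M_x\to P$ thus lifts to a $\G$-equivariant $S'$-epimorphism $\tilde f\colon M_{x,\xi_i}\to (M_{e_j}\oplus M_{e_j})\otimes k^2$. Since a nonzero map $M_{u,v}\to M_{e_j}$ forces $u+v=\xi_j$ (by the exact sequence of \cite{LS93}), the existence of $\tilde f$ forces $2p+\xi_i=\xi_j$; both divisor-sums $(p)+(p+\xi_i)$ and $(p+\xi_i')+(p+\xi_i'')$ then equal $\xi_j$.

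Next I would identify $\ker\tilde f$ as a plain $S'$-module. Through the Morita equivalence $-\otimes k^2$, the map $\tilde f$ becomes an $S$-map $M_{p,p+\xi_i}\oplus M_{p+\xi_i',p+\xi_i''}\to M_{e_j}\oplus M_{e_j}$ whose four entries each lie in the one-dimensional space $\Hom_S(-,M_{e_j})$; hence $\tilde f=C\circ(s_1\oplus s_2)$ for an invertible scalar matrix $C$ and the two surjections $s_1,s_2$ of \cite{LS93}, whose kernels are $M_{p-\tau,p+\xi_i-\tau}(-1)$ and $M_{p+\xi_i'-\tau,p+\xi_i''-\tau}(-1)$. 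Therefore $\ker\tilde f\cong M_{x-\tau,\xi_i}(-1)$ as an $S'$-module, so after descent the kernel is automatically an elliptic (not conic) line module. The subtlety is that $M_{x-\tau,\xi_i}$ and $M_{x+\xi_i'-\tau,\xi_i}$ are the very same $S'$-module with its two summands interchanged, and its two $\G$-equivariant structures descend respectively to $M_{x-\tau}(-1)$ and $M_{x+\xi_i'-\tau}(-1)$, interchanged by swapping $\xi_i$-eigenspaces (\cite[Thm. 11.6(3)]{CS15}). Everything therefore reduces to deciding which structure $\ker\tilde f$ inherits as a $\G$-stable submodule of $M_{x,\xi_i}$.

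This last determination is the crux and the main obstacle. Translation by $\xi_i$ fixes each of the two divisors above, so $\c_i=\xi_i$ preserves each summand of $M_{x,\xi_i}$; the deciding datum is thus the degree-wise $\langle\xi_i\rangle\cong\bZ/2$-eigenvalue pattern of $\ker\tilde f$, read off from the $\langle\xi_i\rangle$-equivariant sequence $0\to M_{p-\tau,p+\xi_i-\tau}(-1)\otimes k^2\to M_{p,p+\xi_i}\otimes k^2\to M_{e_j}\otimes k^2\to 0$. The key input is that in $M_{e_j}=S/Se_j^\perp$ every $x_m$ with $m\ne j$ acts as $0$ and $x_j$ generates, so by \Cref{Gamma.action} (where $\c_i(x_j)=x_j$ when $j\in\{0,i\}$ and $\c_i(x_j)=-x_j$ otherwise) $\c_i$ acts on $(M_{e_j})_n$ by $\epsilon_i^{\,n}$ with $\epsilon_i=+1$ exactly when $j\in\{0,i\}$. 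Because the quotient in the sequence carries this $\epsilon_i$-twist, the eigenvalue pattern of $\ker\tilde f$ — and hence which of the two structures it carries — flips with the sign of $\epsilon_i$: the descent is $M_{x-\tau}(-1)$ when $\epsilon_i=-1$, i.e. $j\notin\{i,0\}$, and $M_{x+\xi_i'-\tau}(-1)$ when $\epsilon_i=+1$, i.e. $j\in\{i,0\}$. The genuinely delicate task is to pin down the direction of this correspondence against the explicit equivariant structures of \cite[\S11]{CS15}; this is the special-point analogue of the eigenspace bookkeeping already carried out in the proof of \Cref{pr.ord_seq}.
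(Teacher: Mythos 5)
Your overall route coincides with the paper's: pass to $\Gr(S')^\G\equiv\Gr(\wtQ)$, lift $f$ to a $\G$-equivariant epimorphism $\tilde f\colon M_{x,\xi_i}\to(M_{e_j}\oplus M_{e_j})\otimes k^2$, identify $\ker\tilde f$ as the $S'$-module $\bigl(M_{p-\tau,p+\xi_i-\tau}\oplus M_{p+\xi_i'-\tau,p+\xi_i''-\tau}\bigr)(-1)\otimes k^2$ via \cite[Thm. 5.7]{LS93}, and then decide which of the two equivariant structures the kernel inherits. Your scalar-matrix factorization $\tilde f=C\circ(s_1\oplus s_2)$ is a clean justification of the middle step (the paper is terser there). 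The difficulty is entirely in the last step, and that is where your proposal has a genuine gap.

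The mechanism you offer --- reading off the degree-wise $\langle\xi_i\rangle$-eigenvalue pattern of $\ker\tilde f$ by complementarity against the ``$\epsilon_i$-twisted'' quotient --- cannot decide the question, because the two candidate equivariant structures on $M_{x-\tau,\xi_i}$ are isomorphic as $\G$-representations in every degree: $\c_i$ acts on each summand $(M_{q,q+\xi_i})_n\otimes k^2$ as (a $\c_i$-twisted module map)$\,\otimes\,(\pm i q_i)$, which has trace zero, and the other nontrivial elements of $\G$ swap the two summands, so every homogeneous component of either structure is a multiple of the regular representation. Consequently the kernel and the quotient have identical eigenvalue multisets in each degree no matter what $\epsilon_i$ is, and no counting argument of this kind can distinguish $M_{x-\tau}(-1)$ from $M_{x+\xi_i'-\tau}(-1)$. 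What actually distinguishes the two structures is \emph{where} the eigenspaces sit relative to the canonical decomposition --- whether $\c_i$ fixes or negates $(M_{p-\tau,p+\xi_i-\tau})_0\otimes{0 \choose 1}$ --- and that must be read off from an explicit element of $\ker\tilde f$ in degree one. This is exactly where the case split bites, via \cite[Lemma 11.7]{CS15}: the degree-one part of $\ker s_1$ is $e_j^\perp m$ modulo $\Ann(m)$, and $\Ann(m)$ has the form $\langle\beta_0x_0+\beta_ix_i,\ \beta_{i'}x_{i'}+\beta_{i''}x_{i''}\rangle$ with the coefficient of $x_j$ forced to vanish. When $j\notin\{0,i\}$ one gets $\beta_0\beta_i\neq0$, so the $\G$-invariant element $x_0$ survives and spans the kernel's generating line; transporting the degree-zero structure along the invariant element $y_0$ then reproduces the convention of \cite[\S11.4]{CS15} for $M_{x-\tau}$. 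When $j\in\{0,i\}$ the invariant elements $x_0,x_i$ of $e_j^\perp$ lie in $\Ann(m)$, so the kernel is spanned by $x_{i'}m$ with $\c_i(x_{i'})=-x_{i'}$, and this sign flip lands in the other structure, giving $M_{x+\xi_i'-\tau}$. You gesture at the relevant input (``$x_j$ generates $M_{e_j}$''), but your stated mechanism bypasses the annihilator computation, and your closing sentence explicitly defers the determination of the direction --- which is precisely the content of the proposition. As written, the proposal establishes only that $\ker f$ is one of the two listed modules, not which one.
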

\begin{proof}
Since $i$ and $j$ are fixed we drop the index $i$ in $\xi_i$ and write  $E[2]=\{o,\xi=\xi_i,\xi',\xi''\}$, and we write 
$M_{e}$ for the point module in $\Gr(S)$ that is annihilated by $\{x_0,x_1,x_2,x_3\}-\{x_j\}$ (see \Cref{se.pts}).

Let $p \in E$ be one of the two preimages of $x$.

The homomorphism $f:M_x \to P$ corresponds to a morphism 
\begin{equation}\label{eq.spec_seq1}
  (M_{p,p+\xi}\oplus M_{p+\xi',p+\xi''})\otimes k^2\to (M_e\oplus M_e)\otimes k^2,
\end{equation}
of $\G$-equivariant $S'$-modules. 
Let $K$ denote the kernel and $M$ the domain of this map. Thus, $\ker(f)=K^\Gamma$ and $M_x =M^\Gamma$. 

By \cite[Thm. 5.7]{LS93}, the kernel of every surjective homomorphism $M_{p,p+\xi} \to M_e$ in $\Gr(S)$ is isomorphic to $M_{p-\tau,p+\xi-\tau}(-1)$.
 Thus, $K$ is isomorphic to the $S'$-module $(M_{p-\tau,p+\xi-\tau}\oplus M_{p+\xi'-\tau,p+\xi''-\tau})\otimes k^2$ equipped with one of its two 
 $\G$-equivariant structures. We will show that the two possibilities correspond to the two cases in 
 (\ref{ker.elliptic.line.to.fat.pt}). We will prove only the first statement, $j\not\in\{0,i\}$; the other case is entirely analogous. 

Since $M_{p,p+\xi}$ surjects onto $M_e$, the subspace of $\wtQ_1$ that annihilates  $(M_{p,p+\xi})_0$  
is contained in the linear 
span of $\{x_0,x_1,x_2,x_3\}-\{x_j\}$. By \cite[Lemma 11.7]{CS15}, this subspace is spanned by an element of the form $\beta_0x_0+\beta_ix_i$ and $x_k$ where $k\not\in\{0,i,j\}$. Moreover, since $(M_{p+\xi',p+\xi''})_0$ is annihilated by $\beta_0x_0-\beta_ix_i$ and $x_k$, $\beta_0$ and $\beta_i$ are both non-zero. 

It follows that $x_0$ annihilates $(M_e)_0$ but not $(M_{p,p+\xi})_0$ or $(M_{p+\xi',p+\xi''})_0$. 
Hence $K_1=x_0M_0$.  

Let $q +\langle \xi \rangle \in E/\langle \xi \rangle$.
As in \cite[$\S$11.4]{CS15}, we define the line module corresponding to $q +\langle \xi \rangle$ as the 
$\G$-invariants for the equivariant structure on $(M_{q,q+\xi}\oplus M_{q+\xi',q+\xi''})\otimes k^2$  
in which $\xi_i$ fixes $(M_{q,q+\xi})_0\otimes {0 \choose 1}$ and changes the sign of $(M_{q+\xi',q+\xi''})_0\otimes {0 \choose 1}$. Since $x_0$ is fixed by $\G$,  $K^\G \cong M_{x-\tau}(-1)$.
\end{proof}

\subsection{Fat point modules}
\label{subse.fat}

As before, $M_p\otimes k^2$ is the fat point module for $\wtB$ corresponding to $p+E[2]$.  If $L$ is a line module and $f:L
\to M_p \otimes k^2$ is non-zero we will show that $\ker(f) \cong L(-2)$.

\begin{definition}
The action of a group $\G$ on a category $\cC$ is \define{weakly trivial} if $\gamma^*c\cong c$ for all $\c \in \G$ and
all $c\in\cC$. 
\end{definition}

\begin{lemma}\label{le.wtriv}
  The action of $\G$ on $\QGr(\wtB)$ is weakly trivial. 
\end{lemma}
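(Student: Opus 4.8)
The plan is to transport the autoequivalence $\c^*$ to $\qcoh(E/E[2])$ and pin it down using the structure theory of autoequivalences of a smooth projective curve. First I would check that each $\c\in\G$ descends from $\wtQ$ to a graded algebra automorphism of $\wtB$: by \S\ref{ssect.2centers} every $\Omega_j$ is fixed by $\G$, so $\c$ preserves the ideal generated by the $\Omega(z)$, hence acts on $\wtB$. The induced autoequivalence $\c^*$ of $\Gr(\wtB)$ manifestly preserves the subcategory $\Fdim$ (it preserves finite dimension), so it descends to an autoequivalence of $\QGr(\wtB)\equiv\qcoh(E/E[2])$. Since $E/E[2]\cong E$ is a smooth projective curve, a standard reconstruction theorem of Gabriel gives $\c^*\cong \cM_\c\otimes g_\c^*(\hdot)$ for some $g_\c\in\Aut(E/E[2])$ and $\cM_\c\in\Pic(E/E[2])$. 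Weak triviality will follow once I show $g_\c=\id$ and $\cM_\c\cong\cO$.

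To see $g_\c=\id$ I would use that $\c^*$ fixes every skyscraper sheaf. Under the equivalence the skyscrapers are exactly the fat points $M_p\otimes k^2$ (\S\ref{ssect.B_tilde}), and by \Cref{pr.E4_act_fat} the $E[4]$-action on these---of which the $\G$-action is the restriction to $E[2]$, cf. \Cref{pr.E4_acts}---is translation on $E/E[2]$ by the image of $E[4]$ in $E/E[2]$. The image of $\G=E[2]$ is zero, so $\c^*$ fixes each $\cO_{p+E[2]}$ up to isomorphism; as $\c^*$ permutes the simple objects of $\qcoh(E/E[2])$ according to $g_\c$, this forces $g_\c=\id$ and hence $\c^*\cong\cM_\c\otimes(\hdot)$.

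It remains to show $\cM_\c\cong\cO$, and this is the genuine obstacle: fixing all skyscrapers does not rule out a line-bundle twist, since $\cM\otimes(\hdot)$ fixes every skyscraper for any $\cM$. Here I would exploit that $\c^*$ comes from an honest algebra automorphism, for which the regular module is self-twisted: the map $\c\colon\wtB\to\c^*\wtB$, $x\mapsto\c(x)$, is an isomorphism of graded left $\wtB$-modules, so $\c^*(\pi^*\wtB)\cong\pi^*\wtB$. A Hilbert-series computation identifies $\pi^*\wtB$ with the structure sheaf $\cO_{E/E[2]}$ (the series $(1+t)^2(1-t)^{-2}$ has $n$th coefficient $4n$, matching $h^0(\cO(n))$ for a quartic elliptic curve), so in particular $\pi^*\wtB$ is a line bundle and $\c^*\cO\cong\cO$. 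Combining with $g_\c=\id$ gives $\cM_\c=\cM_\c\otimes\cO\cong\c^*\cO\cong\cO$, whence $\c^*\cong\id$, and a fortiori $\c^*c\cong c$ for every $c$. The two points that need care are the invocation of the reconstruction theorem and the verification that $\pi^*\wtB$ is a line bundle; once those are secured, the self-duality of the regular module kills the twist cleanly.
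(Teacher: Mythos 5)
Your route coincides with the paper's for its first two steps: write $\c^*\cong \cM_\c\otimes g_\c^*(-)$ by the structure theorem for autoequivalences of $\qcoh$ of a projective variety (the paper cites \cite[Thm.~5.4]{Bra13} rather than Gabriel reconstruction, but this is the same move), then use \Cref{pr.E4_act_fat} to see that skyscrapers are fixed, forcing $g_\c=\id$. Both of those steps are sound, and your idea for the last step -- exploit that $\c^*$ fixes the image of the regular module $\wtB$ -- is also exactly the paper's idea.

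The gap is in how you execute that last step: the claim $\pi^*\wtB\cong\cO_{E/E[2]}$ is false. By \cite[Prop.~8.4]{CS15}, which is what the paper's proof invokes, under the equivalence $\QGr(\wtB)\equiv\qcoh(E/E[2])$ the object $\pi^*\wtB$ is the \emph{non-split self-extension} $\cV$ of $\cO_{E/E[2]}$ -- a rank-two bundle, not a line bundle. Your Hilbert-series count cannot detect this: $\dim_k\wtB_n=4n$ is equally consistent with twists of a rank-two, degree-zero bundle by a degree-two line bundle (Riemann--Roch on an elliptic curve gives $h^0=\deg$ in both scenarios), so the numerics do not certify rank one. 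Indeed your identification contradicts the dictionary you use one step earlier: skyscrapers (length $1$) correspond to fat points of multiplicity $2$, so by additivity the multiplicity-$4$ module $\wtB$ must go to a rank-$2$ sheaf; the deeper reason the naive reading of Hilbert series fails is that $\wtB$ is not a twisted homogeneous coordinate ring (\S\ref{ssect.base.locus}). The step can be repaired while keeping your strategy: from $g_\c=\id$ and $\c^*(\pi^*\wtB)\cong\pi^*\wtB$ you get $\cM_\c\otimes\cV\cong\cV$, and $\cM_\c\otimes\cV$ is the non-split self-extension of $\cM_\c$ by itself; since $\cO$ is the unique degree-zero line subbundle of $\cV$ (a degree-zero line subbundle either lands in the sub-copy of $\cO$, hence equals it, or maps isomorphically to the quotient $\cO$ and splits the extension), this forces $\cM_\c\cong\cO$. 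That is precisely how the paper concludes.
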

\begin{proof}
By \cite[$\S$8]{CS15}, the fat point module $M_p\otimes k^2$ corresponds to the skyscraper sheaf $\cO_{p+E[2]}$
under the equivalence $\QGr(\wtB)\equiv \QCoh(E/E[2])$.

Let $\g\in\G$. By \cite[Theorem 5.4]{Bra13} the autoequivalence $\g^*$ of $\QCoh(E/E[2])$ is isomorphic to 
$\alpha^*\circ (\cL\otimes -)$ for some automorphism $\alpha:E/E[2]\to E/E[2]$ and some line bundle
$\cL$. By \Cref{pr.E4_act_fat}, $\c^*(M_p\otimes k^2) \cong M_p\otimes k^2$; since $\cL \otimes  \cO_{p+E[2]}
\cong \cO_{p+E[2]}$, $\alpha$ must be the identity. 

However, $\gamma^*$ also preserves the isomorphism class of $\wtB\in \QGr(\wtB)\equiv \QCoh(E/E[2])$. Since the latter object is identified with the non-trivial self-extension $\cV$ of $\cO_{E/E[2]}$ (\cite[Prop. 8.4]{CS15}), $\gamma^*\wtB$ is a non-trivial self-extension of $\cL$. This can only be isomorphic to $\cV$ when $\cL\cong \cO_{E/E[2]}$. 
\end{proof}

\begin{proposition}\label{pr.B_no_conic}
  No line module for $A$ is a $\wtB$-module. 
\end{proposition}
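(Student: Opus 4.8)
The plan is to pass from $\wtB$-modules to coherent sheaves via the equivalence $\QGr(\wtB)\equiv\qcoh(C)$, where $C:=E/E[2]$ is a smooth projective curve of genus $1$, and to extract a numerical contradiction from the ``rank two'' nature of this equivalence. Suppose, for contradiction, that a line module $L$ for $A=\wtQ$ is a $\wtB$-module. Then $\cF:=\pi^*L$ is a coherent sheaf on $C$, so $\rank(\cF)$ is a well-defined non-negative integer. Since $H(L;t)=(1-t)^{-2}$ we have $\dim_k L_n=n+1$, so the Hilbert polynomial of $L$ has leading coefficient $1$; I will show this is incompatible with $\cF$ having integer rank.

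First I would record that the assignment sending a finitely generated graded $\wtB$-module $M$ to the eventual value $P_M(n):=\dim_k(\pi_*\pi^*M)_n$ (for $n\gg 0$) is a numerical polynomial, that it is additive on short exact sequences, and that $\dim_k M_n=P_M(n)$ for $n\gg 0$. Transported through the equivalence, $P$ becomes an additive invariant of $\qcoh(C)$ and hence factors through the numerical Grothendieck group $K_0^{\mathrm{num}}(\qcoh(C))\cong\bZ^2$, whose two coordinates are rank and degree. Because a rank-zero (torsion) sheaf has bounded Euler characteristic, the leading (degree-one) coefficient of $P_M$ depends only on $\rank(\cF)$; say it equals $c_0\cdot\rank(\cF)$ for a universal constant $c_0$.

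To pin down $c_0$ I would use two calibrating objects. The fat point $M_p\otimes k^2$ corresponds to the skyscraper $\cO_{p+E[2]}$, which has rank $0$, consistent with $\dim_k(M_p\otimes k^2)_n=2$ being constant. The module $\wtB$ itself corresponds, as noted in the proof of \Cref{le.wtriv}, to the non-split self-extension $\cV$ of $\cO_C$, so $[\cV]=2[\cO_C]$ and $\rank(\cV)=2$; and since $\Omega_0,\Omega_1$ form a regular sequence of central degree-two elements, $H(\wtB;t)=(1-t^2)^2(1-t)^{-4}=(1+t)^2(1-t)^{-2}$, whence $\dim_k\wtB_n=4n$ for $n\ge 1$. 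Therefore $c_0\cdot 2=4$, i.e. $c_0=2$, and the leading coefficient of $P_M$ equals $2\rank(\pi^*M)$ for every $M$. Applied to $L$ this gives $2\rank(\cF)=1$, which is impossible since $\rank(\cF)\in\bZ_{\ge 0}$, and this is the desired contradiction.

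The main obstacle is the bookkeeping in the middle step: one must verify that $P$ is a genuine additive invariant that descends through the equivalence to something controlled only by the rank, and that the calibration constant is exactly $c_0=2$ — equivalently, that $\QGr(\wtB)\equiv\qcoh(C)$ carries the ``multiplicity two'' feature visible in both dictionary entries (skyscraper $\leftrightarrow$ fat point, and $\cV$ $\leftrightarrow$ $\wtB$). As a cross-check, and an alternative more elementary route, one could instead invoke \Cref{le.wtriv}: any $\wtB$-module is $\G$-invariant, so it would suffice to prove that no line module $L$ satisfies $\c^*L\cong L$ for all $\c\in\G$. For elliptic lines this is immediate from \Cref{pr.E4_act_lines}(1), since $\G=E[2]$ acts on each family $E/\langle\xi\rangle$ through the fixed-point-free translation by the nonzero image of $\xi'$; the remaining difficulty would be to rule out a conic line fixed by all of $\G$, which amounts to checking that the induced Klein-four action on each $C_j\cong\PP^1$ has no global fixed point.
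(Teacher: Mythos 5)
Your fallback route is, in fact, the paper's own proof: combine \Cref{le.wtriv} with the assertion that no point of the line scheme is fixed by all of $\G$, the elliptic families being handled exactly as you say via the translation description in \Cref{pr.E4_act_lines}(1). But the step you defer --- ruling out a conic line fixed by all of $\G$ --- is precisely the crux, and it genuinely requires an argument: each individual involution in $\G$ \emph{does} fix two points of every conic. For instance, in the coordinates of \Cref{prop.commute}, $\c_1$ acts on $C_0$ by $(z_{01},z_{02},z_{03})\mapsto(z_{01},-z_{02},-z_{03})$ and fixes the two points of $C_0$ with $z_{01}=0$, while $\c_2$ fixes the two points with $z_{02}=0$. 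So fixed-point-freeness holds only for the group as a whole, and one must check that these fixed loci are disjoint: a common fixed point would force $z_{01}=z_{02}=0$, hence $\c z_{03}^2=0$ by the conic equation, which is impossible; the actions on $C_1,C_2,C_3$ are conjugate (by the $E[4]$-autoequivalences) to the action on $C_0$, so the same holds there. Alternatively, your ``Klein-four on $\PP^1$'' reformulation closes by a general fact: a faithful $(\ZZ/2)^2$-action on $\PP^1$ in characteristic $\ne 2$ has no common fixed point, because the stabilizer of a point of $\PP^1$ in $\mathrm{PGL}_2(k)$ is $k^\times\ltimes k^+$, whose finite subgroups of order $4$ are cyclic when $\mathrm{char}(k)\ne 2$. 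As written, this route of yours is incomplete at exactly the point where the paper's proof does its work.

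Your main route is genuinely different from the paper's, and its core idea --- that under $\QGr(\wtB)\equiv\qcoh(E/E[2])$ the leading coefficient of a Hilbert polynomial is an even multiple of the rank of the corresponding sheaf, while a line module's leading coefficient is $1$ --- is sound and could be made to work. However, the ``bookkeeping'' you set aside is the substance of the argument, and two pieces of it are real gaps. First, the claim $\dim_k M_n=P_M(n)$ for $n\gg 0$ is an Artin--Zhang $\chi_1$-type (saturation) statement for $\wtB$; it is not established anywhere in the paper, and it is not automatic for noetherian connected graded algebras (it can fail in general), so it must be proved for $\wtB$ or extracted from the construction of the equivalence in \cite{CS15}. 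Second, ``the leading coefficient depends only on rank'' needs the shift functor of $\QGr(\wtB)$ to transport to an autoequivalence of $\qcoh(E/E[2])$ of the form $\alpha^*(\cL\otimes -)$, hence rank-preserving with positive twisting degree; this does follow from the result of \cite{Bra13} already invoked in \Cref{le.wtriv} together with Riemann--Roch, but it has to be said. If you want to bypass the Hilbert-polynomial transport entirely, your setup admits a cleaner finish: rank $0$ is impossible (a finite-length sheaf would force $\dim_k L_n$ to be bounded, not $n+1$), and if $\cF=\pi^*L$ had rank $\ge 1$ it would surject onto the skyscraper $\cO_x$ for \emph{every} $x\in E/E[2]$, so every fat point would lie on the line $\pi^*L$; this contradicts the incidence results already proved (each elliptic line carries exactly one fat point, by the proof of \Cref{pr.E4_act_fat}, and each conic line exactly one, by \Cref{th.fat_conic}), since $E/E[2]$ has infinitely many points.
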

\begin{proof}
 By \Cref{pr.E4_act_lines}, no points of the line scheme are fixed by $\G$. 
 Thus, if $L$ is a line module $\c^*L \not\cong L$. Hence, as objects in $\QGr(A)$, $\c^*L \not\cong L$. 
 It therefore follows from \Cref{le.wtriv} that $L$ in not a $\wtB$-module. 
\end{proof}

\begin{theorem}\label{pr.fat_seq}
If $L$ is a line module, then the kernel of every non-zero morphism $L\to M_p\otimes k^2$ 
is isomorphic to $L(-2)$. 
\end{theorem}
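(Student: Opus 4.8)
The plan is to realize the kernel explicitly as $\Omega L$ for a well-chosen central element $\Omega$, and then to force equality by a Hilbert-series count. First I would choose a degree-two central element $\Omega=\Omega(z)$, $z\in E$, that does \emph{not} annihilate $L$. Such an element exists: by \Cref{pr.B_no_conic} no line module is a $\wtB$-module, so $L$ cannot be killed by every $\Omega(z)$. On the other hand $M_p\otimes k^2$ is a $\wtB$-module, hence is annihilated by all the $\Omega(z)$. Therefore $f(\Omega\cdot x)=\Omega\cdot f(x)=0$ for every $x\in L$, which gives the containment $\Omega L\subseteq\ker(f)$ that I will eventually promote to an equality.

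Next I would identify $\Omega L$ with $L(-2)$. Line modules are $2$-critical, and a homogeneous central element of positive degree acting on a $2$-critical module either annihilates it or acts injectively: if $N=\{m\in L:\Omega m=0\}$ were a nonzero submodule, then $L/N\cong\Omega L$ would simultaneously be a proper quotient of $L$, hence of Gelfand--Kirillov dimension $<2$, and a nonzero submodule of $L$, hence of dimension $2$ by criticality, a contradiction. Since $\Omega$ does not annihilate $L$, multiplication $\cdot\,\Omega\colon L(-2)\to L$ is injective with image $\Omega L$, so $\Omega L\cong L(-2)$. In particular $\dim_k(\Omega L)_n=\dim_k L_{n-2}=n-1$ for $n\ge 2$, and $(\Omega L)_n=0$ for $n\le 1$.

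It then remains to prove $\ker(f)=\Omega L$, and since $\Omega L\subseteq\ker(f)$ it suffices to match graded dimensions. This reduces to showing that $f$ is surjective in every positive degree, i.e. $\operatorname{im}(f)_n=(M_p\otimes k^2)_n$ for all $n\ge 1$. Writing $e$ for a generator of $L_0$ and $f(e)=e\otimes v\neq 0$, the case $n=1$ is exactly \cite[Prop. 8.6]{CS15}, which gives $\wtQ_1(e\otimes v)=(M_p\otimes k^2)_1$; the general case then follows by induction once one knows $\wtQ_1\cdot(M_p\otimes k^2)_n=(M_p\otimes k^2)_{n+1}$. The latter is a short linear-algebra check using $y_j=x_j\otimes q_j$, the fact that the point module $M_p$ for $S$ is generated out of each degree, and that for $p=(\delta_0,\ldots,\delta_3)$ the operators $\{\delta_j q_j\}$ move any line in $k^2$ onto all of $k^2$ (using $p\notin E[4]$, and treating the $E[4]$ case directly, where exactly one $\delta_j$ vanishes). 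Granting this, $\dim_k\ker(f)_n=\dim_k L_n-\dim_k(M_p\otimes k^2)_n=(n+1)-2=n-1$ for $n\ge 1$ while $\ker(f)_0=0$, matching $\dim_k(\Omega L)_n$ in every degree, so $\ker(f)=\Omega L\cong L(-2)$.

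The main obstacle is precisely this last upgrade from containment to equality. Knowing only that $f$ is an epimorphism in $\QGr(\wtQ)$ (which does hold, since $M_p\otimes k^2$ is simple there) yields surjectivity of $f$ merely in high degrees, and hence only that $\ker(f)/\Omega L$ is finite-dimensional; the uniform degree-by-degree surjectivity is what pins this quotient down to zero. I therefore expect the verification of $\wtQ_1\cdot(M_p\otimes k^2)_n=(M_p\otimes k^2)_{n+1}$ for all $n$, including the delicate $4$-torsion cases, to be the one step demanding genuine care.
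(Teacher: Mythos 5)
Your proposal is correct and takes essentially the same route as the paper's proof: choose a degree-two central element $\Omega=\Omega(z)$ acting faithfully on $L$ (via \Cref{pr.B_no_conic} and criticality), note that $\Omega$ kills the $\wtB$-module $M_p\otimes k^2$ so that $\Omega L\subseteq\ker(f)$, and then force equality by comparing Hilbert series, giving $\ker(f)=\Omega L\cong L(-2)$. The only (minor) difference is in how the comparison is justified: the paper implicitly invokes the known Hilbert series $(1+t)(1-t)^{-1}$ of the cyclic submodule $Af(L_0)\subseteq M_p\otimes k^2$ from \cite[Thm.~8.1, Cor.~8.7]{CS15}, whereas you re-derive the degree-by-degree surjectivity of $f$ in degrees $\ge 1$ from \cite[Prop.~8.6]{CS15} together with the (correct, and in fact easy, since a single $y_j$ with non-vanishing coordinate already suffices once a full two-dimensional graded piece is reached) inductive step $\wtQ_1\cdot(M_p\otimes k^2)_n=(M_p\otimes k^2)_{n+1}$.
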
 
\begin{proof} 
It follows from \Cref{pr.B_no_conic}  that there is a non-zero degree-two central element $\Omega$ that does not annihilate $L$ and hence acts faithfully on it. But $\Omega$ annihilates $M_p\otimes k^2$ so the image of 
$L\to M_p\otimes k^2$ must be isomorphic to $L/\Omega L$ by a Hilbert series comparison. In conclusion, the kernel of $L\to M_p\otimes k^2$ is $\Omega L\cong L(-2)$.
\end{proof}

\vskip .3in

\appendix
\section{The components of the line scheme}
\begin{center}
\textsc{Derek Tomlin}\footnote{derek.tomlin@mavs.uta.edu,
Department of Mathematics, P.O.~Box 19408,
University of Texas at Arlington,
Arlington, TX 76019-0408}
\end{center}

We use the method  in \cite{ShV02_bis} to determine the reduced line scheme, $\mathbb{L}_{\rm red}$, for the algebras 
$A$ defined in \S 3.1. 
For more detail about these methods the reader is referred to \cite{ChV}.

We use the coordinate functions $y_0,y_1,y_2,y_3$ on $\PP(A_1^*)$ used earlier in this paper.   
We associate to the line  in $\PP(A_1^*)$ through different points $(a_0,\ldots,a_3)$ and $(b_0,\ldots,b_3)$ the 
point $(X_{01},X_{02},X_{03},X_{12},$ $X_{13},X_{23}) \in \PP^5$ where $X_{ij}:=a_ib_j-a_jb_i$. Thus $X_{ij}$ is a 
$2 \times 2$ minor  of  
\[
\begin{pmatrix}
a_0 & a_1 & a_2 & a_3 \\[2mm]
b_0 & b_1 & b_2 & b_3 
\end{pmatrix}.
\]
The Pl\"ucker polynomial $X_{01}X_{23}-X_{02}X_{13}+X_{03}X_{12}$ vanishes on this matrix.
The Grassmannian of lines in $\PP(A_1^*)$ is the zero locus in $\PP^5$ of the Pl\"ucker polynomial.

% \end{document}

\begin{theorem}\label{thm.D}\cite{DT}
The line variety, $\LL_{\rm red},$ of  $A $ is the union of four plane conics and three quartic elliptic curves:
\begin{enumerate}
\item[$\L_{0}$:] $ \;  X_{01}+\alpha X_{23} \;=\; X_{02}- \beta X_{13} \;=\; X_{03}+ \gamma X_{12}
 \;=\; X_{01}^2 + \alpha \gamma X_{12}^2 +\alpha \beta X_{13}^2\,=\,0.    \phantom{\Big)}$ 

\item[$\L_{1}$:]$ \; X_{01}-\alpha X_{23} \;=\; X_{02}+X_{13}\phantom{x} \;=\;  X_{03}+ X_{12}
\phantom{x} \;=\;  X_{01}^2 - \alpha X_{12}^2 +\alpha X_{13}^2 \phantom{xx}\, =\,0.   \phantom{\Big)}$ 

\item[$\L_{2}$:] $  \;  X_{01}+X_{23} \phantom{x} \;=\; X_{02}+\beta X_{13} \;=\; X_{03}-X_{12}
\phantom{x} \;=\; X_{01}^2 - X_{12}^2 - \beta X_{13}^2 \phantom{xx}\,\, \,\, \,=\,0.   \phantom{\Big)}$ 
	              
\item[$\L_{3}$:] $\;  X_{01}-X_{23} \phantom{x}\,\,=\,X_{02}-X_{13}\phantom{x} \,\,=\,X_{03}-\gamma X_{12}
\;\,= \; X_{01}^2+\c X_{12}^2-X_{13}^2 \; \phantom{xx}\,\,\,\,  = \;0.   \phantom{\Big)} $ 
            
\item[$E_{1}$:] $\; X_{01}\,=\,X_{23}\,=\,X_{02}X_{13}-X_{03}X_{12}   \phantom{\Big)} $ 

$\phantom{xxxxxxxx}
 =\,(1+\gamma) X_{02}^2 - (1-\beta) X_{03}^2 - \gamma (1-\beta) X_{12}^2 - \beta(1+\gamma) X_{13}^2\,=\,0.   \phantom{\Big)}$ 

\item[$E_{2}$:] $ \; X_{02}\,=\,X_{13}\,=\,X_{01}X_{23}+X_{03}X_{12}\,   \phantom{\Big)}$

$\phantom{xxxxxxxx}=\,(1-\gamma)X_{01}^2 - (1+\alpha)X_{03}^2+\gamma(1+\alpha)X_{12}^2 +\alpha(1-\gamma)X_{23}^2\,=\,0.   \phantom{\Big)}$ 
	       
\item[$E_{3}$:] $\; X_{03}\,=\,X_{12}\,=\,X_{01}X_{23}-X_{02}X_{13}\,   \phantom{\Big)}$

$\phantom{xxxxxxxx}
 =\,( 1 + \beta)X_{01}^2 - (1-\alpha)X_{02}^2 - \beta(1-\alpha) X_{13}^2 - \alpha(1+\beta) X_{23}^2\,=\,0. \phantom{\Big)}$
\end{enumerate}
\end{theorem}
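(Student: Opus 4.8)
The statement is a direct determination of the reduced line scheme, so the plan is to run the Shelton--Vancliff algorithm of \cite{ShV02_bis,ChV} for the specific relations (\ref{S-tilde-relns}) and then identify the irreducible components of the resulting one-dimensional scheme. First I would record the relations of $A$ as the $6$-dimensional subspace $R\subseteq A_1^{\otimes 2}$ and, following the recipe in \cite{ShV02_bis}, encode the condition that a line $\ell\subseteq\PP(A_1^*)$ --- equivalently a $2$-dimensional subspace $W\subseteq A_1$ --- satisfy $R\cap(A_1\otimes W)\neq 0$, which is precisely the condition that $A/AW$ be a line module. Parametrizing $\ell$ by its Pl\"ucker coordinates $(X_{01},\dots,X_{23})$ produces an explicit matrix $M(X)$ whose entries are linear in the $X_{ij}$ and whose maximal minors, together with the Pl\"ucker relation $X_{01}X_{23}-X_{02}X_{13}+X_{03}X_{12}=0$, cut out $\LL\subseteq\GG(1,3)\subseteq\PP^5$.

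The computational heart is then a Gr\"obner-basis calculation over $k(\alpha,\beta,\gamma)$, subject to the structure relation $\alpha+\beta+\gamma+\alpha\beta\gamma=0$: form the ideal $I$ of maximal minors of $M(X)$ inside the Pl\"ucker ideal, compute its radical to obtain $\LL_{\rm red}$, and run a primary decomposition. I expect the output to split into seven components, and the task is then to match them against the seven explicit systems in the statement. For each candidate component $\L_0,\dots,\L_3$ and $E_1,E_2,E_3$ I would verify directly that its generators lie in the radical of $I$, that each $\L_j$ is a smooth plane conic --- its three linear forms cut out a $\PP^2$ on which the remaining quadric is nondegenerate, exactly as in \Cref{prop.commute} --- and that each $E_j$ is an irreducible quartic: inside the $\PP^3\subseteq\PP^5$ defined by its two linear equations, $E_j$ is the intersection of the (restricted) Pl\"ucker quadric with the listed quadric, hence a $(2,2)$-complete-intersection curve, which is an elliptic quartic once smoothness is checked.

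As a global consistency check I would confirm that the seven listed varieties exhaust $\LL_{\rm red}$ by a degree count: the four conics contribute $4\cdot 2=8$ and the three elliptic quartics contribute $3\cdot 4=12$, for total degree $20$, in agreement with the bound $\deg(\LL)=20$ forced once $\dim(\LL)=1$ by \cite[Thm.~1.1]{CSV15}, and with the degree-$4$ computation of \cite[Prop.~11.12]{CS15}. Two cross-checks make the identification unambiguous: the $\L_j$ coincide, up to the coordinate rescaling of \Cref{conv.identification}, with the conics $C_j$ constructed independently in \Cref{se.commuting_conic} and \Cref{se.other_conics}, and the $E_j$ with the elliptic families $E/\langle\xi_j\rangle$ of \cite[\S11]{CS15}.

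The main obstacle is the computation itself, together with the extraction of clean, symmetric generators. The minor ideal over the three-parameter base (with the cubic constraint) is large, and a naive primary decomposition returns unwieldy output; the real work is to massage it into the symmetric normal forms displayed above and to certify that the scheme produced is genuinely reduced --- the theorem asserts only $\LL_{\rm red}$, so I would either track multiplicities through the minor computation or verify reducedness componentwise, checking that each component is generically reduced and that distinct components meet transversally where they intersect. Finally, care with the hypotheses $\mathrm{char}(k)\neq 2,3$ and with the genericity of $\tau$ (so that the $\alpha_i$ avoid the degenerate loci $\{0,\pm 1\}$) is needed to guarantee that the decomposition has exactly this shape.
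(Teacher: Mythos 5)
Your proposal follows essentially the same route as the paper: the appendix presents Theorem \ref{thm.D} precisely as the output of this Shelton--Vancliff minor-ideal computation (the method of \cite{ShV02_bis}, as elaborated in \cite{ChV}), with all Gr\"obner-basis and primary-decomposition details deferred to Tomlin's thesis \cite{DT}, and your cross-checks against the conics $C_j$ and the elliptic families $E/\langle\xi_j\rangle$ match how the paper itself reconciles the appendix with \Cref{pr.theyre_conics} and \Cref{prop.Ej}. The one point deserving emphasis is the caveat you raise at the end: a computation over $k(\alpha,\beta,\gamma)$ modulo $\alpha+\beta+\gamma+\alpha\beta\gamma=0$ proves the decomposition only for generic parameters, so to get the theorem for every admissible $(\alpha,\beta,\gamma)$ one must track the denominators and leading coefficients appearing in the elimination, verifying they are invertible whenever $\alpha_i\notin\{0,\pm 1\}$ --- this bookkeeping is exactly what the full computation in \cite{DT} supplies.
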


There is an isomorphism from $\GG(1,3)$ with coordinate functions $X_{pq}$ to $\GG(1,3)$ with coordinate functions $z_{rs}$, which are used in 
\S\S\ref{se.commuting_conic} and \ref{sect.quantum.symm}, given by $X_{pq} \mapsto z_{rs}$ whenever ${0\, \, 1 \,\, 2 \,\, 3 \choose p\, \, q \,\, r \,\, s }$ is an even permutation.
In the $z$-coordinates, the equations cutting out the four conics are  
\begin{enumerate}
\item[$\L_{0}$:] $ \quad  z_{23}+\alpha z_{01} \;=\; z_{13}- \beta z_{02} \;=\; z_{12}+ \gamma z_{03}
 \;=\;  \a z_{01}^2 + \beta z_{02}^2+  \gamma z_{03}^2 \;=\;0;   \phantom{\Big)}$ 

\item[$\L_{1}$:]$ \quad z_{23}-\alpha z_{01}  \;=\; z_{13}+z_{02} \; \,\, \,\;=\;  z_{12}+ z_{03} \,
\;  \;=\;    \a z_{01}^2 + z_{02}^2 -  z_{03}^2 \phantom{xx}  \;=\;0;   \phantom{\Big)}$ 

\item[$\L_{2}$:] $  \quad  z_{23}+z_{01}\; \; \, \;=\; z_{13}+\beta z_{02} \;=\; z_{12}-z_{03} \;
 \;\;=\;  -z_{01}^2  + \beta z_{02}^2 + z_{03}^2\phantom{i} \;=\;0;   \phantom{\Big)}$ 
	              
\item[$\L_{3}$:] $\quad  z_{23}-z_{01}\; \; \;\,=\,z_{13}-z_{02}\,\;\;\,=\,z_{12}-\gamma z_{03}
 \;\;=\;  z_{01}^2  -  z_{02}^2 +\gamma z_{03}^2 \phantom{xx} \;=\;0   \phantom{\Big)}$. 
\end{enumerate}
These are the same as the equations for the conics in \Cref{pr.theyre_conics},  so $\L_i=C_i$ for $i=0,1,2,3$.

In terms of the $z$-coordinates, the three elliptic curves are 
\begin{enumerate}
\item[$E_{1}$:] $\; z_{23}\,=\,z_{01}\,=\,z_{13}z_{02}-z_{12}z_{03} \phantom{\Big)}$

$\phantom{xxxxixi}
 =\,(1+\gamma) z_{13}^2 - (1-\beta) z_{12}^2 - \gamma (1-\beta) z_{03}^2 - \beta(1+\gamma) z_{02}^2\,=\,0; \phantom{\Big)}$ 

\item[$E_{2}$:] $ \; z_{13}\,=\,z_{02}\,=\,z_{23}z_{01}+z_{12}z_{03}\, \phantom{\Big)}$

$\phantom{xxixxxi}=\,(1-\gamma)z_{23}^2 - (1+\alpha)z_{12}^2+\gamma(1+\alpha)z_{03}^2 +\alpha(1-\gamma)z_{01}^2\,=\,0; \phantom{\Big)}$ 
	       
\item[$E_{3}$:] $\; z_{12}\,=\,z_{03}\,=\,z_{23}z_{01}-z_{13}z_{02}\, \phantom{\Big)}$

$\phantom{xxxixxi}
 =\,( 1 + \beta)z_{23}^2 - (1-\alpha)z_{13}^2 - \beta(1-\alpha) z_{02}^2 - \alpha(1+\beta) z_{01}^2\,=\,0. \phantom{\Big)}$
\end{enumerate}
By \Cref{prop.Ej}, $E_j \cong E/\langle \xi_j \rangle$.

\begin{corollary}\cite{DT}
Each $C_i$, $i=0,\,1,\,2,\,3,$ intersects each $E_j$,  $j=1,2,3$, at two points.
With respect to the ordered coordinate functions  $(z_{01},z_{02},z_{03},z_{12},z_{13},z_{23})$, 
the two points in $C_i \cap E_j$ are those in row $C_i$ and column $E_j$ of Table   \ref{E.cap.C}.

\begin{table}[htp]
\begin{center}
\begin{tabular}{|l||l|l|l|l|l|}
\hline
  & $\phantom{xxxxxxxx} E_1$ & $\phantom{xxxxxxxx} E_2$ & $\phantom{xxxxxxx\Big)}E_3 $
\\
\hline
\hline
$C_0$ &   $ (\, 0, \,  c ,\, \pm i b ,\,   \, \mp i b\c  , \, \beta c ,0  \,) $ & $(\,c,\,0,\,\pm ia, \,\mp i a\c,\, 0,\, -\alpha  c\,) $ &  
$(\, b , \, \pm i a ,\, 0 ,\,0 \, \pm i a\b , \,  -\alpha b \,) $     $\phantom{\Big\vert}$ 
 \\
 \hline
 $C_1$ &   $ (\, 0 , \, 1 ,\, \pm 1, \, \mp 1 , \,  -1 , \, 0  \,) $ & $(\,1 ,\,0,\,\pm a,\,\mp a,\, 0,\,\alpha\,) $ &  
 $(\,  1, \, \pm i a ,\, 0 ,\, 0 , \, \mp i a , \, \alpha  \,)  $     $\phantom{\Big\vert}$ 
  \\
\hline
$ C_2$ &  $(\, 0 , \,   1 ,\, \pm i b , \, \pm ib , \, -   \b , \, 0  \,) $    $\phantom{\Big\vert}$   &  $ (\,1,\,0,\,\pm 1,\,\pm 1,\,0,\,-1\,) $   &  
 $(\, b , \, \pm 1 ,\, 0 ,\, 0 , \, \mp \b , \, -b   \,)$  \\
\hline
$C_3$ &  $(\, 0 , \,  c , \, \pm 1 , \, \pm \c, \, c ,\, 0  \,) $   & $(\,c,\,0,\, \pm i, \, \pm i \c,\,0,\, c\,)$&  
$(\, 1 , \, \pm 1 , \, 0 , \, 0 , \, \pm 1 ,\, 1  \,)$    $\phantom{\Big\vert}$ 
\\
\hline
\end{tabular}
\end{center}
\vskip .12in
\caption{Intersection points $C_i \cap E_j$.}
\label{E.cap.C}
\end{table}
\end{corollary}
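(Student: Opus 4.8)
The plan is to treat the corollary as a direct intersection computation in $\PP^5$, organized so that the symmetries established earlier do most of the work. Fix a pair $(C_i,E_j)$. Each conic $C_i$ is cut out by three linear forms together with one quadric, and each elliptic curve $E_j$ by two linear forms together with a bilinear form and one further quadratic relation (\Cref{thm.D}). First I would intersect the linear parts, imposing simultaneously the five linear forms coming from $C_i$ and $E_j$. The key observation is that these five forms are linearly dependent: for $C_0\cap E_1$, say, the vanishing of $z_{01}$ and $z_{23}$ imposed by $E_1$ makes the form $z_{23}+\a z_{01}$ of $C_0$ redundant, so the common linear span is not a point but a projective line $\PP^1\subseteq \PP^5$. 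Concretely, for $C_0\cap E_1$ this line is $\{z_{01}=z_{23}=0,\ z_{13}=\b z_{02},\ z_{12}=-\c z_{03}\}$, with homogeneous parameter $(z_{02}:z_{03})$.

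Next I would impose the quadratic constraints on this $\PP^1$. The crucial algebraic fact, to be verified in each case, is that every remaining quadratic relation restricts to a scalar multiple of one and the same binary quadratic form. For $C_0\cap E_1$ the quadric $\a z_{01}^2+\b z_{02}^2+\c z_{03}^2$ of $C_0$ and the bilinear form $z_{13}z_{02}-z_{12}z_{03}$ of $E_1$ both restrict to $\b z_{02}^2+\c z_{03}^2$, while the last quadratic relation of $E_1$ factors as $(1+\c)(\b-1)(\b z_{02}^2+\c z_{03}^2)$ and hence vanishes automatically, the factors $1+\c$ and $\b-1$ being non-zero since $\c\ne -1$ and $\b\ne 1$. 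Thus the intersection is exactly the zero locus on $\PP^1$ of $\b z_{02}^2+\c z_{03}^2$, a nondegenerate binary quadratic because $\b\c\ne 0$, yielding two distinct solutions $(z_{02}:z_{03})=(c:\pm i b)$. Substituting back through the linear relations produces the two six-tuples in row $C_0$, column $E_1$ of \Cref{E.cap.C}, and the remaining entries arise the same way.

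To avoid grinding through all twelve cases I would exploit the symmetries already available. By \Cref{pr.E4_act_lines} and \Cref{pr.theyre_conics} the group $E[4]$ acts on $\GG(1,3)$ permuting $\{C_0,C_1,C_2,C_3\}$ as the regular representation of $E[4]/E[2]$ while fixing each elliptic family $E_j$, and the induced action on Plücker coordinates is the explicit one recorded by $\psi_1,\psi_2,\psi_3$ at the end of \Cref{sect.quantum.symm}. Hence it suffices to compute the three base intersections $C_0\cap E_j$, $j=1,2,3$, and then apply $\psi_1,\psi_2,\psi_3$ to transport them to the rows $C_1,C_2,C_3$ of the table; since each $\psi_i$ fixes $E_j$ and carries $C_0$ to a distinct $C_k$, it sends the two points of $C_0\cap E_j$ to the two points of $C_k\cap E_j$. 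Moreover the formal cyclic symmetry $\phi$ of \Cref{ssect.symmetry}, which permutes $(\a_1,\a_2,\a_3)$ and $(y_1,y_2,y_3)$ and cyclically permutes $E_1,E_2,E_3$ while preserving the commuting conic $C_0$, relates the three base computations to one another, so a single honest calculation in principle propagates to the whole table.

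The main obstacle will be bookkeeping rather than conceptual depth: in each base case one must identify precisely which of the five linear forms is redundant (the pattern differs from pair to pair), and then check the over-determination, namely that the several quadratic relations really do collapse onto a single binary form with no spurious extra constraint. This collapse rests on factorization identities such as the one above, which hold formally using $\a_i\in k-\{0,\pm 1\}$ (and, where needed, the defining relation $\a_1+\a_2+\a_3+\a_1\a_2\a_3=0$); the same hypotheses guarantee that the binary quadratic is nondegenerate, so that the two intersection points are genuinely distinct. Once these identities are in place, reading off the coordinates in \Cref{E.cap.C} and confirming through the $\psi_i$ that the transported points satisfy both the conic and the elliptic equations is routine, and agrees with the machine verification.
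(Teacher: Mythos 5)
Your proposal is correct, and it takes a genuinely different route from the paper: the paper simply cites this corollary to \cite{DT}, where it is obtained by the same machine computation (the method of \cite{ShV02_bis}) that produces \Cref{thm.D}, while the main text separately proves the count $|C_i\cap E_j|=2$ abstractly in \Cref{cor.ell_line=conic_line} via fat points and the $E[4]$-action, but with no coordinates. Your argument replaces both with a hand calculation. I checked your pivotal identities: for $C_0\cap E_1$ the five linear forms do degenerate to the line $z_{01}=z_{23}=0$, $z_{13}=\b z_{02}$, $z_{12}=-\c z_{03}$; the conic's quadric and the bilinear form of $E_1$ both restrict to $\b z_{02}^2+\c z_{03}^2$; and the remaining quadric of $E_1$ restricts to $(\b-1)(1+\c)\bigl(\b z_{02}^2+\c z_{03}^2\bigr)$, so the intersection is exactly $(z_{02}:z_{03})=(c:\pm ib)$, matching the table. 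The same formal collapse occurs in the other cases I spot-checked (e.g.\ $C_1\cap E_1$, $C_0\cap E_2$, $C_3\cap E_3$), and indeed never uses the relation $\a+\b+\c+\a\b\c=0$. The transport step is also sound: by \Cref{pr.E4_act_lines}(1) and the proof of \Cref{pr.theyre_conics}, $\psi_i(E_j)=E_j$ and $\psi_i(C_0)=C_i$, so $\psi_i(C_0\cap E_j)=C_i\cap E_j$ exactly; for instance $\psi_1$ carries $(0,c,ib,-ib\c,\b c,0)\in C_0\cap E_1$ to $(0,i\b\c,-i\b\c,i\b\c,-i\b\c,0)\sim(0,1,-1,1,-1,0)$, which is one of the two entries in row $C_1$, column $E_1$. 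What your route buys is a human-verifiable proof that explains why exactly two points occur (a nondegenerate binary quadratic on a $\PP^1$, with char $k\ne 2$ ensuring distinct roots); what the paper's route buys is that the same computer calculation simultaneously certifies the global statement that these seven curves exhaust $\LL_{\rm red}$, which your intersection argument does not (and need not) address.
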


\bibliography{biblio}{}
\bibliographystyle{plain}

\end{document}